\author{ Michael C.\
Laskowski  
and Danielle S.\ Ulrich
 \thanks{Both authors partially supported
by NSF grant DMS-2154101.}
\\
Department of Mathematics\\University of Maryland
}
\newbox\smilebox
\newbox\anchorbox
\newbox\noanchorbox
\newbox\tempbox
\def\anchor{\hbox{\vtop{
           \hbox to \wd\smilebox{\hfil\vrule width.4pt height7pt depth1pt\hfil}
           \vskip  -11.5truept
           \hbox to \wd\smilebox{\hfil$\smile$\hfil}}}}
\def\noanchor{\hbox{\vtop{
           \hbox to \wd\anchorbox{\hfil\anchor\hfil}
           \vskip -14truept
           \hbox to \wd\anchorbox{\hfil/\hfil}}}}
\def\fg#1#2#3{\setbox\tempbox=\hbox{$\scriptstyle{#2}$}
\ifnum\wd\anchorbox>\wd\tempbox\dimen255=\wd\anchorbox
\else\dimen255=\wd\tempbox\fi
{#1\,\vtop{\hbox to \dimen255{\hfil\anchor\hfil}
           \vskip -6truept
           \hbox to \dimen255{\hfil$\scriptstyle{#2}$\hfil}}
           \,#3}}
\def\nfg#1#2#3{\setbox\tempbox=\hbox{$\scriptstyle{#2}$}
\ifnum\wd\noanchorbox>\wd\tempbox\dimen255=\wd\noanchorbox
\else\dimen255=\wd\tempbox\fi
{#1\,\vtop{\hbox to \dimen255{\hfil\noanchor\hfil}
           \vskip -6truept
           \hbox to \dimen255{\hfil$\scriptstyle{#2}$\hfil}}
           \,#3}}
\def\north#1#2{#1\,
\hbox{$\bot$\llap {\hbox to\wd1 {\hfil $/$\hfil}}}
\,#2}
\def\nao#1#2#3{#1\  \hbox{\vtop{ 
\baselineskip=4pt
\hbox{$\bot$\llap {\hbox to\wd1 {\hfil $/$\hfil}}
\hskip .05em \llap{\hbox{$^{\scriptscriptstyle{a}}$}}}\hbox{$\scriptstyle
{#2}$}}}\, #3}
\def\includeE#1{{\lhook\kern-3.5pt\joinrel\smash{
    \mathop{\longrightarrow}\limits^{#1}}}}
\def\efor/{Example~\ref{E4}}
\def\BL/{Baldwin--Lachlan}
\def\Bu/{Buechler}
\def\Hr/{Hrushovski}
\def\lm/{locally modular}
\def\wm/{weakly minimal}
\def\nm/{non--modular}
\def\ss/{superstable}
\def\ud/{unidimensional}
\def\sm/{strongly minimal}
\def\abar{\overline{a}}
\def\bbar{\overline{b}}
\def\cbar{\overline{c}}
\def\dbar{\overline{d}}
\def\ebar{\overline{e}}
\def\hbar{\overline{h}}
\def\rbar{\overline{r}}
\def\sbar{\overline{s}}
\def\tbar{\overline{t}}
\def\xbar{\overline{x}}
\def\ybar{\overline{y}}
\def\zbar{\overline{z}}
\def\Mbar{\overline{M}}
\def\rhat{\hat{r}}
\def\acl{{\rm acl}}
\def\dom{{\rm dom}}
\def\tp{{\rm tp}}
\def\stp{{\rm stp}}
\def\tr/{trivial}
\def\nt/{non--trivial}
\def\st/{strong type}
\def\TV/{Tarski--Vaught}
\def\sc/{sound construction}
\def\ac/{atomic construction}
\def\fal/{functional}
\def\upl/{unique parallel lines}
\def\chp/{categorical in a higher power}
\def\includeE#1{{\lhook\kern-3.5pt\joinrel\smash{
    \mathop{\longrightarrow}\limits^{#1}}}}
\def\efor/{Example~\ref{E4}}
\def\BL/{Baldwin--Lachlan}
\def\Bu/{Buechler}
\def\Hr/{Hrushovski}
\def\lm/{locally modular}
\def\wm/{weakly minimal}
\def\nm/{non--modular}
\def\ss/{superstable}
\def\ud/{unidimensional}
\def\sm/{strongly minimal}
\def\abar{\overline{a}}
\def\bbar{\overline{b}}
\def\cbar{\overline{c}}
\def\dbar{\overline{d}}
\def\ebar{\overline{e}}
\def\hbar{\overline{h}}
\def\rbar{\overline{r}}
\def\sbar{\overline{s}}
\def\tbar{\overline{t}}
\def\xbar{\overline{x}}
\def\ybar{\overline{y}}
\def\zbar{\overline{z}}
\def\Mbar{\overline{M}}
\def\rhat{\hat{r}}
\def\acl{{\rm acl}}
\def\dom{{\rm dom}}
\def\tp{{\rm tp}}
\def\stp{{\rm stp}}
\def\tr/{trivial}
\def\nt/{non--trivial}
\def\st/{strong type}
\def\abar{\bar{a}}
\def\bbar{\bar{b}}
\def\cbar{\bar{c}}
\def\dbar{\bar{d}}
\def\ebar{\bar{e}}
\def\ybar{\bar{y}}
\def\phi{\varphi}
\def\C{{\frak  C}}
\def\F{{\cal F}}
\def\FF{{\bf F}}
\def\M{{\cal M}}
\def\P{{\cal P}}
\def\Nbar{{\overline{N}}}
\def\tp{{\rm tp}}
\def\stp{{\rm stp}}
\def\wt{{\rm wt}}
\def\dom{{\rm dom}}
\def\acl{{\rm acl}}
\def\dcl{{\rm dcl}}
\def\Fa0{{\FF^a_{\aleph_0}}}
\def\<{\langle}
\def\>{\rangle}
\def\V{{\mathbb V}}
\newtheorem{Theorem}{Theorem}[section]
\newtheorem{Proposition}[Theorem]{Proposition}
\newtheorem{Definition}[Theorem]{Definition}
\newtheorem{Remark}[Theorem]{Remark}
\newtheorem{Example}[Theorem]{Example}
\newtheorem{Lemma}[Theorem]{Lemma}
\newtheorem{Corollary}[Theorem]{Corollary}
\newtheorem{Fact}[Theorem]{Fact}
\def\V{{\mathbb V}}
\def\ss{{\bf s}}
\def\sq{{\sqsubseteq}}
\def\PP{{\mathbb P}}
\def\K{{\mathcal K}}
\def\Abar{{\overline{A}}}
\def\Bbar{{\overline{B}}}
\def\Ebar{{\overline{E}}}
\def\Kbar{{\overline{K}}}
\def\PP{{\bf P}}
\def\PPe{{\bf P_e}}
\def\precbf{\preceq_{\infty,\omega}}
\def\F{{\cal F}}
\def\d{{\mathfrak d}}
\def\ehat{\hat{e}}
\def\rhat{\hat{r}}
\def\shat{\hat{s}}
\def\that{\hat{t}}
\newcommand\myrestriction{\mathord\restriction}
\def\mr#1{\myrestriction_{#1}}
\begin{document}

	\title{Equivalents of NOTOP}
	
	\date{\today}

	\maketitle
	
	\begin{abstract}    Working within the context of countable, superstable theories, we give many equivalents of a theory having NOTOP.
	In particular, NOTOP is equivalent to V-DI, the assertion that any type $V$-dominated by an independent triple is isolated over the triple.
	If $T$ has NOTOP, then every model $N$ is atomic over an independent tree of countable, elementary substructures, and hence is
	determined up to back-and-forth equivalence over such  a tree. 
	We also verify Shelah's assertion from Chapter XII of \cite{Shc} that NOTOP implies PMOP (without using NDOP).  
		\end{abstract}

 \section{Introduction}
 The capstone of Shelah's work on classifiable theories was proving the so-called Main Gap.   Shelah found a successive chain of properties of first order theories
 in a countable language, which taken together, characterize which countable theories have fewer than $2^\kappa$ non-isomorphic models of size $\kappa$ for every uncountable cardinal $\kappa$.  The story began with Shelah identifying the notion of a {\em superstable} theory, and proving a myriad of theorems indicating that the class of uncountable models of any non-superstable is extremely unwieldy.   In particular, he proved that unsuperstable theories have the maximal number of non-isomorphic models of every uncountable cardinality, but even more complexity can be found for any of these theories.  With those results in hand, Shelah worked in earnest in understanding models of countable superstable theories.
 
 His next significant advance was the identification of theories with DOP, the Dimensional Order Property.  For this notion, it is cleaner to work in the category of a-models, i.e., models in which every strong type over every finite subset is realized.  On one hand, Shelah proved that superstable theories with NDOP (the negation of DOP) admit a very strong structure theorem on the class of a-models.  In particular, every $a$-model $N$ is a-prime and a-minimal over an independent tree of a-models, each of size at most $2^{\aleph_0}$.
 On the other hand, via the method of quasi-isomorphisms, he proved that any countable theory $T$ that was not superstable with NDOP has the maximum number of models of every uncountable cardinality.  
 
 Whereas this dichotomy is very nice, the grail was to get a decomposition of an arbitrary model $N$ in terms of a tree of countable models (as opposed to a-models).  
 For this, Shelah defined the notion of OTOP, the Omitting Types Order Property, an adjective that could hold for an arbitrary theory. %**** Added comma after Property
 
 \begin{Definition}  {\em  A theory $T$ has {\em OTOP} if there is a type $p(\xbar,\ybar,\zbar)$ such that for every binary relation $R$ on any set $X$, there is a model $M_R$ of $T$ and $\{\abar_i:i\in X\}$ from $M_R$ such that, for every $(i,j)\in X^2$, $M_R$ omits $p(\xbar,\abar_i,\abar_j)$ if and only if $R(i,j)$ holds. \\ $T$ has {\em NOTOP} if it does not have OTOP.
 }
 \end{Definition}

In his book, \cite{Shc}, he explores the consequences of NOTOP among countable, superstable theories with NDOP.  
With it, he was able to solve the  `Main Gap", i.e., to precisely determine the  complete theories in a countable language that have fewer than the maximal number of uncountable models.    
Subsequently, aided by some technical results of Hrushovski,
 Shelah and Buechler succeeded in proving a strong structure theorem for such theories.
   Call a countable, complete theory $T$ {\em classifiable} if it is superstable, with NDOP, and NOTOP.
 In \cite{ShBue} they prove that for a classifiable theory, every model is prime and minimal over (hence uniquely determined by) an independent tree of countable,
 elementary substructures.  
%**** Added "that" in "a countable language that have fewer than the maximal number of  uncountable models
%**** changed final clause from "every model is prime and minimal (hence uniquely determined) by an independent tree of countable, elementary substructures.
 
 Following this breakthrough, the study of classifiable theories slowed considerably.
   In \cite{HHL}, Hart, Hrushovski, and the first author determined all of the various uncountable spectra of classifiable theories, and some work was done on the `fine structure' of the decompositions as well, e.g., \cite{BBHL}.
   
 Because the property of NOTOP was only discovered significantly after the DOP/NDOP dichotomy was understood, almost nothing was proved for countable, superstable theories with NOTOP, without the additional assumption of NDOP.  
% Unlike the other dividing lines, the definition of OTOP directly explains its non-structural properties -- there is a type $p(\xbar,\ybar,\zbar)$ 
% such that for any binary relation $R$ on any set $X$,
% there is a model $M_R$ and $\{\abar_i:i\in X\}$ for which $M_R$ realizes $p(\xbar,\abar_i,\abar_j)$ if and only if $R(i,j)$ holds (see Definition~\ref{OTOP}).
In this paper, we define and develop  numerous technical conditions that are equivalent to NOTOP, the inability of coding arbitrary relations by omitting types
in the context of countable, superstable theories, and we also prove a structure theorem for arbitrary models of countable, superstable theories with NOTOP.
%In particular, the paper is dedicated to explaining and proving the following Theorem

\begin{Theorem}  \label{main}  The following are equivalent for a countable, superstable theory $T$.
\begin{enumerate}
\item  $T$ has V-DI;
\item  $T$ has $\PPe$-NDOP and PMOP;
\item  $T$ has $\PPe$-NDOP and countable PMOP;
\item  $T$ has linear NOTOP;
\item  $T$ has NOTOP.
\end{enumerate}
\end{Theorem}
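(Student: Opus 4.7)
\medskip

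\noindent\textbf{Proof plan.}
The plan is to establish the cycle
$(5) \Rightarrow (4) \Rightarrow (1) \Rightarrow (2) \Rightarrow (3) \Rightarrow (5)$.
Two of these are immediate from the definitions: $(5) \Rightarrow (4)$ because linear orders form a subclass of binary relations appearing in the OTOP definition, and $(2) \Rightarrow (3)$ because countable PMOP is the restriction of PMOP to countable submodels. For $(4) \Rightarrow (1)$ I would argue contrapositively: if V-DI fails, there is a type $p$ that is V-dominated by an independent triple $\abar_1,\abar_2,\abar_3$ yet not isolated over the triple. For each linear order $(X,<)$ I realize along $X$ an independent family of copies of this triple and, for each pair $(i,j)$, realize an instance of $p$ exactly when $i < j$. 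V-domination is exactly what makes the realize/omit choice free at each pair, and non-isolation guarantees omission is actually possible; the resulting models exhibit linear OTOP, contradicting $(4)$.

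For $(1) \Rightarrow (2)$, V-DI quickly yields $\PPe$-NDOP, since any witness to $\PPe$-DOP would produce a regular type over an independent pair of models that is V-dominated but not isolated, violating V-DI. For PMOP, take $M_1, M_2$ independent over a common $M_0$, let $N$ be the $a$-prime model over $M_1\cup M_2$ provided by standard superstability, and apply V-DI to each finite tuple in $N$: such a tuple has type V-dominated by an independent triple drawn from $M_0,M_1,M_2$, so V-DI upgrades $a$-primality to genuine primality over $M_1\cup M_2$. This is Shelah's argument from Chapter~XII of \cite{Shc} executed without invoking NDOP, as promised in the abstract.

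The decisive step is $(3) \Rightarrow (5)$. Given $\PPe$-NDOP and countable PMOP, I would show that every model $N$ is atomic over an independent tree $\langle N_\eta : \eta \in I\rangle$ of countable elementary submodels of $N$, constructed by induction on tree height: at each stage countable PMOP supplies a prime model over the union of two previously constructed countable submodels, while $\PPe$-NDOP forces regular types over new nodes to remain orthogonal to distant branches so the tree stays independent. To deduce NOTOP from this decomposition, suppose $p(\xbar,\ybar,\zbar)$ and $\{\abar_i : i\in X\}$ coded an arbitrary binary relation $R$ in a model $M_R$. Decomposing $M_R$ over such a tree, each $\abar_i$ would be $\acl$-captured in some $N_\eta$, and atomicity of $M_R$ over $\bigcup_\eta N_\eta$ would force whether $p(\xbar,\abar_i,\abar_j)$ is realized to be determined by the finite-path data in $I$ joining $\abar_i$ and $\abar_j$, bounding the number of coding patterns and precluding the encoding of arbitrary $R$. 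The principal obstacle is precisely this tree-decomposition step: whereas Shelah's NDOP$+$NOTOP analysis builds trees of $a$-models, here one must build and sustain an independent tree of \emph{countable} elementary substructures and verify full atomicity (rather than the easier $a$-atomicity) at each successor and limit stage. Transferring the $a$-model decomposition into this countable-model setting, in particular handling limit stages where atomicity over a union of countable submodels is not automatic, is the technical heart of the argument.
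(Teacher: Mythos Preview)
Your cycle has a concrete error at its first step. You claim $(5)\Rightarrow(4)$ is immediate ``because linear orders form a subclass of binary relations,'' but that reasoning yields the \emph{opposite} implication: if a single type can code every binary relation (OTOP), then in particular it codes linear orders (linear OTOP), so by contraposition linear NOTOP $\Rightarrow$ NOTOP, i.e.\ $(4)\Rightarrow(5)$. The implication $(5)\Rightarrow(4)$ --- equivalently, that linear OTOP implies full OTOP --- is not trivial; the paper explicitly calls it ``somewhat curious'' that the two coincide for superstable $T$, and this coincidence is part of what the theorem establishes. With $(5)\Rightarrow(4)$ removed your cycle does not close: you reach $(5)$ from $(3)$ but have no route back to $(4)$.

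The paper's structure is different. It first proves $(1)\Leftrightarrow(2)\Leftrightarrow(3)$ directly (Theorem~\ref{sofar}), where $(3)\Rightarrow(1)$ is a self-contained argument via dull pairs and na-substructures rather than tree decompositions, and $(1)\Rightarrow$ PMOP (Theorem~\ref{DIPMOP}) is an induction on cardinality through atomic-special $\P^-(n)$-systems --- your one-line ``upgrade $a$-primality'' at best gives atomicity, which does not yield constructibility over uncountable bases. Then $(4)\Rightarrow(5)$ is the trivial direction, $(5)\Rightarrow(1)$ is quoted from Hart and Shelah (their ``$\neg$V-DI $\Rightarrow$ OTOP'' is a strengthening of your $(4)\Rightarrow(1)$ sketch), and the new work is $(1)\Rightarrow(4)$. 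That step does use the tree decomposition (Theorem~\ref{atomicdecomp}) as you envisioned, but the punchline is a symmetry/Erd\H{o}s--Rado argument: if $M^*$ realizes $p(\xbar,\abar_\alpha,\abar_\beta)$ for all $\alpha<\beta$, uniformizing the isolating formulas over the tree manufactures a realization of $p(\xbar,\abar_\alpha,\abar_\beta)$ for some $\beta<\alpha$, contradicting linear OTOP directly. So the decomposition is aimed at linear OTOP, not full OTOP, and the equivalence with NOTOP only closes after looping back through $(5)\Rightarrow(1)$.
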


In particular, we prove that  NOTOP implies PMOP for countable, superstable theories $T$.  This implication had previously been proved  under
the additional assumption of NDOP \cite{Shc,Hart}.
%see that any of these properties, e.g., 
%V-DI implies that models of arbitrarily large sizes are 
%determined up to back and forth equivalence by a maximal independent tree of countable models.  
We also show that countable, superstable theories with NOTOP admit a structure theorem for arbitrary models of its theory.  

\begin{Theorem}  \label{decomp}  Suppose $T$ is countable, superstable with NOTOP.   Then  every model $N$
is atomic over an independent tree of countable, elementary substructures.   Thus, $N$ is determined by any such tree, up to
back and forth equivalence
over the tree. 
\end{Theorem}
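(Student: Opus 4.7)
My plan is to derive Theorem~\ref{decomp} from the equivalent form V-DI of NOTOP supplied by Theorem~\ref{main}, together with PMOP and $\PPe$-NDOP. Inside an arbitrary $N\models T$, I would build a maximal independent tree $(M_s:s\in I)$ of countable elementary substructures of $N$ by transfinite recursion, then use V-DI to show that $N$ is atomic over $M^\ast:=\bigcup_{s\in I}M_s$; the back-and-forth uniqueness over the tree is then a formal consequence of atomicity together with the countability of each $M_s$.

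For the construction, pick any countable $M_\emptyset\preceq N$. Inductively, having built $(M_t:t\in I_\alpha)$, at each existing node $s$ look for $a\in N$ with $a\notin\acl\bigl(\bigcup_{t\in I_\alpha}M_t\bigr)$ satisfying $a\mathop{\downarrow}_{M_s}\bigcup\{M_t:t\in I_\alpha,\ t\not\succeq s\}$. If such $a$ exists, add a new child $M_{s^\frown k}\preceq N$ of cardinality $\aleph_0$ containing $M_s$ and $a$, arranged so that the enlarged family is still independent over the tree. Countability of $T$ makes such a countable elementary extension exist inside $N$, and $\PPe$-NDOP (guaranteed by Theorem~\ref{main}) ensures that independence across siblings is preserved and passes to the limit at limit stages. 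The recursion terminates because each successful step strictly enlarges the algebraic closure of the tree inside $N$, and $N$ is a set.

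To show $N$ is atomic over $M^\ast$, fix a finite tuple $\cbar\subseteq N$. By finite character of forking in the superstable $T$, the type $\tp(\cbar/M^\ast)$ is based on a finite subtree $J\subseteq I$, i.e., $\cbar\mathop{\downarrow}_{\bigcup_{t\in J}M_t}M^\ast$. The maximality of the greedy construction then forces $\cbar$ to be $V$-dominated by the independent configuration carried by $J$: had $\cbar$ provided access to an element escaping the tree relative to some node, the construction would have added a child there. PMOP furnishes primary models over the relevant finite independent triples inside $J$, and V-DI from Theorem~\ref{main} then says that $\tp(\cbar/\bigcup_{t\in J}M_t)$ is isolated. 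By stationarity this isolation lifts to $\tp(\cbar/M^\ast)$, so every finite tuple of $N$ has an isolated type over $M^\ast$, which is precisely atomicity.

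The uniqueness statement reduces to a standard back-and-forth: given two models atomic over the same tree $(M_s:s\in I)$, any finite partial elementary map between them fixing a finite portion of the tree and a finite tuple on each side extends to absorb one further element, since principal types extend to principal types under finite enlargement of parameters; enumerating both sides and alternating yields a back-and-forth system over the tree. The step I expect to be the main obstacle is the transition from single-element greedy maximality to full $V$-domination of arbitrary finite tuples: the obvious stopping condition only forbids adding one new branch-starting element at a time, whereas V-DI requires the whole type of $\cbar$ to be dominated by an independent triple inside $J$. Closing this gap should draw on the \emph{linear NOTOP} clause of Theorem~\ref{main}, together with a careful weight analysis in the countable superstable setting, to rule out the ``linearly ordered'' configurations that would otherwise obstruct $V$-domination by any finite subtree.
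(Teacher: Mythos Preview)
Your proposal has the right top-level architecture (pass to V-DI via Theorem~\ref{main}, build a maximal tree inside $N$, deduce atomicity, then back-and-forth), but the mechanism you propose for atomicity is not the one that works, and the gap you yourself flag at the end is real and is not closed by the ``linear NOTOP plus weight analysis'' patch you suggest.

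Concretely, the step ``maximality of the greedy construction forces $\cbar$ to be $V$-dominated by the independent configuration carried by $J$'' is not correct as stated and is not how the paper proceeds. $V$-domination is a condition on independent \emph{triples} (or more generally $\P^-(n)$-systems), and an arbitrary finite subtree $J$ does not carry a $\P^-(n)$-system in the relevant sense; there is no lemma in the paper that turns tree-maximality into $V$-domination of an arbitrary finite tuple $\cbar$ over some triple sitting in $J$. The paper's route to atomicity is entirely different: one first uses PMOP to produce a constructible model $N_0\preceq N$ over $M_I$, and then one shows $N_0\preceq N$ is a \emph{dull pair}, meaning every regular type over $N_0$ realized in $N$ is orthogonal to $\PPe$. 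This uses $\PPe$-NDOP together with the specific shape of the decomposition (the $na$-substructure condition, the regularity of each $\tp(a_\nu/M_\eta)$, and the orthogonality $\tp(a_\nu/M_\eta)\perp M_{\eta^-}$), none of which your greedy construction guarantees. Once dullness is established, types $\perp\PPe$ are \emph{always isolated} (Lemma~\ref{AIlemma}), and Lemma~\ref{anytree} lets you absorb realizations of such types while staying atomic over $M_I$; iterating gives $N$ atomic over $M_I$. The uncountable case is handled by a forcing/absoluteness trick, not directly.

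Two further specific problems: (i) your tree nodes should be countable $na$-substructures of $N$, with each $M_\nu$ dominated by a distinguished $a_\nu$ whose type over $M_\eta$ is regular and $\perp M_{\eta^-}$; without these constraints the 3-model Lemma (Fact~\ref{3big}(2)) is unavailable and you cannot convert ``some regular type over $N_0$ is $\not\perp M_\eta$'' into ``$C_\eta$ was not maximal''; (ii) your termination argument (``each step enlarges $\acl$'') is not the right bookkeeping: the tree is generally uncountable and one simply takes a $\le^*$-maximal decomposition via Zorn (Lemma~\ref{extendtomax}), not a terminating recursion.
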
  

\medskip
 
 \centerline{{\bf Throughout the paper, we assume $T$ is a complete, superstable theory in a countable language.}}
 
 \medskip
 
 In Section 2 we define $V$-domination and the property of a theory having V-DI.  It is essentially immediate that V-DI implies countable PMOP, but in
 Section~3 we show that V-DI implies full PMOP, i.e., constructible models exist over independent triples of models of any size. 
 
 In Section~4 we develop an idea
 of Baisalov \cite{Baisalov} by defining a family non-orthogonality classes of regular types $\PPe$ in terms of the existence of a stationary, weight one (not necessarily regular!)
 type in the class being non-isolated over some finite set containing its canonical base.  
Following the thesis of \cite{PNDOP}, we localize the notion of NDOP to the specific class $\PPe$ of regular types and show its relation to V-DI.   With 
Theorem~\ref{sofar} we show that V-DI is equivalent to the conjunction of $\PPe$-NDOP and PMOP.   

On the flip side, we use ideas from \cite{Borel} to study {\em dull pairs} $M\preceq N$, where every $c\in N\setminus M$ has $\tp(c/M)\perp\PPe$.
We see that if $M\preceq N$ is a dull pair, then $M$ and $N$ are back-and-forth equivalent over any finite subset of $M$.  

In Section 6, we show that theories with V-DI admit tree decompositions in the same sense as for classifiable theories.  In both cases,
arbitrarily large models $N$ are atomic over an independent tree of countable, elementary substructures.  In the classifiable case, this is tight, i.e.,
$N$ is prime and minimal over the tree.  Here, we show that if $N,N'$ are two models that admit the same tree decomposition, then $N$ and $N'$ are back-and-forth equivalent
over the tree.

In Section 7, we explore more about the class of regular types $\PPe$.  In the $\omega$-stable case, having $\PPe$-NDOP is equivalent to the older notion of eni-NDOP,
but with Example~\ref{Ex}, we see that they can differ in some countable, superstable theories.

Many of the old, standard results we use are relegated to the Appendix.  There is a small amount of new material in the `na-substructures' subsection, but mostly it is a recording of 
definitions and facts that are presented for the convenience of the reader.  

We are grateful to Saharon Shelah for many insightful conversations about potential variants of OTOP.

\section{$V$-domination and V-DI}

The notion of $V$-domination is due to Harrington, who as early as the 1980's, realized its connection to NOTOP.
Our story starts with an investigation of independent triples of sets.

\begin{Definition} \label{V}  {\em  An {\em independent triple of sets} $\Abar=(A_0,A_1,A_2)$ is any triple of sets satisfying $A_0\subseteq A_1\cap A_2$ and $\fg {A_1} {A_0} {A_2}$.

Given two independent triples $\Abar=(A_0,A_1,A_2)$ and $\Bbar=(B_0,B_1,B_2)$ of sets, we say that {\em $\Bbar$ extends $\Abar$}, written $\Abar\sq\Bbar$ if 
$A_i\subseteq B_i$ for each $i$, $\fg {B_0} {A_0} {A_1A_2}$, $\fg {B_1} {B_0A_1} {A_2}$, and $\fg {B_2} {B_0A_2} {B_1}$ hold. 
}
\end{Definition} 

It is easily checked that the relation $\sq$ is transitive.   Whereas $M_1M_2$ need not be a model for an arbitrary independent triple $\Mbar$,
the category of independent triples with $\sq$ acts similarly to the category of models with $\preceq$.  In particular, we have the following.

\begin{Fact}  \label{category}  
\begin{enumerate}
\item  For any independent triples, $\Mbar\sq\Bbar$ implies $M_1M_2\subseteq_{TV} B_1B_2$ (see Definition~\ref{TVdef});
\item (Upward LS)  For any independent triple $\Abar$, there is an independent triple $\Mbar\sqsupseteq\Abar$ consisting of a-models (or even, $\kappa$-saturated for any cardinal $\kappa$);
\item (Downward LS)  For any independent triple $\Mbar$ of models,  for any infinite cardinal $\lambda$, and for any set $X\subseteq M_1M_2$ with $|X|<\lambda$,
there is $\Abar\sq\Mbar$ with $X\subseteq A_1A_2$ and $|A_1A_2|<\lambda$.
\end{enumerate}
\end{Fact}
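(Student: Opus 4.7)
My plan is to handle each of the three items with the standard stability-theoretic toolkit: nonforking calculus and stationarity over models for (1), an iterative a-prime (or $\kappa$-saturated) construction for (2), and a L\"owenheim--Skolem style closure for (3). The substantive work is in (1); (2) and (3) are routine once the diagram of independences is chosen.

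For (1), given $\varphi(\xbar,\cbar)$ with $\cbar=\cbar_1\cbar_2\in M_1M_2$ realized in $B_1B_2$ by some $\bbar=\bbar_1\bbar_2$ with $\bbar_i\in B_i$, I would produce $\abar_i\in M_i$ with $\models\varphi(\abar_1\abar_2,\cbar)$ in two stages. First, use $\fg{B_2}{B_0M_2}{B_1}$ together with $\fg{B_0}{M_0}{M_1M_2}$ and symmetry to see that $\tp(\bbar_2/M_2\bbar_1\cbar_1)$ does not fork over $M_2$; since $M_2$ is a model, stationarity over $M_2$ furnishes $\abar_2\in M_2$ realizing the same type, so $\models\varphi(\bbar_1\abar_2,\cbar)$. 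Then apply the mirror argument: using $\fg{B_1}{B_0M_1}{M_2}$ (and now $\abar_2\in M_2$), the type $\tp(\bbar_1/M_1\abar_2\cbar)$ does not fork over $M_1$, so stationarity over $M_1$ produces $\abar_1\in M_1$ with the required formula.

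For (2), construct $\Mbar$ in three stages over $\Abar$: first take $M_0\supseteq A_0$ a-prime (or $\kappa$-saturated) with $\fg{M_0}{A_0}{A_1A_2}$; next $M_1\supseteq M_0A_1$ a-prime with $\fg{M_1}{M_0A_1}{A_2}$; finally $M_2\supseteq M_0A_2$ a-prime with $\fg{M_2}{M_0A_2}{M_1}$. Verify $\fg{M_1}{M_0}{M_2}$ by transitivity from the three built-in independences together with $\fg{A_1}{A_0}{A_2}$. For (3), close $X$ inside $M_1M_2$ iteratively: at each of $\omega$ stages, adjoin canonical bases of the relevant stationary types (which sit inside $M_0$) together with countably many witnesses from each $M_i$ to existential formulas over previously collected parameters. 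The limit $\Abar$ will have $|A_1A_2|<\lambda$, contain $X$, and satisfy $\Abar\sq\Mbar$ by construction.

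The main technical obstacle is the independence bookkeeping for (1): carefully unpacking the three hypotheses so that the base for stationarity can be pushed from $B_0M_2$ down to $M_2$ alone, and analogously from $B_0M_1$ down to $M_1$. All tools needed are standard nonforking lemmas, but the three-sorted structure of $\sq$ demands care to apply them in the right order and with the correct monotonicity and symmetry steps.
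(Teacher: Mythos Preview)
Your sketches for (2) and (3) are fine and match the standard constructions; the paper itself states this Fact without proof, deferring to general stable-system machinery (see the appendix remark that for stable systems $\Mbar\sq\Nbar$ of models one has $\bigcup\Mbar\subseteq_{TV}\bigcup\Nbar$). So there is nothing to compare against except correctness.

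Your argument for (1), however, has a genuine gap. You claim that from $\fg{B_2}{B_0M_2}{B_1}$ and $\fg{B_0}{M_0}{M_1M_2}$ one can deduce $\fg{\bbar_2}{M_2}{\bbar_1\cbar_1}$, i.e.\ essentially $\fg{B_2}{M_2}{B_1}$. This is false. Take $M_0=M_1=M_2=M$ (any model) and $B_0=B_1=B_2=M\cup\{a\}$ with $a\notin M$: all three $\sq$-conditions hold trivially, yet $\fg{B_2}{M_2}{B_1}$ would force $a\in\acl(M)$. The problem is that $B_0$ sits inside both $B_1$ and $B_2$, so elements of $B_0\setminus M_2$ witness forking of $B_2$ over $M_2$ with $B_1$. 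Your second step (replacing $\bbar_1$ using $\fg{B_1}{M_1}{M_2}$, which \emph{does} hold) is fine, but you cannot get there because the first step fails.

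The two-step reduction over $M_2$ then $M_1$ cannot work as stated; one needs to pass through $B_0$ explicitly. A correct route is the three-step chain
\[
M_1M_2\ \subseteq_{TV}\ B_0M_1M_2\ \subseteq_{TV}\ B_1M_2\ \subseteq_{TV}\ B_1B_2,
\]
using finite satisfiability over the model $M_0$ for the first inclusion, then over $M_1$ for the second (via $\fg{B_1}{B_0M_1}{M_2}$), then over $M_2$ for the third (via $\fg{B_2}{B_0M_2}{B_1}$). The second and third steps are the delicate ones since $B_0M_i$ is not a model; one must argue that the relevant formulas can be realized in the larger set $B_0M_1M_2$ (resp.\ $B_1M_2$) rather than in $M_i$ alone, which requires combining the non-forking with the inclusion $B_0\subseteq B_i$. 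This is where the general stable-system arguments of Shelah XII.2 (or Hart's exposition) come in, and it is not as immediate as your two-line sketch suggests.
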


\begin{Definition}  \label{Vdom}  {\em 
We say that {\em $c$ is $V$-dominated by $\Abar$} if $\fg c {A_1A_2} {B_1B_2}$ for every $\Bbar\sqsupseteq \Abar$.
}
\end{Definition}

Many facts about $V$-domination are evident.

\begin{Fact}  \label{Vdomfact} 
\begin{enumerate}
\item  If $c$ is $V$-dominated by $\Abar$, then $\stp(c/A_1A_2)\vdash \tp(c/B_1B_2)$ for every $\Bbar\sqsupseteq\Abar$.
\item  If $c$ is $V$-dominated by $\Abar$, then $c$ is $V$-dominated by every $\Bbar\sqsupseteq \Abar$.
\item  If $c$ is $V$ dominated by $\Bbar$ and $\Bbar\sqsupseteq\Abar$ with $\fg {c} {A_1A_2} {B_1B_2}$, then $c$ is $V$-dominated by $\Abar$.
\item  If $\Mbar$ is an independent triple of models and if $\tp(c/M_1M_2)$ is $\ell$-isolated, then $c$ is $V$-dominated by $\Mbar$.
%\item  If $\Mbar$ is an independent triple of $\aleph_1$-saturated models and if $\tp(c/M_1M_2)$ is $\aleph_1$-isolated, then $c$ is $V$-dominated by $\Mbar$.
\end{enumerate}
\end{Fact}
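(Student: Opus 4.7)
The plan is to treat the four clauses in sequence using standard non-forking calculus. For (1), I use that $V$-domination forces $\fg c {A_1A_2}{B_1B_2}$, and that stationarity of $\stp(c/A_1A_2)$ means its unique non-forking extension to $B_1B_2$ must coincide with $\tp(c/B_1B_2)$; hence $\stp(c/A_1A_2)\vdash \tp(c/B_1B_2)$. For (2), given $\overline{C}\sqsupseteq\Bbar\sqsupseteq\Abar$, transitivity of $\sq$ (remarked just after Definition~\ref{V}) yields $\overline{C}\sqsupseteq\Abar$, so the hypothesis gives $\fg c {A_1A_2}{C_1C_2}$; applying non-forking transitivity along $A_1A_2\subseteq B_1B_2\subseteq C_1C_2$ then delivers $\fg c {B_1B_2}{C_1C_2}$.

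For (3), fix an arbitrary $\overline{C}\sqsupseteq\Abar$; the target is $\fg c {A_1A_2}{C_1C_2}$. My plan is a pushout inside the monster: first use non-forking extension to replace $\overline{C}$ by a conjugate $\overline{C}'$ over $A_1A_2$ whose coordinates are free from $\Bbar$ over $\Abar$, then set $D_i := B_i C_i'$ and verify the three asymmetric non-forking conditions $\fg {D_0}{B_0}{B_1B_2}$, $\fg {D_1}{D_0B_1}{B_2}$, and $\fg {D_2}{D_0B_2}{D_1}$ that witness $\overline{D}\sqsupseteq\Bbar$. Once that is done, $V$-domination of $\Bbar$ provides $\fg c {B_1B_2}{D_1D_2}$, which combined with the hypothesis $\fg c {A_1A_2}{B_1B_2}$ and transitivity along $A_1A_2\subseteq B_1B_2\subseteq D_1D_2$ gives $\fg c {A_1A_2}{D_1D_2}$; restricting to $C_1'C_2'\subseteq D_1D_2$ and conjugating back then yields the desired $\fg c {A_1A_2}{C_1C_2}$. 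The main obstacle is precisely the bookkeeping for the three $\sq$-conditions: it is an exercise in monotonicity, symmetry, and transitivity of non-forking, using the freeness of $\overline{C}'$ over $\Bbar$ together with the $\sq$-data built into $\overline{C}\sqsupseteq\Abar$.

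For (4), the plan is to use $\ell$-isolation to extract a finite $\bar m\subseteq M_1M_2$ with $\stp(c/\bar m)\vdash \tp(c/M_1M_2)$, which in particular forces $\fg c {\bar m}{M_1M_2}$. Given any $\Bbar\sqsupseteq\Mbar$, I realize the unique non-forking extension of $\stp(c/\bar m)$ to $B_1B_2$ by some $c'$; its restriction to $M_1M_2$ is also a non-forking extension of $\stp(c/\bar m)$, so by $\ell$-isolation it must equal $\tp(c/M_1M_2)$. Therefore $c'\equiv_{M_1M_2}c$, and non-forking transitivity along $\bar m\subseteq M_1M_2\subseteq B_1B_2$ gives $\fg {c'}{M_1M_2}{B_1B_2}$, hence $\fg c {M_1M_2}{B_1B_2}$, as required.
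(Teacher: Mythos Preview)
Your (2) is fine and matches the paper. The paper itself only proves (1) directly and cites Hart for (3) and (4); your attempts at direct arguments have gaps in (1), (3), and especially (4).

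For (1), the step ``hence $\stp(c/A_1A_2)\vdash\tp(c/B_1B_2)$'' does not follow from what precedes it. Knowing that $\tp(c/B_1B_2)$ is the unique non-forking extension of $\stp(c/A_1A_2)$ only tells you what happens for realizations that \emph{are} free from $B_1B_2$; the entailment requires that \emph{every} $c'$ with $\stp(c'/A_1A_2)=\stp(c/A_1A_2)$ be free from $B_1B_2$. The paper supplies exactly this missing step: $V$-domination by $\Abar$ is invariant under automorphisms fixing $\acl(A_1A_2)$, so any such $c'$ is itself $V$-dominated and hence $\fg{c'}{A_1A_2}{B_1B_2}$.

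For (3), conjugating $\overline{C}$ over $A_1A_2$ alone breaks the last step: the automorphism taking $\overline{C}'$ back to $\overline{C}$ need not fix $c$, so $\fg c{A_1A_2}{C_1'C_2'}$ does not transport to $\fg c{A_1A_2}{C_1C_2}$. The fix is to conjugate over $cA_1A_2$, choosing $\overline{C}'\equiv_{cA_1A_2}\overline{C}$ with $\fg{\overline{C}'}{cA_1A_2}{B_1B_2}$; the hypothesis $\fg c{A_1A_2}{B_1B_2}$ plus transitivity then still gives $\fg{\overline{C}'}{A_1A_2}{B_1B_2}$, and your pushout goes through.

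For (4), the first move is simply false: $\ell$-isolation does \emph{not} produce a single finite $\bar m$ with $\stp(c/\bar m)\vdash\tp(c/M_1M_2)$. That condition is $a$-isolation, a strictly stronger notion over arbitrary models, and extracting it from $\ell$-isolation is essentially the content of (4) itself (compare Fact~\ref{3.3}). A correct direct argument runs through Tarski--Vaught: since $\Mbar$ consists of models, $\Mbar\sq\Bbar$ gives $M_1M_2\subseteq_{TV}B_1B_2$, so by Fact~\ref{TVupl} each witnessing $\psi_\phi(x,m)$ also isolates $\tp_\phi(c/B_1B_2)$. Now if some $\phi(x,b)\in\tp(c/B_1B_2)$ divided over $M_1M_2$, then $\psi_\phi(x,m)\vdash\phi(x,b')$ for every $M_1M_2$-conjugate $b'$ of $b$, forcing $\psi_\phi(x,m)$ to be inconsistent---a contradiction.
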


\begin{proof}  (1)  Choose $c'$ such that $\stp(c'/A_1A_2)=\stp(c/A_1A_2)$.   Clearly, $c'$ is $V$-dominated by $\Abar$ as well, so we have both
$\fg c {A_1A_2} {B_1B_2}$ and $\fg {c'} {A_1A_2} {B_1B_2}$, hence $\tp(cB_1B_2)=\tp(c'B_1B_2)$.

(2) is immediate by the transitivity of $\sq$.  (3) and (4) are respectively, Lemmas~2.2 and 2.6 of \cite{Hart}.
\qed\end{proof} 

If the independent triple consists of $a$-models, we can say more.

\begin{Fact}   \label{3.3}  Suppose $\Mbar=(M_0,M_1,M_2)$ is an independent triple of $a$-models.  Then the following are equivalent for a finite tuple $c$.
\begin{enumerate}
\item  $c$ is $V$-dominated by $\Mbar$;
\item  There is an independent triple $\Bbar\sq\Mbar$ of finite sets with $\tp(c/B_1B_2)\vdash\tp(c/M_1M_2)$;
\item  $\tp(c/M_1M_2)$ is $a$-isolated;
\item  $\tp(c/M_1M_2)$ is $\ell$-isolated (see Definition~\ref{elldef}).
\end{enumerate}
\end{Fact}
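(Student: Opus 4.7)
The strategy is to close a cycle of four implications, with the edge $(4) \Rightarrow (1)$ already provided by Fact~\ref{Vdomfact}(4). I would organize the remaining work as $(1) \Rightarrow (2) \Rightarrow (3) \Rightarrow (4)$, dispatching the two easy legs first before turning to the main step.

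For $(2) \Rightarrow (3)$: if $\Bbar \sq \Mbar$ is a finite triple with $\tp(c/B_1B_2) \vdash \tp(c/M_1M_2)$, set $D := B_1 B_2$, a finite subset of $M_1 M_2$. Then $\tp(c/M_1M_2)$ is the unique---hence non-forking---extension of $\tp(c/D)$, so $\stp(c/D) \vdash \tp(c/M_1M_2)$ and $\tp(c/M_1M_2)$ is $a$-isolated. For $(3) \Rightarrow (4)$: I would invoke the standard classification-theoretic principle (recorded in the appendix) that over an independent union of $a$-models such as $M_1 M_2$, $a$-isolation implies $\ell$-isolation; this is a general saturation feature which makes stationary types over finite subsets of $M_1M_2$ automatically $\phi$-isolated for every $\phi$.

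The heart of the proof is $(1) \Rightarrow (2)$, which I would establish by contrapositive. Assuming no finite $\Bbar \sq \Mbar$ satisfies $\tp(c/B_1B_2) \vdash \tp(c/M_1M_2)$, for each such $\Bbar$ choose $c_\Bbar \models \tp(c/B_1B_2)$ with $c_\Bbar \not\models \tp(c/M_1M_2)$, the failure witnessed by some formula $\phi_\Bbar(x, \bar m_\Bbar) \in \tp(c/M_1M_2)$ with $\bar m_\Bbar \in M_1 M_2$. A compactness-and-extension argument would then produce an independent triple $\Mbar^* \sqsupseteq \Mbar$ of $a$-models (via Fact~\ref{category}(2)) that absorbs these witnesses, together with a realization $c^*$ of $\tp(c/M_1M_2)$ satisfying $\nfg{c^*}{M_1 M_2}{M_1^* M_2^*}$, contradicting $V$-domination.

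The main obstacle is the construction of $\Mbar^*$: I must arrange the extension so as to encode the failure-of-isolation witnesses while respecting the non-trivial independence constraints built into $\sq$, namely $\fg{M_0^*}{M_0}{M_1 M_2}$, $\fg{M_1^*}{M_0^* M_1}{M_2}$, and $\fg{M_2^*}{M_0^* M_2}{M_1^*}$. My plan is to absorb each $c_\Bbar$ (or a suitable analogue produced by an Erd\H{o}s--Rado extraction to regularize the witnesses and preserve a single formula $\phi$ along a uniform sequence of parameters) by splitting it across $M_1^*$ and $M_2^*$ over the enlarged base $M_0^*$ in a manner that preserves non-forking, then apply Fact~\ref{category}(2) to fatten to $a$-models. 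Verifying that the forking actually transfers to some realization of $\tp(c/M_1 M_2)$ in this extension---rather than only to the artificially introduced witnesses---is the delicate technical core.
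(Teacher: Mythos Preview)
Your treatment of $(2)\Rightarrow(3)\Rightarrow(4)\Rightarrow(1)$ is fine and matches the paper (for $(3)\Rightarrow(4)$ the paper cites Shelah's Conclusion~XII.2.11 directly, not anything in the appendix). The problem is your plan for $(1)\Rightarrow(2)$, which has a genuine gap and is far more elaborate than what is needed.

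The gap is this: your contrapositive produces, for each finite $\Bbar\sq\Mbar$, a witness $c_\Bbar$ realizing $\tp(c/B_1B_2)$ but not $\tp(c/M_1M_2)$. You then propose to ``absorb'' these witnesses into an extension $\Mbar^*\sqsupseteq\Mbar$ so that some $c^*\equiv_{M_1M_2} c$ forks with $M_1^*M_2^*$ over $M_1M_2$. But there is no mechanism connecting the two: the $c_\Bbar$ are realizations, not parameters in $M_1M_2$, and ``splitting'' them across $M_1^*,M_2^*$ does not manufacture a forking relation for $c$ (or any conjugate of $c$) over $M_1M_2$. The difficulty you label ``the delicate technical core'' is not a residual detail---it is the entire content of the implication, and I do not see how to carry it out along the lines you sketch. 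Erd\H{o}s--Rado and compactness are red herrings here.

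The paper's argument is direct and short, and the key tool you are missing is the \emph{Relative Stationarity Lemma} (Shelah~XII.3.5, or Hart's version). By superstability pick finite $X_0\subseteq M_1M_2$ with $\fg{c}{X_0}{M_1M_2}$; the lemma lets you enlarge $X_0$ to a finite $X\subseteq M_1M_2$ on which $\tp(c/X)$ is based and \emph{relatively stationary inside $M_1M_2$}, meaning every non-forking extension of $\tp(c/X)$ to $M_1M_2$ equals $\tp(c/M_1M_2)$. Now use Fact~\ref{category}(3) to find a finite $\Bbar\sq\Mbar$ with $X\subseteq B_1B_2$. Since $\fg{c}{B_1B_2}{M_1M_2}$, Fact~\ref{Vdomfact}(3) says $c$ is $V$-dominated by $\Bbar$, whence $\stp(c/B_1B_2)\vdash\tp(c/M_1M_2)$ by Fact~\ref{Vdomfact}(1). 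Relative stationarity then upgrades this to $\tp(c/B_1B_2)\vdash\tp(c/M_1M_2)$, which is exactly~(2).
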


\begin{proof}  $(1)\Rightarrow(2):$  By superstability, choose a finite $X_0\subseteq M_1M_2$ such that $\fg c {X_0} {M_1M_2}$. 
In fact, by either Shelah's Conclusion XII.3.5 in \cite{Shc} or Hart's Relative Stationarity Lemma, there is a finite $X$, $X_0\subseteq X\subseteq M_1M_2$ for which $\tp(c/X)$ is based
and relatively stationary inside $M_1M_2$.   
 Find a finite $\Abar\sq\Mbar$
with $X\subseteq A_1A_2$.  By Fact~\ref{category}(3), $c$ is $V$-dominated by $\Abar$, hence $\stp(c/A_1A_2)\vdash \tp(c/M_1M_2)$ by
Fact~\ref{Vdomfact}(1).  However, by the relative stationarity, this is strengthened to  $\tp(c/A_1A_2)\vdash \tp(c/M_1M_2)$. 

$(2)\Rightarrow(3)$ is trivial, and $(3)\Rightarrow(4)$ is by Conclusion~XII.2.11 of \cite{Shc}.
Finally, $(4)\Rightarrow(1)$ is immediate from Fact~\ref{Vdomfact}(4).
 \qed\end{proof} 

We ostentatiously ask when all of these variants of isolation  over independent triples are actually isolated over the triple.
There are many equivalent ways of formalizing this idea.\footnote{The analogous statement involving  $\ell$-isolation over arbitrary sets
would not be equivalent, as e.g., every type
over a finite set is always $\ell$-isolated.}

\begin{Lemma}  \label{DIequiv}  The following are equivalent for any countable, superstable theory.  
\begin{enumerate}
\item  For every independent triple $\Abar$ of sets and for every finite $c$, if $c$ is $V$-dominated by $\Abar$, then $\tp(c/A_1A_2)$ is isolated;
\item  For every independent triple $\Mbar$ of a-models and for every finite $c$, if $c$ is $V$-dominated by $\Mbar$, then $\tp(c/M_1M_2)$ is isolated;
\item  For every independent triple $\Mbar$ of countable models and for every finite $c$, if $c$ is $V$-dominated by $\Mbar$, then $\tp(c/M_1M_2)$ is isolated;
\item  For every independent triple $\Mbar$ of models and for every finite $c$,  if $\tp(c/M_1M_2)$ is $\ell$-isolated,  then $\tp(c/M_1M_2)$ is isolated;
\item  For every independent triple $\Mbar$ of a-models and for every finite $c$, if $\tp(c/M_1M_2)$ is $\ell$-isolated,  then $\tp(c/M_1M_2)$ is isolated;
\item  For every independent triple $\Mbar$ of countable models and for every finite $c$,  if $\tp(c/M_1M_2)$ is $\ell$-isolated,  then $\tp(c/M_1M_2)$ is isolated.
\end{enumerate}
\end{Lemma}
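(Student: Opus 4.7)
The plan is to organize the six conditions into easy implications — most of which follow directly from Fact~\ref{Vdomfact}(4) and Fact~\ref{3.3} — together with one nontrivial transfer that closes the loop.

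Most implications are routine. Clearly (1) restricts to (2) and (3), as a-models and countable models are sets. Combining (1) with Fact~\ref{Vdomfact}(4) (which says $\ell$-isolation over a model triple forces $V$-domination) yields (1) $\Rightarrow$ (4), (5), (6); the same observation gives (3) $\Rightarrow$ (6), and (4) restricts to (5) and (6). The equivalence (2) $\Leftrightarrow$ (5) is immediate from Fact~\ref{3.3}, which identifies $V$-domination and $\ell$-isolation on triples of a-models.

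It remains to close the loop, for which I would prove (2) $\Rightarrow$ (1). Given $\Abar$ an independent triple of sets and $c$ a finite tuple $V$-dominated by $\Abar$, extend $\Abar$ upward to an independent triple $\Mbar$ of a-models via Fact~\ref{category}(2); by Fact~\ref{Vdomfact}(2), $c$ is $V$-dominated by $\Mbar$, so hypothesis (2) produces an isolating formula $\phi(x,\mbar)$ for $\tp(c/M_1M_2)$. To convert this to an $A_1A_2$-parameter isolating formula, my plan is to apply the relative stationarity lemma (used in the proof of Fact~\ref{3.3}) to produce a finite independent triple $\Bbar \sq \Abar$ with $\tp(c/B_1B_2)$ based and relatively stationary inside $A_1A_2$, so that $\tp(c/B_1B_2) \vdash \tp(c/A_1A_2)$ and, in particular, $\tp(c/B_1B_2) \vdash \tp(c/M_1M_2)$. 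Then the isolating formula for $\tp(c/B_1B_2)$ — extracted as the $\phi$-definition of the stationary type $\tp(c/B_1B_2)$, with parameters in $B_1B_2$ — yields an isolating formula for $\tp(c/A_1A_2)$.

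The main obstacle is ensuring that relative stationarity produces a finite $\Bbar$ inside $\Abar$ rather than merely inside the larger $\Mbar$, since the lemma is typically stated for a-model triples. One route is to verify that the relative stationarity argument extends to arbitrary $\Abar$ using the $V$-domination hypothesis (the point being that $c$ $V$-dominated by $\Abar$ places the canonical base inside $A_1A_2$), and then apply the standard finite-character of basedness. If this subtly fails, an alternative is to prove (6) $\Rightarrow$ (1) via Downward L\"owenheim-Skolem on $\Mbar$: apply (6) to obtain isolation over a countable submodel triple, then transfer back via the same relative stationarity machinery — which ultimately reduces to the same transfer problem.
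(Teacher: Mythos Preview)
Your proposal has two gaps, one structural and one substantive.

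\textbf{Structural gap.} Even granting $(2)\Rightarrow(1)$, your implications do not close the loop: $(3)$ and $(6)$ only receive arrows from $(1)$ and emit arrows to $(6)$, so neither is shown to imply any of $(1),(2),(4),(5)$. You need something like $(3)\Rightarrow(2)$ or $(6)\Rightarrow(5)$ in addition to $(2)\Rightarrow(1)$. The paper handles this by proving $(3)\Rightarrow(2)$ (and analogously $(6)\Rightarrow(5)$): given $c$ $V$-dominated by an a-model triple $\Nbar$, Fact~\ref{3.3} gives a finite $\Bbar\sq\Nbar$ with $\tp(c/B_1B_2)\vdash\tp(c/N_1N_2)$; then Downward LS (Fact~\ref{category}(3)) produces a countable model triple $\Mbar$ with $\Bbar\sq\Mbar\sq\Nbar$, and $(3)$ gives isolation over $M_1M_2$, hence over $B_1B_2$, hence over $N_1N_2$.

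\textbf{Substantive gap.} Your $(2)\Rightarrow(1)$ argument is both overcomplicated and incorrect as stated. The relative stationarity lemma does not produce an isolating formula: relative stationarity of $\tp(c/B_1B_2)$ inside $A_1A_2$ gives $\tp(c/B_1B_2)\vdash\tp(c/A_1A_2)$, but the ``$\phi$-definition'' of a stationary type is not an isolating formula --- it describes the nonforking extension, not the type itself. The tool you are missing is the Open Mapping Theorem (Fact~\ref{Open}): since $c$ is $V$-dominated by $\Abar$ and $\Abar\sq\Mbar$, the very definition of $V$-domination gives $\fg c {A_1A_2} {M_1M_2}$; then $\tp(c/M_1M_2)$ isolated implies $\tp(c/A_1A_2)$ isolated in one line. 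This is exactly what the paper does, and it dissolves the obstacle you flagged. The same tool handles $(5)\Rightarrow(4)$.
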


\begin{proof}   
As (5) and (2) are equivalent by Fact~\ref{3.3}, it suffices to show the equivalence of 
$(1),(2),(3)$ and $(4),(5),(6)$ separately.  
 
 $(1)\Rightarrow(3)$ is trivial.  
$(3)\Rightarrow(2)$: Suppose $c$ is $V$-dominated by a-models $\Nbar$.  Choose a finite $\Bbar\sq\Nbar$ with
$\tp(c/B_1B_2)\vdash \tp(c/N_1N_2)$.   By Fact~\ref{category}(3), choose an independent triple $\Mbar\sq\Nbar$ of countable
models with $B_1B_2\subseteq M_1M_2$.  By Fact~\ref{Vdomfact}(3), $c$ is $V$-dominated by $M_1M_2$, hence $\tp(c/M_1M_2)$ is isolated by
(3).  In particular, $\tp(c/B_1B_2)$ is isolated, hence $\tp(c/N_1N_2)$ is isolated as well.

$(2)\Rightarrow(1)$:   Suppose $c$ is $V$-dominated by $\Abar$.  By Fact~\ref{category}(2), choose a triple $\Mbar$ of a-models with
$\Abar\sq\Mbar$.  Then $c$ is $V$-dominated by $\Mbar$, hence $\tp(c/M_1M_2)$ is isolated.  As $\fg c {A_1A_2} {M_1M_2}$, $\tp(c/A_1A_2)$ is isolated
by the Open Mapping Theorem, Fact~\ref{Open}.  

Turning to $(4),(5),(6)$, $(4)\Rightarrow(6)$ is trivial.  
$(6)\Rightarrow(5)$:  Suppose $\Nbar$ is an independent triple of $a$-models and $\tp(c/N_1N_2)$ is $\ell$-isolated.
As $T$ is countable, choose a countable $A\subseteq N_1N_2$ such that for every $\phi(x,y)$, $\tp_\phi(c/N_1N_2)$ is isolated
by some $L(A)$-formula $\psi(x)$.  By Fact~\ref{category}(3), choose a triple of countable models $\Mbar\sq\Nbar$ with $A\subseteq M_1M_2$.
By (6), $\tp(c/M_1M_2)$ is isolated, hence $\tp(c/N_1N_2)$ is isolated as well by Fact~\ref{category}(1) and Lemma~\ref{TVup}.

$(5)\Rightarrow(4)$:  Say $\Mbar$ is an arbitrary triple of models and $\tp(c/M_1M_2)$ is $\ell$-isolated.  By Fact~\ref{category}(2), choose
an independent triple of a-models $\Nbar\sqsupseteq\Mbar$.   By Fact~\ref{TVupl}, $\tp(c/N_1N_2)$ is also $\ell$-isolated, hence $\tp(c/N_1N_2)$ is isolated by (5).
Thus, $\tp(c/M_1M_2)$ is isolated by the Open Mapping Theorem.  
\qed\end{proof}

%The following notion was also investigated by Harrington, but usually in conjunction with NDOP.  

\begin{Definition}  {\em  A (countable, superstable) theory $T$ has {\em V-DI,} read {\em V-domination implies isolation}, 
if any one of the conditions in Lemma~\ref{DIequiv} hold.} 
\end{Definition}

%**** Add?: $V-DI$ stands for $V$-domination implies isolation.

%To check whether or not a theory $T$ has V-DI, it suffices to look at independent triples of $a$-models.
%
%\begin{Lemma}  \label{amodelstuff}  Suppose that a countable, superstable theory $T$ has $\tp(c/M_1M_2)$ isolated whenever 
%$c$ is V-dominated by any independent triple of $a$-models $\Mbar=(M_0,M_1,M_2)$.   Then $T$ has V-DI.
%\end{Lemma}
%
%\begin{proof}  Suppose $T$ is as above, let $\Abar=(A_0,A_1,A_2)$ be any independent triple of sets and let $c$ be $V$-dominated by $\Abar$.
%Choose an extension $\Mbar\sqsupseteq\Abar$
%consisting of $a$-models.  Then $c$ is $V$-dominated by $\Mbar$ by Fact~\ref{Vdomfact}(2), hence $\tp(c/M_1M_2)$ is isolated by assumption.  As $\fg c {A_1A_2} {M_1M_2}$ holds,
%$tp(c/A_1A_2)$ is also isolated by the Open Mapping Theorem.
%\qed\end{proof}

%\begin{Lemma}  \label{beginning}  Suppose $T$ has V-DI.   Then there is an atomic model $M^*$ over any independent triple $\Mbar=(M_0,M_1,M_2)$ of models.
%\end{Lemma}
%
%\begin{proof}  Since $T$ is countable and superstable, there is an $\ell$-constructible model $M^*$ over $\bigcup\Mbar$.  Since any finite tuple $c$ from $M^*$ has
%$\tp(c/M_1M_2)$ is $\ell$-isolated,  $c$ is $V$-dominated by $\Mbar$ via Fact~\ref{3.3}, hence $\tp(c/M_1M_2)$ is isolated by V-DI.
%\qed\end{proof}

Note that if a (countable, superstable) $T$ is V-DI, then a constructible model exists over every independent triple of {\em countable} models.  
[Given such an $\Mbar$, by Fact~\ref{ellexist}(2) choose a countable, $\ell$-atomic model over $M_1M_2$.  Since $T$ is V-DI, $N$ is atomic over $M_1M_2$, hence
constructible by Fact~\ref{arbitrary}(2).]
In the next section we will improve this  by showing  that V-DI implies the existence of a  constructible model  over any independent triple of models.

\section{V-DI implies PMOP}

\begin{Definition}  {\em  A theory $T$ has {\em Prime Models Over Pairs,} PMOP, if there is a constructible model over every independent triple of models.
}
\end{Definition}

The main goal of this section is to prove Theorem~\ref{DIPMOP}, that a countable superstable theory $T$ with V-DI also has PMOP.
Although V-DI was never explicitly described, this result was essentially proved by both Shelah \cite{Shc} and Hart \cite{Hart}, under the additional assumption of
NDOP.  Here, we prove the theorem without assuming NDOP.   Curiously, the proof without NDOP is arguably more straightforward than the either of the previous proofs.

For this, we  pass from independent triples of models to certain {\em stable systems} of models.  All of this development is due to Shelah and can be found in
Chapter XII of \cite{Shc}.

\begin{Definition} \label{stablesystem}   {\em  For $n\ge 2$, let $\P^-(n)$ denote the partial order defined by subset on the set $\P(n)\setminus\{n\}$ with $2^n-1$ elements.
A {\em $\P^-(n)$-stable system of models $\Mbar=(M_s:s\in \P^-(n))$} satisfies:
\begin{itemize}
\item  $M_t\preceq M_s$ whenever $t\subseteq s$; and
\item for each $s$, $\fg {M_s} {\Mbar_{<s}} {\bigcup\{M_t:t\not\supseteq s\}}$.
\end{itemize}
Given two $\P^-(n)$-stable systems $\Mbar,\Nbar$, we say $\Mbar\sq\Nbar$ if, for each $s\in \P^-(n)$, $M_s\preceq N_s$ and
$\fg {N_s}  {M_s\bigcup\Nbar_{\subset s}}  {\bigcup\{N_t:t\not\supseteq s\}}$.

We say a finite tuple $c$ is {\em $\P^-(n)$-dominated} by  $\P^-(n)$-stable system $\Mbar$ if 
$\fg c {\bigcup\Mbar} {\bigcup \Nbar}$ for all $\Nbar\sqsupseteq\Mbar$.
}
\end{Definition}

Note that an independent triple $(M_0,M_1,M_2)$ of models is precisely a $\P^-(2)$-stable system of models and the definitions of $\sq$ given in definitions \ref{V} and \ref{stablesystem} coincide.  As well, Facts~\ref{category}, \ref{Vdomfact} and  \ref{3.3} go through in this more general setting.

\begin{Fact}   \label{3.3+}  Suppose $\Mbar$ is a $\P^-(n)$-stable system of $a$-models. Then the following are equivalent for a finite tuple $c$.
\begin{enumerate}
\item  $c$ is $\P^-(n)$-dominated by $\Mbar$;
\item  There is some $\P^-(n)$-system  $\Bbar\sq\Mbar$ of finite sets such that $\tp(c/\bigcup\Bbar)\vdash\tp(c/\bigcup\Mbar)$;
\item  $\tp(c/\bigcup\Mbar)$ is $a$-isolated;
\item  $\tp(c/\bigcup\Mbar)$ is $\ell$-isolated.
\end{enumerate}
\end{Fact}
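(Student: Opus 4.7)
The plan is to mirror the proof of Fact~\ref{3.3}, using the generalizations of Facts~\ref{category} and~\ref{Vdomfact} to $\P^-(n)$-stable systems asserted immediately before the statement. The logical arrangement will be $(1)\Rightarrow(2)\Rightarrow(3)\Rightarrow(4)\Rightarrow(1)$.

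For $(1)\Rightarrow(2)$, first I would invoke superstability to pick a finite $X_0\subseteq\bigcup\Mbar$ with $\fg{c}{X_0}{\bigcup\Mbar}$, then enlarge $X_0$ to a finite $X\subseteq\bigcup\Mbar$ on which $\tp(c/X)$ is based and relatively stationary inside $\bigcup\Mbar$, using Shelah's Conclusion~XII.3.5 of \cite{Shc} (or the $\P^-(n)$-version of Hart's Relative Stationarity Lemma). Next I would apply the downward L\"owenheim--Skolem analog for $\P^-(n)$-systems (the generalization of Fact~\ref{category}(3)) to obtain a finite $\Bbar\sq\Mbar$ with $X\subseteq\bigcup\Bbar$. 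The generalization of Fact~\ref{Vdomfact}(3) then yields that $c$ is $\P^-(n)$-dominated by $\Bbar$, so by the generalization of Fact~\ref{Vdomfact}(1), $\stp(c/\bigcup\Bbar)\vdash\tp(c/\bigcup\Mbar)$. Relative stationarity upgrades this to $\tp(c/\bigcup\Bbar)\vdash\tp(c/\bigcup\Mbar)$.

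The step $(2)\Rightarrow(3)$ is immediate, because $\bigcup\Bbar$ is finite so $\tp(c/\bigcup\Bbar)$ is already isolated (and in particular $a$-isolated), and this lifts to $\tp(c/\bigcup\Mbar)$. The step $(3)\Rightarrow(4)$ is the general fact that $a$-isolation implies $\ell$-isolation over any base, quoted as Conclusion~XII.2.11 of \cite{Shc}. Finally $(4)\Rightarrow(1)$ is the $\P^-(n)$-analog of Fact~\ref{Vdomfact}(4): if $\tp(c/\bigcup\Mbar)$ is $\ell$-isolated and $\Nbar\sqsupseteq\Mbar$ is any extension, then $\bigcup\Mbar\subseteq_{TV}\bigcup\Nbar$ by the generalization of Fact~\ref{category}(1), so $\ell$-isolation is preserved (Fact~\ref{TVupl}), which forces $\fg{c}{\bigcup\Mbar}{\bigcup\Nbar}$.

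The main obstacle is really verifying that the machinery of the $n=2$ case carries over cleanly to $\P^-(n)$-systems for $n\ge 3$. In particular the relative stationarity lemma and the localization of domination to a finite sub-system must be stated in sufficient generality that the finite $\Bbar\sq\Mbar$ can be chosen to be an honest $\P^-(n)$-system; the transitivity of $\sq$ and the Tarski--Vaught behavior of $\bigcup\Mbar\subseteq\bigcup\Nbar$ in the larger poset are where bookkeeping can go wrong. Since the excerpt explicitly asserts that Facts~\ref{category}, \ref{Vdomfact} and \ref{3.3} ``go through in this more general setting,'' the present proof is essentially an organizational reprise and contains no new ideas beyond those of Fact~\ref{3.3}.
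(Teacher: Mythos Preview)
Your approach is correct and is exactly what the paper intends: the paper gives no separate proof of Fact~\ref{3.3+}, merely asserting that Facts~\ref{category}, \ref{Vdomfact}, and \ref{3.3} ``go through in this more general setting,'' so the argument is precisely the reprise of the proof of Fact~\ref{3.3} that you have written out.

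One small slip: in your $(2)\Rightarrow(3)$ step you write that $\tp(c/\bigcup\Bbar)$ is ``already isolated (and in particular $a$-isolated).'' A type over a finite set need not be isolated; that would require $\omega$-categoricity. The point is rather that the condition in (2), namely the existence of a finite $\bigcup\Bbar\subseteq\bigcup\Mbar$ with $\tp(c/\bigcup\Bbar)\vdash\tp(c/\bigcup\Mbar)$, is \emph{itself} the definition of $a$-isolation of $\tp(c/\bigcup\Mbar)$, so (3) follows immediately. This is what the paper means by ``$(2)\Rightarrow(3)$ is trivial'' in the proof of Fact~\ref{3.3}.
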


The following notion is a simplification of Hart's `$\lambda$-special $\P^-(n)$-system' and Shelah's sp.\ stable system in XII.5.1 of \cite{Shc}.
The crucial distinction is that we only require our "special" nodes to be atomic, as opposed to constructible.  By e.g.,  Fact~\ref{arbitrary}(2), the notions coincide
over systems of countable models, but typically are distinct over uncountable systems.

\begin{Definition}  {\em  An {\em atomic-special $\P^-(n)$-stable system of models} has the additional property that for all $s\in \P^-(n)$ with $\{0,1\}\subseteq s$,
$M_s$ is atomic over $\bigcup\Mbar_{<s}$.\\
We say $T$ has {\em a-s $\P^-(n)$-DI} if there is an atomic model $M^*$ over $\bigcup\Mbar$ for every atomic-special $\P^-(n)$-stable system of models.
}
\end{Definition}

Note that any independent triple $\Mbar=(M_0,M_1,M_2)$ of models is an atomic-special $\P^-(2)$-stable system of models, so V-DI implies
a-s $\P^-(2)$-DI by the discussion at the end of Section~2.    The following Proposition extends this to higher dimensional a-s systems.

\begin{Proposition}  \label{a-s}  Suppose $T$ has V-DI.  Then $T$ has  a-s $\P^-(n)$-DI for all $n\ge 2$.
\end{Proposition}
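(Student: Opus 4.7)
I would argue by induction on $n\geq 2$. For the base case $n=2$, every $\P^-(2)$-stable system is simply an independent triple $(M_\emptyset,M_{\{0\}},M_{\{1\}})$ of models; no $s\in\P^-(2)$ contains $\{0,1\}$, so the atomic-special clause is vacuous. By Fact~\ref{ellexist}(2), an $\ell$-atomic model over $M_{\{0\}}\cup M_{\{1\}}$ exists, and clause (4) of Lemma~\ref{DIequiv} upgrades it to an atomic model.

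For the inductive step, assume a-s $\P^-(n)$-DI and fix an atomic-special $\P^-(n+1)$-stable system $\Mbar=(M_s:s\in\P^-(n+1))$. Set $n^\star:=\{0,\ldots,n-1\}$ and $L:=M_{n^\star}$. Since $M_t\preceq L$ for every $t\subseteq n^\star$, we have $\bigcup\{M_s:n\notin s\}=L$, and the atomic-special clause at $n^\star$ (applicable since $\{0,1\}\subseteq n^\star$ for $n\geq 2$) gives that $L$ is atomic over $\bigcup\{M_t:t\in\P^-(n)\}$. By transitivity of atomicity, it suffices to build an atomic model over $L\cup\bigcup\{M_{t\cup\{n\}}:t\in\P^-(n)\}$. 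For this, I would recursively construct $\Nbar=(N_t:t\in\P^-(n))$, where each $N_t$ is an $a$-prime model over $L\cup\bigcup\Nbar_{<t}\cup M_{t\cup\{n\}}$, chosen nonforking-independent over this base from the previously constructed nodes. The cube-independences of $\Mbar$ at $s=t\cup\{n\}$, together with this choice, should yield $\fg{N_t}{\Nbar_{<t}}{\bigcup\{N_u:u\not\supseteq t\}}$, making $\Nbar$ a $\P^-(n)$-stable system. For $t$ with $\{0,1\}\subseteq t$, the a-s clause of $\Mbar$ at $t\cup\{n\}$ gives atomicity of $M_{t\cup\{n\}}$ over $\Mbar_{<t\cup\{n\}}$; combined with V-DI applied to the independent triple sitting one level below $N_t$, this upgrades the $a$-atomicity of $N_t$ to plain atomicity, so $\Nbar$ is atomic-special. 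The inductive hypothesis then supplies an atomic model $M^*$ over $\bigcup\Nbar$, and since each $N_t$ is atomic over $L\cup M_{t\cup\{n\}}$, $M^*$ is atomic over $L\cup\bigcup\{M_{t\cup\{n\}}:t\in\P^-(n)\}$, as required.

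The main obstacle will be verifying that $\Nbar$ really is a $\P^-(n)$-stable system whose atomic-special clauses can be read off $\Mbar$ after absorbing $L$ into the base. Each such verification requires a careful application of the nonforking calculus within the cube, and each upgrade from $a$-atomic to plain atomic at an intermediate node must be matched to a bona fide independent triple of models where V-DI legitimately applies (i.e., it must land in the hypothesis of Lemma~\ref{DIequiv}, not on some amalgam outside it). Arranging the recursion so that all of these independences remain compatible is the technical heart of the induction, and is where the restriction of the atomic-special clause to nodes containing $\{0,1\}$ gets essentially used.
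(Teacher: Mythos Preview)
Your plan has the right inductive skeleton (induct on $n$, separate out one coordinate), but there is a genuine gap at the step ``V-DI applied to the independent triple sitting one level below $N_t$.'' For $|t|=2$ this is fine: $\Nbar_{<\{0,1\}}=(N_\emptyset,N_{\{0\}},N_{\{1\}})$ really is an independent triple of $a$-models, and Lemma~\ref{DIequiv} applies. But for $|t|\ge 3$ (which occurs as soon as $n\ge 4$, e.g.\ $t=\{0,1,2\}$ in the passage from $\P^-(4)$ to $\P^-(5)$), $\Nbar_{<t}$ is a $\P^-(|t|)$-system with $2^{|t|}-1\ge 7$ nodes, not a triple, and V-DI says nothing. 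What you actually need there is the inductive hypothesis a-s $\P^-(|t|)$-DI itself, and in the strong form ``$\P^-(|t|)$-dominated implies isolated'' rather than merely ``an atomic model exists''; you would then have to check that $\Nbar_{<t}$ is atomic-special (so the IH applies) and that $N_t$ is $\P^-(|t|)$-dominated by it. Your closing paragraph shows you sense the difficulty, but you still insist on matching each upgrade to ``a bona fide independent triple,'' and that simply is not available past $|t|=2$.

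The paper's argument sidesteps this. Rather than building a new system node by node, it first blows $\Mbar$ up to an atomic-special $\P^-(n+1)$-system $\Nbar$ of $\aleph_1$-saturated models; the preservation of ``atomic-special'' under the blow-up is precisely where the full inductive hypothesis for all $k\le n$ is spent, via Fact~\ref{3.3+} (over $a$-models, $a$-isolation, $\ell$-isolation, and domination coincide). It then takes the natural sub-system $\K_1=(N_{s\cup\{n\}}:s\in\P^-(n))$, which inherits the stable-system and atomic-special structure for free, together with the single remaining node $N_n=N_{\{0,\ldots,n-1\}}$. For any $c$ that is $\P^-(n+1)$-dominated by $\Mbar$, one shows $N_n c$ is $\P^-(n)$-dominated by $\K_1$; a single application of the inductive hypothesis gives atomicity of $N_n c$ over $\bigcup\K_1$, a finite witness $b$ from Fact~\ref{3.3+} finishes isolation over $\bigcup\Nbar$, and the Open Mapping Theorem pulls it back to $\bigcup\Mbar$. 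This avoids both your node-by-node bookkeeping and the (unavailable) triple at each intermediate stage.
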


\begin{proof} We prove this by induction on $n\ge 2$.  That $T$ has a-s $P^-(2)$-DI was noted above, so fix $n\ge 2$ and assume $T$ has  a-s $\P^-(k)$-DI for all $2\le k\le n$.
Let $\Mbar=(M_s:s\in\P^-(n+1))$ be an atomic-special $\P^-(n+1)$-stable system.   We first note that, under these hypotheses,  the standard method of blowing up 
$\Mbar$  to a $\P^-(n+1)$-system of $\aleph_1$-saturated models preserves being atomic-special.   To see that, choose an enumeration $(s_i:i<2^{n+1}-1)$ of $\P^-(n+1)$
such that $s_i\subseteq s_j$ implies $i\le j$.  We recursively construct a sequence $(N_{s_i})$ of $\aleph_1$-saturated models satisfying
$\fg {N_{s_i}} {M_{s_i}\bigcup\{N_{s_j}:j<i\}} {\bigcup\{M_t:t\not\supseteq s\}}$ and when $|s_i|\ge 2$, $N_{s_i}$ is chosen to be $\aleph_1$-prime over
$M_{s_i}\bigcup\Nbar_{<s_i}$.   It follows from XII.2.6 of \cite{Shc} that the resulting  system $\Nbar=(N_s:s\in\P^-(n+1))$ is a stable system, and in fact $\Mbar\sq\Nbar$.
To see that $\Nbar$ is atomic-special, we argue by induction on $i$, that if 
$\{0,1\}\subseteq s_i$, then $N_{s_i}$ is atomic over $\Nbar_{<s_i}$. Choose $i$ for which $\{0,1\}\subseteq s_i$ and assume that this holds for all $j<i$.  Let $k=|s_i|$.
Our inductive hypothesis implies the subsystem $\Nbar_{\subset s_i}$ is an atomic-special $\P^-(k)$-system.
   Since $\Mbar$ was assumed to be atomic-special, $M_{s_i}$ is atomic over
$\Mbar_{\subset s_i}$.  However, since $\bigcup\Mbar_{\subset s_i}\sq \bigcup\Nbar_{\subset s_i}$, we also have that 
the set $M_{s_i}$ is atomic over $\bigcup\Nbar_{\subset s_i}$ by Lemma~\ref{TVup}.
As $N_{s_i}$ was chosen to be $\aleph_1$-atomic over $M_{s_i}\Nbar_{\subset s_i}$, it follows that $N_{s_i}$ is also $\aleph_1$-atomic over $\Nbar_{\subset s_i}$.  
Since  the subsequence $(N_t:t\subset s_i)$  is an a-s  $\P^-(k)$-stable system of $\aleph_1$-saturated models, it follows from 
Fact~\ref{3.3+} that $N_{s_i}$ is $\P^-(k)$-dominated by $(N_t:t\subset s_i)$.  Thus, $N_{s_i}$ is atomic over $\Nbar_{\subset s_i}$ since  a-s $\P^-(k)$-DI holds.

Now suppose $c$ is $\P^-(n+1)$-dominated by $\Mbar$ and choose  an a-s $\P^-(n+1)$-stable system $\Nbar\sqsupseteq\Mbar$ consisting of $\aleph_1$-saturated models.
We will show that $\tp(c/\bigcup\Nbar)$ is isolated, which, since $\fg c {\Mbar} {\Nbar}$, directly implies $\tp(c/\Mbar)$ is isolated by the Open Mapping Theorem.  
%Since $c$ is  $\P^-(n+1)$-dominated by $\Nbar$,   Fact~\ref{3.3+} gives a finite $b\subseteq \bigcup \Nbar$ with $\tp(c/b)\vdash \tp(c/\bigcup\Nbar)$.   

Next, we unpack $\Nbar$ into three pieces.  Let $\K_0=(N_s:s\in \P^-(n))$ and let $\K_1=(N_{s\cup\{n\}}:s\in \P^-(n))$, with the third, remaining piece $N_n$.
Note that $\K_0\sq\K_1$ as $\P^-(n)$-systems; $\bigcup \K_0\subseteq \bigcup \K_1$, hence $\bigcup\Nbar=\bigcup\K_1\cup\{N_n\}$; and that $\K_1$ is an a-s $\P^-(n)$-stable system.  

\medskip\noindent{\bf Claim.}  $N_n c$ is $\P^-(n)$-dominated by $\K_1$.

\begin{proof}   Since $N_n$ is atomic over $\bigcup \K_0$, $N_n$ is $\P^-(n)$-dominated by $\K_0$.   As $\K_0\sq \K_1$, it follows that
$N_n$ is $\P^-(n)$-dominated by $\K_1$ as well.     Now choose any $\P^-(n)$-stable system $\Ebar\sqsupseteq\K_1$.  It follows that
$$\fg {N_n} {\bigcup\K_1} {\bigcup\Ebar}$$
Form a $\P^-(n+1)$-stable system $\Ebar^*$ by piecing together $\K_0$, $\Ebar$, and $N_n$.  It is readily checked that $\Nbar\sq\Ebar^*$ as $\P^-(n+1)$-structures,
hence $\fg c {\bigcup\Nbar} {\bigcup \Ebar^*}$, so by transitivity we have $\fg {cN_n} {\bigcup\K_1} {\bigcup\Ebar}$, proving the Claim.
\qed\end{proof}

\smallskip

By the Claim and $\K_1$ being an a-s $\P^-(n)$-DI, we have that $cN_n$ is atomic over $\bigcup\Kbar_1$.   Additionally, by Fact~\ref{3.3},
there is a finite $b\subseteq \bigcup\Nbar$ for which
$$\tp(c/b)\vdash\tp(c/\bigcup\Nbar)$$
Recall that $\bigcup\Nbar=\bigcup\K_1\cup N_n$.  
Thus, $\tp(bc/\Kbar_1)$ is isolated, which implies $\tp(c/\Kbar_1 b)$ is isolated as well.  As $\tp(c/b)\vdash\tp(c/\Nbar)$, the same formula isolates
$\tp(c/\Nbar)$ as well.  
\qed\end{proof}

At this point, we could simply quote Theorem 3.3 of \cite{Hart} to conclude the following theorem.  However, our new notion of being atomically special simplifies the argument somewhat.

\begin{Theorem}  \label{DIPMOP}  If a countable, superstable theory $T$ has V-DI, then it has PMOP.
\end{Theorem}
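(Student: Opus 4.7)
Our plan is to prove PMOP by reducing the arbitrary-size case to the countable case (already handled at the end of Section~2) via a transfinite filtration of the independent triple $\Mbar = (M_0, M_1, M_2)$ into countable sub-triples. Proposition~\ref{a-s}, which upgrades V-DI to a-s $\P^-(n)$-DI for all $n \geq 2$, will play the decisive role at successor stages.

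Concretely, set $\lambda = |M_1 M_2|$. Using iterated Downward L\"owenheim--Skolem (Fact~\ref{category}(3)), we first construct a continuous $\sq$-increasing chain $(\Mbar^\alpha : \alpha < \lambda)$ of countable independent triples with $\bigcup_\alpha \Mbar^\alpha = \Mbar$. Alongside, we inductively build a continuous elementary chain of countable models $(N_\alpha : \alpha < \lambda)$, together with construction sequences witnessing that $N_\beta$ is constructible over $N_\alpha \cup M_1^\beta M_2^\beta$ whenever $\alpha < \beta$ (so that, in particular, each $N_\alpha$ is constructible over $M_1^\alpha M_2^\alpha$). Concatenating these successor-stage constructions then yields a single construction sequence for $N := \bigcup_{\alpha<\lambda} N_\alpha$ over $M_1 M_2$, establishing PMOP.

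The base case $\alpha = 0$ uses the countable instance of V-DI recorded at the end of Section~2, and limit stages are handled by concatenation of construction sequences. The main obstacle is the successor step: given countable $N_\alpha$ constructible over $\Mbar^\alpha$, we must produce a countable $N_{\alpha+1} \succeq N_\alpha$ constructible over $N_\alpha \cup M_1^{\alpha+1} M_2^{\alpha+1}$, with construction sequences cohering. The approach is to assemble $N_\alpha$ together with $M_0^{\alpha+1}, M_1^{\alpha+1}, M_2^{\alpha+1}$ into an atomic-special $\P^-(3)$-stable system of countable models, placing $N_\alpha$ at the $\{0,1\}$-node: its atomicity over $\bigcup \Mbar^\alpha$ is already in hand, while the forking-independence requirements at the remaining nodes are to be extracted from the $\sq$-data for $\Mbar^\alpha \sq \Mbar^{\alpha+1}$ together with $V$-domination of $N_\alpha$ by $\Mbar^\alpha$ (Fact~\ref{Vdomfact}(4)). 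Proposition~\ref{a-s} then provides an atomic, hence (being countable) constructible, model over the union of this $\P^-(3)$-system, from which $N_{\alpha+1}$ is read off.

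The chief technical hurdle will be verifying that the $\P^-(3)$-system so assembled is genuinely atomic-special, i.e.\ checking the forking conditions at all seven nodes and ensuring that the new construction sequence for $N_{\alpha+1}$ truly extends that of $N_\alpha$. The conceptual advantage over the classical proofs of Shelah and Hart is that atomic-specialness rather than full specialness suffices, which is precisely what Proposition~\ref{a-s} delivers at each node, so the inductive skeleton becomes considerably cleaner and no invocation of NDOP is needed.
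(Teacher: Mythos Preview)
Your overall strategy matches the paper's: filter $\Mbar$ into a $\sq$-chain of smaller subtriples, build constructible models over each, and at successor stages assemble the data into an atomic-special $\P^-(3)$-system so that Proposition~\ref{a-s} applies. However, there is a genuine gap in the specific implementation you describe.

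You claim to build a \emph{continuous} $\sq$-increasing chain $(\Mbar^\alpha:\alpha<\lambda)$ of \emph{countable} triples with union $\Mbar$, and likewise a continuous chain of countable models $N_\alpha$. This is impossible once $\lambda\ge\aleph_2$: at $\alpha=\omega_1$ continuity forces $\Mbar^{\omega_1}=\bigcup_{\beta<\omega_1}\Mbar^\beta$, which has size $\aleph_1$, and similarly $N_{\omega_1}$ is uncountable. Dropping continuity does not help, since $\sq$-monotonicity still forces $\Mbar^\alpha$ to contain all earlier $\Mbar^\beta$. Consequently, at the successor step past $\omega_1$ your $\P^-(3)$-system is no longer countable, and Proposition~\ref{a-s} yields only an \emph{atomic} model over it, not a constructible one.

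This is exactly why the paper does not fix $n=2$ and reduce directly to the countable case. Instead it proves by induction on the cardinal $\kappa$ the stronger statement $(**)$: for \emph{every} $n\ge 2$, there is a constructible model over any atomic-special $\P^-(n)$-system of size $\le\kappa$. The filtration then has length $\mathrm{cf}(\kappa)$ with pieces of size $<\kappa$, and the successor-stage $\P^-(n+1)$-system (of size $<\kappa$) is handled by the inductive hypothesis at smaller cardinals for dimension $n+1$. The dimension-bump at each successor step is precisely what forces the induction to run over all $n$ simultaneously; one cannot get away with $n\in\{2,3\}$ alone, since handling $\kappa=\aleph_k$ for $n=2$ eventually requires $n=k+2$ at the countable base. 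Your plan is correct for $\lambda\le\aleph_1$, but to go further you must adopt this double induction.
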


\begin{proof}  We argue that for all infinite cardinals $\kappa$, 
\begin{quotation}  
\noindent  For all $n\ge 2$, there is a constructible model $N$ over any atomic-special $\P^-(n)$-system $\Mbar$ with $|\bigcup \Mbar|\le \kappa$. \hskip3in $(**)$
\end{quotation}
To begin, note that this holds for $\kappa=\aleph_0$  by coupling Fact~\ref{arbitrary}(2) with  Proposition~\ref{a-s}.
So fix an uncountable cardinal $\kappa$ and assume we have the above for all $\lambda<\kappa$.  Choose $n\ge 2$ and an atomic-special $\P^-(n)$-stable system
$\Mbar=(M_s:s\in\P^-(n))$ with $|\bigcup\Mbar|=\kappa$.    Let $\mu=cf(\kappa)$.
By iterating the analogue of Fact~\ref{category}(3),  choose an elementary chain $(\Mbar^\alpha:\alpha<\mu)$ of $\P^-(n)$-systems such that 
\begin{enumerate}
\item  $|\bigcup\Mbar^\alpha|<\kappa$;
\item $\Mbar^\alpha\sq \Mbar^{\alpha+1}$ for all $\alpha$; and
\item  $\Mbar=\bigcup\{\Mbar^\alpha:\alpha<\mu\}$.
\end{enumerate}
We will recursively build a sequence of sequences $\cbar_\alpha$ for each $\alpha$ such that
\begin{enumerate}
\item  $\cbar_\alpha$ is an initial segment of $\cbar_\beta$ whenever $\alpha\le\beta<\mu$:
\item  $\cbar_\alpha$ enumerates a constructible model $N_\alpha$ over $\bigcup \Mbar^\alpha$;
\item  $\cbar_\alpha$ is also a construction sequence over $\Mbar$ (hence also over any $\Mbar^\beta$, $\beta\ge\alpha$).
\end{enumerate}
To begin, since $|\bigcup \Mbar^0|<\kappa$, apply $(**)$ to get $\cbar_0$, enumerating a constructible model over $\Mbar^0$.
As $\bigcup\Mbar^0\subseteq_{TV} \Mbar$, $\cbar_0$ is also a construction sequence over $\bigcup\Mbar$.   For $\gamma<\mu$ a non-zero limit, 
take $\cbar_\gamma$ to be the concatenation of all $\cbar_\alpha$, $\alpha<\gamma$.

Say $\alpha<\mu$ and $\cbar_\alpha$ has been found.  Let $N_\alpha=\bigcup\cbar_\alpha$.
Note that since $\Mbar^\alpha\sq\Mbar^{\alpha+1}$, the three pieces $(\Mbar^\alpha,\Mbar^{\alpha+1},N_\alpha)$ form a $\P^-(n+1)$-stable system.  
We also claim that it is atomic-special.  For this, choose $s\in \P^-(n+1)$ with $\{0,1\}\subseteq s$.  There are three cases.
First, if $s=n=\{i:i\in n\}$, then  as $N_\alpha$ is constructible over $\bigcup\Mbar^\alpha$, it is atomic over $\bigcup\Mbar^\alpha$ as well.
Second, if $s\subset n$, $s\neq n$, then by $\Mbar^\alpha\sq\Mbar$ we have $\fg {M_s^\alpha} {\M_{<s}^\alpha}  {\Mbar_{<s}}$.
But, as $\Mbar$ is atomic-special, every finite $e\in M_s^\alpha$ has $\tp(e/\bigcup \Mbar_{<s})$ isolated.  Thus, by the Open Mapping Theorem,
$\tp(e/\Mbar^{\alpha+1}_{<s})$ is isolated as well.  
Third, if $n\in s$, then similarly, $\fg {M_s^{\alpha+1}} {\Mbar_{<s}^{\alpha+1}}  {\Mbar_{<s}}$ and
$\tp(e/\bigcup \Mbar_{<s})$ is isolated for all finite $e\in M_s^{\alpha+1}$, so again, $\tp(e/\Mbar^{\alpha+1}_{<s})$ is isolated by the Open Mapping Theorem.

Thus, by our inductive hypothesis, there is a constructible model $N_{\alpha+1}$ over $\bigcup\Mbar^{\alpha+1}\cbar_\alpha$.  However, as $\Mbar^\alpha\subseteq_{TV}
\Mbar^{\alpha+1}$, $\cbar_\alpha$ is also constructible over $\bigcup\Mbar^{\alpha+1}$.  Thus, there is a construction sequence $\cbar_{\alpha+1}$ end extending $\cbar_\alpha$
that enumerates $N_\alpha$.  For any such choice of $\cbar_{\alpha+1}$, since $\Mbar^{\alpha+1}\subseteq_{TV} \Mbar$, we have that $\cbar_{\alpha+1}$ is also a construction sequence over $\bigcup\Mbar$, as required.
\qed\end{proof}

\section{$\PPe$ and always isolated types}

Following an idea of Baisalov \cite{Baisalov}, we begin with a novel definition.

\begin{Definition}  \label{original} {\em  An {\em e-type} is a stationary, weight one type $p(x,d)$ with $d$ finite that is non-isolated.
}
\end{Definition}  

Following the template given in \cite{PNDOP}, we use this notion to define a family of regular types.  In \cite{Baisalov} he called elements of $\PPe$ w-types.  

%**** replaced w-eni by w-

\begin{Definition} {\em  $\PPe$ is the set of all regular types that are non-orthogonal to an $e$-type.  
%Dually, a strong type $p$ is {\em always isolated} if, for all finite $d$ on which $p$ is based, $\tp(a/d)$ is isolated for any realization of $a$ of $p$.
}
\end{Definition}  

It is evident that the class $\PPe$ of regular types is closed under automorphisms of $\C$ and non-orthogonality.  The latter uses that non-orthogonality is an equivalence relation on the class of all stationary, weight one types.  With an eye on the results in \cite{PNDOP}, we show one more closure property of $\PPe$.

\begin{Definition}  {\em  Suppose $p,q$ are regular types.  We say  {\em  $q$ lies directly over $p$} if there are a-models $M\preceq N$  and elements $a,b$
such that $\tp(a/M)$ regular and non-orthogonal to $p$, $\tp(b/N)$ regular and non-orthogonal to $q$, with $N$ dominated by $a$ over $M$ and $q\perp M$.
We also say {\em $p$ supports $q$} if $q$ lies directly over $p$.  
}
\end{Definition}

\begin{Lemma}   \label{support}   If $q\in\PPe$ and $q$ lies directly over some regular type $p$, then $p\in\PPe$ as well.  
\end{Lemma}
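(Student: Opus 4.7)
The plan is to construct, from an $e$-type witnessing $q\in\PPe$, an $e$-type witnessing $p\in\PPe$, tracking it through the support configuration. Fix the witnesses given by ``$p$ supports $q$'': $a$-models $M\preceq N$ and elements $a,b$ with $\tp(a/M)\not\perp p$, $\tp(b/N)\not\perp q$, $N$ dominated by $a$ over $M$, and $q\perp M$. Also fix an $e$-type $r(x,d)\not\perp q$.

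My first step would be to normalize so that $d\subseteq N$. Since $N$ is an $a$-model it realizes every finitary type over $\emptyset$, so some $\sigma\in\Aut(\C)$ sends $d$ into $N$; being stationary, weight one, and non-isolated are all $\Aut(\C)$-invariant, and $\PPe$ is closed under $\Aut(\C)$ and non-orthogonality, so after replacing the entire configuration by its $\sigma$-image I may assume $d\subseteq N$. Then I let $c$ realize the unique non-forking extension of $r(x,d)$ to $N$. The chain $r(x,d)\not\perp q\not\perp\tp(b/N)$ of weight-one non-orthogonalities allows me to arrange that $c$ and $b$ are mutually dominating over $N$; combined with $b\in N$ being dominated by $a$ over $M$ and transitivity of domination, this yields that $c$ is dominated by $a$ over $M$. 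By superstability, pick a finite $\bar e\subseteq Mad$ on which $\tp(c/Mad)$ is based and relatively stationary (as in the proof of Fact~\ref{3.3}), so that $\stp(c/\bar e)\vdash\tp(c/Mad)$ and weight one is inherited from $r$. This $\stp(c/\bar e)$ is the candidate $e$-type for $p$.

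The main obstacle is verifying the two remaining clauses for an $e$-type: non-orthogonality to $p$ and non-isolation. Non-isolation is the cleaner: if $\stp(c/\bar e)$ were isolated, say by a formula $\varphi(x)\in L(\bar e)$, then $\stp(c/\bar e)\vdash\tp(c/Mad)$ gives that $\varphi(x)$ also isolates $\tp(c/Mad)$, and since $c$ realizes the non-forking extension of $r$ to $N\supseteq Mad$ (so $c$ is independent of $Mad$ over $d$), the Open Mapping Theorem (Fact~\ref{Open}) then forces $\tp(c/d)=r(x,d)$ to be isolated, contradicting that $r$ is an $e$-type. Non-orthogonality to $p$ is more delicate: because $r(x,d)$ is non-algebraic and $c$ is dominated by $a$ over $M$, $c$ must fork with $a$ over $M$ (else $c\in\acl(M)$, contradicting non-algebraicity); combined with $\tp(a/M)\not\perp p$ and a weight-one calculus, this forking configuration should translate to $\stp(c/\bar e)\not\perp p$ via the piece of $a$ that $\bar e$ contains. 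Making this last translation rigorous --- carefully matching non-orthogonality classes of weight-one types and ensuring that the fragment of $a$ inside $\bar e$ correctly witnesses the $p$-connection --- is where the technical core of the argument lies.
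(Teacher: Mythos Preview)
Your candidate $e$-type $\stp(c/\bar e)$ is orthogonal to $p$, so the approach cannot succeed as written. Here is why: you take $c$ to realize the non-forking extension of $r$ to $N$, and $\bar e\subseteq Mad\subseteq N$ is chosen so that $\tp(c/Mad)$ is based on $\bar e$. Since $\fg c d N$ and $Mad\subseteq N$, the type $\tp(c/Mad)$ is the non-forking extension of $r$, hence parallel to $r$; so $\stp(c/\bar e)$ is parallel to $r$ as well. But $r\not\perp q$ and $q\perp M$ (this is part of the definition of ``lies directly over''), and non-orthogonality is an equivalence relation on stationary weight-one types, so $r\perp M$. Since $p\not\perp\tp(a/M)\in S(M)$, transitivity forces $r\perp p$. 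Thus your candidate is $\perp p$, and the ``delicate'' non-orthogonality step you flag at the end is in fact impossible: the forking of $c$ with $a$ over $M$ that you correctly observe gets absorbed into the base $\bar e$, and the type over $\bar e$ reverts to being parallel to $r$.

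The paper repairs this by taking the type of the \emph{pair} $bd$ over $M$ rather than the type of $b$ (your $c$) over $Mad$. Concretely: with $d\subseteq N$ and $b$ realizing the non-forking extension of $r$, one has $bd$ dominated by $a$ over $M$ (using $r\perp M$), so $\wt(bd/M)=1$ and $\tp(bd/M)\not\perp\tp(a/M)\not\perp p$. Choosing finite $e\subseteq M$ on which $\tp(bd/M)$ is based and stationary, the type $\tp(bd/e)$ is stationary, weight one, and $\not\perp p$. Non-isolation follows by your Open-Mapping argument: if $\tp(bd/e)$ were isolated then so would $\tp(b/de)$, and since $\fg b d e$ this would force $r=\tp(b/d)$ to be isolated. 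Two smaller points: your normalization ``replace the entire configuration by its $\sigma$-image'' does not place $d$ inside the \emph{new} $N$; the clean fix is to choose $\sigma$ fixing a finite base of $\tp(b/N)$ inside $N$. Also, $b\notin N$ in the support configuration; the transitivity you want goes through $Nb$ being dominated by $a$ over $M$ (which holds because $\tp(b/N)\perp M$), not through ``$b\in N$''.
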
  

\begin{proof}  Let $M$ be an a-model on which $p$ is based and let $N=M[a]$ be a-prime over $M$ and a realization of $p$ with $q\not\perp N$.  As $q\in\PPe$
choose a non-isolated weight one, stationary $r\in S(d)$ with $d\subseteq N$ finite with $r\not\perp q$.  Let $b$ be any realization of $r|da$.  Since 
$da$ is dominated by $a$ over $M$ and $\tp(b/da)\perp M$, we have that $bda$ is dominated by $a$ over $M$, from which it follows that $wt(bd/M)=1$.
Choose any finite $e\subseteq M$ so that $\tp(bd/M)$ is based and stationary on $e$.  Then $\tp(bd/e)$ is a non-isolated weight one, stationary type non-orthogonal to $p$,
hence $p\in\PPe$.
\qed\end{proof}
%**** Changed "Let $b$ be any realization of $r$" to "Let $b$ be any realization of $r|da$".

Thus, in the terminology of \cite{PNDOP}, $\PPe={\bf P_e^{active}}$.  We now turn to the complementary notion.  

\begin{Definition}  {\em  A strong type $p$ is  {\em always isolated}  if, for all finite $d$ on which $p$ is based, $\tp(a/d)$ is isolated for any realization of $a$ of $p|d$.
}
\end{Definition}
%**** Maybe change final ``p" to ``p|d"?

\begin{Lemma}  \label{AIlemma}   For any strong type $p$, if $p\perp \PPe$, then $p$ is always isolated.
\end{Lemma}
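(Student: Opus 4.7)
The strategy is by contrapositive: suppose $p$ is not always isolated, so there is a finite $D$ on which $p$ is based and $a \models p|D$ with $\tp(a/D)$ non-isolated; we produce some $r \in \PPe$ with $r \not\perp p$. Proceed by induction on $n := \wt(a/D)$, which is finite by superstability of $T$.

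In the base case $n = 1$, the type $p|D$ is itself stationary, weight one, and non-isolated --- that is, an e-type. Since any non-algebraic stationary type in a superstable theory is non-orthogonal to some regular type, pick a regular $r \not\perp p|D$; then $r$ is regular and non-orthogonal to the e-type $p|D$, so $r \in \PPe$ and $r \not\perp p$, as required.

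For the inductive step $n \geq 2$, choose $c \in \acl^{\rm eq}(aD)$ with $\tp(c/D)$ regular (such $c$ exists by the standard regular weight decomposition in superstable theories). Transitivity of isolation forces at least one of $\tp(c/D)$ or $\tp(a/Dc)$ to be non-isolated: otherwise $\tp(ac/D)$ would be isolated, and so would its restriction $\tp(a/D)$, contradicting our hypothesis. If $\tp(c/D)$ is non-isolated, then it is already a regular e-type, so lies in $\PPe$ (being regular and non-orthogonal to itself), and is non-orthogonal to $p|D = \tp(a/D)$ because $c \in \acl^{\rm eq}(aD) \setminus \acl^{\rm eq}(D)$ exhibits a dependent pair $(a,c)$ of realizations over the common base $D$; hence $\tp(c/D) \not\perp p$. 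Otherwise $\tp(a/Dc)$ is non-isolated of weight $n-1$, and after enlarging $Dc$ within $\acl^{\rm eq}$ if needed to secure stationarity, the inductive hypothesis applied to this strong type provides $r \in \PPe$ with $r \not\perp \tp(a/Dc)$. The standard fact that a regular type's non-orthogonality to a stationary type is controlled by the non-orthogonality classes appearing in its regular weight decomposition --- combined with the observation that the decomposition of the forking extension $\tp(a/Dc)$ is obtained from that of $\tp(a/D)$ by deleting the class of $\tp(c/D)$ --- yields $r \not\perp p|D$, and therefore $r \not\perp p$.

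The main obstacle is this last propagation of non-orthogonality across the forking extension $\tp(a/D) \rightsquigarrow \tp(a/Dc)$ in the inductive step, for which the weight-decomposition characterization of non-orthogonality for regular types is the essential tool. A minor bookkeeping point is ensuring stationarity is maintained through the induction, which is routinely handled by enlarging bases inside $\acl^{\rm eq}$.
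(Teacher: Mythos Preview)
Your contrapositive induction differs from the paper's direct proof and has a real gap in the inductive step. You claim the existence of $c \in \acl^{\rm eq}(aD)$ with $\tp(c/D)$ regular, appealing to the ``standard regular weight decomposition.'' But that decomposition, carried out over an a-model $M$, only produces $M$-independent realizations $b_i$ of regular types with which $a$ forks over $M$; nothing places any $b_i$ in $\acl^{\rm eq}(aM)$, much less in $\acl^{\rm eq}(aD)$ for the finite $D$. The separate fact that \emph{some} regular type is realized in $\acl^{\rm eq}(aM)$ is Pillay's Proposition~8.3.2 (cf.\ the paper's Lemma~\ref{aclregular}), and its proof---via being foreign to a rank class and the machinery of $p$-simple types---genuinely uses that the base is a model. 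The analogue over an arbitrary finite base is not standard, and your parenthetical does not justify it. There is also a minor slip in the base case: $p$ being based on $D$ does not make $p|D$ stationary, so it is not literally an e-type; the paper remedies this by adjoining a realization $b\models p|d$ to the base, obtaining the stationary, weight-one $p|db$, which is still non-isolated by the Open Mapping Theorem.

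The paper's direct argument sidesteps all of this. Assuming $p\perp\PPe$ with $\wt(p)=n+1$, it passes to an a-model $M\supseteq d$ with $a$ realizing $p|M$, chooses $M$-independent regular $b_0,\dots,b_n$ each forking with $a$ (these are \emph{external}, not in $\acl(aM)$), takes $N$ a-prime over $M\cup\{b_i:i<n\}$, and picks finite $e\subseteq N$ on which $\tp(a/N)$ is based. Since $e$ is dominated by $\{b_i:i<n\}$ over $M$, $\stp(e/M)$ has weight $\le n$ and is $\perp\PPe$, so the inductive hypothesis gives $\tp(e/h)$ isolated for suitable finite $h\supseteq d$ in $M$. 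Meanwhile $\stp(a/N)$ is dominated by $b_n$ over $N$, hence weight $\le 1$ and $\perp\PPe$, so (by the base case) $\tp(a/eh)$ is isolated. Combining gives $\tp(a/h)$ isolated, and Open Mapping pushes this down to $\tp(a/d)$.
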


\begin{proof}  By induction on $wt(p)$.  
If $wt(p)=1$, but there were some finite $d$ on which $p$ is  based  with $p|d$ non-isolated, then let $b$ be any realization of $p|d$.
Then $p|db$ is stationary, weight one, and non-isolated by the Open Mapping Theorem, contradicting $p\perp \PPe$, so the Lemma holds for $wt(p)=1$.
     
Assume the Lemma holds for all strong types of weight at most $n$.  Choose a strong type $p\perp \PPe$ of weight $n+1$ and choose a finite set $d$ on which $p$ is based.
Choose an a-model $M\supseteq d$ and let $a$ realize $p|M$.  We will show $tp(a/d)$ is isolated.  
As $M$ is an a-model, choose an $M$-independent set $\{b_i:i\le n\}$ such that $\tp(b_i/M)$ is regular and $\nfg a M {b_i}$ for each $i$.
Choose an a-model $N^*=M[b_i:i\le n]$ with $a\in N^*$ and within $N^*$, choose $N\preceq N^*$ to be a-prime over $M\cup\{b_i:i<n\}$.    
Choose any finite $e\subseteq N$ on which $\tp(a/N)$ is based.  As $e\subseteq N$, $e$ is dominated by $\{b_i:i<n\}$ over $M$, hence $\tp(e/M)\perp \PPe$
and $wt(e/M)\le n$.   Choose a finite $h\supseteq d$ on which $\tp(e/M)$ is based.  By our inductive hypothesis $\tp(e/h)$ is isolated.  
As well, $eh\subseteq N$ and $\tp(a/N)$ is based on this set, hence $\tp(a/eh)$ is isolated as well.  Putting these together, $\tp(a/h)$ is isolated.
However,  $\fg a d h$, so $\tp(a/d)$ is isolated by the Open Mapping Theorem.
\qed\end{proof}

\begin{Proposition}  \label{charAI}   Let $p$ be any strong type.  Then $p\perp \PPe$ if and only if every strong type $q\triangleleft p$ is always isolated (see Definition~\ref{domdef}).
\end{Proposition}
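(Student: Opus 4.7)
The forward direction follows directly from \textbf{Lemma \ref{AIlemma}}. Suppose $p \perp \PPe$ and $q \triangleleft p$. Since domination preserves non-orthogonality, for any regular $r$ the relation $q \not\perp r$ would force $p \not\perp r$; consequently $q \perp \PPe$, and the lemma then yields that $q$ is always isolated.

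For the converse I would argue the contrapositive: assuming $p \not\perp \PPe$, I would produce some $q \triangleleft p$ that fails to be always isolated. By definition of $\PPe$, there is an $e$-type $s(x,d)$ with $s \not\perp p$, and by definition of $e$-type, $s$ is stationary, of weight one, and non-isolated over the finite base $d$. The combination of weight one and the non-orthogonality $s \not\perp p$ gives $s \triangleleft p$ as strong types, verified by passing to an a-model $M$ containing $d$ and a set on which $p$ is based, realizing $a \models p|M$ and $b \models s|M$ with $\nfg{a}{M}{b}$, and observing that $\wt(b/M) = 1$ together with this forking forces $b$ to be dominated by $a$ over $M$. This local domination lifts to the parallelism-class relation $s \triangleleft p$ of \textbf{Definition \ref{domdef}}. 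Since $s$ is non-isolated over the base $d$ on which it is based, $s$ is visibly not always isolated, so $q := s$ completes the contrapositive.

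The only step with any content is the weight-one passage from $s \not\perp p$ to $s \triangleleft p$; the rest is bookkeeping with the closure properties of $\PPe$, the definition of $e$-type, and the definition of always isolated. I expect the principal subtlety to be purely notational---namely, matching the local domination statement inside the a-model $M$ with the parallelism-class preorder $\triangleleft$ used in the hypothesis---but this is a standard move in the Shelah-style calculus of weight and domination.
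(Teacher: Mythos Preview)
Your proposal is correct and follows essentially the same route as the paper. The paper's converse also proceeds by contrapositive: it picks a regular $q\in\PPe$ with $q\not\perp p$, then an $e$-type $r\not\perp q$, and concludes via $r\triangleleft q\triangleleft p$; you simply collapse this to a single step $s\triangleleft p$, which is fine since weight-one non-orthogonal types are domination-equivalent. One cosmetic point: your phrase ``by definition of $\PPe$, there is an $e$-type $s$ with $s\not\perp p$'' elides the small transitivity argument (from $p\not\perp q$ and $q\not\perp s$ with $q,s$ weight one to $p\not\perp s$), which is exactly what the paper's intermediate $q$ makes explicit, but this is a standard fact and your later remarks show you are aware of it.
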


\begin{proof}  If $p\perp \PPe$, then $q\perp \PPe$ for every $q\triangleleft p$, so left to right follows from Lemma~\ref{AIlemma}.
For the converse, assume $p\not\perp \PPe$.  Choose a regular $q\in \PPe$ with $q\not\perp p$.   
As $q\in\PPe$, choose a  stationary, weight one $r\not\perp q$
that is not always isolated.  As $r\triangleleft q\triangleleft p$, we finish.
\qed\end{proof}
%DSU 3-17-2025: "w-eni" intended?

The following Lemma is likely well-known, but as we do not know of a specific reference, we include its proof for the convenience of the reader. %**** I couldn't find a reference, even though I distinctly remember one. I guess this is fine

\begin{Lemma}  \label{FEQ}   ($T$ stable)  Suppose $p=\tp(a/b)$ is any non-algebraic type and let $q(x)\in S(ba)$ be the stationary non-forking extension for which $\stp(c/b)=\stp(a/b)$ for
some/every realization $c$ of $q$.  If $q$ is isolated, then there are only finitely many strong types extending $p$.  In fact, there is some $E^*(x,y)\in FE(b)$ such that
for any $a_1,a_2$ realizing $p$, $\stp(a_1/b)=\stp(a_2/b)$ iff $E^*(a_1,a_2)$ holds.
\end{Lemma}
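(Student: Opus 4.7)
The plan is to locate the desired $E^{*}$ by using the isolating formula for $q$ to detect $\stp$-equivalence on realizations of $p$, and then apply compactness to contract this to a single finite equivalence relation over $b$.

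Fix $\theta(x,a,b)$ isolating $q$. Since $q(x)$ contains $E(x,a)$ for every $E \in FE(b)$ (encoding $\stp(c/b) = \stp(a/b)$), we have $\theta(x,a,b) \vdash E(x,a)$ for each such $E$. By an automorphism over $b$ sending $a$ to any $a' \models p$, the formula $\theta(x,a',b)$ is consistent and isolates the corresponding parallel nonforking extension $q_{a'}$ of $\stp(a'/b)$; hence every realization $c'$ of $\theta(x,a',b)$ satisfies $\stp(c'/b) = \stp(a'/b)$.

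Next consider the $b$-definable relation
$$R(y_1, y_2) := \exists x\,(\theta(x, y_1, b) \wedge \theta(x, y_2, b)).$$
The key claim is that for $a_1, a_2 \models p$, $R(a_1, a_2)$ holds if and only if $\stp(a_1/b) = \stp(a_2/b)$. The forward direction is immediate from the previous paragraph: a common realization $c'$ of $\theta(x, a_1, b) \wedge \theta(x, a_2, b)$ has $\stp(c'/b) = \stp(a_i/b)$ for $i = 1, 2$. For the converse, assume $\stp(a_1/b) = \stp(a_2/b)$ and let $c'$ realize the parallel nonforking extension of this shared strong type to $b a_1 a_2$; by stationarity, $\tp(c'/ba_i) = q_{a_i}$ for each $i$, so $\theta(c', a_i, b)$ holds.

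Finally, the partial type $\Sigma(y_1, y_2) := p(y_1) \cup p(y_2) \cup \{E(y_1,y_2) : E \in FE(b)\} \cup \{\neg R(y_1,y_2)\}$ is inconsistent by the claim. Compactness then yields finitely many $E_1, \ldots, E_n \in FE(b)$ (and a finite part of $p$, automatically satisfied by realizations of $p$) such that $\bigwedge_i E_i(y_1, y_2)$ together with $p(y_1) \wedge p(y_2)$ implies $R(y_1, y_2)$. Setting $E^* := E_1 \cap \cdots \cap E_n$, which remains in $FE(b)$, the two directions agree on $p \times p$, and finiteness of the number of strong types extending $p$ is immediate since $E^*$ has only finitely many classes. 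The main technical point is producing the common realization $c'$ in the converse of the claim about $R$; this is where the stationarity of the strong type (and its transport to the parallel copies $q_{a'}$) does the essential work, turning a purely $\stp$-theoretic hypothesis into the consistency of a formula.
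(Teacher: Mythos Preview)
Your proof is correct and takes a genuinely different route from the paper's. The paper argues directly via the Finite Equivalence Relation Theorem: it observes that $q$ is generated by $\{E_j(x,a):E_j\in FE(b)\}\cup\{\neg\alpha_i(x,b,a):\alpha_i\ \hbox{forks over}\ b\}$, so the isolating formula $\phi(x,b,a)$ is implied by a finite subset, and $E^*$ is taken to be the intersection of the finitely many $E_j$ appearing there; the verification that $E^*(x,a)\vdash E_j(x,a)$ for every $j$ is then done by hand, pushing a realization to an independent copy. Your approach instead packages the strong-type equivalence into a single $b$-definable relation $R(y_1,y_2)=\exists x(\theta(x,y_1,b)\wedge\theta(x,y_2,b))$, shows $R$ agrees with $\stp$-equality on $p\times p$, and then runs compactness on the whole type $\Sigma$ at once. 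Your argument is arguably more conceptual (the relation $R$ is a nice object in its own right), while the paper's is more elementary in that it only unpacks the generating set of $q$ and never needs to produce a common realization over $ba_1a_2$; both buy the same $E^*$ and both hinge on stationarity of strong types for the nontrivial direction.
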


\begin{proof}  Let $\{\alpha_i(x,b,a):i\in I\}$ enumerate the formulas that fork over $b$ and let $\{E_j:j\in J\}$ enumerate $FE(b)$.  Note that $\bigwedge_{j\in J} E_j(x,a)\vdash p(x)$, 
so by the Finite Equivalence Relation theorem, $q$ is generated by 
$$\{E_j(x,a):i\in J\}\cup\{\neg\alpha_i(x,b,a):i\in I\}$$
If $\phi(x,b,a)$ isolates $q$, there are finite subsets $J_0\subseteq J$ and $I_0\subseteq I\}$ entailing $\phi(x,b,a)$.  Put $E^*(x,y):=\bigwedge_{j\in J_0}E_j(x,y)$.
Then $E^*(x,y)\in FE(b)$, and it suffices to show that $\forall x(E^*(x,a)\vdash E_j(x,a))$ for every $j\in J$.  To see this, fix any $j\in J$ and choose any $a_1$ such that $E^*(a_1,a)$ holds.  As $E^*(x,a)\in FE(b)$, it is not a forking formula over $b$, hence there is some $a_2\in \C$ such that $\stp(a_1/b)=\stp(a_2,b)$, $E^*(a_1,a_2)$, and $\fg {a_2} b a$.
From above, $\phi(a_2,b,a)$, hence $E_j(a_2,a)$ holds.  But $\stp(a_1/b)=\stp(a_2/b)$ implies $E_j(a_1,a_2)$, so $E_j(a_1,a)$, as required.
\qed\end{proof}

Lemma~\ref{FEQ} immediately gives the following $\omega$-stable-like behavior of types orthogonal to $\PPe$.

\begin{Lemma}   \label{stationary}
 Suppose $M$ is any model and $p=\tp(a/M)\perp \PPe$.     Then there is some finite $d\subseteq M$ on which $p$ is based and stationary.
\end{Lemma}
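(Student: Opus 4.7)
The plan is to upgrade the standard superstable choice of a finite base $d_0$ for $p$ to a finite $d \supseteq d_0$ over which $\tp(a/d)$ is stationary, by adjoining a code for the correct strong-type class. The key tool is Lemma~\ref{FEQ}, which collapses the strong-type extensions of $\tp(a/d_0)$ to the classes of a single definable finite equivalence relation, provided an appropriate non-forking extension is isolated.

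First I will choose, by superstability, a finite $d_0 \subseteq M$ (working in $M^{eq}$) over which $p$ does not fork. If $p$ is algebraic over $d_0$ the claim is trivial, so I assume $p_0 := \tp(a/d_0)$ is non-algebraic. Since $p \perp \PPe$, Proposition~\ref{charAI} gives that $p$ is always isolated, and in particular $p_0$ is isolated.

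Next I must verify the hypothesis of Lemma~\ref{FEQ}, namely that the stationary non-forking extension $q \in S(d_0 a)$ of $\stp(a/d_0)$ is isolated. Here $p$ and $q$ are both non-forking extensions of the common strong type $\stp(a/d_0)$ (to $M$ and to $d_0 a$ respectively), so they are parallel. Since orthogonality is preserved under parallel translation, $q \perp \PPe$ as well, and Proposition~\ref{charAI} applied to $q$ yields that $q$ is always isolated. Since $q$ is based on $d_0 \subseteq d_0 a$, the type $q \in S(d_0 a)$ is then isolated. Lemma~\ref{FEQ} now produces $E^*(x,y) \in FE(d_0)$ such that for $a_1, a_2 \models p_0$, $\stp(a_1/d_0) = \stp(a_2/d_0)$ if and only if $E^*(a_1, a_2)$. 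In particular $p_0$ has only finitely many strong-type completions, whose $E^*$-class codes lie in $\acl^{eq}(d_0) \subseteq M^{eq}$.

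Letting $c$ be the code of the $E^*$-class containing $a$ and setting $d := d_0 \cup \{c\}$, the lemma will follow: $d$ is a finite subset of $M^{eq}$, $p$ does not fork over $d$ (as $d \supseteq d_0$), and $\tp(a/d)$ is stationary because any two non-forking extensions are both forced into the $E^*$-class coded by $c$, hence both correspond to the strong type $\stp(a/d_0)$, which is already stationary. The main delicate point is the verification that $q$ is isolated: this requires tracking that $q$ is parallel to $p$ (so orthogonality to $\PPe$ transfers) and that ``always isolated'' for the strong type $q$ yields isolation of $q$ as an ordinary type over the finite set $d_0 a$. Once this is in hand, Lemma~\ref{FEQ} and the standard theory of finite equivalence relations deliver the stationarity of $\tp(a/d)$ essentially for free.
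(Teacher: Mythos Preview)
Your proof is correct and follows essentially the same route as the paper: both arguments choose a finite base, observe that the relevant non-forking extension $q$ is parallel to $p$ and hence always isolated, and then invoke Lemma~\ref{FEQ} to reduce the strong types over the base to the classes of a single $E^*\in FE(d_0)$. The only difference is cosmetic: you pass to $M^{\mathrm{eq}}$ and adjoin the code $c$ of the $E^*$-class of $a$, whereas the paper stays in the home sort by using that $M$ is a model to find a real $a^*\in\psi(M,b)$ with $E^*(a,a^*)$, and then shows $p$ is based and stationary on $ba^*$. Your version is a bit shorter; the paper's avoids any appeal to imaginaries.
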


\begin{proof}  Choose a finite $b\subseteq M$ on which $p$ is based.  Since $p$ is always isolated, choose $\psi(x,b)$ isolating $p_0:=\tp(a/b)$.
Choose $a'\in p(\C)$ with $\fg a b {a'}$ and let $q=\tp(a/ba')$.  So $q$ is stationary and based on $ba'$.
As $q$ is parallel to $p$, it is always isolated, hence there is $\phi(x,b,a')$ isolating $q|ba'$.  Choose $E^*(x,y)\in FE(b)$ as in Lemma~\ref{FEQ}.  Now $a'\not\in M$, but $\psi(M,b)$ contains a complete set of representatives of  the $E^*(x,y)$-classes consistent with $\psi(x,b)$.  Choose $a^*\in \psi(M,b)$ with $E^*(a',a^*)$.
As both $a'$ and $a^*$ realize $p_0$, it follows from Lemma~\ref{FEQ} that $\stp(a'/b)=\stp(a^*/b)$.  Choose a strong automorphism $\sigma\in Aut(\C/b)$ with $\sigma(a')=a^*$
and put $q^*(x,b,a^*):=\sigma(q)$.   Then $q^*$ is stationary and, since
since  both $a'$ and $a^*$ are independent from $a$ over $b$, it follows that $\tp(aa'b)=\tp(aa^*b)$.  That is, $a$ realizes the stationary type $q^*$, so $p$ is based and stationary
over $ba^*$.
\qed\end{proof}

Recall that a stationary type $p\in S(A)$ is {\em strongly regular} if there is a formula $\phi(x,a)\in p$ such that for every global type $q\in S(\C)$ with $\phi(x,a)\in q$,
either $q\perp p$ or $q$ is the unique non-forking extension of $p$ to $S(\C)$.

\begin{Lemma} \label{SRna}   Suppose $p\in S(M)$ is regular with $p\perp \PPe$.  Then $p$ is both strongly regular and na.
\end{Lemma}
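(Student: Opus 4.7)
The plan is to use Lemmas~\ref{AIlemma} and~\ref{stationary} in tandem to produce a single formula that simultaneously witnesses strong regularity and the na property for $p$. First, by Lemma~\ref{AIlemma}, the hypothesis $p\perp\PPe$ gives that $p$ is always isolated. Then by Lemma~\ref{stationary}, there is a finite $d\subseteq M$ on which $p$ is based and stationary. Applying the always-isolated property to this base $d$ yields a formula $\phi(x,d)$ isolating $p|d$, and $\phi$ will be the candidate witness for both conclusions.

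For strong regularity, I would verify that $\phi(x,d)$ has the required property. Fix $q\in S(\C)$ containing $\phi$ and pick $c\models q$; since $\phi$ isolates $p|d$ we have $\tp(c/d)=p|d$. If $q$ does not fork over $d$, then stationarity of $p|d$ on $d$ forces $q$ to be the unique non-forking extension of $p|d$ to $\C$, hence $q=p|\C$. The substantive case is when $q$ forks over $d$, where the goal is $q\perp p$. Since $T$ is superstable, $q$ is based on some finite $e\subseteq\C$, and as $q$ extends the non-algebraic weight-one type $p|d$, the restriction $q|e$ is stationary of weight one. If $q|e$ is non-isolated, it qualifies as an e-type per Definition~\ref{original}. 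Any failure of $q\perp p$ would then, via parallelism of $q$ and $q|e$, exhibit the regular type $p$ as non-orthogonal to an e-type, placing $p\in\PPe$ and contradicting $p\perp\PPe$ (as $p\not\perp p$). So $q\perp p$ in this subcase.

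The main obstacle is the remaining subcase, where $q|e$ is itself isolated. Here both $\tp(c/d)=p|d$ and $\tp(c/e)=q|e$ are isolated over their respective finite bases, yet $q=\tp(c/\C)$ is supposed to fork over $d$. I expect to rule this configuration out using an analogue of Lemma~\ref{FEQ}: the stationarity of $p|d$ combined with the joint isolation over $d$ and over $e$ should force the strong type of $c$ over $de$ to coincide with that of a realization of the non-forking extension of $p|d$ to $de$, thus making $\tp(c/\C)$ the non-forking extension of $p|d$ and contradicting the assumption that $q$ forks over $d$. Executing this strong-type bookkeeping --- tracking the compatibility of the two isolating formulas through the finite-equivalence-relation mechanism of Lemma~\ref{FEQ} --- is the central technical step of the proof.

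Finally, the na conclusion should follow from the same witness $\phi(x,d)$ and the isolation of $p|d$. Since $p$ is based on a finite $d\subseteq M$ over which it is isolated, the finitely-based, formula-controlled presentation demanded by the na property (as defined in the Appendix's na-substructures subsection) is already in hand, so this part should require no further technical work beyond identifying $\phi$ as the appropriate witness.
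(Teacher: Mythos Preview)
Your proposal has two genuine gaps, both arising from missing the simple arguments the paper uses.

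\textbf{Strong regularity.} The entire case split on whether $q|e$ is isolated is unnecessary, and your ``main obstacle'' case is unresolved. The paper's argument is immediate from the \emph{definition} of regularity: once $\theta(x,d)$ isolates $p|d$, any global $q$ containing $\theta$ extends the regular type $p|d$; if $q$ forks over $d$, then by regularity every forking extension of $p|d$ is orthogonal to $p|d$, hence $q\perp p|d$ and so $q\perp p$. No appeal to $\PPe$, e-types, or Lemma~\ref{FEQ} is needed beyond obtaining the isolating formula. Your attempt to derive a contradiction in the ``$q|e$ isolated'' subcase via Lemma~\ref{FEQ} is only an expectation, not an argument, and there is no evident mechanism by which isolation of $q|e$ would force $q$ to be non-forking over $d$.

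\textbf{The na property.} You appear to believe that the single witness $\phi(x,d)$ settles the na property, but the definition requires that for \emph{every} formula $\psi(x,b)\in p$ there is a realization in $M$ outside $\acl(b)$. The paper handles this directly: given any $\psi(x,b)\in p$, the always-isolated property (from $p\perp\PPe$) yields an isolating formula $\delta(x,b)$ for $p|b$; since $M$ is a model, choose $e\in\delta(M,b)$; then $e$ realizes $p|b$, which is non-algebraic, so $e\in\psi(M,b)\setminus\acl(b)$. Your sketch does not address arbitrary $\psi(x,b)$ and so does not establish na.
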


\begin{proof}  By Lemma~\ref{stationary}, choose any finite $d\subseteq M$ on which $p$ is based and stationary.  Since $p$ is always isolated,
there is a formula $\theta(x,d)\in p$ isolating the type $p|d$.  
We first argue that $p$ is strongly regular via $\theta(x,d)$.  To see this, choose any global type $q\in S(\C)$
with $\theta(x,d)\in q$.     As $\theta(x,d)$ isolates $p|d$, $q$ extends the regular type $p|d$.  Thus, if $q\not\perp p$,
then $q\not\perp p|d$, so $q$ is the non-forking extension of $p|d$, which is also the non-forking extension of $p$ to $S(\C)$.  

%**** Deleted extra "Choose any global type $q\in S(\C)$ with $\theta(x,d)\in q$."

To see that $p$ is na, choose any $\phi(x,b)\in p$.  By replacing $\phi(x,b)$ by $\phi(x,b)\wedge\theta(x,d)$, we may assume $d\subseteq b$ and $\phi(x,b)\vdash \theta(x,d)$.
Since $p$ is always isolated, $p|b$ is isolated, say by $\delta(x,b)$.  Since $M$ is a model, choose $e\in M$ realizing $\delta(x,b)$.    Then $e\in\phi(M,b)$, but $\tp(e/b)=p|b$,
hence is non-algebraic.
\qed\end{proof}

Recall that a type $\tp(a/B)$ is {\em c-isolated} if  there is a formula $\phi(x,b)\in\tp(a/B)$ such that $R^\infty(q)=R^\infty(\phi(x,b))$ for every $q\in S(B)$ with $\phi(x,b)\in q$.

\begin{Proposition}  \label{DI}  Suppose $\stp(b/M)\perp \PPe$ and choose $c$ to both be $c$-isolated over $Mb$ 
%[i.e., there is $\theta(x,b,a)\in \tp(c/ba)$ such that for any
%$c'\in\theta(\C,b,a)$ we have $\fg {c'} {ba} A$] 
and such that
$bc$ is dominated by $b$ over $M$.  Then $\tp(c/Mb)$ is isolated.
\end{Proposition}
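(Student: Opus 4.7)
The plan is to reduce $\tp(c/Mb)$ to an isolated, stationary type over a finite base $db \subseteq Mb$, and then use the $c$-isolation hypothesis to propagate isolation up to $Mb$ via $\rinfty$-rank equalities. First, I would argue $\stp(bc/M) \perp \PPe$: if some $q \in \PPe$ had a realization $a$ with $\nfg{bc}{M}{a}$, then either $\nfg{b}{M}{a}$ or $\nfg{c}{Mb}{a}$, and the latter case reduces to the former via the contrapositive of $c$ being dominated by $b$ over $M$ (namely $\nfg{c}{Mb}{X} \Rightarrow \nfg{b}{M}{X}$), contradicting $\stp(b/M) \perp \PPe$. By Lemma~\ref{AIlemma}, $\stp(bc/M)$ is always isolated, and by Lemma~\ref{stationary} I can choose a finite $d \subseteq M$ on which $\tp(bc/M)$ is based and stationary. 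Enlarging $d$ (which preserves both properties) to include the parameters of the $c$-isolating formula, I may assume $\phi(x,d,b) \in \tp(c/Mb)$ witnesses $c$-isolation and that $\tp(bc/d)$ is isolated by some $\theta(y,z,d)$; then $\theta(b,z,d)$ isolates $\tp(c/db)$, since any realization $c'$ satisfies $\tp(bc'/d) = \tp(bc/d)$ and hence $\tp(c'/db) = \tp(c/db)$.

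I claim $\alpha(z) := \theta(b,z,d) \wedge \phi(z,d,b)$ isolates $\tp(c/Mb)$. Let $c' \models \alpha$. From $\theta$, $\tp(c'/db) = \tp(c/db)$; from $c$-isolation applied to $\phi$,
\[ \rinfty(\tp(c'/Mb)) = \rinfty(\phi(z,d,b)) = \rinfty(\tp(c/Mb)) = \rinfty(\tp(c/db)) = \rinfty(\tp(c'/db)), \]
where the middle equality uses $\fg{c}{db}{M}$ (a consequence of basedness $\fg{bc}{d}{M}$). Hence $\tp(c'/Mb)$ does not fork over $db$; combined with $\fg{b}{d}{M}$ this yields $\fg{bc'}{d}{M}$, so $\tp(bc'/M)$ is a non-forking extension of $\tp(bc/d)$. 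By stationarity of $\tp(bc/d)$, $\tp(bc'/M) = \tp(bc/M)$, whence $\tp(c'/Mb) = \tp(c/Mb)$.

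The main obstacle is precisely this last step: converting $c$-isolation into full isolation requires both (i) an isolating formula for $\tp(bc/d)$ over the finite base $d$, supplied by $\stp(bc/M) \perp \PPe$ via the always-isolated conclusion of Lemma~\ref{AIlemma}, and (ii) stationarity of $\tp(bc/d)$ to pin down the unique non-forking extension to $M$. The $\rinfty$-rank chain is the bridge that forces any realization of $\alpha$ to be non-forking over $db$, thereby making stationarity applicable.
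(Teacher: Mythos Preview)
Your proof is correct and follows essentially the same route as the paper's. Both arguments pick a finite $d\subseteq M$ (the paper calls it $m$) on which $\tp(bc/M)$ is based and stationary and which contains the parameters of the $c$-isolating formula, use $\stp(bc/M)\perp\PPe$ to get that $\tp(bc/d)$ is isolated, and then verify that the resulting formula over $db$ isolates $\tp(c/Mb)$ via the $R^\infty$-rank chain and stationarity. Two cosmetic differences: (i) the paper notes that its isolating formula already implies the $c$-isolating one (so your conjunction $\alpha=\theta\wedge\phi$ is equivalent to $\theta$ alone), and (ii) the paper simply asserts $\tp(bc/M)\perp\PPe$ while you spell out the domination argument; your phrasing there is slightly informal (you test orthogonality against a single realization over $M$ rather than over all supersets), but the intended content---that orthogonality passes down along domination---is the standard fact being used.
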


\begin{proof}  
%Choose any model $M\supseteq A$ with $\fg {bc} A M$.  Note that  $\tp(c/Mb)$ is $c$-isolated,
%$bc$ is dominated by $b$ over $M$, and it follows by the Open Mapping Theorem that if
%$\tp(c/Mb)$ is isolated, then so is $\tp(c/Ab)$.  Thus, we may assume $A=M$.
Choose $\theta(x,b,m)\in \tp(c/Mb)$ with $R^\infty(\theta(x,b,m))=R^\infty(c/Mb)$.  By increasing $m$ (but keeping it finite), Lemma~\ref{stationary} allows us to assume $\tp(bc/M)$ is based and stationary on $m$.  
%
%\medskip
%
%****NEED Shelah-Buechler here!!  $M\subseteq_{na} \C$ can be assumed.  This is used to get $bc$ dominated by $b$ over $M$, hence $bc/M\perp \PPe$.
%
%\medskip
As $\tp(bc/M)\perp \PPe$, we conclude that $\tp(bc/m)$ is isolated, say by $\phi(x,y,m)$.   
It follows that $\phi(x,b,m)$ isolates $\tp(c/bm)$, but we argue that $\phi(x,b,m)$ isolates $\tp(c/Mb)$.  

To see this,   choose any $c'$ realizing $\phi(x,b,m)$.   Note that $\phi(x,y,m)\vdash \theta(x,y,m)$, so it follows that $\fg {c'} {bm}  M$.
However, $\tp(c'b/m)=\tp(cb/m)$ is stationary and both $cb$ and $c'b$ are independent with $M$ over $m$, hence $\tp(c'bM/m)=\tp(cbM/m)$.  As $m\subseteq M$,
this implies $\tp(c'/Mb)=\tp(c/Mb)$, as required.
\qed\end{proof}

We do not know whether the following result requires that $M$ be countable.  

\begin{Corollary}  \label{prime}  For any countable model $M$ and any finite $b$, if $\tp(b/M)\perp \PPe$, then there is a constructible model $M(b)\supseteq Mb$.
\end{Corollary}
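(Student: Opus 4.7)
The plan is to pass to an $a$-prime model over $Mb$, use Proposition~\ref{DI} to show this model is atomic over $Mb$, extract a countable elementary substructure containing $Mb$, and invoke Fact~\ref{arbitrary}(2) to conclude constructibility.

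First I would let $N$ denote an $a$-prime model over $Mb$, which exists since $T$ is superstable. A standard property of $a$-prime models in the superstable setting is that $N$ is dominated by $b$ over $M$; in particular, every finite tuple $c$ from $N$ is dominated by $b$ over $M$, and hence so is $bc$.

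Next I claim $N$ is atomic over $Mb$. Fix a finite $c \in N$. By $a$-primality, $\tp(c/Mb)$ is $a$-isolated, which in the countable superstable setting implies $\ell$-isolated, and in turn $c$-isolated. Since $M$ is a model, $\stp(b/M) = \tp(b/M) \perp \PPe$, so Proposition~\ref{DI} applies and yields that $\tp(c/Mb)$ is isolated. Thus $N$ is atomic over $Mb$.

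For the final step, since $M$ is countable and $b$ is finite, $Mb$ is countable, so downward L\"owenheim--Skolem produces a countable $N' \preceq N$ with $Mb \subseteq N'$. Every finite tuple in $N' \subseteq N$ has isolated type over $Mb$ by the preceding step, so $N'$ is countable and atomic over the countable set $Mb$. Fact~\ref{arbitrary}(2) then gives that $N'$ is in fact constructible over $Mb$; we set $M(b) := N'$. The main work, such as it is, amounts to assembling the standard superstability facts invoked along the way --- existence of $a$-prime models, the domination of an $a$-prime model by its base, and the chain $a\text{-isolated} \Rightarrow \ell\text{-isolated} \Rightarrow c\text{-isolated}$. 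Countability of $M$ enters only at the very last step, in the passage from atomic to constructible via Fact~\ref{arbitrary}(2), which is consistent with the authors' remark that they do not know whether this hypothesis is essential.
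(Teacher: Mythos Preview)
There is a genuine gap. The chain of implications ``$a$-isolated $\Rightarrow$ $\ell$-isolated $\Rightarrow$ $c$-isolated'' does not hold. The second implication is simply false: $\ell$-isolation and $c$-isolation (Shelah's $\mathbf{F}^\ell$ and $\mathbf{F}^f$) are independent notions, and neither implies the other. For a sanity check, every type over a finite set is trivially $\ell$-isolated, yet a type whose $R^\infty$-rank is a limit ordinal not attained by any single formula in it is not $c$-isolated. The first implication is also suspect over this base: the passage $a$-isolated $\Rightarrow$ $\ell$-isolated invoked in Fact~\ref{3.3} (via XII.2.11 of \cite{Shc}) is stated for stable systems of $a$-models, and $Mb$ with $M$ merely a countable model need not qualify. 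So you cannot feed an arbitrary element of the $a$-prime model into Proposition~\ref{DI}.

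The paper's proof sidesteps this by proving \emph{density} of isolated types over $Mb$ rather than atomicity of a pre-built model. Given a consistent $L(Mb)$-formula $\psi$, one first chooses $\theta\vdash\psi$ of minimal $R^\infty$-rank among consistent $L(Mb)$-formulas below $\psi$; any type over $Mb$ containing $\theta$ is then $c$-isolated by construction. One then picks, via Fact~\ref{ellexist}, a realization $c$ of $\theta$ that is $\ell$-isolated over $Mb$, which separately guarantees that $bc$ is dominated by $b$ over $M$. With both hypotheses of Proposition~\ref{DI} now in hand, $\tp(c/Mb)$ is isolated. Density plus countability of $Mb$ then yields the constructible model. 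The point is that $c$-isolation and domination are secured by two independent, deliberate choices --- minimizing rank and selecting an $\ell$-isolated realization --- rather than as byproducts of $a$-isolation.
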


\begin{proof}  As $T$ and $M$ are countable, it suffices to show that the isolated types over $Mb$ are dense.  Choose any consistent formula $\psi(x,a,b)$ with $a$ from $M$.
Among all consistent  $L(Mb)$-formulas that imply $\psi(x,a,b)$, choose $\theta(x,a',b)\vdash \psi(x,a,b)$ of least $R^\infty$-rank and then choose
an element $c\in\theta(\C,a',b)$ that is $\ell$-isolated over $Mb$.  Thus, $cb$ is dominated by $b$ over $M$ and $\tp(c/Mb)$ is c-isolated, hence
$\tp(c/Mb)$ is isolated by
 Proposition~\ref{DI}. 
From this density result, a constructible model over $Mb$ exists.
\qed\end{proof}

%We close this subsection by exhibiting a large trove of $na$ types.  
%
%\begin{Lemma}  \label{nalink}   Suppose $p\in S(M)$ is always isolated.  Then $p$ is na.
%\end{Lemma}
%
%\begin{proof}  Choose any $\phi(x,d)\in p$.  By increasing $d$ if necessary, we may assume $p$ is based on $d$, hence $\tp(a/d)$ is isolated for any $a$ realizing $p$.  
%Then there is $b\in\phi(M,d)\setminus \acl(d)$ by Lemma~\ref{isolnonalg}.  
%\qed\end{proof}
%

\subsection{Dull pairs}

\begin{Definition} {\em  We call $M\preceq N$ a {\em dull pair} if $\tp(N/M)\perp \PPe$.
}
\end{Definition}

The following Proposition gives an easily checkable criterion for whether a given pair of models is dull.  
The notion of $M\subseteq_{na} N$ is defined in Definition~\ref{naDef}.

\begin{Proposition}  \label{chardull}    $M\preceq N$ is a dull pair if and only if every regular type $p\in S(M)$ realized in $N$ is orthogonal to $\PPe$.
\end{Proposition}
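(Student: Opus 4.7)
The forward direction is immediate from the definitions: if $M\preceq N$ is dull then $\tp(N/M)\perp\PPe$, so for any $a\in N$ realizing a regular $p\in S(M)$ we have $p=\tp(a/M)\perp\PPe$.

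For the converse I argue by contrapositive. Suppose $M\preceq N$ is \emph{not} dull; then some finite tuple $\bar c\in N$ has $\tp(\bar c/M)\not\perp\PPe$, witnessed by a regular $q\in\PPe$ non-orthogonal to $\tp(\bar c/M)$. My goal is to produce a regular $p\in S(M)$, realized in $N^{\rm eq}$, with $p\not\perp\PPe$---contradicting the hypothesis that every regular type over $M$ realized in $N$ is orthogonal to $\PPe$.

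The strategy is to extract a regular component of $\tp(\bar c/M)$ lying inside $\acl^{\rm eq}(M\bar c)\subseteq N^{\rm eq}$. By superstability, apply the semi-regular (weight) decomposition to $\tp(\bar c/M)$: this yields an $M$-independent family $\bar a_0,\dots,\bar a_{n-1}$ with each $\bar a_i\in\acl^{\rm eq}(M\bar c)$, each $\tp(\bar a_i/M)$ regular, and $\tp(\bar c/M)$ domination-equivalent to $\tp(\bar a_0\cdots\bar a_{n-1}/M)$. The construction is iterative: at step $i$, pick a weight-one element $\bar a_i\in\acl^{\rm eq}(M\bar c\bar a_0\cdots\bar a_{i-1})$ and then pass to a parallel type based over $M$; superstability forces termination at some finite $n$. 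Because (a) non-orthogonality to the regular $q$ is preserved under domination-equivalence and (b) an $M$-independent sum of regular types is non-orthogonal to $q$ iff some summand is (a standard weight argument), at least one $\tp(\bar a_i/M)\not\perp q$. Since $\PPe$ is closed under non-orthogonality among regular types, $\tp(\bar a_i/M)\in\PPe$, and $p:=\tp(\bar a_i/M)$ is the sought-after regular type in $S(M)$, realized by $\bar a_i\in N^{\rm eq}$.

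The principal obstacle is ensuring that the regular component actually lives in $N^{\rm eq}$ rather than in some auxiliary a-model extension of $M$; this is precisely why the decomposition is run strictly inside $\acl^{\rm eq}(M\bar c)$, exploiting that $N\succeq M$ is a model so that $\acl^{\rm eq}(M\bar c)\subseteq N^{\rm eq}$. The remainder is routine bookkeeping with the superstable machinery of regular types, weight, and non-orthogonality.
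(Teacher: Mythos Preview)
Your forward direction is fine, but the converse has a real gap: the ``semi-regular decomposition'' you invoke does not exist in the form you need over an arbitrary model $M$. The standard result---that every type is domination-equivalent to a finite product of regular types, with the regular components realized in $\acl^{\rm eq}$ of the original tuple---requires $M$ to be an a-model (or at least sufficiently saturated). Over an arbitrary model you can indeed find \emph{some} element $a_0\in\acl^{\rm eq}(M\bar c)$ with $\tp(a_0/M)$ regular (this is Proposition~8.3.2 of \cite{Pillay}, recorded here as Lemma~\ref{aclregular}), but your iteration stalls at the next step: $\acl^{\rm eq}(Ma_0)$ is not a model, so you cannot reapply that result, and to pass instead to an intermediate model $M_1\preceq N$ dominated by $a_0$ over $M$ already requires $M\subseteq_{na} N$ (this is Fact~\ref{3big}(3)). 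Your phrase ``pick a weight-one element over $M\bar c\bar a_{<i}$ and pass to a parallel type based over $M$'' does not actually produce an element of $\acl^{\rm eq}(M\bar c)$ whose type over $M$ is regular.

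The paper's proof takes a route that genuinely exploits the hypothesis of the converse direction rather than treating it only as the target of a contrapositive. Assuming every regular $p\in S(M)$ realized in $N$ is $\perp\PPe$, Lemma~\ref{SRna} shows each such $p$ is na, and Proposition~\ref{charna} then yields $M\subseteq_{na} N$. With the na condition in hand, Fact~\ref{3big}(1) (Proposition~8.3.5 of \cite{Pillay}) furnishes, for any regular $r\in\PPe$ non-orthogonal to $\tp(N/M)$, an element $b\in N\setminus M$ with $\tp(b/M)$ regular and $\not\perp r$---precisely the regular type you were trying to build by decomposition. The na hypothesis is what makes this extraction possible in the correct non-orthogonality class; without first establishing it, there is no general mechanism to find such a $b$ inside $N$.
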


\begin{proof}  Left to right is immediate.  For the converse, assume $M\preceq N$ and that every regular type $p\in S(M)$ realized in $N$ is $\perp \PPe$.
By Lemma~\ref{SRna}, every such $p$ is na, so by Proposition~\ref{charna}, $M\subseteq_{na} N$.  
Assume by way of contradiction that $\tp(N/M)\not\perp \PPe$.  
Choose a (regular) $r\in \PPe$ such that $\tp(N/M)\not\perp r$.
Since $M\subseteq_{na} N$,  it follows from  e.g., Proposition~8.3.5 of \cite{Pillay} that  
there is $b\in N\setminus M$ such that $\tp(b/M)$ is regular and non-orthogonal to $r$, contradicting our assumption.
\qed\end{proof}

\begin{Lemma}  \label{dullfacts}   Suppose $M\preceq N$ is a dull pair.  Then:
\begin{enumerate}
\item $M\subseteq_{na} N$;
\item  There is some $a\in N\setminus M$ with $\tp(a/M)$ strongly  regular,  and for any such choice of $a$, 
\begin{enumerate}
\item  There is a model $M'\preceq N$ containing $a$, and dominated by $a$ over $M$; and
\item  For any such $M'$, $M'\preceq N$ is a dull pair.
\end{enumerate}
\item  If $(M_\alpha:\alpha<\beta)$ is any continuous, increasing chain of substructures of $N$ with $M_\alpha\preceq N$ dull for each $\alpha$, then
$\bigcup\{M_\alpha:\alpha<\beta\}\preceq N$ is a dull pair.
\end{enumerate}
\end{Lemma}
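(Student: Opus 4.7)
The plan is to handle the four parts in order, with (2b) as the main obstacle. For (1), I would repeat the argument already contained in the proof of Proposition~\ref{chardull}: dullness ensures every regular $p \in S(M)$ realized in $N$ is $\perp \PPe$, Lemma~\ref{SRna} then forces each such $p$ to be na, and Proposition~\ref{charna} delivers $M \subseteq_{na} N$. For the existence clause of (2), assume $N \neq M$ and apply (1) to pick $a \in N \setminus M$ with $\tp(a/M)$ regular; dullness gives $\tp(a/M) \perp \PPe$, and Lemma~\ref{SRna} then upgrades this to strong regularity.

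For (2a), fix such an $a$ and mimic, inside $N$, the density-of-isolated-types argument from the proof of Corollary~\ref{prime}. Starting from $Ma$, build a chain of sets $Ma = B_0 \subseteq B_1 \subseteq \cdots$ in $N$ by a Tarski--Vaught procedure: at each stage and each consistent $L(B_n)$-formula $\psi(x)$ with a realization in $N$, adjoin a witness $c \in \psi(N, B_n)$ chosen to be $\ell$-isolated over $B_n$ via an $R^\infty$-minimizing formula, in the style of Corollary~\ref{prime}. An induction on stages shows that the finite part of each $B_n$ lying outside $M$ has type $\perp \PPe$ over $M$, so Proposition~\ref{DI} applies at each step to upgrade $\ell$-isolation to isolation and keeps $B_n$ dominated by $a$ over $M$. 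Setting $M' = \bigcup_n B_n$ gives $M' \preceq N$ via Tarski--Vaught, and by transitivity of domination $M'$ is dominated by $a$ over $M$.

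Part (2b) is the main step. Fix $a$ and $M'$ as above, and take any $b \in N$ with $\tp(b/M')$ regular; by Proposition~\ref{chardull} it suffices to show $\tp(b/M') \perp \PPe$. Suppose for contradiction $\tp(b/M') \not\perp q$ for some $q \in \PPe$; replacing $q$ by an $M$-conjugate in the same non-orthogonality class, I arrange $q \in S(d)$ with $d \subseteq M$. Pick $e$ realizing $q$'s non-forking extension to a saturated $B \succeq M'$, so that $\nfg b {M'} e$. Dullness gives $\tp(a/M) \perp q$, hence $\fg a M e$, and domination of $M'$ by $a$ over $M$ yields $\fg {M'} M e$; dullness also gives $\tp(b/M) \perp q$, so $\fg b M e$. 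What remains is to amalgamate these two independences over the model $M$ into $\fg b {M'} e$, contradicting the choice of $e$; the plan is to reduce to a finite base $f \subseteq M'$ for $\tp(b/M')$ and exploit the $a$-atomicity of $f$ over $M$ from (2a), together with strong regularity of $\tp(a/M)$, to carry out the amalgamation.

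For (3), set $M^* := \bigcup_{\alpha < \beta} M_\alpha$; then $M^* \preceq N$ by Tarski--Vaught. Given $b \in N$, superstability provides a finite $X \subseteq M^*$ with $\fg b X {M^*}$, and $X \subseteq M_\alpha$ for some $\alpha < \beta$; thus $\tp(b/M^*)$ is the non-forking extension of $\tp(b/M_\alpha)$, which is $\perp \PPe$ by dullness of $M_\alpha \preceq N$. Hence $\tp(b/M^*) \perp \PPe$, and Proposition~\ref{chardull} delivers dullness of $M^* \preceq N$. I expect the main obstacle to be the concluding amalgamation step in (2b): a naive combination of $\fg {M'} M e$ and $\fg b M e$ does not over a model generally yield $\fg {M'b} M e$, and producing the missing independence requires careful exploitation of the $a$-dominated structure of $M'$ and the strong regularity of $\tp(a/M)$.
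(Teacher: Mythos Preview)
Your arguments for (1), the existence clause of (2), and (3) are correct and essentially match the paper's proof. For (2a) you work much harder than necessary: since (1) already gives $M \subseteq_{na} N$, Fact~\ref{3big}(3) applied with $A = Ma$ immediately produces a model $M' \preceq N$ containing $a$ and dominated by $a$ over $M$. No Tarski--Vaught construction or appeal to the density argument of Corollary~\ref{prime} is needed (and your version imports an implicit countability hypothesis on $M$ from that corollary).

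The genuine gap is in (2b), and it occurs earlier than the amalgamation step you flag. Your move ``replacing $q$ by an $M$-conjugate \ldots\ I arrange $q \in S(d)$ with $d \subseteq M$'' is unjustified: if $p = \tp(b/M')$ is orthogonal to $M$, then every regular type non-orthogonal to $p$ is also orthogonal to $M$, so no representative of its non-orthogonality class can be based on a subset of $M$. Your entire strategy of realizing $q$ independently over $M$ then has nothing to work with in exactly the case that matters.

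The paper handles (2b) by a case split. If $p \not\perp M$, apply the 3-model Lemma (Fact~\ref{3big}(2)) to $M \preceq M' \preceq N$, using $M \subseteq_{na} N$ from (1), to find $h \in N$ with $\tp(h/M)$ regular, $\not\perp p$, and realized in $N$; dullness of $M \preceq N$ gives $\tp(h/M) \perp \PPe$, hence $p \perp \PPe$. If $p \perp M$, then $p$ lies directly over $\tp(a/M)$ in the sense defined just before Lemma~\ref{support}; by that lemma, $p \in \PPe$ would force $\tp(a/M) \in \PPe$, contradicting dullness of $M \preceq N$. The ingredient you are missing is precisely Lemma~\ref{support}: the class $\PPe$ is closed downward under ``supports'', and this is what disposes of the case $p \perp M$ that your approach cannot reach.
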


\begin{proof}  
(1)  Choose any regular type $p\in S(M)$ that is realized in $N$.  Since $p\perp \PPe$, $p$ is na by Lemma~\ref{SRna}, so $M\subseteq_{na} N$ by Proposition~\ref{charna}.

(2)  Choose any $a\in N\setminus M$ for which $\tp(a/M)$ is regular.  Then $\tp(a/M)$ is strongly regular by Lemma~\ref{SRna}.  Now fix such an $a\in N\setminus M$.  
For (a), the existence of such an $M'$ follows by (1) and Fact~\ref{3big}(3).
For (b), fix any such $M'$ and
choose any regular $p\in S(M')$ that is realized in $N$.  Any such $p$ is strongly regular by Lemma~\ref{SRna}.    We show
$p\perp\PPe$ by splitting into cases.  If $p\not\perp M$, then since $M\subseteq_{na} N$ by (1), Fact~\ref{3big}(2) (the 3-model Lemma) implies there is some regular 
$q\in S(M)$ non-orthogonal to $p$ that is realized in $N$.  Since $M\preceq N$ is dull, $q\perp \PPe$,  hence $p\perp \PPe$ as well.   On the other hand, if
$p\perp M$, then $p$ lies directly above $\tp(a/M)$, hence $p\perp \PPe$ by Lemma~\ref{support}.

(3)  Let $M^*:=\bigcup\{M_\alpha:\alpha<\beta\}$ and choose any regular type $p\in M^*$ that is realized in $N$.  By superstability, choose $\alpha^*<\beta$ such that
$p$ is based and stationary on $M_{\alpha^*}$.  Since $M_{\alpha^*}\preceq N$ is dull, $p|M_{\alpha^*}$ and hence $p$ are $\perp \PPe$.
\qed\end{proof}

%**** Deleted ``the" in ``the $p|M_{\alpha^*}|$ and hence $p$ are $\perp \PPe$."

\begin{Definition}  {\em  Suppose $M\preceq N$ are models.  A {\em strongly regular filtration of $N$ over $M$} is a continuous, elementary chain
$(M_\alpha:\alpha\le\beta)$ of models satisfying $M_0=M$, $M_\beta=N$, and for each $\alpha<\beta$, there is some $a_\alpha\in M_{\alpha+1}$ such that
$\tp(a_\alpha/M_\alpha)$ is strongly regular and $M_{\alpha+1}$ is dominated by $a_\alpha$ over $M_\alpha$.

We say that a strongly regular filtration is {\em prime} if, in addition, $M_{\alpha+1}$ is constructible over $Ma_\alpha$.
}
\end{Definition}

\begin{Proposition}  \label{filtration}  
Suppose $M\preceq N$ is dull.  Then a strongly regular filtration of $N$ over $M$ exists.  Additionally, if $N$ is countable, then a prime strongly regular filtration exists.
\end{Proposition}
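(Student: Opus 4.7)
The plan is to construct the filtration by transfinite recursion on $\alpha$, at each step using Lemma~\ref{dullfacts} to extend by a strongly regular type while preserving dullness. Start with $M_0=M$. At a non-zero limit $\gamma$, set $M_\gamma=\bigcup_{\alpha<\gamma}M_\alpha$; by part (3) of Lemma~\ref{dullfacts}, $M_\gamma\preceq N$ is again a dull pair, so the inductive hypothesis is preserved. At a successor stage, if $M_\alpha=N$ we stop; otherwise $M_\alpha\preceq N$ is a dull pair with $M_\alpha\subsetneq N$, so by Lemma~\ref{dullfacts}(2) there is $a_\alpha\in N\setminus M_\alpha$ with $\tp(a_\alpha/M_\alpha)$ strongly regular, and a model $M_{\alpha+1}\preceq N$ containing $a_\alpha$ and dominated by $a_\alpha$ over $M_\alpha$; part (2)(b) then ensures $M_{\alpha+1}\preceq N$ is again dull.

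The recursion must terminate: since each $a_\alpha\in M_{\alpha+1}\setminus M_\alpha$, the chain is strictly increasing as long as it is defined, and it lives inside $N$; so at some ordinal $\beta\le|N|^+$ we must have $M_\beta=N$. This produces the desired strongly regular filtration $(M_\alpha:\alpha\le\beta)$.

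For the prime version, assume $N$ is countable. We modify the successor step: after choosing $a_\alpha\in N\setminus M_\alpha$ with $\tp(a_\alpha/M_\alpha)$ strongly regular, note that $M_\alpha\subseteq N$ is countable and $\tp(a_\alpha/M_\alpha)$ is realized in $N$, hence is $\perp\PPe$ by the definition of dullness (together with Lemma~\ref{SRna}). Corollary~\ref{prime} then yields a constructible model $M_\alpha(a_\alpha)$ over $M_\alpha a_\alpha$. Since $M_\alpha(a_\alpha)$ is prime (in particular atomic, hence $\ell$-atomic) over $M_\alpha a_\alpha$, it is dominated by $a_\alpha$ over $M_\alpha$; embed it into $N$ over $M_\alpha a_\alpha$ by primality and take its image as $M_{\alpha+1}$. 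This choice is both dominated by $a_\alpha$ over $M_\alpha$ and constructible over $M_\alpha a_\alpha$, so the resulting filtration is prime; dullness is inherited via Lemma~\ref{dullfacts}(2)(b) exactly as before, and limit stages are handled by part (3).

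The only real content is bundled into Lemma~\ref{dullfacts} and Corollary~\ref{prime}; once those are in hand the proof is essentially a bookkeeping recursion. The mildest obstacle worth attention is simply checking that at a limit the union remains a dull pair with $N$ (handled by Lemma~\ref{dullfacts}(3) via a superstability/finite-basis argument) and that, in the countable case, the prime model $M_\alpha(a_\alpha)$ really can be taken inside $N$, which is immediate from the defining property of primality.
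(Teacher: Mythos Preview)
Your proof is correct and follows essentially the same approach as the paper: both build the filtration by a transfinite construction inside $N$, using Lemma~\ref{dullfacts}(2) at successor stages, Lemma~\ref{dullfacts}(3) at limits, and Corollary~\ref{prime} for the prime version in the countable case. Your termination argument (strict increase bounded by $|N|^+$) and the paper's (maximality of the chain) are minor variants of the same idea, and you are slightly more explicit than the paper about embedding the constructible $M_\alpha(a_\alpha)$ into $N$ via primality.
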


\begin{proof}  Given $M\preceq N$, construct a maximal  continuous chain of submodels $(M_\alpha:\alpha<\gamma)$ of $N$ such that 
$M_0=M$, each $M_\alpha\preceq N$ is dull,
and $M_{\alpha+1}$ is dominated over $M_\alpha$ by some $a_\alpha\in N$ for which $\tp(a_\alpha/M_\alpha)$ is strongly regular.  
By Lemma~\ref{dullfacts}(3), $\gamma$ cannot be a limit ordinal, so say $\gamma=\beta+1$.  
We argue that $M_\beta=N$.  If this were not the case, then as $M_\beta\preceq N$ is dull, by Lemma~\ref{dullfacts}(2)
there is some $a_\beta\in N$ and $M_{\gamma}$ contradicting the maximality of the chain.

In the case where $N$ is countable, we argue similarly, but at stage $\alpha$, if we are given $M_{\alpha}$ and $a_\alpha\in N\setminus M_\alpha$ with $\tp(a_\alpha/M_\alpha)$
strongly regular, we use Corollary~\ref{prime} to choose a constructible model $M_\alpha(a_\alpha)$ over $M_\alpha a_\alpha$ to serve as $M_{\alpha+1}$.
\qed\end{proof}

Proposition~\ref{filtration} begets the following Corollaries.

\begin{Corollary}    \label{splitfiltration}  
\begin{enumerate} 
\item   Suppose $M\preceq N$ is dull and $K$ is any model satisfying $M\preceq K\preceq N$.  Then both $M\preceq K$ and $K\preceq N$ are dull.
\item    Suppose $M\preceq K$ is dull and $K\preceq N$ is dull.  Then $M\preceq N$ is dull.
\end{enumerate}
\end{Corollary}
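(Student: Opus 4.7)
My plan is to treat each part separately, relying throughout on the characterization from Proposition~\ref{chardull}: a pair is dull iff every regular type over the smaller model that is realized in the larger model is $\perp\PPe$.

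For (1), the claim that $M\preceq K$ is dull is immediate from Proposition~\ref{chardull}: any regular $p\in S(M)$ realized in $K$ is also realized in $N$ (since $K\preceq N$), and the dullness of $M\preceq N$ forces $p\perp\PPe$. For the harder half, that $K\preceq N$ is dull, I would use Proposition~\ref{filtration} --- applicable because we have just established $M\preceq K$ dull --- to obtain a strongly regular filtration $(M_\alpha:\alpha\le\beta)$ of $K$ over $M$, and then induct on $\alpha\le\beta$ to show each $M_\alpha\preceq N$ is dull. The base $\alpha=0$ is the hypothesis. At a successor, the element $a_\alpha\in M_{\alpha+1}\subseteq N$ has $\tp(a_\alpha/M_\alpha)$ strongly regular and $M_{\alpha+1}$ is dominated by $a_\alpha$ over $M_\alpha$, so Lemma~\ref{dullfacts}(2)(b) applied to the inductively dull pair $M_\alpha\preceq N$ yields that $M_{\alpha+1}\preceq N$ is dull. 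Limit stages are handled by Lemma~\ref{dullfacts}(3). Setting $\alpha=\beta$ gives $K\preceq N$ dull.

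For (2), I would fix a regular $p\in S(M)$ realized by some $a\in N$ and show $p\perp\PPe$, splitting on whether $p\perp K$. If $p\perp K$, then $a\downarrow_M K$, so $\tp(a/K)$ is the non-forking extension of $p$ to $K$, hence regular and parallel to $p$. As $\tp(a/K)$ is realized in $N$ and $K\preceq N$ is dull, Proposition~\ref{chardull} gives $\tp(a/K)\perp\PPe$, and parallelism transfers this to $p\perp\PPe$. If instead $p\not\perp K$, then $\tp(K/M)\not\perp p$; since $M\subseteq_{na} K$ by Lemma~\ref{dullfacts}(1), Proposition~8.3.5 of \cite{Pillay} (as cited in the proof of Proposition~\ref{chardull}) yields $b\in K\setminus M$ with $\tp(b/M)$ regular and non-orthogonal to $p$. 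Dullness of $M\preceq K$ forces $\tp(b/M)\perp\PPe$, and closure of $\PPe$ under non-orthogonality delivers $p\perp\PPe$. Since every regular $p\in S(M)$ realized in $N$ is thus $\perp\PPe$, Proposition~\ref{chardull} gives $M\preceq N$ dull.

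The only real obstacle I foresee is keeping the successor step of part (1) straight: one must verify $M_{\alpha+1}\preceq N$ (not merely $M_{\alpha+1}\preceq K$) in order to invoke Lemma~\ref{dullfacts}(2)(b) with $M_\alpha\preceq N$ as the ambient dull pair. This is automatic from $M_{\alpha+1}\preceq K\preceq N$, but it is the precise point where the hypothesis that $K\preceq N$ is an elementary extension (and not mere containment) is being used. The other standard inputs --- regularity of non-forking extensions of regular types, and the equivalence-relation character of non-orthogonality on stationary weight-one types --- are already in use elsewhere in the paper and require no new work.
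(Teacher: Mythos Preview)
Your argument for part~(1) is correct and is essentially the paper's: both use the filtration of $K$ over $M$ from Proposition~\ref{filtration} and climb via Lemma~\ref{dullfacts}(2)(b),(3) to conclude $K\preceq N$ is dull.

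For part~(2) you take a different route from the paper. The paper concatenates strongly regular filtrations of $K$ over $M$ and of $N$ over $K$; for a regular $q=\tp(e/M)$ with $e\in N$ it locates the first stage $\alpha$ at which $e$ forks, and uses that $\tp(a_\alpha/M_\alpha)\perp\PPe$ at every stage of the concatenated filtration. Your direct case-split avoids filtrations entirely, but as written it has a slip: with the paper's definition of $p\perp B$ (namely $p\perp q$ for every $q\in S(B)$), the condition ``$p\perp K$'' is impossible when $p\in S(M)$ and $M\preceq K$, since $p\not\perp p|K$. Hence your first case is vacuous and the opening step of your second case, ``$p\not\perp K\Rightarrow\tp(K/M)\not\perp p$'', is a non-sequitur. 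The split you actually want is on whether $\fg a M K$: if so, $\tp(a/K)$ is the regular non-forking extension of $p$, realized in $N$, hence $\perp\PPe$ by dullness of $K\preceq N$; if not, then $p\not\perp\tp(K/M)$ genuinely holds and Fact~\ref{3big}(1) yields your $b\in K$. With that correction your argument is correct and arguably cleaner than the paper's.
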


\begin{proof}   (1)  Given $M\preceq K\preceq N$ with $M\preceq N$ dull, it is trivial that $M\preceq K$ is dull, hence has a strongly regular filtration $(M_\alpha:\alpha\le\beta)$ with
$M_\beta=K$.  By Lemma~\ref{dullfacts}(2,3), depending on whether or not $\beta$ is a limit ordinal, $K\preceq N$ as well.

(2)  Now suppose $M\preceq K$ and $K\preceq N$ are dull.  By Proposition~\ref{filtration} on each part, there is a  filtration of $K$ over $M$ and a filtration of $N$ over $K$.
The concatenation of these filtrations gives a filtration $(M_\alpha:\alpha\le\beta)$ of $N$ over $M$.  Note that for every $\alpha<\beta$, $\tp(a_\alpha/M_\alpha)\perp \PPe$,
either because $M\preceq K$ is dull (when $M_\alpha\preceq K$) or because $K\preceq N$ is dull (when $K\preceq M_\alpha)$.
To see that $M\preceq N$ is dull, choose any $e\in N\setminus M$ for which
$q=\tp(e/M)$ is regular.  As $e\in M_\beta=N$, there is a least $\alpha$ such that $\fg e M {M_\alpha}$, but $\nfg e {M_{\alpha}} {M_{\alpha+1}}$.  
Then $q$ is non-orthogonal to $\tp(a_\alpha/M_\alpha)$, the latter being regular and $\perp \PPe$ by our sentences above.  
Thus $M\preceq N$  is dull by Proposition~\ref{chardull}.
\qed\end{proof}

We now embark on a series of Lemmas that will lead us to Proposition~\ref{iso} and Theorem~\ref{absolute}.  The structure of this argument is similar to what appears in
Section~1 of \cite{Borel}, but there we were assuming the theory was $\omega$-stable.  
Indeed, these  Lemmas would have  much easier proofs under the assumption of  $(\aleph_0,2)$-existence, but here we are not even assuming this.

\begin{Lemma} \label{atomicexists}   Suppose $N$ is countable and $M\preceq N$ is dull.  Let $J\subseteq N\setminus M$ be any $M$-independent set of finite sets.
Then a constructible model over $MJ$ exists.
\end{Lemma}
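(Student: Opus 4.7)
The plan is to establish density of isolated types over the countable set $MJ$; since $T$ is countable, density suffices for the existence of a constructible model over $MJ$ (cf.\ Fact~\ref{arbitrary}). Accordingly, fix an arbitrary consistent $L(MJ)$-formula $\psi(x,m,j)$ with $m$ a finite tuple from $M$ and $j=(j_1,\dots,j_n)$ a finite subtuple of $J$, and aim to produce a complete isolated type over $MJ$ containing $\psi$.

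First, observe that $\tp(j/M)\perp\PPe$ (and hence $\stp(j/M)\perp\PPe$). Each $j_i\in N\setminus M$ satisfies $\tp(j_i/M)\perp\PPe$ by the dullness of $M\preceq N$, and since $J$ is $M$-independent, $\tp(j/M)$ is the $M$-independent union of the $\tp(j_i/M)$. The standard fact that orthogonality to a class of regular types is preserved under $M$-independent unions of stationary types (via the weight-one property of regular types applied to a hypothetical $r\in\PPe$ with $r\not\perp\tp(j/M)$) then yields $\tp(j/M)\perp\PPe$.

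Next, I mimic the proof of Corollary~\ref{prime}. Among all consistent $L(Mj)$-formulas implying $\psi(x,m,j)$, choose $\theta(x,m',j)$ of least $R^\infty$-rank, and, using that $Mj$ is countable, pick $c$ realizing $\theta$ that is $\ell$-isolated over $Mj$. Then $cj$ is dominated by $j$ over $M$ and $\tp(c/Mj)$ is $c$-isolated by the minimality of rank. Hence Proposition~\ref{DI}, applied with $b:=j$ and using the previous paragraph for the hypothesis $\stp(j/M)\perp\PPe$, yields that $\tp(c/Mj)$ is isolated. It remains to promote this isolation up to $MJ$: since $cj$ is dominated by $j$ over $M$ and $\fg j M {J\setminus j}$ by the $M$-independence of $J$, we get $\fg {cj} M {J\setminus j}$, hence $\fg c {Mj} {MJ}$. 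Thus $\tp(c/MJ)$ is the non-forking extension of the (stationary) isolated type $\tp(c/Mj)$, and the Open Mapping Theorem (Fact~\ref{Open}) shows that $\tp(c/MJ)$ is itself isolated, supplying the required density witness.

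The main obstacle is the first step: verifying preservation of $\perp\PPe$ under $M$-independent unions. The intuition is that if a regular $r\in\PPe$ (of weight one) were non-orthogonal to $\tp(j/M)$, then after moving to a suitable $a$-model extension of $M$, the $M$-independence of the $j_i$ together with weight-one of $r$ forces $r$ to be non-orthogonal to some $\tp(j_i/M)$, contradicting dullness. Once that is in place, the rest of the argument is a direct adaptation of Corollary~\ref{prime} combined with standard propagation of independence via domination.
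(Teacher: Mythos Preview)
Your density argument is essentially sound and close in spirit to the paper's proof, but there are two slips worth flagging.

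First, your ``main obstacle'' is not an obstacle at all. By definition, $M\preceq N$ dull means $\tp(N/M)\perp\PPe$, i.e.\ \emph{every} finite tuple from $N$ has type over $M$ orthogonal to $\PPe$. So $\tp(j/M)\perp\PPe$ is immediate for any finite $j\subseteq N$; there is no need to argue via $M$-independent unions.

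Second, and more seriously, your final step misapplies the Open Mapping Theorem. Fact~\ref{Open} says that isolation passes \emph{down} along non-forking restrictions, not up: from $\tp(c/Mj)$ isolated and $\fg c {Mj} {MJ}$ you cannot conclude $\tp(c/MJ)$ is isolated via Open Mapping. The correct tool is Lemma~\ref{TVup}: since $\fg j M {(J\setminus j)}$ with $M$ a model, finite satisfiability gives $Mj\subseteq_{TV} MJ$, and then Lemma~\ref{TVup} lifts the isolating formula for $\tp(c/Mj)$ to one for $\tp(c/MJ)$. With this repair your argument goes through.

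For comparison, the paper takes a slightly different but closely related route: rather than proving density over $MJ$ directly, it enumerates $J=\{\abar_i:i\in\omega\}$, invokes Corollary~\ref{prime} to get constructible $M_n$ over each finite $MJ_n$, arranges $M_n\preceq M_{n+1}$, and shows the union $M^*=\bigcup_n M_n$ is atomic over $MJ$ (again via $MJ_n\subseteq_{TV} MJ$ and Lemma~\ref{TVup}); countability then gives constructibility by Fact~\ref{arbitrary}(2). Your approach unpacks Corollary~\ref{prime} into a direct density argument over $MJ$, which is arguably more streamlined; the paper's version keeps Corollary~\ref{prime} as a black box. Both hinge on the same $\subseteq_{TV}$ step.
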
  

\begin{proof}  Enumerate $J=\{\abar_i:i\in\omega\}$ and, for each $n\in\omega$, let $J_n=\{\abar_i:i<n\}$.  
For each $n\in\omega$, choose $M_n$ to be constructible over $MJ_n$.  Without loss, we may assume
$M_n\preceq M_{n+1}$ for each $n$.  We claim that  $M^*=\bigcup\{M_n:n\in\omega\}$ is atomic over $MJ$.  To see this, choose any $\ebar\subseteq M^*$.
Choose $n\in\omega$ so that $\ebar\subseteq M_n$.
As $\fg {J_n} M {(J\setminus J_n)}$ we have 
$MJ_n\subseteq_{TV} MJ$, hence $\tp(\ebar/MJ)$ is isolated by Lemma~\ref{TVup}.  Thus, $M^*$ is atomic over $MJ$.  But as $M^*$ is countable,
it is also constructible over $MJ$ by Fact~\ref{arbitrary}(2).   
\qed\end{proof}

\begin{Lemma}    \label{J1constructible}  Suppose $N$ is countable and $M\preceq N$ is dull.  If $J\subseteq N\setminus M$ is any maximal $M$-independent set, then
$N$ is constructible over $MJ$.
\end{Lemma}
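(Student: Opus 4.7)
Plan:

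The plan is to show $N$ is atomic over $MJ$; then, since $N$ is countable, constructibility over $MJ$ follows from Fact~\ref{arbitrary}(2). Fix a finite tuple $\ebar\subseteq N$ and aim to show $\tp(\ebar/MJ)$ is isolated.

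By superstability, choose a finite $J_0\subseteq J$ with $\fg \ebar {MJ_0} {J}$. Since $J$ is $M$-independent, $\fg {J_0} M {J\setminus J_0}$, so the TV-substructure argument from the proof of Lemma~\ref{atomicexists} gives $MJ_0\subseteq_{TV}MJ$; by Lemma~\ref{TVup} it therefore suffices to prove $\tp(\ebar/MJ_0)$ is isolated.

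To isolate $\tp(\ebar/MJ_0)$, apply Proposition~\ref{DI} with $b=J_0$ and $c=\ebar$. The hypothesis $\stp(J_0/M)\perp\PPe$ is immediate from dullness. For c-isolation of $\ebar$ over $MJ_0$: since $\tp(\ebar J_0/M)\perp\PPe$ (again by dullness), Lemma~\ref{stationary} gives a finite $d\subseteq M$ on which $\tp(\ebar J_0/M)$ is based and stationary, and Lemma~\ref{AIlemma} yields that $\tp(\ebar J_0/d)$ is isolated. Since $\tp(\ebar J_0/M)$ is based on $d$, we have $\fg \ebar {dJ_0} M$, so non-forking preserves $R^\infty$; the rank-minimization step used in the proof of Corollary~\ref{prime} then produces a c-isolating formula for $\tp(\ebar/MJ_0)$.

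The principal obstacle is verifying the domination hypothesis, that $J_0\ebar$ is dominated by $J_0$ over $M$. The maximality of $J$ is what makes this work. If domination failed, the weight decomposition of $\tp(\ebar J_0/M)$ would yield a regular $c^*\in\acl(M\ebar J_0)\setminus\acl(MJ_0)$ whose regular type is orthogonal to $\tp(J_0/M)$. Such a $c^*$ lies in $N\setminus M$; by orthogonality of regular types, $\fg {c^*} M {J_0}$; and since $c^*\in\acl(M\ebar J_0)$ together with $\fg \ebar {MJ_0} {J\setminus J_0}$ gives $\fg {c^*} {MJ_0} {J\setminus J_0}$, transitivity yields $\fg {c^*} M {J}$. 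But then $\{c^*\}\cup J$ is an $M$-independent subset of $N\setminus M$ properly extending $J$, contradicting its maximality. Hence domination holds, and Proposition~\ref{DI} gives that $\tp(\ebar/MJ_0)$ is isolated, completing the argument.
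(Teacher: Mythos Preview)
Your overall strategy---directly showing $N$ is atomic over $MJ$ by applying Proposition~\ref{DI} to each finite tuple $\ebar$---is different from the paper's approach, which instead builds a maximal atomic submodel $N_0\preceq N$ over $MJ$ and argues $N_0=N$ by analyzing regular types over $N_0$. The c-isolation step in your argument is fine (the isolating formula for $\tp(\ebar/dJ_0)$ already witnesses c-isolation over $MJ_0$, since non-forking preserves $R^\infty$; the reference to the rank-minimization step of Corollary~\ref{prime} is a red herring, but harmless).

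The genuine gap is the domination step. Your claim that failure of domination would yield, via a ``weight decomposition of $\tp(\ebar J_0/M)$,'' a regular $c^*\in\acl(M\ebar J_0)\setminus\acl(MJ_0)$ with $\tp(c^*/M)$ orthogonal to $\tp(J_0/M)$, is not justified. Weight decompositions are carried out over a-models, not arbitrary models $M$; and even over a-models the witnessing regular-type realizations are merely domination-equivalent to the tuple---they need not lie in $\acl(M\ebar J_0)$. You need $c^*\in N$ to contradict maximality of $J$, and you secure this only via $c^*\in\acl(M\ebar J_0)\subseteq N$; without that containment there is no reason the witness lies in $N$ at all, and no contradiction with maximality follows.

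The paper sidesteps this difficulty by working one regular type at a time over an intermediate model $N_0$ with $MJ\subseteq N_0\preceq N$: if $e\in N\setminus N_0$ has $q=\tp(e/N_0)$ regular and $q\not\perp M$, the 3-model Lemma (Fact~\ref{3big}(2), using $M\subseteq_{na} N$ from Lemma~\ref{dullfacts}(1)) produces $h\in N$ with $\tp(h/M)$ regular and $\fg h M {N_0}$, hence $\fg h M J$, contradicting maximality of $J$; if $q\perp M$, orthogonality arguments show $N_0e$ remains atomic over $MJ$, contradicting maximality of $N_0$. The passage through a model $N_0$ is precisely what gives access to the 3-model Lemma and a clean dichotomy on $\perp M$---machinery your direct approach over $MJ_0$ does not have available.
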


\begin{proof}  By Lemma~\ref{atomicexists} there is some constructible model $M^*$ over $MJ$, which we may assume is contained in $N$.
Thus, $N$ contains an atomic model over $MJ$.  By Zorn's Lemma, let $N_0\preceq N$ be any maximal atomic model over $MJ$ contained in $N$.
By Fact~\ref{arbitrary}(2)  it suffices to show that $N_0=N$.  Assume by way of contradiction that $N_0\prec N$ is proper.
 Choose any $e\in N\setminus N_0$ such that
$q=\tp(e/N_0)$ is regular.  
By Corollary~\ref{splitfiltration}(1), $N_0\preceq N$ is dull, hence $q\perp \PPe$.  On one hand, if $q\not\perp M$, then as $M\subseteq_{na} N$ by 
Lemma~\ref{dullfacts}(1), the 3-model lemma gives $d\in N$ with $\fg d M {N_0}$, but this contradicts the maximality of $J$.  
So assume $q\perp M$.  We will obtain a contradiction by showing that $N_0e$ is atomic over $MJ$.   For this, it suffices to show that $\tp(de/MJ)$ is isolated for any finite $d\subseteq N_0$.
%**** Added "is isolated" in "For this, it suffices to show..."
So choose any finite $d\subseteq N_0$.  Choose a finite $d^*$, $d\subseteq d^*\subseteq N_0$ on which $q$ is based and stationary.  By superstablity, choose
 any finite $J^*\subseteq J$ for which $\fg {d^*} {MJ^*}  J$.  Note that $d^*J^*$ is finite and $\fg {d^*J^*} M {(J\setminus J^*)}$.
 Choose any finite $\abar^*\subseteq M$ such that $\fg {d^*J^*} {a^*} M$.  It follows by transitivity of non-forking that
 $$\fg {d^*J^*} {a^*} {M(J\setminus J^*)}$$
 As $d^*J^*a^*\subseteq N_0$,  $\tp(e/d^*J^*a^*)=q|d^*J^*a^*$, and since $q\perp M$ (hence $q\perp a^*$) it follows from Fact~\ref{basicorth}(2)  that %**** this sentence seems to be missing something. Perhaps "... and *as* q \perp M"
 $$\tp(e/d^*J^*a^*)\vdash \tp(e/d^* MJ)$$
 However, since $\tp(e/N_0)$ is always isolated, $\tp(e/d^*J^*a^*)$ is isolated, so $\tp(e/d^*MJ)$ is isolated as well.  Since $d^*\subseteq N_0$, $\tp(d^*/MJ)$ is also isolated. %**** Added `*,*: \tp(e/d^*J^*a^*) *is isolated*
 Thus, $\tp(ed^*/MJ)$ and hence $\tp(ed/MJ)$ is isolated as well.  This contradicts the maximality of $N_0$.
\qed\end{proof}

\begin{Lemma}  \label{together}   Suppose $N$ is countable and $p\in S(N)$ is regular and $\perp \PPe$.  Then for any finite set $A\subseteq N$, $N\cong_A N(c)$,
where $c$ is any realization of $p$ and $N(c)$ is any constructible model over $Nc$.
\end{Lemma}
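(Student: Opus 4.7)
The plan is to build a back-and-forth between the countable models $N$ and $N(c)$ over $A$, using the dull-pair machinery developed in this section.

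First, I observe that $N\preceq N(c)$ is a dull pair. Since $\tp(c/N)=p\perp\PPe$ and $N(c)$ is constructible (hence atomic) over $Nc$, every finite tuple from $N(c)$ is dominated by $c$ over $N$, so $\tp(N(c)/N)\perp\PPe$ and Proposition~\ref{chardull} applies. By Proposition~\ref{filtration}, $N(c)$ even admits a prime strongly regular filtration over $N$. The crucial consequence, via Lemmas~\ref{stationary} and~\ref{AIlemma}, is: for any finite tuple $\bar b$ from $N(c)$, $\tp(\bar b/N)$ is based and stationary over some finite $d_{\bar b}\subseteq N$, and is isolated by a formula $\theta_{\bar b}(x,d_{\bar b})$ which is non-algebraic whenever $\bar b\not\subseteq\acl(N)$; in particular $\theta_{\bar b}(x,d_{\bar b})$ has infinitely many realizations in $N$ in that case.

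The back-and-forth is carried out on the family $F$ of finite partial elementary maps $f:X\to Y$ with $X\subseteq N$, $Y\subseteq N(c)$, $A\subseteq X\cap Y$, and $f\mr A=\mathrm{id}_A$. I plan to verify the extension property: for any $f\in F$ and any $e\in N\cup N(c)$, $f$ can be extended within $F$ to contain $e$ in its domain or range. Enumerating both countable models and iterating then yields the desired isomorphism. The back-step (absorbing $b\in N(c)\setminus Y$) proceeds by first using forth-steps to enlarge $X$ so that the base $d_b\subseteq N$ for $\tp(b/N)$ is contained in $X$, with $f$ acting on $d_b$; then the $f$-translate of $\theta_b(x,d_b)$ is a non-algebraic formula over $f(d_b)\subseteq Y$, realized in $N(c)$ by $b$, and — by elementarity of $f$ combined with the always-isolation of the pullback type over $X$ — realized in $N$ by some $a$, which extends $f$ to include the pair $(a,b)$. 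The forth-step (absorbing $a\in N\setminus X$) is dual and uses that $N\preceq N(c)$ together with the same always-isolation principle.

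The hard part will be the bookkeeping: guaranteeing that the forth-steps preparatory to each back-step stay within $F$, and that the $f$-translate of the relevant type really is realized in the target model at each stage, not merely in the monster. This is handled by the always-isolation of types orthogonal to $\PPe$ over finite subsets (Lemma~\ref{AIlemma}) together with the Open Mapping Theorem (Fact~\ref{Open}), which lets us pass between isolated types over different finite sets related by non-forking. Because both models are countable and each base $d_{\bar b}$ is finite, each enumerated element is absorbed in finitely many stages, so the back-and-forth terminates in the limit and produces the desired isomorphism $N\cong_A N(c)$.
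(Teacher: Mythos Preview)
Your back-and-forth approach has a genuine gap in the forth step. You claim the forth step ``uses that $N\preceq N(c)$ together with the same always-isolation principle,'' but always-isolation (Lemma~\ref{AIlemma}) applies only to types orthogonal to $\PPe$. For $a\in N$ and finite $X\subseteq N$, the type $\tp(a/X)$ is an arbitrary type realized in $N$; there is no reason whatsoever for it to be $\perp\PPe$ or isolated. Given a partial elementary $f:X\to Y$ with $Y\subseteq N(c)$ possibly containing elements outside $N$, you need the $f$-pushforward of $\tp(a/X)$ to be realized in $N(c)$, and nothing in your toolkit guarantees this. In effect you are assuming a form of homogeneity of $N$ (or of $N(c)$) that is simply not available. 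The back step is also muddled: you write that $\theta_b(x,f(d_b))$ is ``realized in $N(c)$ by $b$,'' but $b$ realizes $\theta_b(x,d_b)$, not its $f$-translate; and even after sorting this out, matching $\tp(a/d_b)$ with $\tp(b/d_b)$ is far weaker than matching $\tp(a/X)$ with $f^{-1}_*\tp(b/Y)$, since $Y$ may meet $N(c)\setminus N$.

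The paper's proof avoids any direct back-and-forth. Instead it exploits the single regular type $p$: enlarge $A$ so that $p$ is based and stationary on it, use always-isolation to find an infinite Morley sequence $J\subseteq N$ in $p|A$, split $J=J_0\cup J_1$ into two infinite pieces, and build a submodel $M\preceq N$ maximal with $\fg{M}{AJ_0}{J_1}$. One checks that $J_1$ is a maximal $M$-independent subset of $N$, so $M\preceq N$ is dull and Lemma~\ref{J1constructible} gives that $N$ is constructible over $MJ_1$; similarly $N(c)$ is constructible over $MJ_1c$. The punchline is that $J_1$ and $J_1\cup\{c\}$ are both countably infinite Morley sequences in $p|M$, hence any bijection between them is elementary over $M$, and uniqueness of constructible models finishes. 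The key idea you are missing is this absorption trick: rather than comparing $N$ and $N(c)$ element by element, one absorbs the extra realization $c$ into an already-infinite Morley sequence.
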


\begin{proof}  Note that by Corollary~\ref{prime}, a constructible model $N(c)$ exists.  Choose any finite $A\subseteq N$, and by increasing $A$ if necessary, we may
assume $p$ is based and stationary on $A$.  As $p$ is always isolated, there is an infinite Morley sequence $J\subseteq N$ in $p|A$.
Partition $J=J_0\cup J_1$ into two infinite pieces and choose $B\subseteq N$ maximal such that
$\fg B {AJ_0} {J_1}$.

\medskip
\noindent{\bf Claim 1.}  $B$ is the universe of an elementary submodel $M\preceq N$.

\begin{proof}  If not, choose  $\alpha$ least for which $R^\infty(\phi(x,b))=\alpha$ for some  consistent $L(B)$-formula $\phi(x,b)$  that is not realized in $B$.  
Since $N$ is a model, choose $e\in \phi(N,b)$.
We argue that $\fg  e {B} {J_1}$, which contradicts the maximality of $B$.   
Assume by way of contradiction that this non-forking failed.  Then there would be some finite $b^*\subseteq B$ such that $\nfg {e}  {AJ_0b^*} {J_1}$, and we may assume
$b^*\supseteq b$.  Choose a forking formula $\delta(x,a,b^*,\hbar_0, \hbar_1)\in\tp(e/AJ_0b^*J_1)$, where $a\in A$, $\hbar_0\subseteq J_0$ and  $\hbar_1\subseteq J_1$. 
By replacing $\delta$ by $\delta\wedge\phi$, we may assume $\delta(x,a,b^*,\hbar_0,\hbar_1)\vdash \phi(x,b)$, yet $R^\infty(\delta(x,a,b^*,\hbar_0,\hbar_1))<\alpha$.
We will obtain a contradiction by `dropping $\hbar_1$ down into $J_0$,' thereby contradicting the minimality of $\alpha$. %**** Added the apostrophe $J_0$,*'*

By superstability again, by enlarging $\hbar_0\subseteq J_0$  we may additionally assume that $\fg {b^*} {A\hbar_0} {J_0}$, hence also
$$\fg {b^*}  {A\hbar_0} {J_0J_1}$$
by transitivity of non-forking.  
As $J=J_0\cup J_1$ is indiscernible over $A$ and $J_0$ is infinite, we can find some $\hbar_0'\subseteq J_0$ such that%**** Changed from $\hbar_0'\subseteq \overline{h}_1$
$$\stp(\hbar_0'/A\hbar_0)=\stp(\hbar_1/A\hbar_0)$$
Thus, as both $\hbar_1$ and $\hbar_0'$ are independent from $b^*$ over $A\hbar_0$, $\tp(b^*a\hbar_0\hbar_0')=\tp(b^*a\hbar_0\hbar_1)$.
It follows that $R^\infty(\delta(x,b^*, a,\hbar_0,\hbar_0'))<\alpha$ and is a consistent $L(B)$-formula that is not realized in $B$.   This contradicts our choice of $\phi(x,b)$.
\qed\end{proof}

\medskip
\noindent{\bf Claim 2.}  $J_1$ is a maximal $M$-independent subset of $N\setminus M$.  

\begin{proof}  Choose any $e\in N$ with $\fg e M {J_1}$.  Since $\fg M {AJ_0} {J_1}$, $Me$ is independent from $J_1$ over $AJ_0$, hence $e\subseteq M$ by the maximality of $M$.
\qed\end{proof}

It follows from Claim 2 that $M\preceq N$ is dull.  To see this, choose  $e\subseteq N$ with $\tp(e/M)$ regular.  Since $J_1$ is  Morley sequence in $p|M$, the fact
that $\nfg e M {J_1}$ implies that $\tp(e/M)\not\perp p$, hence $\tp(e/M)\perp \PPe$.  Thus $M\preceq N$ is dull by Proposition~\ref{chardull}.  
From this, it follows from 
Lemma~\ref{J1constructible} that $N$ is constructible over $MJ_1$.  

\medskip
\noindent{\bf Claim 3.}  $N(c)$ is constructible over $MJ_1c$.

\begin{proof}  We know $N$ is constructible over $MJ_1$ and since $\fg c {M} {J_1}$, we have $MJ_1\subseteq_{TV} MJ_1c$, thus by Lemma~\ref{TVup},
the universe of $N$ is a construction sequence over $MJ_1c$.  As $N(c)$ is constructible over $Nc$, it follows that the concatenation of these two sequences is
a construction sequence of $N(c)$ over $MJ_1c$.
\qed\end{proof}

Finally, since both $J_1$ and $J_1c$ are infinite Morley sequences in $p|M$, any bijection $f_0:J_1\rightarrow J_1c$ extends to an elementary map
$f:MJ_1\rightarrow MJ_1c$ with $f\mr{M}=id$.   By the uniqueness of constructible models, it follows that $f$ can be extended to an isomorphism $f^*:N\rightarrow N(c)$
fixing $M$ (and hence $A$) pointwise.
\qed\end{proof}

\begin{Definition}  {\em  Given two structures $M,N$ and finite tuples of the same length $\abar\in M^k$, $\bbar\in N^k$,
we say $\tp^\infty_M(\abar)=\tp^\infty_N(\bbar)$ if the structures $(M,\abar)$ and $(N,\bbar)$ are back and forth equivalent.

We say  $M\preceq_{\infty,\omega} N$ if,  $M\subseteq N$ and  $\tp_M^\infty(\abar)=\tp_N^\infty(\abar)$ for all finite $\abar\in M^{<\omega}$
}
\end{Definition}  

It is evident that the relation $\precbf$ is transitive.   Moreover, 
when $M\subseteq N$ are countable, then  for $\abar\in M^k$, $\bbar\in N^k$, $\tp^\infty_M(\abar)=\tp^\infty_N(\bbar)$ if and only if there is an isomorphism
$f:M\rightarrow N$ with $f(\abar)=\bbar$.

We record the following easy Lemma.

\begin{Lemma}  \label{bfchain}  Suppose $\{M_n:n\in\omega\}$ are countable with $M_n\precbf M_{n+1}$ for every $n\in\omega$ and let $M^*=\bigcup\{M_n:m\in\omega\}$.
Then $M_n\precbf M^*$ for every $n\in\omega$.
\end{Lemma}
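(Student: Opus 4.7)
The plan is to fix $n \in \omega$ and a finite $\abar \in M_n^{<\omega}$ and produce an isomorphism $f \colon M_n \to M^*$ with $f(\abar) = \abar$. Since $M_n$ and $M^*$ are both countable, the remark immediately preceding the Lemma (characterizing $\tp^\infty$-equality for countable structures) reduces the statement to exhibiting such an $f$. The preliminary observation I would use is that $\precbf$ is transitive, so the chain hypothesis $M_j \precbf M_{j+1}$ upgrades to $M_k \precbf M_\ell$ for all $n \le k \le \ell$; each $M_k$ with $k \ge n$ is therefore a legitimate reference model for $M_n$.

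The map $f$ would be produced by a standard back-and-forth construction. Enumerate $M_n = \{c_j : j \in \omega\}$ and $M^* = \{d_j : j \in \omega\}$, and build a chain $(f_j : j \in \omega)$ of finite partial elementary maps from $M_n$ to $M^*$ with $f_0$ the identity on $\abar$, each $f_{j+1}$ extending $f_j$ and alternating between covering the next element of $M_n$ in the domain and the next element of $M^*$ in the range. The invariant I would maintain is that there is an index $k_j \ge n$ such that $\range(f_j) \subseteq M_{k_j}$ and $(M_n, \dom(f_j)) \bfequiv (M_{k_j}, \range(f_j))$. This holds at stage $0$ with $k_0 = n$, using $M_n \precbf M_n$. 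The forth step is immediate: the invariant equivalence supplies an element $b \in M_{k_j}$ to pair with the next $c \in M_n$, and the new index $k_{j+1} = k_j$ works.

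The back step is where the chain hypothesis gets genuine use. Given the next $d \in M^*$ to be placed in the range, I would choose $\ell \ge k_j$ large enough that $d \in M_\ell$. By the transitivity observation, $M_{k_j} \precbf M_\ell$, so $(M_{k_j}, \range(f_j)) \bfequiv (M_\ell, \range(f_j))$, and composing with the current invariant equivalence yields $(M_n, \dom(f_j)) \bfequiv (M_\ell, \range(f_j))$. The back-and-forth property over this enlarged reference model now produces $a \in M_n$ with $(M_n, \dom(f_j)\, a) \bfequiv (M_\ell, \range(f_j)\, d)$, and setting $k_{j+1} = \ell$ restores the invariant. Taking $f = \bigcup_j f_j$ yields the desired isomorphism, and I do not foresee any serious obstacle; the one delicate bookkeeping point is remembering to promote the reference model $M_{k_j}$ to some $M_\ell$ containing the new range element before attempting the back step, which is exactly where the chain hypothesis is consumed.
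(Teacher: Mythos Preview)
Your proposal is correct and follows essentially the same approach as the paper: both arguments hinge on maintaining, at each finite stage, a back-and-forth equivalence between $(M_n,\text{domain})$ and $(M_\ell,\text{range})$ for some moving index $\ell$, promoting $\ell$ upward via transitivity of $\precbf$ whenever a new element of $M^*$ must be absorbed. The only cosmetic difference is that the paper packages this as an abstract back-and-forth system $\F$ of pairs $(\abar,\bbar)$ and verifies the forth/back closure properties, whereas you build the isomorphism directly while tracking the invariant; the underlying mechanism is identical.
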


\begin{proof}  Note that since $M_\ell\precbf M_{\ell'}$ whenever $\ell\le\ell'<\omega$, $\tp^\infty_{M_\ell}(\bbar)=\tp^\infty_{M_{\ell'}}(\bbar)$ for every $\bbar\subseteq M_\ell$.
It suffices to show that $M_0\precbf M^*$.  For this, we claim that
$$\F=\{(\abar,\bbar):\abar\subseteq M_0, \bbar\subseteq M^*,\ \hbox{and $\tp^\infty_{M_0}(\abar)=\tp^\infty_{M_\ell}(\bbar)$ whenever $\bbar\subseteq M_\ell$}\}$$
is a back-and-forth system.  

Since $(\abar,\abar)\in\F$ for any $\abar\subseteq M_0$, $\F$ is nonempty.
Next, choose $(\abar,\bbar)\in\F$ and choose $c\in M_0$.  Choose $\ell$ such that $\bbar\subseteq M_\ell$. As $\tp^\infty_{M_0}(\abar)=\tp^\infty_{M_\ell}(\bbar)$,
choose an isomorphism $f:M_0\rightarrow M_\ell$ with $f(\abar)=\bbar$.  Put $d:=f(c)$.  Then $(\abar c,\bbar d)\in \F$.  Finally, choose 
$(\abar,\bbar)\in\F$ and choose $d\in M^*$.  Choose $\ell$ such that $\bbar d\subseteq M_\ell$.  As $\tp^\infty_{M_0}(\abar)=\tp^\infty_{M_\ell}(\bbar)$,
choose an isomorphism $g:M_\ell\rightarrow M_0$ with $g(\bbar)=\abar$.  Then taking $c:=g(d)$ yields $(\abar c,\bbar d)\in \F$.

GIven that $\F$ is a back and forth system, showing  $M_0\precbf M^*$ is easy.  Choose $\abar\in M_0^k$.   Since $(\abar,\abar)\in\F$ and both $M_0$ and $M^*$ are countable,
there is an isomorphism $h:M_0\rightarrow M^*$ with $h\mr{\abar}=id$, as required.
\qed\end{proof}

\begin{Proposition}  \label{iso}  Suppose  $N$ is countable and $M\preceq N$ is dull.  Then for any finite  set $A\subseteq M$,  $M\cong_A N$.  In particular, $M$ and $N$ are isomorphic.  
\end{Proposition}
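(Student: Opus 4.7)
The plan is to exploit the prime strongly regular filtration provided by Proposition~\ref{filtration}. Since $M\preceq N$ is dull and $N$ is countable, fix such a filtration $(M_\alpha:\alpha\le\beta)$ with $M_0=M$, $M_\beta=N$, and each $M_{\alpha+1}$ constructible over $M_\alpha a_\alpha$ for some $a_\alpha$ with $\tp(a_\alpha/M_\alpha)$ strongly regular. Since $|N|=\aleph_0$ and each successor is a proper elementary extension, $\beta$ is a countable ordinal and every $M_\alpha$ is countable.

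I would prove by transfinite induction on $\alpha'\le\beta$ the strengthened claim that $M_\alpha\precbf M_{\alpha'}$ for every $\alpha\le\alpha'$. The successor step $\alpha'\to\alpha'+1$ is the payoff of Lemma~\ref{together}: by Corollary~\ref{splitfiltration}(1), $M_{\alpha'}\preceq N$ remains dull, so $p=\tp(a_{\alpha'}/M_{\alpha'})$ is regular and $\perp\PPe$; applying Lemma~\ref{together} to $M_{\alpha'}$ with the constructible model $M_{\alpha'+1}$ over $M_{\alpha'}a_{\alpha'}$ yields $M_{\alpha'}\cong_B M_{\alpha'+1}$ for every finite $B\subseteq M_{\alpha'}$, which is precisely $M_{\alpha'}\precbf M_{\alpha'+1}$ in the countable setting. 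Transitivity of $\precbf$ (noted immediately after the definition, and evident for countable models by composing isomorphisms that fix a given finite tuple) then promotes the inductive hypothesis from $\alpha'$ to $\alpha'+1$.

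The main obstacle is the limit step, because Lemma~\ref{bfchain} is only stated for $\omega$-chains. For a countable limit $\lambda\le\beta$, I would fix a strictly increasing cofinal sequence $(\alpha_n:n\in\omega)$ in $\lambda$. The inductive hypothesis gives $M_{\alpha_n}\precbf M_{\alpha_{n+1}}$ for every $n$, while continuity of the filtration gives $M_\lambda=\bigcup_n M_{\alpha_n}$. Lemma~\ref{bfchain} then delivers $M_{\alpha_0}\precbf M_\lambda$, and for an arbitrary $\alpha<\lambda$ I would pick $\alpha_n\ge\alpha$ and chain $M_\alpha\precbf M_{\alpha_n}\precbf M_\lambda$ using transitivity.

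Finally, applying the claim with $\alpha=0$ and $\alpha'=\beta$ yields $M=M_0\precbf M_\beta=N$. Given any finite $A\subseteq M$, the remark following the definition of $\precbf$ (countable models that are $\precbf$-related over a finite tuple admit an isomorphism fixing that tuple) produces an isomorphism $M\to N$ fixing $A$ pointwise, so $M\cong_A N$; taking $A=\emptyset$ gives $M\cong N$.
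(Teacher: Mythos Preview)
Your proof is correct and follows essentially the same route as the paper: take a prime strongly regular filtration via Proposition~\ref{filtration}, use Lemma~\ref{together} at successor stages to get $M_\alpha\precbf M_{\alpha+1}$, and conclude $M\precbf N$. The only difference is that you spell out the limit case explicitly by passing to a cofinal $\omega$-sequence so that Lemma~\ref{bfchain} applies, whereas the paper simply asserts the limit step; your treatment is a faithful elaboration of what the paper leaves implicit.
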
  

\begin{proof}  By Proposition~\ref{filtration}, choose a prime, strongly regular filtration $(M_\alpha:\alpha\le\beta)$ of $N$ over $M$.
As $M_{\alpha+1}$ is constructible over $M_\alpha a_\alpha$ with $\tp(a_\alpha/M_\alpha)\perp \PPe$, it follows from Lemma~\ref{together}
that $M_\alpha\precbf M_{\alpha+1}$ for each $\alpha<\beta$.  Additionally, for any countable limit ordinal $\gamma<\beta$, $M_\alpha\precbf M_\gamma$
for all $\alpha<\gamma$.  That is, $(M_\alpha:\alpha\le\beta)$ is a $\precbf$-chain of countable models with with $M_0=M$ and $M_\beta=N$.
Thus, $M\precbf N$, which suffices.
\qed\end{proof}  

\begin{Theorem}  \label{absolute}  Suppose (only) that $T$ is countable and superstable.
Suppose $M\preceq N$ is dull with $M,N$ of arbitrary size.  Then for any finite set $A\subseteq M$, 
$(M,a)_{a\in A}\equiv_{\infty,\omega}(N,a)_{a\in A}$.  In particular, $M$ and $N$ are back and forth equivalent.
\end{Theorem}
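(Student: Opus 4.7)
The plan is to build a back-and-forth system $\mathcal{I}$ between $(M,A)$ and $(N,A)$, reducing the general case to Proposition~\ref{iso} via a downward L\"owenheim--Skolem argument for dull pairs. Partial maps in $\mathcal{I}$ will be witnessed by countable dull sub-pairs together with isomorphisms supplied by Proposition~\ref{iso}; the richness of $\precbf$ between countable dull pairs then drives the forth/back steps.

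The preparatory step is a downward L\"owenheim--Skolem for dull pairs: given any countable $X\subseteq N$, one constructs countable $M_0\preceq M$ and $N_0\preceq N$ with $X\cap M\subseteq M_0$, $X\subseteq N_0$, $M_0\preceq N_0$, and $N_0\downarrow_{M_0} M$. The independence condition is arranged by an $\omega$-length construction alternately closing under Tarski--Vaught conditions on each side while adding finite bases of $\tp(\cbar/M)$ to $M_0$ for each new finite $\cbar\subseteq N_0$. It forces $N_0\cap M=M_0$, so every $c\in N_0\setminus M_0$ lies in $N\setminus M$; since $N_0\downarrow_{M_0} M$, any regular $\tp(c/M_0)$ has as its non-forking extension the regular, parallel type $\tp(c/M)$, which is $\perp\PPe$ by dullness of $M\preceq N$. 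Parallelism preserves orthogonality to $\PPe$, so $\tp(c/M_0)\perp\PPe$, and Proposition~\ref{chardull} yields that $M_0\preceq N_0$ is dull.

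Define $\mathcal{I}$ to be the set of finite partial isomorphisms $h\colon A\xbar\to A\ybar$ of $(M,A)\to(N,A)$ (with $h\mr{A}=\id$) for which there exist countable models $M_0\preceq M$, $N_0\preceq N$ forming a dull pair as above with $\xbar\subseteq M_0$, $\ybar\subseteq N_0$, and an isomorphism $\hat h\colon M_0\to N_0$ fixing $A$ pointwise with $\hat h(\xbar)=\ybar$. Nonemptiness follows by applying the downward L\"owenheim--Skolem with $X=A$ and invoking Proposition~\ref{iso}. For the forth step, given $h$ witnessed by $(M_0,N_0,\hat h)$ and $c\in M$: if $c\in M_0$ take $d=\hat h(c)$; otherwise apply downward L\"owenheim--Skolem to $M_0\cup N_0\cup\{c\}$ to produce a countable dull pair $(\tilde M,\tilde N)$ with $M_0\cup\{c\}\subseteq\tilde M\preceq M$ and $N_0\subseteq\tilde N\preceq N$. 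Proposition~\ref{iso} gives $\tilde M\precbf_A\tilde N$, hence a back-and-forth system of finite partial isomorphisms between the two; within this system we locate a partial map sending $\xbar c$ to $\ybar d$ for some $d\in\tilde N\subseteq N$, yielding the required extension of $h$. The back step is symmetric, reversing the roles.

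The main obstacle is guaranteeing that the specific partial map $\xbar\to\ybar$ belongs to the back-and-forth system for $\tilde M\precbf_A\tilde N$ at every extension stage — that is, that $(\tilde M,A,\xbar)\equiv_{\infty,\omega}(\tilde N,A,\ybar)$ inherits from $(M_0,A,\xbar)\cong(N_0,A,\ybar)$ under the elementary extensions $M_0\preceq\tilde M$ and $N_0\preceq\tilde N$. This transfer is the crux of the argument and must be built into the downward L\"owenheim--Skolem: by further closing under $L_{\infty,\omega}$-witnesses while maintaining the independence condition, one arranges $M_0\precbf_{A\xbar}\tilde M$ and $N_0\precbf_{A\ybar}\tilde N$, and then the equivalence propagates up as desired. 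Once this is in hand, $\mathcal{I}$ is a bona fide back-and-forth system, giving $(M,a)_{a\in A}\equiv_{\infty,\omega}(N,a)_{a\in A}$.
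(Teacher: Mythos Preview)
Your downward L\"owenheim--Skolem construction of countable dull sub-pairs is sound, but the argument breaks down precisely where you flag it: the ``main obstacle'' is not resolved by the fix you propose. You want to arrange $M_0\precbf_{A\xbar}\tilde M$ by ``closing under $L_{\infty,\omega}$-witnesses,'' but $L_{\infty,\omega}$-equivalence is governed by a back-and-forth hierarchy indexed by \emph{all} ordinals; there is no countable closure process that enforces it. Concretely, $M_0\precbf_{A\xbar}\tilde M$ would follow from Proposition~\ref{iso} if $M_0\preceq\tilde M$ were dull, but you have no control over $\tp(c/M_0)$: if $c$ realizes a $\PPe$-type over $M_0$ (which nothing in your setup prevents, since $M_0\preceq M$ is not assumed dull), then no countable $\tilde M\preceq M$ containing $M_0\cup\{c\}$ can sit dully over $M_0$. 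Without this coherence between successive witness pairs, the isomorphism $\hat h$ on $(M_0,N_0)$ does not propagate to an isomorphism on $(\tilde M,\tilde N)$ sending $\xbar$ to $\ybar$, and your family $\mathcal{I}$ need not have the forth property.

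The paper takes a completely different route that sidesteps this coherence bookkeeping: pass to a forcing extension $\V[G]$ in which $M$ and $N$ are both countable, observe that dullness is preserved (it is witnessed by first-order and forking data on finite tuples), and apply Proposition~\ref{iso} in $\V[G]$ to obtain an actual isomorphism fixing $A$. This gives $(M,\abar)\equiv_\alpha(N,\abar)$ in $\V[G]$ for every ordinal $\alpha$, and since each relation $\equiv_\alpha$ is absolute between $\V$ and $\V[G]$, the conclusion transfers back to $\V$.
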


\begin{proof}  Fix any finite $A\subseteq M$ and choose an enumeration $\abar$ of $A$.  We show that for all ordinals $\alpha$, 
$(M,\abar)\equiv_\alpha (N,\abar)$, i.e., are $\alpha$-equivalent.  To see this,
pass to any forcing extension $\V[G]$ of $\V$ in which $M$ and $N$ are countable.  %**** Decapitalized ``Pass"
It is easily checked that $M\preceq N$ remains dull in $\V[G]$.
Thus, by Proposition~\ref{iso}, there is an isomorphism $f:M\rightarrow N$ fixing $A$ pointwise.  
The existence of $f$ implies that $(M,\abar)\equiv_\alpha (N,\abar)$ in $\V[G]$ for all ordinals $\alpha$.
By absoluteness it follows that this holds in $\V$ as well. 
\qed\end{proof}

We close the subsection by showing that  that dull pairs can be amalgamated, with no new non-orthogonality classes of regular types being realized.  

\begin{Lemma}   \label{dullamalg}  Suppose $M\preceq N_1$ and $M\preceq N_2$ are both dull pairs with $\fg {N_1} M {N_2}$.
Then there is $N^*$ for which  $N_1\preceq N^*$, $N_2\preceq N^*$ and $M\preceq N^*$ are all dull pairs.   Moreover, for any $e\in N^*\setminus M$ with $p:=\tp(e/M)$ regular,
either there  some is $h\in N_1$ or there is some $h\in N_2$ with $\tp(h/M)$ regular and $\not\perp p$.
\end{Lemma}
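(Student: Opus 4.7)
The plan is to take $N^*$ to be an $\ell$-atomic model over $N_1N_2$, whose existence is standard in superstable theories. Since $\fg{N_1}{M}{N_2}$, the triple $\Mbar=(M,N_1,N_2)$ is an independent triple of models; $\sq$-extending to a-models via Fact~\ref{category}(2) and invoking Fact~\ref{3.3} then yields that every finite tuple from $N^*$ is $V$-dominated by $\Mbar$. I would first prove the ``moreover'' clause and then deduce the three dullness claims from it.

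For the moreover, fix $e\in N^*\setminus M$ with $p=\tp(e/M)$ regular. If $e\in N_1\cup N_2$ take $h=e$, so assume $e\notin N_1\cup N_2$. The key claim is that $\nfg{e}{M}{N_i}$ for some $i\in\{1,2\}$; once that is in hand, dullness of $M\preceq N_i$ gives $M\subseteq_{na}N_i$ by Lemma~\ref{dullfacts}(1), and the three-model lemma (Proposition~8.3.5 of \cite{Pillay}) produces an $h\in N_i$ with $\tp(h/M)$ regular and non-orthogonal to $p$. To prove the claim, suppose for contradiction that both $\fg{e}{M}{N_1}$ and $\fg{e}{M}{N_2}$ hold. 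Combined with $\fg{N_1}{M}{N_2}$, the standard non-forking pairs lemma gives $\fg{e}{M}{N_1N_2}$. Then $V$-domination of $e$ together with Fact~\ref{3.3}(2) supplies a finite independent subtriple $\Bbar\sq\Mbar$ with $\tp(e/B_1B_2)\vdash\tp(e/N_1N_2)$; enlarge $B_0\subseteq M$ so that $p$ is based and stationary on $B_0$, and combine $\fg{e}{M}{N_1N_2}$ with stationarity to collapse this implication to $\tp(e/B_0)\vdash\tp(e/N_1N_2)$. Regularity of $p$ together with the Open Mapping Theorem should then force $e\in\acl(N_1\cup N_2)$, contradicting $e\notin N_1\cup N_2$.

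With the moreover in hand, $M\preceq N^*$ is dull by Proposition~\ref{chardull}: every regular $p\in S(M)$ realized in $N^*$ is non-orthogonal to a regular type realized in some $N_i$, which is $\perp\PPe$ by dullness of $M\preceq N_i$, hence $p\perp\PPe$. For $N_i\preceq N^*$ dull I would rerun the $V$-domination analysis over $N_i$ rather than $M$; since $\ell$-atomicity over $N_1N_2$ is symmetric in $N_1$ and $N_2$, the analogous moreover — that every regular type over $N_i$ realized in $N^*$ is non-orthogonal to a regular type realized in $N_{3-i}$ — follows, and combining it with the non-forking extension of dullness of $M\preceq N_{3-i}$ along $\fg{N_{3-i}}{M}{N_i}$ yields that such types are $\perp\PPe$.

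The main obstacle I anticipate is the final step in the moreover argument: converting the collapsed implication $\tp(e/B_0)\vdash\tp(e/N_1N_2)$ into the algebraicity contradiction $e\in\acl(N_1\cup N_2)$. This should hinge on regularity of $p$ ensuring that its non-forking extensions are of maximal $R^\infty$-rank on their locus, together with the Open Mapping Theorem transferring isolation from $B_0\subseteq M$ upward through the triple to $N_1N_2$; verifying that this combination really does pin $e$ into $\acl(N_1\cup N_2)$ is where I expect the delicate bookkeeping to lie.
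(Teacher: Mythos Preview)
Your argument for the ``moreover'' clause has a genuine gap. The step you call the ``standard non-forking pairs lemma'' --- that $\fg{e}{M}{N_1}$, $\fg{e}{M}{N_2}$, and $\fg{N_1}{M}{N_2}$ together give $\fg{e}{M}{N_1N_2}$ --- is false, even when $\tp(e/M)$ is regular. In an infinite $\mathbb{F}_2$-vector space, take $b_i\in N_i\setminus M$ generic and set $e=b_1+b_2$: then $e$ is generic over each $N_i$ separately (so $\fg{e}{M}{N_i}$ for both $i$), yet $e\in\dcl(N_1N_2)$. The same example defeats your proposed endgame: $e\in\acl(N_1\cup N_2)$ in no way contradicts $e\notin N_1\cup N_2$, so there is no contradiction waiting at the end of your chain of implications, and the ``delicate bookkeeping'' you anticipate cannot be made to work.

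The paper's route is much shorter and avoids this trap. Since $N^*$ is $\ell$-constructible over $N_1N_2$, it is dominated by $N_1N_2$ over $M$; hence any $e\in N^*\setminus M$ satisfies $\nfg{e}{M}{N_1N_2}$, so $p=\tp(e/M)\not\perp\tp(N_1N_2/M)$. Because $\fg{N_1}{M}{N_2}$ and orthogonality is preserved under independent products, $p\not\perp\tp(N_i/M)$ for some $i$; then $M\subseteq_{na} N_i$ (Lemma~\ref{dullfacts}(1)) and Fact~\ref{3big}(1) produce the desired $h$. (The paper phrases the middle step as ``$e$ forks with either $N_1$ or $N_2$'', which the vector-space example shows is literally too strong; only the non-orthogonality conclusion is actually used.) Finally, there is no need to rerun any analysis over $N_i$ for the dullness of $N_i\preceq N^*$: once $M\preceq N^*$ is dull, Corollary~\ref{splitfiltration}(1) applied to $M\preceq N_i\preceq N^*$ gives it immediately.
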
  

\begin{proof}  As $T$ is countable and superstable, choose an $\ell$-contructible model  $N^*$  over $N_1N_2$.  We first show that no new non-orthogonality classes of regular types over $M$
are realized in $N^*$.  To see this, choose $e\in N^*\setminus M$ with $p:=\tp(e/M)$ regular.  Since $\fg {N_1} M {N_2}$ and since $\wt(e/M)=1$, $e$ forks with either $N_1$ or $N_2$ over $M$.   By symmetry, assume the former.  Then $p\not\perp \tp(N_1/M)$, and since $M\preceq N_1$ dull implies $M\subseteq_{na} N_1$ by Lemma~\ref{dullfacts}(1),
it follows from Fact~\ref{3big}(1) that there is $h\in N_1$ such that $\tp(h/M)$ is regular and $\not\perp p$.

In particular, since every regular type $\tp(h/M)\perp \PPe$, for every $h\in N_1$ and every $h\in N_2$, the regular type $p$ is also $\perp\PPe$. %**** Added period
Thus, by Proposition~\ref{chardull}, $M\preceq N^*$ is a dull pair.  That $N_1\preceq N^*$ and $N_2\preceq N^*$ are dull as well follows from Corollary~\ref{splitfiltration}(1).
\qed\end{proof}

\subsection{$\PPe$-NDOP}

We begin with some general comments about DOP witnesses.  
In \cite{PNDOP}, this was extensively studied for regular types.   The following definition appears as Definition~3.1 of \cite{PNDOP}.  

\begin{Definition}   \label{origDOP}
 {\em A regular type $r$ {\em has a DOP witness}  if there is an independent triple $(M_0,M_1,M_2)$ of a-models and an a-prime model $M_3$ over $M_1M_2$
such that the canonical base $Cb(r)\subseteq M_3$, with $r\perp M_1$ and $r\perp M_2$.
}
\end{Definition}

Clearly, if a regular type $r$ has a DOP witness, then every stationary, weight one type $p(x,d)$ non-orthogonal to $r$ has a DOP witness as well.  However, the dependence on a-models make the definition a bit awkward to use.  The following two Lemmas use  $V$-domination to get more malleable conditions.  

\begin{Lemma}  \label{relaxamodel}  Let $\Abar=(A_0,A_1,A_2)$ be any independent triple, let $A^*$ be $V$-dominated by $\Abar$ and let
$p(x)$ be any stationary, weight one type whose non-orthogonality class does not have a DOP witness.  
If $p\not\perp A^*$, then $p\not\perp A_1$ or $p\not\perp A_2$.
\end{Lemma}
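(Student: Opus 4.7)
My plan is to argue the contrapositive: assume $p \perp A_1$ and $p \perp A_2$, and show $p \perp A^*$. By superstability, the non-orthogonality class of $p$ contains a regular type $r$; since non-orthogonality is an equivalence relation on stationary, weight-one types, $p \perp B$ iff $r \perp B$ for any set $B$, so the hypothesis transfers: $r \perp A_1$, $r \perp A_2$, and the class of $r$ still has no DOP witness. It therefore suffices to show $r \perp A^*$.

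The central step is to extend $\Abar$ to an independent triple $\Mbar = (M_0, M_1, M_2)$ of $a$-models with $\Abar \sq \Mbar$ arranged so that $r \perp M_1$ and $r \perp M_2$. I would build the triple sequentially. First, take $M_0$ to be $a$-prime over $A_0$ with $\fg{M_0}{A_0}{A_1A_2}$; since $A_0 \subseteq A_1$ yields $r \perp A_0$, the standard fact that every $a$-prime model over a set $B$ is dominated over $B$ by a $B$-independent set of realizations of regular types over $B$ --- all of which are $\perp r$ when $r \perp B$ --- gives $r \perp M_0$. Next, observe that $r \perp M_0A_1$: since $\fg{M_0}{A_0}{A_1}$, every $m \in M_0$ has $\tp(m/A_1)$ the non-forking extension of $\tp(m/A_0)$, which is $\perp r$; applying the general fact that $r \perp A$ together with every $c \in C$ satisfying $\tp(c/A) \perp r$ implies $r \perp AC$, with $A = A_1$ and $C = M_0$, yields $r \perp M_0A_1$. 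Now take $M_1$ $a$-prime over $M_0A_1$ with $\fg{M_1}{M_0A_1}{A_2}$; the same domination-decomposition argument gives $r \perp M_1$. Symmetrically, construct $M_2$ $a$-prime over $M_0 A_2$ with $\fg{M_2}{M_0 A_2}{M_1}$ and $r \perp M_2$.

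With $\Mbar$ in hand, Fact~\ref{Vdomfact}(2) gives that $A^*$ is $V$-dominated by $\Mbar$, and Fact~\ref{3.3} gives that $\tp(A^*/M_1M_2)$ is $a$-isolated. Let $M_3$ be an $a$-prime model over $M_1M_2$ containing $A^*$. Suppose for contradiction that $r \not\perp A^*$: then there is a regular $s$ over $A^*$ with $s \not\perp r$. The canonical base $\Cb(s)$ lies in $\acl^{\mathrm{eq}}(A^*) \subseteq M_3^{\mathrm{eq}}$, and since $s$ lies in the non-orthogonality class of $r$, also $s \perp M_1$ and $s \perp M_2$. Thus $(M_0, M_1, M_2; M_3)$ is a DOP witness for $s$, a regular in the class of $p$, contradicting the hypothesis.

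The main obstacle is the construction of $\Mbar$ in the middle paragraph, specifically the upgrade from $r \perp A_1$ (together with the inductively obtained $r \perp M_0$) to $r \perp M_0A_1$. In general, $r \perp B_1$ and $r \perp B_2$ do not imply $r \perp B_1B_2$ --- failure is itself a DOP-type phenomenon --- but here the crucial input is the independence $\fg{M_0}{A_0}{A_1}$ with $A_0 \subseteq A_1$, which forces every type of an element of $M_0$ over $A_1$ to be non-forking over $A_0$, and hence $\perp r$. Everything else reduces to routine manipulations of $V$-domination, $a$-isolation (via Facts \ref{Vdomfact} and \ref{3.3}), and the fact that non-orthogonality on the class of $r$ is preserved in DOP-witness configurations.
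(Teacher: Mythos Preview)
Your overall architecture matches the paper's: assume $p\perp A_1$, $p\perp A_2$, build an independent triple $\Mbar$ of $a$-models above $\Abar$ with $p\perp M_1$ and $p\perp M_2$, put $A^*$ inside an $a$-prime $M_3$ over $M_1M_2$, and read off a DOP witness. The gap is in how you obtain $r\perp M_\ell$.

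Both of your two ``standard facts'' are false. First, ``$r\perp B$ and $M$ $a$-prime over $B$ implies $r\perp M$'' fails: take $T$ the theory of an equivalence relation with infinitely many infinite classes, $B=\emptyset$, $d$ any element, and $r$ the type of a new element of $d$'s class. Then $r\perp\emptyset$ (the unique $1$-type over $\emptyset$ is orthogonal to $r$), yet any $a$-prime $M$ over $\emptyset$ either contains $d$ or something in its class once you fail to control independence, and then $r\not\perp M$. What you actually show is $r\perp\tp(M_0/A_0)$, which is \emph{not} the same as $r\perp M_0$. Second, your ``general fact'' that $r\perp A$ together with $\tp(c/A)\perp r$ for all $c\in C$ implies $r\perp AC$ fails for the same reason: with $A=\emptyset$, $C=\{d\}$, we have $\tp(d/\emptyset)\perp r$ but $r\not\perp\{d\}$.

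The missing ingredient is control over where $r$ (equivalently $p$) is based. The correct transfer principle is: if $p$ is based and stationary on $d$, $p\perp B$, and $\fg d B M$ with $B\subseteq M$, then $p\perp M$ (apply Fact~\ref{basicorth}(2) to the non-forking extension $p|dB$). The paper secures this by first dropping to a \emph{finite} triple $\Bbar\sq\Abar$ on which the relevant finite piece $b\subseteq A^*$ is $V$-dominated, and then choosing the $a$-model triple $\Mbar\sqsupseteq\Bbar$ with the extra constraint $\fg d{B_1B_2}{M_1M_2}$; from $\Bbar\sq\Mbar$ one then extracts $\fg d{B_\ell}{M_\ell}$ and hence $p\perp M_\ell$. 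Your construction can be repaired in the same spirit: when building $M_0,M_1,M_2$, also keep them independent from a fixed finite base $d$ of $p$ over the appropriate sets.
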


\begin{proof}  By way of contradiction, suppose $p\not\perp A^*$, but $p\perp A_1$ and $p\perp A_2$.  We will obtain a contradiction by constructing a DOP witness for
some regular type $r$ non-orthogonal to $p$.  
Suppose $p$ is based and stationary on the finite set $d$.  
Choose any $e$ such that $p\not\perp\stp(e/A^*)$ (where $\stp(e/A^*)$ need not be regular). %**** Added 'p' in 'p \not \perp \stp(e/A^*)
Choose a finite $b\subseteq A^*$  such that $\fg e b {A^*}$.   By Fact~\ref{Vdomfact},(3) choose a finite $\Bbar\sq\Abar$ such that $b$ is $V$-dominated by $\Bbar$.  
Now let $\Mbar$ be any independent triple of $a$-models such that $\Bbar\sq\Mbar$ and $\fg d {B_1B_2} {M_1M_2}$.  
Note that for $\ell=1,2$, $p\perp A_\ell$ implies $p\perp B_\ell$.  However,  from $\Bbar\sq\Mbar$ and $\fg d {B_1B_2} {M_1M_2}$,
it follows that $\fg d {B_\ell} {M_\ell}$, hence $p\perp M_\ell$ as well.

Since $b$ was $V$-dominated by $\Bbar$ and since $\Bbar\sq\Mbar$, we have that $\tp(b/M_1M_2)$ is a-isolated.  Thus, we can find an a-prime model $M_3$ over $M_1M_2$
with $b\subseteq M_3$.  As $p\not\perp b$, $p\not\perp M_3$, hence there is a regular type $r\in S(M_3)$ non-orthogonal to $p$.  Then $(M_0,M_1,M_2,M_3)$ and $r$ form a DOP witness, giving our contradiction.
\qed\end{proof}

\begin{Definition}  \label{newDOP} {\em  A stationary type $p(x,d)$ over a finite set has a {\em finitary DOP witness} if there is an independent triple $\Abar=(a,b,c)$ of finite sets satisfying:
\begin{enumerate} 
\item   $d$ is $V$-dominated by $\Abar$;
\item  $p(x,d)\perp b$ and $p(x,d)\perp c$.
\end{enumerate}
}
\end{Definition}

\begin{Lemma}  \label{Dopwitness}  Suppose $p(x,d)$ is a stationary, weight one type whose non-orthogonality class has a DOP witness. 
Then $p(x,d)$ has a finitiary DOP witness.
\end{Lemma}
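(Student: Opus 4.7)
Start with any DOP witness $(M_0,M_1,M_2,M_3,r)$ for a regular $r\not\perp p(x,d)$: $M_3$ is a-prime over $M_1M_2$, $\Cb(r)\subseteq M_3$, and $r\perp M_1$, $r\perp M_2$. The strategy is first to maneuver $d$ into $M_3$ via a carefully chosen automorphism, and then to read off the finitary DOP witness directly from the a-primeness of $M_3$.

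\emph{Placing $d$ in $M_3$.} Since $M_3$ is an a-model and $\Cb(r)\subseteq M_3$ is finite, $M_3$ realizes $\stp(d/\Cb(r))$; pick $d^*\in M_3$ with $\stp(d^*/\Cb(r))=\stp(d/\Cb(r))$ and let $\tau$ be a strong automorphism fixing $\Cb(r)$ pointwise with $\tau(d)=d^*$. Because $\tau$ fixes $\Cb(r)$, the type $\tau(r)$ is parallel to $r$, hence lies in the same non-orthogonality class, and applying $\tau$ to $r\not\perp p(x,d)$ therefore yields $r\not\perp p(x,d^*)$. The conclusion "having a finitary DOP witness" is automorphism-invariant: if $(a,b,c)$ witnesses this for $p(x,d^*)$, then $(\tau^{-1}(a),\tau^{-1}(b),\tau^{-1}(c))$ witnesses it for $p(x,d)$. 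Hence we may assume $d\in M_3$ from the start.

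\emph{Extracting the witness.} With $d\in M_3$ and $M_3$ a-prime over $M_1M_2$, $\tp(d/M_1M_2)$ is $a$-isolated, so by Fact~\ref{3.3} $d$ is $V$-dominated by $(M_0,M_1,M_2)$, and there is a finite independent triple $(a,b,c)\sq(M_0,M_1,M_2)$ with $\tp(d/bc)\vdash\tp(d/M_1M_2)$. This entailment forces $\fg{d}{bc}{M_1M_2}$, so Fact~\ref{Vdomfact}(3) upgrades $V$-domination to the finite triple: $d$ is $V$-dominated by $(a,b,c)$. For the orthogonality clause, $r\perp M_1$ together with $r\not\perp p(x,d)$ and the equivalence-relation property of non-orthogonality on stationary weight one types yields $p(x,d)\perp M_1\supseteq b$; the symmetric argument gives $p(x,d)\perp c$. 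Therefore $(a,b,c)$ is the required finitary DOP witness.

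The only real obstacle is the first step: the automorphism $\tau$ has to be chosen so as to fix $\Cb(r)$ (not merely to be a strong automorphism over $\emptyset$), so that $\tau(r)$ is guaranteed to be parallel to $r$ and hence $r\not\perp p(x,d^*)$ can be read off. Everything after that is a routine application of the $V$-domination / $a$-isolation machinery developed in Section~2 together with the basic equivalence-relation property of non-orthogonality on stationary weight one types.
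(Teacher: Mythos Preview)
Your proof is correct and follows essentially the same route as the paper's: first reduce to the case $d\in M_3$ via a strong automorphism over a finite base for $r$, then use $a$-isolation of $\tp(d/M_1M_2)$ to extract a finite triple $V$-dominating $d$, and finally read off orthogonality to $b,c$ from $r\perp M_1$, $r\perp M_2$. The only cosmetic difference is that the paper phrases the base for $r$ as a finite $e\subseteq M_3$ on which $r$ is based and stationary, rather than $\Cb(r)$; this avoids any quibble about whether $\Cb(r)$ is literally a finite real tuple.
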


\begin{proof}  Choose a regular type $r\not\perp p(x,d)$ and choose a quadruple $(M_0,M_1,M_2,M_3)$ of a-models exemplifying 
 that $r$ has a DOP witness, and
fix a finite $e\subseteq M_3$ on which $r$ is based and stationary.
First, as a special case, assume $d\subseteq M_3$.  Then, as $\tp(d/M_1M_2)$ is $a$-isolated, $d$ is $V$-dominated by $\Mbar$ via Fact~\ref{3.3}.  
Choose a finite $h\subseteq M_1M_2$ on which $\tp(d/M_1M_2)$ is based and by Fact~\ref{category}(3), choose a finite $\Abar\sq \Mbar$ with $h\subseteq A_1A_2$.
As $\fg d {A_1A_2} {M_1M_2}$, it follows from Fact~\ref{Vdomfact}(3) that $d$ is V-dominated by $\Abar$.
%Taking  $X=e$ in Fact~\ref{category}(3) gives that there is some
%independent triple $\Abar=(a,b,c)$ of finite sets such that $e\subseteq bc$ and $\Abar\sq\Mbar$, and $d$ is $V$ dominated by $\Abar$.  
%**** But e isn't in M_1 M_2, but we don't need it. What we do need to arrange is that $d$ is still $V$-dominated by Abar, which doesn't follow from Fact~\re{category}(3). We need to combine it with an application of Fact 2.4(3) (using $X = $ some base for $\mbox{tp}(d/M_1 M_1)$).
Since $p(x,d)$ is weight one, non-orthogonal to $r$,  and as $r\perp M_1$ with $b\subseteq M_1$, it follows that $p(x,d)\perp b$.  Dually, $p(x,d)\perp c$, so $\Abar$ is a finitary DOP witness
for $p(x,d)$.

%By superstability, choose finite $m_1\subseteq M_1$,
%$m_2\subseteq M_2$ for which $e\subseteq m_1m_2$ and  $\tp(d/M_1M_2)$ is based on $m_1m_2$.    By Fact~\ref{Vfact}(5) applied to $m_1m_2$,
%choose a finite $\Abar\sq\Mbar$ with $d$ $V$-dominated by $A_1A_2$.
%Note that since $p(x,d)$ is stationary and weight one, non-orthogonal to $r$, then since $r\perp M_1$, we conclude that $p(x,d)\perp A_1$.   Dually, $p(x,d)\perp A_2$.
%So writing $(A_0,A_1,A_2)$ as $(a,b,c)$ and letting $d'=dbc$, we have the ingredients for a finitary DOP witness.    

Now for the general case, since $M_3$ is an a-model, choose $d'\subseteq M_3$ with $\stp(d/e)=\stp(d'/e)$ and let $p'(x,d')$ be the conjugate type to $p(x,d)$ over $e$.
Since $r$ is based and stationary on $e$, we have that $p'(x,d')$ is stationary, weight one, and non-orthogonal to $r$.  As $d'\subseteq M_3$, apply the special case above
to get $\Abar=(a,b,c)$ for $p'(x,d')$.   Then, take any automorphism $\sigma$ of $\C$ fixing $\acl(e)$ pointwise, with $\sigma(d')=d$.  Then $\sigma(\Abar)=(\sigma(a),\sigma(b),\sigma(c))$ is a finitary DOP witness for $p(x,d)$.
\qed\end{proof}

\begin{Definition}  {\em  A  countable, superstable theory $T$ has {\em $\PPe$-DOP} if some (regular) $r\in \PP_e$ has a DOP witness.  We say $T$ has $\PPe$-NDOP
if it does not have $\PPe$-DOP.
}
\end{Definition}  

\begin{Proposition}  \label{VDINDOP}    If $T$ has V-DI, then $T$ has $\PPe$-NDOP.
\end{Proposition}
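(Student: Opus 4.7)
Argue by contrapositive. Assume $T$ has $\PPe$-DOP, so there is a regular $r\in\PPe$ with a DOP witness. By definition of $\PPe$, pick an $e$-type $p(x,d)$, that is, a stationary, weight one, non-isolated type over a finite set $d$, with $p\not\perp r$. The property of having a DOP witness passes along non-orthogonality of stationary weight one types, so $p(x,d)$ has a DOP witness; by Lemma~\ref{Dopwitness}, $p(x,d)$ has a \emph{finitary} DOP witness $\Abar=(a_0,a_1,a_2)$: an independent triple of finite sets such that $d$ is $V$-dominated by $\Abar$, $p(x,d)\perp a_1$, and $p(x,d)\perp a_2$.

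Fix $e$ realizing $p\mid da_1a_2$. The plan is to show two things and then hit them with V-DI:
\begin{enumerate}
\item[(i)] $\fg e d {a_1a_2}$, so that $\tp(e/da_1a_2)$ is the non-forking extension of $p(x,d)$;
\item[(ii)] the pair $de$ is $V$-dominated by $\Abar$.
\end{enumerate}
For (i), combine $p\perp a_1$, $p\perp a_2$, and $\fg{a_1}{a_0}{a_2}$ with the weight one hypothesis on $p$ to conclude that $p$ is orthogonal to $a_1a_2$, so $e$ does not fork with $a_1a_2$ over $d$. For (ii), given an arbitrary $\Bbar\sqsupseteq\Abar$, the $V$-domination of $d$ gives $\fg d {a_1a_2} {B_1B_2}$, so by transitivity it suffices to show $\fg e {da_1a_2} {B_1B_2}$. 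This in turn reduces to showing that $p$ remains orthogonal to $B_1B_2$. The cleanest route is to first extend $\Abar$ to a triple $\Mbar$ of $a$-models (Fact~\ref{category}(2)), use Fact~\ref{Vdomfact}(2) to get $d$ $V$-dominated by $\Mbar$, transfer $p\perp a_i$ to $p\perp M_i$ via the weight one orthogonality machinery, conclude $p\perp M_1M_2$, and then descend back to $\Abar$ using Fact~\ref{Vdomfact}(3) together with the weight one analysis of $\fg{B_1B_2}{a_1a_2}{}$ relative to $d$.

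With (i) and (ii) in hand, suppose for contradiction that $T$ has V-DI. By (ii) and Lemma~\ref{DIequiv}(1), $\tp(de/a_1a_2)$ is isolated by some formula $\phi(x,y,a_1a_2)$. Substituting $d$ for $x$, the formula $\phi(d,y,a_1a_2)$ isolates $\tp(e/da_1a_2)$. But by (i), $\tp(e/da_1a_2)$ is the non-forking extension of $p(x,d)$ over $da_1a_2$, so the Open Mapping theorem (Fact~\ref{Open}) yields that $p(x,d)=\tp(e/d)$ is itself isolated. This contradicts the assumption that $p(x,d)$ is an $e$-type, completing the proof.

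The main obstacle is step (ii): propagating the weight one orthogonalities $p\perp a_1$, $p\perp a_2$ across arbitrary finite extensions $\Bbar\sqsupseteq\Abar$, since finite sets do not afford the usual smoothness of stationarity and orthogonality arguments. The remedy outlined above, lifting to an $a$-model triple $\Mbar$ where orthogonality to the triple behaves robustly, then descending via the Open Mapping theorem, is standard but requires careful bookkeeping. Step (i) is routine given (ii)'s machinery, and the final V-DI/Open Mapping combination is immediate.
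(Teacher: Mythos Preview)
Your overall architecture matches the paper's proof exactly: pick an $e$-type $p(x,d)$ non-orthogonal to the DOP-witnessed $r$, invoke Lemma~\ref{Dopwitness} to get a finitary DOP witness $\Abar$, let $e$ realize the non-forking extension of $p$ to $da_1a_2$, show $de$ is $V$-dominated by $\Abar$, apply V-DI, and finish with the Open Mapping Theorem. Step (i) is in fact vacuous: since $p$ is stationary and you chose $e$ to realize $p\mid da_1a_2$, the non-forking $\fg e d {a_1a_2}$ holds by definition, so no argument is needed there.

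The gap is in your route for step (ii). The implication you need, ``$p\perp a_i$ implies $p\perp M_i$'' for an $\Mbar\sqsupseteq\Abar$ produced by Fact~\ref{category}(2), goes the wrong way: orthogonality to a subset does not in general propagate to a superset, and there is no ``weight one machinery'' that reverses this. One could try to rescue it via Fact~\ref{basicorth}(2), which would require $\fg d {a_i} {M_i}$, but $V$-domination of $d$ over $\Abar$ only gives $\fg d {a_1a_2} {M_1M_2}$, not the side-by-side independences you would need. Even if you instead remembered the specific $a$-model triple from inside the proof of Lemma~\ref{Dopwitness} (where $p\perp M_i$ does hold), descending back to $\Abar$ via Fact~\ref{Vdomfact}(3) would require $\fg {de}{a_1a_2}{M_1M_2}$, which is exactly the instance of step (ii) for $\Bbar=\Mbar$ --- so the argument is circular.

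The paper handles (ii) by a direct appeal to Lemma~\ref{adde}: since $d$ is $V$-dominated by $\Abar$ and $\tp(e/da_1a_2)$ is stationary with $\tp(e/da_1a_2)\perp a_1$ and $\perp a_2$, that lemma gives $de$ $V$-dominated by $\Abar$ outright. The proof of Lemma~\ref{adde} is a three-step forking computation using Fact~\ref{basicorth}(2) at each stage (first absorb $B_0$, then $B_1$, then $B_2$), never passing through $a$-models. That is the missing ingredient; once you cite it, your argument is complete and identical to the paper's.
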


\begin{proof}  By way of contradiction, assume  some $r\in\PPe$ has a DOP witness, and that V-DI holds.   Choose a stationary, weight one $p(x,d)\not\perp r$ with $d$ finite and
$p(x,d)$ non-isolated.  By Lemma~\ref{Dopwitness}, find a finitary DOP witness 
$\Abar=(a,b,c)$ for $p(x,d)$.    Let $p'(x,dbc)\in S(dbc)$ be the non-forking extension of $p(x,d)$  and let $e$ realize $p'(x,dbc)$.
Since $d$ is $V$-dominated by $\Abar$ with $p(x,d)$ orthogonal to $b$ and $c$,  Lemma~\ref{adde} implies that $de$ is also  $V$-dominated by $\Abar$.   
Thus, by V-DI, $\tp(de/bc)$ is isolated, hence $\tp(e/dbc)=p'(x,dbc)$ is isolated as well.   As $p'(x,dbc)$ is a nonforking extension of $p(x,d)$,
 this contradicts the Open Mapping Theorem.
\qed\end{proof}

We close this section by summarizing our results so far, the equivalence of the first three conditions of Theorem~\ref{main}.

\begin{Theorem}  \label{sofar}  The following are equivalent for a countable, superstable theory $T$.
\begin{enumerate}
\item  V-DI;
\item  $\PPe$-NDOP and PMOP;
\item  $\PPe$-NDOP and countable PMOP;
\end{enumerate}
\end{Theorem}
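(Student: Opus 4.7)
The plan is to close the cycle $(1) \Rightarrow (2) \Rightarrow (3) \Rightarrow (1)$. The first two implications are at hand: Theorem~\ref{DIPMOP} and Proposition~\ref{VDINDOP} together give $(1) \Rightarrow (2)$, and $(2) \Rightarrow (3)$ is immediate since PMOP implies countable PMOP.

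For $(3) \Rightarrow (1)$, assume $\PPe$-NDOP and countable PMOP; we verify V-DI via Lemma~\ref{DIequiv}(3). Fix a countable independent triple $\Mbar$ and a finite $c$ that is $V$-dominated by $\Mbar$; we show $\tp(c/M_1M_2)$ is isolated. First, blow $\Mbar$ up to an a-model triple $\Nbar$ (Fact~\ref{category}(2)); $c$ remains $V$-dominated by $\Nbar$ (Fact~\ref{Vdomfact}(2)), and Fact~\ref{3.3} supplies a finite $\Bbar \sq \Nbar$ with $\tp(c/B_1B_2) \vdash \tp(c/N_1N_2)$. Fact~\ref{category}(3) then lets us arrange $\Bbar \sq \Mbar$, reducing the task to showing $\tp(c/B_1B_2)$ is isolated.

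Now split on whether $\stp(c/B_1B_2) \perp \PPe$. If orthogonal, Lemma~\ref{AIlemma} gives that $\stp(c/B_1B_2)$ is always isolated, hence $\tp(c/B_1B_2)$ is isolated since $B_1B_2$ is a finite base. Otherwise, pick regular $r \in \PPe$ with $r \not\perp c$ over $B_1B_2$. Because $\PPe$-NDOP denies $r$ any DOP witness, Lemma~\ref{relaxamodel} forces $r \not\perp B_\ell$ for $\ell = 1$ or $2$, say $\ell = 1$. Invoke countable PMOP (through Corollary~\ref{prime}) to extend $M_1$ to a constructible model $M_1'$ containing a realization of the weight-one component of $c$ responsible for the non-orthogonality; Lemma~\ref{together} yields that $M_1 \preceq M_1'$ is dull, and Lemma~\ref{dullamalg} amalgamates with the $M_2$-side to place $(M_0, M_1', M_2)$ into an enlarged independent triple over which the residual part of $c$ remains $V$-dominated by a finite sub-triple (using the same $V$-domination closure invoked in Proposition~\ref{VDINDOP}). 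An induction on the finite weight of $\stp(c/B_1B_2)$ then collapses the non-orthogonal case to the orthogonal one.

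The main obstacle is this iterative peel: at each stage one must verify that the constructible extension furnished by countable PMOP absorbs precisely one weight-one unit of the $\PPe$-non-orthogonality, preserves a finite $V$-domination witness for the residual, and amalgamates cleanly with the opposite side without introducing fresh $\PPe$-classes beyond those already controlled. Once this bookkeeping is in place, the orthogonal base case handles the residue, and the isolating formulas collected along the way assemble into a single isolating formula for $\tp(c/B_1B_2)$, closing the cycle.
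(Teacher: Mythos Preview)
Your cycle $(1)\Rightarrow(2)\Rightarrow(3)$ is fine and matches the paper. The orthogonal case in $(3)\Rightarrow(1)$ is also correct: if $\stp(c/B_1B_2)\perp\PPe$ then Lemma~\ref{AIlemma} finishes immediately.

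The non-orthogonal case, however, does not work as written. Every lemma you invoke there---Corollary~\ref{prime}, Lemma~\ref{together}, Lemma~\ref{dullamalg}---has as its \emph{hypothesis} that the relevant type is orthogonal to $\PPe$, yet you are in the case where $r\in\PPe$ is non-orthogonal to $c$. Corollary~\ref{prime} builds a constructible model over $Mb$ only when $\tp(b/M)\perp\PPe$; Lemma~\ref{together} requires $p\perp\PPe$; and the dull-pair machinery is by definition about types orthogonal to $\PPe$. So none of these tools is available to ``absorb'' a $\PPe$-component into $M_1$. Beyond this, the proposed induction on weight is only a sketch: you never explain what the ``residual'' of $c$ is, why it is still $V$-dominated by a finite triple after the extension, or why the weight genuinely drops.

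The paper avoids the case split altogether. Using countable PMOP it takes a constructible model $M'$ over $M_1M_2$ and embeds it in a countable $M^*\ni c$ with $M^*$ $V$-dominated by $\Mbar$ (and each $M_i\subseteq_{na} N_i$). The point is that $\tp(c/M')$ is \emph{automatically} orthogonal to $\PPe$: any regular type over $M'$ realized in $M^*$ must be $\perp M_1$ and $\perp M_2$ (via the 3-model Lemma and the domination of $M^*$ by $M_2$ over $M_1$), so by $\PPe$-NDOP and Lemma~\ref{relaxamodel} it is $\perp\PPe$. Thus $M'\preceq M^*$ is dull, $\tp(c/M')\perp\PPe$, and the always-isolated machinery (Lemma~\ref{stationary}, Lemma~\ref{basicorth2}, Proposition~\ref{charAI}) produces the isolating formula directly---no induction, no peeling. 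The moral is that countable PMOP is used not to absorb $\PPe$-components one at a time, but to manufacture a base over which the entire type of $c$ lands in the orthogonal case.
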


\begin{proof}  $(1)\Rightarrow(2)$ is Theorem~\ref{DIPMOP} and Proposition~\ref{VDINDOP}, and $(2)\Rightarrow(3)$ is trivial.
So assume $T$ has $\PPe$-NDOP and countable PMOP.   Choose any independent triple $\Nbar=(N_0,N_1,N_2)$ of a-models and
assume $c$ is $V$-dominated by $\Nbar$.  We will show that $\tp(c/N_1N_2)$ is isolated.  
For this, first note that  $\tp(c/N_1N_2)$ is a-isolated by Fact~\ref{Vdomfact}.  Choose an a-prime model $N^*$ over $N_1N_2$ containing $c$.  

Construct, as a nested union of an $\omega$-chain of finite sets, a countable $M^*\preceq N^*$ such that, letting $M_i:=M^*\cap N_i$ for $i\in\{0,1,2\}$ we have
\begin{enumerate}
\item  $\fg {M^*}  {M_1M_2} {N_1N_2}$;
\item  $M_i\subseteq_{na} N_i$ for each $i$;
\item  $\Mbar=(M_0,M_1,M_2)$ is a (countable) independent triple of models and $\Mbar\sq\Nbar$;
\item  $c\subseteq M^*$.
\end{enumerate}

[To get the non-forking conditions, note that by superstablity, for every finite $d$ from $N^*$, there is a finite $X_d\subseteq N_1N_2$ for which 
$\fg d {A_1A_2} {N_1N_2}$ whenever $X_d\subseteq A_1A_2\subseteq N_1N_2$.
%$A_1\subseteq N_1$, $A_2\subseteq N_2$ and $X_d\subseteq A_1A_2$.] %**** How is $X_d$ being quantified? Maybe simpler to just construct $M_0, M_1, M_2$ before constructing $M^*$, and then take $M_*$ to be $\ell$-constructible over $M_1 M_2 c$, so that $M_*$ is $V$-dominated by $\overline{M} and hence (1) holds. Also, why not just start with $N_0, N_1, N_2$ countable?

Given such an $M^*$, letting  $\Mbar=(M_0,M_1,M_2)$, we have $M^*$ is V-dominated by $\Mbar$ by Fact~\ref{Vdomfact}(3).
By countable PMOP, choose $M'\preceq M^*$, constructible
over $M_1M_2$.   %**** Deleted stray `It follows'

 \medskip
 \noindent{{\bf Claim 1.}}  For any $a\subseteq M^*\setminus M'$, if $\tp(a/M')$ is regular, then $\tp(a/M')\perp M_1$ and $\perp M_2$.
 
 \begin{proof}   Suppose $p=\tp(a/M')$ is regular.  By symmetry, it suffices to show $p\not\perp M_1$.  For this, 
 note that since $M^*$ is $V$-dominated by $\Mbar$, $M^*$ is dominated by $M_2$ over $M_1$.  [Why?  Choose any $Y$ with $\fg {M_2} {M_1} Y$.  It follows that $\Mbar\sq (M_0,M_1Y,M_2)$, hence $\fg {M^*} {M_1M_2} Y$ by $V$-domination.  Thus, $\fg {M^*} {M_1}
{M_2Y}$ by transitivity.]

Now, by way of contradiction, suppose $p\not\perp M_1$.  
Since $M_1\subseteq_{na} M^*$, the  3-model Lemma (Fact~\ref{3big}(2)) applied to $M_1\preceq M'\preceq M^*$ gives some $h\in M^*$ with $\fg h {M_1} {M^*}$, hence
$\fg h {M_1} {M_2}$, contradicting the domination described above. 
\qed\end{proof}

 \medskip
 \noindent{{\bf Claim 2.}}   $M'\preceq M^*$ is a dull pair, and $\tp(c/M')$ is orthogonal to $\PPe$, $M_1$, and $M_2$.  
 
 \begin{proof}  Choose any $a\in M^*\setminus M'$ with $p=\tp(a/M')$ regular.  Since $M'$ is $V$-dominated by $\Mbar$ and $T$ has $\PPe$-NDOP, 
 it follows from Lemma~\ref{relaxamodel}
 and Claim 1 that $p\perp \PPe$.  Thus,   $M'\preceq M^*$ is a dull pair by Proposition~\ref{chardull}.  It follows by Lemma~\ref{dullfacts}(1) that $M'\subseteq_{na} M^*$.
 
 Concerning the orthogonality, first suppose there were some $q\in\PPe$ with $q\not\perp \tp(c/M')$.  Since $M'\subseteq_{na} M^*$, it follows from Fact~\ref{3big}(1)
 that there is some $a\in M^*\setminus M'$ with $\tp(a/M')\in \PPe$, contradicting $M'\preceq M^*$ a dull pair.  Similarly, suppose $\tp(c/M')\not\perp M_1$.
 Then there would be some regular type $q\not\perp \tp(c/M')$ with $q\not\perp M_1$.  Since $M_1\subseteq_{na} M^*$, there again would be $a\in M^*\setminus M'$
 with $\tp(a/M')$ regular and $\not\perp q$, hence $\not\perp M_1$, contradicting Claim 1.  Showing $\tp(c/M')\perp M_2$ is symmetric.
 \qed\end{proof}
 
 As $\tp(c/M')\perp \PPe$, choose a finite $b\subseteq M'$ over which $\tp(c/M')$ is based and stationary.  
 By Claim 2 and Lemma~\ref{basicorth2}(1), $M_1M_2$ is essentially finite with respect to $\tp(c/b)$ (see Definition~\ref{essentiallyfinite})
 hence there is some finite $e\subseteq M_1M_2$ for which
 $$\tp(c/be)\vdash \tp(c/bM_1M_2)$$
 By Proposition~\ref{charAI}, $\tp(c/be)$ is isolated, hence $\tp(c/bM_1M_2)$ is isolated as well.  However,
 since $b\subseteq M'$, $\tp(b/M_1M_2)$ is also isolated, hence so are $\tp(bc/M_1M_2)$ and $\tp(c/M_1M_2)$.
 Finally, $\Mbar\sq\Nbar$, so $M_1M_2\subseteq_{TV} N_1N_2$, hence $\tp(c/N_1N_2)$ is isolated by Lemma~\ref{TVup}.
 \qed\end{proof}

\section{Tree decompositions}

The material in this section closely resembles Section~5 of \cite{Borel}, but here we are not assuming that $T$ is $\omega$-stable.  
However, from our work above, we see that under the assumption of V-DI (or any of its equivalents given in Theorem~\ref{sofar})
enough of the consequences of $\omega$-stability hold to make the arguments 
in \cite{Borel} go through.   Some of the earlier results of this section only require weaker hypotheses, such as PMOP.

\begin{Definition}  {\em A {\em tree} $(I,\trianglelefteq)$  is a non-empty, downward closed of $Ord^{<\omega}$, ordered by initial segment.
An {\em independent tree of models} is a sequence $\Mbar=(M_\eta:\eta\in I)$ of models, indexed by a tree $(I,\trianglelefteq)$ for which
$\fg {M_\eta} {M_{\eta^-}} {\bigcup\{M_\mu:\mu\not\triangleright\eta\}}$ for all $\eta\neq\<\>$.
To ease notation, for any subtree $J\subseteq I$, we write $M_J$ for $\bigcup\{M_\eta:\eta\in J\}$.}
\end{Definition}

Any independent tree of models is a stable system, hence analogues of Facts~\ref{category} and \ref{Vdomfact} apply to this case.  
Our first Lemma is an easy inductive construction.

\begin{Lemma}  \label{usePMOP}
Suppose $T$ is countable, superstable, with PMOP.  Then there is a constructible model over every independent tree $\Mbar=(M_\eta:\eta\in I)$.
\end{Lemma}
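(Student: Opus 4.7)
The plan is to well-order the nodes of $I$ as $\{\eta_\alpha:\alpha<\lambda\}$ in a way that extends $\trianglelefteq$ (so every parent precedes its children), and to build, by transfinite recursion on $\alpha$, a $\trianglelefteq$-nested sequence $(\bar{c}_\alpha:\alpha\le\lambda)$ of construction sequences such that $\bar{c}_\alpha$ enumerates a constructible model $N_\alpha$ over $M_{I_\alpha}$, where $I_\alpha:=\{\eta_\gamma:\gamma<\alpha\}$ is the initial segment. Each $I_\alpha$ is a downward-closed subtree, so the tree analogue of Fact~\ref{category}(1) gives $M_{I_\alpha}\subseteq_{TV} M_{I_\beta}\subseteq_{TV} M_I$ whenever $\alpha\le\beta\le\lambda$; combined with Lemma~\ref{TVup}, this will let each $\bar{c}_\alpha$ simultaneously serve as a construction sequence over $M_I$. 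The final $\bar{c}_\lambda$ enumerates the desired constructible model over $M_I$.

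For the successor step, suppose $\bar{c}_\alpha$ has been built and let $\eta=\eta_\alpha$ with parent $\eta^-\in I_\alpha$. Because the enumeration extends $\trianglelefteq$, no element of $I_\alpha$ lies strictly above $\eta$, so the tree independence condition gives $\fg{M_\eta}{M_{\eta^-}}{M_{I_\alpha}}$. Thus $(M_{\eta^-},M_\eta,M_{I_\alpha})$ is an independent triple of models, and PMOP yields a constructible model over $M_\eta M_{I_\alpha}=M_{I_{\alpha+1}}$. Via the TV inclusion $M_{I_\alpha}\subseteq_{TV} M_{I_{\alpha+1}}$ together with Lemma~\ref{TVup}, the existing sequence $\bar{c}_\alpha$ remains a construction sequence over $M_{I_{\alpha+1}}$; invoking the density of isolated types over $M_{I_{\alpha+1}}$ (which is what PMOP delivers in this setting), one extends $\bar{c}_\alpha$ by further construction steps to a $\bar{c}_{\alpha+1}$ enumerating a model $N_{\alpha+1}\succeq N_\alpha$ constructible over $M_{I_{\alpha+1}}$.

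At a limit ordinal $\gamma$, take $\bar{c}_\gamma$ to be the concatenation $\bigcup_{\alpha<\gamma}\bar{c}_\alpha$, enumerating $N_\gamma=\bigcup_{\alpha<\gamma}N_\alpha$. Since each $N_\alpha\preceq N_{\alpha+1}$, $N_\gamma$ is the union of an elementary chain, hence a model. Every entry of $\bar{c}_\gamma$ was isolated over $M_{I_\alpha}$ together with its predecessors for some $\alpha<\gamma$, and another application of Lemma~\ref{TVup} with $M_{I_\alpha}\subseteq_{TV} M_{I_\gamma}$ shows that each such type is still isolated over $M_{I_\gamma}$ together with its predecessors; so $\bar{c}_\gamma$ is a construction sequence over $M_{I_\gamma}$ (and in fact over $M_I$), and $N_\gamma$ is constructible over $M_{I_\gamma}$.

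The main thing to verify carefully is the Tarski--Vaught inclusion $M_J\subseteq_{TV} M_{J'}$ whenever $J\subseteq J'$ are downward-closed subtrees of $I$; once that is in hand, both the successor step (extending a construction sequence to a larger base) and the limit step (preserving isolation under concatenation) go through smoothly. This is precisely the tree-level analogue of Fact~\ref{category}(1), which the paper has already granted for stable systems, so the remaining work is largely the bookkeeping of the transfinite construction above.
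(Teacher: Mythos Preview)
Your overall architecture---well-order $I$ extending $\trianglelefteq$, build a nested sequence of construction sequences $\bar{c}_\alpha$, and use the Tarski--Vaught inclusions $M_{I_\alpha}\subseteq_{TV} M_{I_\beta}\subseteq_{TV} M_I$ together with Lemma~\ref{TVup} to propagate isolation---is exactly what the paper does.  The gap is in your successor step.  You assert that $(M_{\eta^-},M_\eta,M_{I_\alpha})$ is an independent triple of models and then invoke PMOP, but $M_{I_\alpha}=\bigcup_{\gamma<\alpha}M_{\eta_\gamma}$ is in general \emph{not} a model; it is merely the union of the models indexed by a subtree.  PMOP, as stated, only guarantees a constructible model over an independent triple of \emph{models}, so this application is illegitimate.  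Your parenthetical about ``density of isolated types over $M_{I_{\alpha+1}}$ (which is what PMOP delivers in this setting)'' does not repair this: that density is essentially what the whole lemma is proving, and PMOP does not hand it to you over a raw tree-union.

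The fix, which is what the paper does, is to use the model $N_\alpha$ you have already built in place of $M_{I_\alpha}$.  Since $\bar{c}_\alpha$ is a construction sequence over $M_{I_\alpha}$ and, by Lemma~\ref{TVup}, over $M_{I_{\alpha+1}}=M_{I_\alpha}\cup M_\eta$ via the \emph{same} isolating formulas, each step of $\bar{c}_\alpha$ is non-forking over $M_{I_\alpha}$ together with its predecessors; hence $\fg{N_\alpha}{M_{I_\alpha}}{M_\eta}$, and combining with $\fg{M_\eta}{M_{\eta^-}}{M_{I_\alpha}}$ gives $\fg{M_\eta}{M_{\eta^-}}{N_\alpha}$.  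Now $(M_{\eta^-},M_\eta,N_\alpha)$ \emph{is} an independent triple of models, PMOP yields a constructible $N_{\alpha+1}$ over $M_\eta\cup N_\alpha$, and concatenating its construction sequence after $\bar{c}_\alpha$ (which is already a construction sequence over $M_{I_{\alpha+1}}$) gives the desired $\bar{c}_{\alpha+1}$.
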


\begin{proof}  Choose any well ordering $I=(\eta_\alpha:\alpha<\delta)$
 such that $\eta_\alpha\triangleleft \eta_\beta$ implies $\alpha<\beta$.  
 
 This implies that for all $\alpha<\delta$, $I_{<\alpha}=\{\eta_\beta:\beta<\alpha\}$ is a subtree of $(I,\trianglelefteq)$, thus
 $$\Mbar_{<\alpha}:=\bigcup\{M_\gamma:\gamma<\alpha\}\subseteq_{TV}  \bigcup \Mbar_{<\beta}:=\{M_\gamma:\gamma<\beta\}\subseteq_{TV} M_I$$
 for all $\alpha<\beta<\delta$.
 It follows that any construction sequence $\cbar$  over $\Mbar_{<\alpha}$ is also a construction sequence over 
     $\Mbar_{<\beta}$  and over $M_I$. %**** This isn't what was meant. There also appears to be confusion between $M_<\alpha}$, $\overline{M}_\alpha,$ and $\overline{M}_{<\alpha}$ and $\bigcup \overline{M}_{<\alpha}$
 
 We recursively find a sequence $(\cbar_\alpha:\alpha\le\delta)$ of sequences satisfying
 \begin{enumerate}
 \item  $\cbar_\alpha$ enumerates a constructible model $N_\alpha$ over $\bigcup\Mbar_{<\alpha}$; and
 \item  $\cbar_\alpha$ is an initial segment of $\cbar_\beta$ for all $\alpha\le\beta\le\delta$.
 \end{enumerate}
 If we succeed, then $\cbar_\delta$ will be a construction sequence over $M_I$.
 Put $\cbar_0:=\<\>$ and, for all non-zero limit ordinals $\gamma$, take $\cbar_\gamma$ to be the concatenation of $(\cbar_\alpha:\alpha<\gamma)$.
 Assuming $\cbar_\alpha$ has been chosen with $\alpha<\delta$, note that $M_\alpha$ is a `leaf' of the subtree $\Mbar_{\le\alpha}$.  
 As well, $$\fg {M_\alpha}  {M_{\alpha^-}} {N_\alpha}$$
 where $N_\alpha$ is the constructible model over $\Mbar_{<\alpha}$ enumerated by $\cbar_\alpha$.  From above,
 $\cbar_\alpha$ is also a construction sequence over $\Mbar_{<\alpha} M_\alpha$.  By PMOP, there is a constructible model $N_{\alpha+1}$
 over $M_\alpha N_\alpha$, from which it follows there is an enumeration $\cbar_{\alpha+1}$ of $N_{\alpha+1}$ in which $\cbar_\alpha$ is an initial segment.
 \qed\end{proof}

\begin{Lemma} \label{finitetree} ($T$ stable)
Suppose $(M_\eta:\eta\in I)$ is any independent tree of
models indexed by a finite tree $(I,\trianglelefteq)$.  Then the set
$\bigcup_{\eta\in I} M_\eta$ is essentially finite  with respect to any strong type
$p$ that is orthogonal to every $M_\eta$ (see Definition~\ref{essentiallyfinite}).
\end{Lemma}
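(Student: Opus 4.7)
The plan is to induct on the size $|I|$ of the tree. The base case $|I|=1$ forces $I=\{\<\>\}$, so $\bigcup_{\eta\in I}M_\eta=M_{\<\>}$, and the orthogonality hypothesis $p\perp M_{\<\>}$ together with the single-model content of Lemma~\ref{basicorth2}(1) yields essential finiteness immediately.

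For the inductive step with $|I|>1$, I would pick a leaf $\eta\neq\<\>$ (which exists because $I$ is finite and nonempty) and set $I'=I\setminus\{\eta\}$. Then $I'$ is still a finite tree containing the root, $(M_\mu:\mu\in I')$ is an independent tree, and the orthogonality hypothesis $p\perp M_\mu$ is retained for each $\mu\in I'$. By the inductive hypothesis, $M_{I'}$ is essentially finite with respect to $p$: there is a finite $e'\subseteq M_{I'}$ with $\tp(c/be')\vdash\tp(c/bM_{I'})$ for a realization $c$ of (a suitable extension of) $p$.

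It remains to absorb the leaf $M_\eta$ into a finite piece. The tree-independence condition provides $\fg{M_\eta}{M_{\eta^-}}{M_{I'}}$, so the set-theoretic triple $(M_{\eta^-},M_{I'},M_\eta)$ is independent. By superstability, after enlarging $e'$ to a finite set still contained in $M_{I'}$ and containing a base for $\tp(M_\eta/M_{I'})$ inside $M_{\eta^-}$, we have $\fg{M_\eta}{e'\cap M_{\eta^-}}{be'}$. Since $p\perp M_\eta$ and orthogonality is preserved under non-forking extension, $\tp(c/be')\perp\tp(M_\eta/be')$. A second application of Lemma~\ref{basicorth2}(1) then furnishes a finite $e_\eta\subseteq M_\eta$ with $\tp(c/be'e_\eta)\vdash\tp(c/be'M_\eta)$. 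Setting $e=e'\cup e_\eta\subseteq M_I$ and combining the two implications (the output of the induction gives everything about $c$'s type over $bM_{I'}$, while $e_\eta$ captures the interaction with $M_\eta$), we obtain $\tp(c/be)\vdash\tp(c/bM_I)$.

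The principal subtlety is the transfer step: upgrading the raw orthogonality $p\perp M_\eta$ to $\tp(c/be')\perp\tp(M_\eta/be')$ so that Lemma~\ref{basicorth2}(1) may be invoked over the enlarged base $be'$. This is where the tree-independence $\fg{M_\eta}{M_{\eta^-}}{M_{I'}}$ is essential, allowing us to treat $M_\eta$ as a non-forking extension of its restriction over $M_{\eta^-}$ and to pass the orthogonality through. Once this transfer is in place, the combinatorics of the finite tree reduces the argument to iterated applications of the two-model lemma.
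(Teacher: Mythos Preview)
Your overall architecture matches the paper's: induct on $|I|$, handle the base case via Lemma~\ref{basicorth2}(1) with $B=\emptyset$, and in the inductive step remove a leaf $\eta$ and apply the inductive hypothesis to $I'=I\setminus\{\eta\}$. The divergence is entirely in how you absorb the leaf.

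The paper's inductive step is one line: apply Lemma~\ref{basicorth2}(2) with $A=M_{I'}$ and $B=M_\eta$. The hypotheses are immediate --- $A$ is essentially finite by induction, $p\perp B$ by assumption, and $\fg{M_{I'}}{M_{I'}\cap M_\eta}{M_\eta}$ follows from the tree independence $\fg{M_\eta}{M_{\eta^-}}{M_{I'}}$ --- and the conclusion is exactly that $M_I=A\cup B$ is essentially finite with respect to $p$. You never invoke clause~(2), and instead try to reproduce its content by hand.

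That reproduction has a genuine gap. From the induction you get $p|be'\vdash p|bM_{I'}$, and from a second application of clause~(1) you get $p|be'e_\eta\vdash p|be'M_\eta$. But ``combining the two implications'' to obtain $p|be'e_\eta\vdash p|bM_{I'}M_\eta$ is not automatic: knowing the type over $bM_{I'}$ and over $be'M_\eta$ separately does not determine the type over their union. Bridging this requires exactly the orthogonality-plus-independence argument packaged in Lemma~\ref{basicorth2}(2), and your ``transfer'' paragraph does not supply it (indeed, the transfer you describe, $\tp(c/be')\perp\tp(M_\eta/be')$, is at the wrong base set to help here, and is in any case not needed for the application of clause~(1), which only asks for $p\perp M_\eta$). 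Two smaller points: you invoke superstability, but the lemma is stated for stable $T$; and the finite set $b$ on which $p$ is based need not lie in $M_I$ nor be independent from it, so the independence you assert involving $b$ requires care.

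The fix is simply to cite clause~(2) in place of your manual combination.
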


\begin{proof} We argue by induction on $|I|$.  For $|I|=1$, this is immediate by Lemma~\ref{basicorth2}(1) (taking $A=M_{\<\>}$ and $B=\emptyset$).
So assume $(M_\eta:\eta\in I)$ is any independent tree of
models with
$|I|=n+1$ and we have proved the Lemma when $|I|=n$.
Fix any strong type $p$ that is orthogonal to every $M_\eta$.  
Choose any leaf $\eta\in I$ and let $J\subseteq I$ be the subtree with universe
$I\setminus\{\eta\}$.  By the inductive hypothesis, $M_J$
is essentially finite with respect to $p$,  so  the result follows by
Lemma~\ref{basicorth2}(2), taking $A=M_J$ and
$B=M_\eta$.
\qed\end{proof}

\begin{Lemma}  \label{anytree}  
Suppose $(M_\eta:\eta\in I)$ is any independent tree of
models indexed by any tree $(I,\trianglelefteq)$ and let $N$ be any model 
that contains and is atomic over $M_I$.
Let $p\in S(N)$ be any regular type $\perp\PP_e$ and $\perp M_\eta$ for every $\eta\in I$.
Then $Nc$ is an atomic set over
$M_I$ for every realization $c$ of $p$.
\end{Lemma}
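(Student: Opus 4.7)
The plan is to reduce to a finite subtree via superstability and then apply Lemma~\ref{finitetree} together with the always-isolated machinery of Section~4. Since $N$ is atomic over $M_I$, it suffices, given any finite $\dbar\subseteq N$, to show that $\tp(c/M_I\dbar)$ is isolated: combined with the isolation of $\tp(\dbar/M_I)$ this yields $\tp(\dbar c/M_I)$ isolated, which is exactly what atomicity of $Nc$ over $M_I$ requires.

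Fix such a $\dbar$.  Because $p\perp\PPe$, $p$ is always isolated by Lemma~\ref{AIlemma}, and Lemma~\ref{stationary} lets me choose a finite $d^*\subseteq N$ with $\dbar\subseteq d^*$ on which $p$ is based and stationary; then $p\mr{d^*}$ is isolated.  By superstability there is a finite $X\subseteq M_I$ with $\fg c X {M_I}$, and since $I$ is a tree and $X$ is finite, $X\subseteq M_J$ for some finite subtree $J\subseteq I$; monotonicity then gives $\fg c {M_J} {M_I}$.  The subsystem $(M_\eta:\eta\in J)$ is itself a finite independent tree of models, and $\stp(c/d^*)$ is parallel to $p$ and hence orthogonal to each $M_\eta$ with $\eta\in J$, so Lemma~\ref{finitetree} produces a finite $e\subseteq M_J$ with
$$\tp(c/d^*e)\vdash \tp(c/d^*M_J).$$

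Since $p$ is based on $d^*$, we have $\fg c {d^*} N$, so in particular $\fg c {d^*} e$; hence $\tp(c/d^*e)$ is the unique non-forking extension of the isolated stationary type $p\mr{d^*}$ and, by the Open Mapping Theorem (Fact~\ref{Open}), is itself isolated, whence $\tp(c/d^*M_J)$ is isolated.  To pass from $M_J$ to $M_I$, observe that $\fg c {d^*} {M_I}$ (again because $p$ is based on $d^*$ and $M_I\subseteq N$) and monotonicity yield $\fg c {M_J d^*} {M_I d^*}$; a second application of the Open Mapping Theorem produces $\tp(c/M_I d^*)$ isolated.  Combined with the isolation of $\tp(d^*/M_I)$ (from $N$ atomic over $M_I$) this gives $\tp(d^* c/M_I)$, and hence $\tp(\dbar c/M_I)$, isolated.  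The main obstacle is the trimming step from the infinite tree $I$ to a finite subtree on which Lemma~\ref{finitetree} applies: superstability supplies a finite non-forking base for $c$ over $M_I$, and the tree structure is what lets one absorb that base into a single finite downward-closed subtree $J$; once that is done, orthogonality to $\PPe$ (via always-isolation) and two applications of the Open Mapping Theorem finish the argument.
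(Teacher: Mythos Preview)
Both of your invocations of the Open Mapping Theorem are in the wrong direction. Fact~\ref{Open} says that if $A\subseteq B$ and $\fg c A B$, then isolation of $\tp(c/B)$ \emph{descends} to isolation of $\tp(c/A)$; it does not lift isolation from the smaller base to the larger one.

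The first instance is harmless: $\tp(c/d^*e)$ is indeed isolated, but the correct reason is Lemma~\ref{AIlemma}. Since $p\perp\PPe$, $p$ is always isolated, and $p$ is based on the finite set $d^*e$, so $p|d^*e=\tp(c/d^*e)$ is isolated. Cite that instead.

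The second instance is a genuine gap. From $\tp(c/d^*M_J)$ isolated and $\fg c {d^*M_J}{d^*M_I}$ you simply cannot conclude that $\tp(c/d^*M_I)$ is isolated: the isolating formula over $d^*M_J$ may well have realizations that fork with $M_I$ over $d^*M_J$, so it need not isolate a complete type over $d^*M_I$. Your superstability step locating $J$ with $\fg c {M_J}{M_I}$ therefore does no work in the argument.

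The paper closes this gap by a different route. Instead of trying to isolate $\tp(c/d^*M_I)$ directly, it combines at the $M_J$ level first and then pushes up via a Tarski--Vaught inclusion. Concretely: choose the finite downward-closed $J$ so that it contains the parameters of a formula isolating $\tp(d^*/M_I)$; then $\tp(d^*/M_J)$ is isolated, so $\tp(cd^*/M_J)$ is isolated, and since $M_J\subseteq_{TV} M_I$ (any downward-closed subtree of an independent tree gives this), Lemma~\ref{TVup} yields $\tp(cd^*/M_I)$ isolated. The point is that $\subseteq_{TV}$ is precisely the relation along which isolation transfers \emph{upward}; non-forking alone does not provide this.
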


\begin{proof} 
As notation, for $K\subseteq I$, we let $M_K$ denote $\bigcup_{\nu\in K} M_\nu$.
By Lemma~\ref{stationary} there is a finite $d_0\subseteq N$ over which $\tp(c/N)$ is based and stationary.
It suffices to show that $\tp(dc/M_I)$ is isolated for any finite $d$ with  $d_0\subseteq d\subseteq N$,
 so choose such a $d$.  Choose a finite $e\subseteq M_I$ with a formula
$\phi(x,e)$ isolating $\tp(d/M_I)$.  Choose a finite, downward closed subtree $J\subseteq I$ containing $e$.
As $\tp(c/d)$ is stationary and $\perp M_\eta$ for every $\eta\in J$, Lemma~\ref{finitetree} implies that $M_J$ is essentially finite with respect to $\tp(c/d)$,
so there is a finite $e^*$, $e\subseteq e^*\subseteq M_J$
for which $\tp(c/de^*)\vdash\tp(c/dM_J)$.   As $\tp(c/N)\perp\PP_e$,  by Lemma~\ref{AIlemma}, $\tp(c/de^*)$ is isolated.
Thus, $\tp(c/dM_J)$ is isolated as well. Since $\tp(d/M_J)$ is isolated, so is $\tp(cd/M_J)$.
But now, since $M_J\subseteq_{TV} M_I$, we conclude that $\tp(cd/M_I)$ is isolated as well.
\qed\end{proof}

%To see this, fix such a set $D$.  As $D$ is atomic over $M_I$, we can find
%a finite set $E\subseteq M_I$ such that $\tp(D/E)$ is isolated and $\tp(D/E)\vdash\tp(D/M_I)$.
%Choose a non-empty finite subtree $J\subseteq I$ such that $E\subseteq M_J$ and choose a prime model
%$N_J\preceq N$ over $M_J$ that contains $D$.  By Lemmas~\ref{finitetree} and \ref{atomicextension}
%we have that $N_J\cup\{c\}$ is atomic over $M_J$.  Choose a formula $\delta(x,h)\in\tp(Dc/M_J)$ 
%that isolates the type.  Now, let
%$$\F=\{K:J\subseteq K\subseteq I:K\ \hbox{is a subtree and}\ \tp(Dc/M_K) \ \hbox{is isolated by}\ 
%\delta(x,h)\}$$
%Clearly, $J\in\F$ and by Lemma~\ref{retain} $\F$ is closed under unions of increasing chains.
%So choose a maximal element $K^*\in \F$ with respect to inclusion.
%To complete the proof of the Lemma, it suffices to prove that $K^*=I$.
%If this were not the case, then choose a $\trianglelefteq$-minimal element $\eta\in I\setminus K^*$
%and let $K':=K^*\cup\{\eta\}$.
%As $J$ was non-empty, $\eta\neq\<\>$ and the independence of the tree yields $\fg {M_{K^*}} {M_{\eta^-}} {M_\eta}$.
%But then, by Lemma~\ref{retain}(2), 
%$\delta(x,h)$ isolates $\tp(Dc/M_{K'})$, contradicting the maximality of $K^*$.  Thus, $K^*=I$ and the proof is complete.
%\qed\end{proof}

We define three species  of decompositions.    We begin with the least constrained.

\begin{Definition}  \label{decompdef}
{\em  
Fix a model $M$.  A {\em weak decomposition  $\d=\<(M_\eta,a_\eta):\eta\in I\>$ {\bf inside} $M$\/} 
consists of an independent tree $\d=\{M_\eta:\eta\in I\}$ 
of countable, $na$-substructures $M_\eta\subseteq_{na} M$
indexed by $(I,\trianglelefteq)$, and a distinguished finite tuple $a_\eta\in M_\eta$ (but $a_{\<\>}$ is meaningless)
satisfying the following conditions for each $\eta\in I$:

\begin{enumerate}

%\item Each $a_\eta\in M_\eta$ (but $a_{\<\>}$ is meaningless);
\item  The set $C_\eta:=\{a_\nu:\nu\in Succ_I(\eta)\}$ is independent over $M_\eta$;
\item  For each $\nu\in Succ_I(\eta)$ we have:
\begin{enumerate}  
\item  If $\eta\neq\<\>$, then $\tp(a_\nu/M_\eta)\perp M_{\eta^-}$; 
\item  $M_\nu$ is dominated by $a_\nu$ over  $M_\eta$;
\end{enumerate}
\end{enumerate}

A {\em regular decomposition inside $M$} is a  weak decomposition inside $M$ such that   $\tp(a_\nu/M_\eta)$ is a regular type for every $\eta\in I$ and $a_\nu\in C_\eta$.
\\
A {\em $\PP_e$-decomposition inside $M$} has each $\tp(a_\nu/M_\eta)\in \PP_e$.
}
\end{Definition}

For a given $M$, let $K_{\PPe}\subseteq K_{reg}\subseteq K_{wk}$ denote the sets of [$\PPe$, regular, weak] decompositions $\d$ inside $M$.
For each of these notions, there are two ways in which a decomposition $\d$ can be maximal.  Thankfully, both notions are equivalent.

We can define a natural partial order $\le^*$ on each of $K_{\PPe}$, $K_{reg}$, $K_{wk}$ by increasing the index tree, but leaving the nodes unchanged.  That is
say
$$\d=\<M_\eta,a_\eta:\eta\in J\>\le^*\d'=\<M'_\eta,a'_\eta:\eta\in I\>$$
 if and only if  the index tree $(J,\trianglelefteq)$ is a downward closed subtree of 
$(I,\trianglelefteq)$ and $(M_\eta,a_\eta)=(M'_\eta,a'_\eta)$ for all $\eta\in J$.

\begin{Lemma}  \label{extendtomax}  Fix any model $M$ and any [weak,regular,$\PPe$] decomposition $\d=\<M_\eta,a_\eta:\eta\in I\>$ inside $M$.  Then
\begin{enumerate}
\item  $\d$ is $\le^*$-maximal inside $M$ if and only if $C_\eta$ is maximal for every $\eta\in I$; and
\item  
Every  [weak, regular, $\PPe$] decomposition $\d$ inside $M$ can be $\le^*$-extended to a maximal [weak, regular, $\PP_e$] decomposition
inside $M$.
\end{enumerate}
\end{Lemma}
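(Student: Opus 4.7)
The plan is to prove (1) first and then derive (2) by a Zorn's Lemma argument.

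For (1), the forward direction is by contraposition. If $C_\eta$ is not maximal, pick a witness $a\in M\setminus M_\eta$ satisfying all clauses required to adjoin a new successor above $\eta$: the global independence $\fg{a}{M_\eta}{M_I}$, the orthogonality $\tp(a/M_\eta)\perp M_{\eta^-}$ when $\eta\neq\langle\rangle$, and $\tp(a/M_\eta)$ of the prescribed kind (arbitrary, regular, or in $\PPe$). Since $M_\eta\subseteq_{na}M$, the $na$-machinery developed earlier (analogous in spirit to Lemma~\ref{dullfacts}(2a)) supplies a countable $M_\nu$ with $M_\eta\subseteq_{na} M_\nu\subseteq_{na} M$, $a\in M_\nu$, and $M_\nu$ dominated by $a$ over $M_\eta$. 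Adjoining a new leaf $\nu$ with $\nu^-=\eta$ and data $(M_\nu,a)$ produces a strict $\le^*$-extension of $\d$ of the same type: the independent-tree clause $\fg{M_\nu}{M_\eta}{M_I}$ follows from $\fg{a}{M_\eta}{M_I}$ by domination, and all remaining clauses transfer verbatim.

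The reverse direction uses well-foundedness of $(I',\trianglelefteq)$. Given any proper $\le^*$-extension $\d'$ of $\d$ inside $M$, pick a $\trianglelefteq$-minimal element $\nu\in I'\setminus I$. Because $I$ is downward closed in $I'$, the predecessor $\eta:=\nu^-$ lies in $I$, and then $a'_\nu$ is a legitimate addition to $C_\eta$, showing $C_\eta$ was not maximal.

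For (2), apply Zorn's Lemma to the poset $\mathcal{D}$ of decompositions $\d'$ of the prescribed type inside $M$ with $\d\le^*\d'$, ordered by $\le^*$. Nonemptiness is witnessed by $\d$ itself. For a chain $(\d_\beta)_{\beta<\gamma}$, set $I^*=\bigcup_{\beta<\gamma}I_\beta$ and define $\d^*$ on $I^*$ by taking $(M_\eta,a_\eta)$ from any $\d_\beta$ with $\eta\in I_\beta$; this is well-defined because $\le^*$-extensions do not modify preexisting data. Every clause of the decomposition definition is local to a pair $(\eta,\nu)$ with $\nu\in Succ_{I^*}(\eta)$, and any such pair already appears together in some $\d_\beta$; apparent global independence assertions transfer by finite character of non-forking. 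Hence $\d^*\in\mathcal{D}$ is an upper bound, Zorn yields a maximal element, and by (1) every $C_\eta$ in it is maximal.

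The main technical hurdle is the forward direction of (1): inside $M$, producing a countable $M_\nu\subseteq_{na} M$ containing $a$ and dominated by $a$ over $M_\eta$, while simultaneously preserving the global independence $\fg{M_\nu}{M_\eta}{M_I}$ needed for the tree to remain stable. Both points reduce to the $na$-substructure technology and $V$-domination results developed earlier in the paper; the regular and $\PPe$ variants add no further difficulty, since the kind of $\tp(a/M_\eta)$ transfers directly to the new $a_\nu$.
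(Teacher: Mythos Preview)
Your argument is correct and follows the same route as the paper: one direction of (1) is trivial, the other direction uses the $na$-substructure machinery to produce a dominating countable $M_\nu\subseteq_{na}M$ above the witness $a$, and (2) is Zorn's Lemma using closure of decompositions under $\le^*$-chains. Two minor remarks: the precise tool the paper invokes for building $M_\nu$ is Fact~\ref{3big}(3) (the Shelah--Buechler result) rather than Lemma~\ref{dullfacts}(2a), and the paper chooses a $\trianglelefteq$-least $\eta$ with $C_\eta$ non-maximal, deriving the needed independence $\fg{a^*}{M_\eta}{M_I}$ from the local constraints (independence of $C_\eta\cup\{a^*\}$ over $M_\eta$ together with $\tp(a^*/M_\eta)\perp M_{\eta^-}$) rather than assuming it as part of the witness data.
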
  

\begin{proof}  
(1)   If $\d'$ is a proper $\le^*$-extension of $\d$, then it is obvious that some $C_\eta$ gets extended.
For the converse, suppose there is some $\eta\in I$ for which $C_\eta$ can be extended.  Choose a $\trianglelefteq$-least such $\eta$ and choose $a^*\in M$ so that
$C_\eta\cup\{a^*\}$ satisfies the constraints.   Let $I^+=I\cup\{\eta^+\}$ be the one-point extension of $(I,\trianglelefteq)$ whose extra note is a leaf, with $\eta\triangleleft \eta^+$.
Since $M_\eta\subseteq_{na} M$, we can use Fact~\ref{3big}(3) to choose $M_{\eta^+}\subseteq_{na} M$ with  $M_\eta a^*\subseteq M_{\eta^+}$ and $M_{\eta^+}$ dominated by $a^*$ over $M_\eta$. 
The verification that $(M_\eta:\eta\in I^+)$ remains an independent tree of models follows from the domination and the fact that $\tp(a^*/M_\eta)\perp M_{\eta^-}$.
Then $\d^+=\d\smallfrown\<M_{\eta^+},a^*\>$ properly $\le^*$-extends $\d$.

(2)
It is evident that decompositions inside $M$ of any species are closed under $\le^*$-chains, so (2) follows by (1) and  Zorn's Lemma.
\qed\end{proof}

\begin{Definition}  {\em  
A {\em [weak, regular, $\PP_e$] decomposition {\bf of} $M$\/} is a
maximal [weak, regular, $\PP_e$] decomposition (in either of these senses).
}
\end{Definition}

\begin{Lemma}[$T$ V-DI] \label{overatomic}
  Let $M$ be any model, let $\d=\<M_\eta,a_\eta:\eta\in I\>$
be any [weak, regular, $\PP_e$]  decomposition inside $M$,  and let $N$ be atomic over $M_I$.
If $p\in\PP_e$ is any regular type with $p\not\perp N$, then $p\not\perp M_\eta$ for some $\eta\in I$.
\end{Lemma}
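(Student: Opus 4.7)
I would argue by contradiction, constructing a finitary DOP witness (Definition~\ref{newDOP}) for $p$, which contradicts $\PPe$-NDOP and hence V-DI via Proposition~\ref{VDINDOP}.  Suppose $p\in\PPe$ is regular, $p\not\perp N$, yet $p\perp M_\eta$ for every $\eta\in I$.  Pick a regular $q\in S(N)$ with $q\not\perp p$; then $q\in\PPe$, and since non-orthogonality is an equivalence relation on regular types, $q\perp M_\eta$ for every $\eta\in I$.  Choose a finite $d\subseteq N$ on which $q$ is based and stationary.  By atomicity of $N$ over $M_I$, a formula $\phi(x,e)$ with $e\subseteq M_I$ finite isolates $\tp(d/M_I)$, which forces $\fg d e {M_I}$.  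Fix a finite downward-closed subtree $J\subseteq I$ containing $\<\>$ with $e\subseteq M_J$; the same formula then isolates $\tp(d/M_J)$.

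The aim is to build an independent triple $\Abar=(A_0,A_1,A_2)$ of finite sets with $d$ $V$-dominated by $\Abar$ and $q|d\perp A_1$, $q|d\perp A_2$.  In the \emph{branching} case, some node $\eta_0\in J$ has at least two incomparable descendants in $J$; cut $J$ at $\eta_0$ into subtrees $J_1,J_2$ with $M_J=M_{J_1}\cup M_{J_2}$, so that $(M_{\eta_0},M_{J_1},M_{J_2})$ is an independent triple of models.  Since $\tp(d/M_J)$ is isolated, Fact~\ref{Vdomfact}(4) gives that $d$ is $V$-dominated by this triple, and Fact~\ref{Vdomfact}(3) then truncates to a finite triple $\Abar\sq(M_{\eta_0},M_{J_1},M_{J_2})$ with $e\subseteq A_1A_2$ still $V$-dominating $d$.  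Orthogonality $q|d\perp A_i$ follows from $q|d\perp M_\eta$ for each $\eta$ in the relevant subtree, since orthogonality to a model passes to any subset.

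In the \emph{chain} case (no branching in $J$, in particular when $|J|=1$), pass to a fresh Morley conjugate: choose $e^{(1)}$ with $\tp(e^{(1)})=\tp(e)$ and $\fg{e^{(1)}}{\emptyset}{M_I d}$, and set $\Abar=(\emptyset,e,e^{(1)})$.  Embed $\Abar$ in an ambient independent triple of models $(M_0,M_1,M_2)$ with $M_0$ small, $M_1\supseteq M_J$ containing $e$, and $M_2$ containing $e^{(1)}$ chosen so that $\fg{M_2}{M_0}{M_1 d}$; then $\tp(d/M_1)$ isolated by $\phi(x,e)$ propagates via nonforking and the Open Mapping Theorem to $\tp(d/M_1M_2)$, giving $V$-domination of $d$ by $(M_0,M_1,M_2)$ and hence by $\Abar$ via Fact~\ref{Vdomfact}(3).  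Here $q|d\perp e$ is immediate from $q|d\perp M_\eta$ for an $\eta\in J$ containing $e$, while $q|d\perp e^{(1)}$ follows from the genericity of $e^{(1)}$: an automorphism $\sigma$ of $\bigc$ fixing $\emptyset$ and sending $e^{(1)}$ to $e$ would map any hypothetical witness of $q|d\not\perp e^{(1)}$ to a witness of $\sigma(q|d)\not\perp e$, and since $\sigma(q|d)$ lies in the same non-orthogonality class as $q|d$, one concludes $q|d\not\perp e\subseteq M_\eta$, contradicting $q|d\perp M_\eta$.

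In either case, $\Abar$ is a finitary DOP witness for $q|d$, and hence for $p$ (same non-orthogonality class), contradicting $\PPe$-NDOP.  The main obstacle is the chain case, where neither the internal tree structure nor the isolation of $\tp(d/M_J)$ alone suffice: one must introduce a fresh conjugate outside the tree and then verify both the $V$-domination of $d$ by the truncated triple (via Open Mapping propagation of isolation through the enlarged ambient triple) and the orthogonality of $q|d$ to the generic conjugate (via the automorphism / non-orthogonality-class argument above).  Once $\Abar$ is produced, Definition~\ref{newDOP} and Proposition~\ref{VDINDOP} close the contradiction.
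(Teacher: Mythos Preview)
Your overall strategy --- contradict $\PPe$-NDOP by exhibiting a finitary DOP witness for $q|d$ --- is the right instinct, and it is exactly what underlies Lemma~\ref{relaxamodel}.  But the execution has a genuine gap.

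In the branching case you assert that ``$q|d\perp A_i$ follows from $q|d\perp M_\eta$ for each $\eta$ in the relevant subtree, since orthogonality to a model passes to any subset.''  This is where the argument breaks.  The set $A_i$ is a finite subset of $M_{J_i}=\bigcup_{\eta\in J_i}M_\eta$, not of a single $M_\eta$; orthogonality to each $M_\eta$ individually does \emph{not} give orthogonality to $M_{J_i}$ or to a finite subset spanning several nodes.  Indeed, that implication is essentially the content of the lemma itself (applied to the smaller tree $J_i$).  What is needed is an induction on $|J|$: the paper removes a single leaf $\eta$, uses PMOP to take $N_J$ constructible over $M_J$, and applies Lemma~\ref{relaxamodel} to the triple $(M_{\eta^-},N_J,M_\eta)$ to conclude $p\not\perp M_\eta$ or $p\not\perp N_J$, the latter falling to the inductive hypothesis.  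Your one-shot split at a branching node skips this induction and thereby assumes what you are proving.  (There is also a smaller issue: Fact~\ref{Vdomfact}(4) is stated for triples of \emph{models}, and $M_{J_1},M_{J_2}$ are not models.)

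The chain case is not a minor repair either.  Your automorphism argument for $q|d\perp e^{(1)}$ fails because $\sigma$ moves $d$: you only get $\sigma(q|d)\not\perp e$, and there is no reason $\sigma(q|d)$ lies in the non-orthogonality class of $q$ (or of $p$).  Separately, the ``propagation of isolation via the Open Mapping Theorem'' goes the wrong way (Open Mapping pushes isolation \emph{down}, not up; you would need a $\subseteq_{TV}$ argument as in Lemma~\ref{TVup}, and you have not secured $M_J\subseteq_{TV} M_1$).  In the paper's induction the linear case is the trivial base case, not a place requiring a fresh conjugate, so this whole construction is unnecessary once the induction is set up correctly.
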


\begin{proof}  Recall that  V-DI implies PMOP and $\PP_e$-NDOP by Theorem~\ref{DIPMOP} and Proposition~\ref{VDINDOP}.
We first prove the Lemma for all finite index trees $(I,\trianglelefteq)$ by induction on $|I|$.
To begin, if $|I|=1$, then we must have $N=M_{\<\>}$ and there is nothing to prove.
Assume the Lemma holds for all trees of size $n$ and let
  $\d=\<M_\eta,a_\eta:\eta\in I\>$ be a decomposition inside $M$ indexed by $(I,\trianglelefteq)$ of size
$n+1$.  Let $N$ be atomic over 
$\bigcup_{\eta\in I} M_\eta$ and let $p\in\PP_e$ be  non-orthogonal to $N$.  
Choose 
a leaf $\eta\in I$ and let $J=I\setminus\{\eta\}$.  
If $(I,\trianglelefteq)$ were a linear order, then again $N=M_\eta$ and there is nothing to prove.
If $(I,\trianglelefteq)$ is not a linear order, then by Lemma~\ref{usePMOP},
choose any $N_J\preceq N$ to be constructible over
$M_J$.  %**** Over M_J?
By Lemma~\ref{relaxamodel}, either $p\not\perp M_\nu$ or $p\not\perp N_J$.  In the first case we are done,
and in the second we finish by the inductive hypothesis since $|J|=n$.
Thus, we have proved the Lemma whenever the indexing tree $I$ is finite.

For the general case, fix a decomposition 
  $\d=\<M_\eta,a_\eta:\eta\in I\>$
inside $M$, let $N$ be atomic over $M_I$.   Fix any $p\in\PP_e$ with $p\not\perp N$.
Choose $q\in S(N)$, $q\not\perp p$ and choose a finite set $d\subseteq N$ on which $q$ is based.
As $N$ is atomic over $M_I$, we can find a finite subtree $J\subseteq I$ such that $\tp(d/M_J)$ is isolated.
As $M_J$ is countable, use Fact~\ref{arbitrary}(2) to choose a constructible model $N'\preceq N$ over $M_J$ with $d\subseteq N'$. %**** How does Fact~\ref{arbitrary}(2) apply? And why are we using that M_J is countable? We have full PMOP, so there is a constructible model over M_J, which can be chosen to contain d; thus there is a constructible model over M_J d, which can be chosen inside of N
As $d\subseteq N'$, $p\not\perp N'$, and $J$ is finite, it follows from above that  $p\not\perp M_\eta$ for some $\eta\in J$.
\qed\end{proof}
%By employing Fact~\ref{Fact}(2) and the fact that eni-active types are preserved under non-orthogonality,
%we may assume $p\in S(N)$.
%Choose a finite $D\subseteq N$ over which $p$ is based
%and stationary.  As $D$ is finite and atomic over $\bigcup_{\eta\in I} M_\eta$,
%we can find a finite subtree $J\subseteq I$ such that  $D$ is atomic
%over $\bigcup_{\eta\in J} M_\eta$.
%Fix such a $J$ and choose $M_J\preceq N$ such that $D\subseteq M_J$
%and $M_J$ is prime over $\bigcup_{\eta\in J} M_\eta$.
%As $D\subseteq M_J$,  $p\not\perp M_J$, so since $J$ is finite, the argument above implies that
%there is an $\eta\in J$
%such that $p\not\perp M_\eta$.  
%\qed\end{proof}

%\begin{Proposition}[$T$ V-DI]  Suppose $\d=\<M_\eta,a_\eta:\eta\in I\>$ is any decomposition

\begin{Theorem}[V-DI]   \label{Pmax}  Let $M$ be any model and let $\d=\<M_\eta,a_\eta:\eta\in I\>$ be any [weak,regular,$\PPe$] decomposition of $M$.
Then $N\preceq M$ is a dull pair for any $N\preceq M$ containing $M_I$.

\end{Theorem}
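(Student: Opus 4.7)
The plan is to reduce to the case where $N$ is an atomic model over $M_I$, and then bootstrap. Given $N$ with $M_I \subseteq N \preceq M$, since $T$ has V-DI, Theorem~\ref{DIPMOP} and Lemma~\ref{usePMOP} furnish a constructible (hence atomic) model over $M_I$. Realizing its construction sequence inside $N$, which is possible because $N\supseteq M_I$ is a model, I would obtain some $N_0\preceq N$ with $N_0$ atomic over $M_I$. Then $N_0\preceq N\preceq M$, so by Corollary~\ref{splitfiltration}(1) it suffices to prove that $N_0\preceq M$ is a dull pair.

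For that, I would invoke Proposition~\ref{chardull} and argue by contradiction: suppose $p=\tp(a/N_0)$ is regular for some $a\in M\setminus N_0$, with $p\not\perp\PPe$. Because $p$ is regular and non-orthogonality is an equivalence relation on regular types, $p\in\PPe$. Applying Lemma~\ref{overatomic} to $N_0$ (atomic over $M_I$) and $p\in\PPe$ (trivially $\not\perp N_0$), there is some $\eta\in I$ with $p\not\perp M_\eta$. I would then choose $\eta$ to be $\trianglelefteq$-minimal with this property, so that automatically $p\perp M_{\eta^-}$ whenever $\eta\neq\langle\rangle$.

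Next, using that $M_\eta\subseteq_{na} M$ by the definition of decomposition, I would apply the 3-model Lemma (Fact~\ref{3big}(2)) to $M_\eta\preceq N_0\preceq M$ together with $p\in S(N_0)$ realized in $M$ and non-orthogonal to $M_\eta$. This yields $a'\in M$ with $\tp(a'/M_\eta)$ regular, non-orthogonal to $p$, and $\fg {a'} {M_\eta} {N_0}$. Since $p\in\PPe$, the type $\tp(a'/M_\eta)$ lies in $\PPe$; since $p\perp M_{\eta^-}$ (in the non-root case), $\tp(a'/M_\eta)\perp M_{\eta^-}$; and since $C_\eta\subseteq M_I\subseteq N_0$, the independence $\fg {a'} {M_\eta} {N_0}$ forces $C_\eta\cup\{a'\}$ to remain independent over $M_\eta$. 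Using Fact~\ref{3big}(3), I would find a countable $M^*\subseteq_{na} M$ with $M_\eta a'\subseteq M^*$ and $M^*$ dominated by $a'$ over $M_\eta$. Appending $a'$ as a new child of $\eta$, with associated node $M^*$, produces a proper $\le^*$-extension of $\d$ of the same species, contradicting maximality of $\d$ via Lemma~\ref{extendtomax}(1).

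The main obstacle is verifying, uniformly across all three species of decomposition, that $a'$ simultaneously satisfies every defining constraint: regularity and $\PPe$-membership of $\tp(a'/M_\eta)$, orthogonality to $M_{\eta^-}$ when $\eta\neq\langle\rangle$, and independence of $C_\eta\cup\{a'\}$ over $M_\eta$. All of these follow from the output of the 3-model Lemma together with $\trianglelefteq$-minimality of $\eta$ and the fact that non-orthogonality is an equivalence relation on regular types; the step $p\not\perp N_0 \Rightarrow p\not\perp M_\eta$, which is the engine of the contradiction, ultimately relies on $\PPe$-NDOP baked into V-DI via Lemma~\ref{relaxamodel} and Lemma~\ref{overatomic}.
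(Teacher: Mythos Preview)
Your proof is correct and follows essentially the same route as the paper: reduce to a constructible $N_0\preceq N$ over $M_I$, use Proposition~\ref{chardull} and Lemma~\ref{overatomic} to find $\eta$ with $p\not\perp M_\eta$, take $\eta$ $\trianglelefteq$-minimal, and apply the 3-model Lemma to contradict maximality of $C_\eta$; you then invoke Corollary~\ref{splitfiltration}(1) exactly as the paper does. Your citation of Lemma~\ref{overatomic} is in fact more accurate than the paper's own reference at that step (the paper cites Lemma~\ref{relaxamodel}, which is only the engine behind Lemma~\ref{overatomic}).
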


\begin{proof}   Choose any $N\preceq M$ with $M_I\subseteq N$.  By Lemma~\ref{usePMOP}, choose a constructible model $N_0\preceq N$ over $M_I$.
We first show that $N_0\preceq M$ is a dull pair.  If this were not the case, then by Theorem~\ref{chardull}, there would be some $a\in M\setminus N_0$ with
$p=\tp(a/N_0)\in \PPe$.  By Lemma~\ref{relaxamodel}, $p\not\perp M_\eta$ for some $\eta\in I$.  Choose a $\trianglelefteq$-least such $\eta\in I$.  
Since $M_\eta\subseteq_{na} M$, the 3-model Lemma gives us some $h\in M$ such that $\tp(h/M_\eta)$ is regular and $\not\perp p$ with 
$\fg h {M_\eta} {N_0}$.  %**** Don't we also need to maintain that \tp(h/M_\eta) \not \bot p? (Which the 3-model Lemma also gives)
It follows that $\tp(h/M_\eta)\in\PPe$ and, 
by the minimality of $\eta$, we have that $\tp(h/M_{\eta})\perp M_{\eta^-}$ (provided $\eta\neq\<\>$).  Regardless of the species of $\d$ [weak,regular,$\PPe$]
this contradicts the maximality of $\d$.  Thus, $N_0\preceq M$ is dull.  That $N\preceq M$ is dull now follows from Corollary~\ref{splitfiltration}(1).
\qed\end{proof}

The following Corollaries follow easily.  

\begin{Corollary}[V-DI]  \label{next} Suppose $M$ be any model and let $\d=\<M_\eta,a_\eta:\eta\in I\>$ be any [weak,regular,$\PPe$] decomposition of $M$.
Then $M$ and $N$ are back-and-forth equivalent for every $N\preceq M$ containing $M_I$.
\end{Corollary}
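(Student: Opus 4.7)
The plan is to chain together the two main results already established: Theorem~\ref{Pmax} provides the dullness of the pair $N \preceq M$, and Theorem~\ref{absolute} converts dullness into back-and-forth equivalence. So the corollary should follow essentially immediately, with no new work needed beyond invoking these two results in sequence.

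More concretely, first I would fix any $N \preceq M$ with $M_I \subseteq N$. Applying Theorem~\ref{Pmax} to the given [weak, regular, $\PPe$] decomposition $\d = \<M_\eta, a_\eta : \eta \in I\>$ of $M$ and to this particular $N$, I conclude that $N \preceq M$ is a dull pair. This step is where all the real content sits, since it already requires V-DI (via Lemma~\ref{relaxamodel} and the $\PPe$-NDOP consequences) plus the maximality of $\d$ to rule out any regular type in $\PPe$ being realized in $M$ over $N$.

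Second, I would apply Theorem~\ref{absolute} to the dull pair $N \preceq M$. Since $T$ is countable and superstable and no cardinality restriction is placed on $M$ or $N$, the theorem directly yields that $(M, a)_{a \in A} \equiv_{\infty,\omega} (N, a)_{a \in A}$ for every finite $A \subseteq N$; in particular, $M$ and $N$ are back-and-forth equivalent (as $L$-structures), which is exactly the conclusion required.

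Since both ingredients are already in hand, there is no real obstacle; the proof is effectively a two-line citation. The only subtlety worth flagging is orienting the roles of $M$ and $N$ correctly in Theorem~\ref{absolute} (there the larger model is called $N$ and the smaller $M$, whereas here the larger is $M$ and the smaller is $N$), but this is purely cosmetic.
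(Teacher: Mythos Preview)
Your proposal is correct and matches the paper's own proof exactly: the paper simply writes ``Immediate from Theorems~\ref{Pmax} and \ref{absolute}.'' Your observation about the role-swap of $M$ and $N$ between the corollary and Theorem~\ref{absolute} is accurate but, as you note, purely cosmetic.
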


\begin{proof}  Immediate from Theorems~\ref{Pmax} and \ref{absolute}.
\qed\end{proof}

\begin{Corollary}[V-DI]  \label{firstone}   Suppose $M$ and $N$ are models, and the same $\d=\<M_\eta,a_\eta:\eta\in I\>$ is a [weak,regular,$\PPe$] decomposition of 
both $M$ and $N$.  Then $M\equiv_{\infty,\omega} N$.
\end{Corollary}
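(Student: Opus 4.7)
The plan is straightforward: interpolate both $M$ and $N$ through a single ``small'' elementary submodel that is constructible over $M_I$, then apply Corollary~\ref{next} on each side and compose. The heavy lifting has already been done in Theorem~\ref{Pmax} and its consequence Corollary~\ref{next}; all that remains is to route both $M$ and $N$ through a common prime interpolant.

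First I would apply Lemma~\ref{usePMOP} (using that V-DI implies PMOP by Theorem~\ref{DIPMOP}) to produce a constructible model $N_0$ over $M_I$. Since constructible models are prime, $N_0$ embeds elementarily over $M_I$ into any model elementarily extending $M_I$; in particular there are elementary maps $f_M\colon N_0\to M$ and $f_N\colon N_0\to N$, each fixing $M_I$ pointwise. Let $N_0^M:=f_M(N_0)\preceq M$ and $N_0^N:=f_N(N_0)\preceq N$. The composition $f_N\circ f_M^{-1}$ is an isomorphism $N_0^M\to N_0^N$, so in particular $N_0^M\equiv_{\infty,\omega} N_0^N$.

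Next, both $N_0^M\preceq M$ and $N_0^N\preceq N$ contain $M_I$, so Corollary~\ref{next} applied to the decomposition $\d$ of $M$ yields $M\equiv_{\infty,\omega} N_0^M$, and applied to the decomposition $\d$ of $N$ yields $N\equiv_{\infty,\omega} N_0^N$. Transitivity of $\equiv_{\infty,\omega}$ then assembles the three equivalences into $M\equiv_{\infty,\omega} N$, as required.

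I do not foresee a genuine obstacle here: the proof is essentially a bookkeeping exercise on top of the structural results already proved. The only minor point worth confirming is the universal property of the constructible model $N_0$ over the possibly uncountable set $M_I$, i.e., that $N_0$ embeds elementarily over $M_I$ into any elementary extension of $M_I$. This is standard and is recorded among the facts in the Appendix.
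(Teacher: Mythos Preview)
Your proof is correct and is essentially the paper's own argument: the paper also produces a constructible model over $M_I$ (inside $M$), then arranges it to sit inside $N$ as well (phrased there as ``replacing $N$ by a conjugate over $M_I$''), and applies Corollary~\ref{next} twice. One cosmetic remark: since $M_I$ need not be a model, speak of embedding $N_0$ over $M_I$ into any model containing $M_I$ rather than into ``elementary extensions of $M_I$''; this is exactly Fact~\ref{arbitrary}(3).
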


\begin{proof}  By Lemma~\ref{usePMOP} there is a constructible model $M'\preceq M$ over $M_I$.  By replacing $N$ by a conjugate over $M_I$, we may additionally
assume that $M'\preceq N$.  Two applications of  Corollary~\ref{next} yield $M\equiv_{\infty,\omega} M'\equiv_{\infty,\omega} N$.
\qed\end{proof}

Theorem~\ref{Pmax} and Corollaries~\ref{next} and \ref{firstone}  encapsulate what can be said for $\PPe$-decompositions of $M$ as they do not touch any of the always isolated types.
However, for weak and regular decompositions of $M$, we can say considerably more.   

\begin{Theorem}[V-DI]   \label{atomicdecomp}  Let $M$ be any model and let $\d=\<M_\eta,a_\eta:\eta\in I\>$ be any [weak,regular] decomposition of $M$.
Then $M$ is atomic over $M_I$.  
\end{Theorem}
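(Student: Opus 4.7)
The plan is to show that $M$ coincides with a constructible model over $M_I$, which gives atomicity immediately. Since V-DI implies PMOP by Theorem~\ref{DIPMOP}, Lemma~\ref{usePMOP} provides a constructible model $N$ over $M_I$, which embeds as $N \preceq M$ because constructible models are prime. Then $N$ is atomic over $M_I$, and Theorem~\ref{Pmax} gives that $N \preceq M$ is a dull pair. The theorem reduces to proving $N = M$.

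Supposing toward contradiction that $N \prec M$ properly, Lemma~\ref{dullfacts}(2) produces $a \in M \setminus N$ with $p := \tp(a/N)$ strongly regular; by dullness, $p \perp \PPe$. I aim to find a node $\eta \in I$, an element $a^* \in M$ with $r := \tp(a^*/M_\eta)$ strongly regular, $r \perp M_{\eta^-}$ (when $\eta \neq \<\>$), the set $\{a^*\} \cup C_\eta$ independent over $M_\eta$, together with a countable $M_\nu \subseteq_{na} M$ containing $a^*$ and dominated by $a^*$ over $M_\eta$. Given these data, appending $(M_\nu, a^*)$ to $\d$ as a new successor of $\eta$ yields a proper $\le^*$-extension, contradicting maximality of $\d$ via Lemma~\ref{extendtomax}(1). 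The countable $M_\nu$ exists by Corollary~\ref{prime} applied to the countable $M_\eta$ (using $r \perp \PPe$), combined with the na-substructure machinery of Section~5 (Fact~\ref{3big}(3) applied to $M_\eta \subseteq_{na} M$).

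To locate $\eta$ and $a^*$: by Lemma~\ref{stationary}, pick a finite $b \subseteq N$ on which $p$ is based and stationary. Atomicity of $N$ over $M_I$ together with superstability yields a finite downward-closed subtree $J \subseteq I$ with $\fg{b}{M_J}{M_I}$ and $\tp(b/M_J)$ isolated. Since $p$ has weight one and $\perp \PPe$, an inductive descent through the finite tree $J$---iteratively moving down into a child $\nu$ of the current node whenever $\tp(a/M_\eta)$ is non-orthogonal to some $\tp(a_\mu/M_\eta)$ with $a_\mu \in C_\eta$---produces $\eta \in J$ (possibly $\eta = \<\>$) with $\fg{a}{M_\eta}{C_\eta}$ and $\tp(a/M_\eta) \perp M_{\eta^-}$. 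At this $\eta$, $\fg{a}{M_\eta}{N}$ follows from the tree-independence of $(M_\mu : \mu \in I)$ and the weight-one property of $p$, so $r$ inherits strong regularity from $p$ and we may take $a^* = a$. In the regular decomposition case, $r$ is regular as required; in the weak case, no regularity is imposed.

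The main obstacle is precisely this descent: extracting from $p \in S(N)$ the correct tree node $\eta$ and type $r$ over $M_\eta$ satisfying strong regularity, orthogonality to $M_{\eta^-}$, and $M_\eta$-independence from $C_\eta$ simultaneously. This relies on the weight-one analysis of regular types in tree-independent systems (in the spirit of Chapter~XII of \cite{Shc}) fused with the na-substructure and domination technology of Section~5; the remaining ingredients---strong regularity transfer, existence of $M_\nu$, and the maximality contradiction---are then immediate from the results already established.
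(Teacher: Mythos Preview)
Your argument has a genuine gap: it only handles the case where $p=\tp(a/N)$ is non-orthogonal to some $M_\eta$, and breaks down precisely in the complementary case $p\perp M_\eta$ for all $\eta\in I$. In that case there is \emph{no} $\eta$ with $\fg{a}{M_\eta}{N}$: if $\fg{a}{M_\eta}{N}$ held, then $p$ would be based on $M_\eta$, hence $p\not\perp M_\eta$. So your claimed conclusion of the descent, ``$\fg{a}{M_\eta}{N}$ follows from tree-independence and the weight-one property,'' is simply false there, and $r=\tp(a/M_\eta)$ need not be regular (it is a forking restriction of $p$). You therefore cannot manufacture an $a^*$ extending $C_\eta$, and the maximality of $\d$ is not contradicted. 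The paper's Claim in fact shows that this orthogonal case is the \emph{only} case that survives, so your proof handles exactly the wrong branch.

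There is a second, structural problem: you take $N$ to be the \emph{constructible} model over $M_I$ and try to prove $N=M$. That is strictly stronger than the theorem (for uncountable $M$, atomic over $M_I$ does not imply constructible over $M_I$), and more importantly it leaves you no lever for the orthogonal case. The paper instead takes $N\preceq M$ to be a \emph{maximal atomic} submodel over $M_I$. Once $p\perp M_\eta$ for all $\eta$ is established (via the 3-model Lemma contradiction you essentially sketch), Lemma~\ref{anytree} shows $Nc$ is still atomic over $M_I$; then Corollary~\ref{prime} yields $N'\preceq M$ atomic over $M_I$ properly containing $N$, contradicting the maximality of $N$. This second contradiction---against maximal atomicity of $N$, not against maximality of $\d$---is the missing idea. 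The paper then reduces arbitrary $M$ to the countable case by a forcing/absoluteness argument.
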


\begin{proof}  We first prove this when $M$ is countable.  
By Lemma~\ref{usePMOP}, we know there is a constructible, hence atomic, model $N_0\preceq M$ over $M_I$
so choose $N\preceq M$ to be maximal atomic over $M_I$.  We argue that $N=M$.  If this were not the case,
choose some $e\in M\setminus N$ such that $p=\tp(e/N)$ is regular.    By Theorem~\ref{Pmax},  $N\preceq M$ is dull, hence $p\perp \PPe$.
Additionally, 

\smallskip
\noindent{{\bf Claim.}}  $p\perp M_\eta$ for all $\eta\in I$.

\begin{proof}
Suppose this were not the case.
Choose $\eta\in I$ $\trianglelefteq$-minimal
 such that $p\not\perp M_\eta$.  Thus, either $\eta=\<\>$ or
$p\perp M_{\eta^-}$.  
By the 3-model Lemma, Fact~\ref{3big}(2), there is an element $h\in M$ such that $\tp(eh/M_\eta)$
is regular and non-orthogonal to $p$ (hence orthogonal to $M_{\eta^-}$ if $\eta\neq\<\>$), 
but $\fg h {M_\eta} {N_\alpha}$.
This element $h$ contradicts the maximality of $C_\eta$.
\qed\end{proof} 

By the Claim and $p\perp\PPe$, Lemma~\ref{anytree} implies $Nc$ is atomic over $M_I$.   As well, since $M$ and hence $N$ is countable with $\tp(b/N)\in \PPe$,
 a constructible model $N'\preceq M$ over $Nc$ exists by Corollary~\ref{prime}.   It follows that $N'$ is atomic over $M_I$, contradicting the maximality of $N$.

For the general case, suppose $M$ is uncountable.  Choose a forcing extension $\V[G]$ of $\V$ in which $M$ is countable.  As a counterexample to the maximality of $\d$ would be given by a finite tuple, it follows that $\d$ is a decomposition of $M$ in $\V[G]$.  Thus, by the argument above in $\V[G]$,  $M$ is atomic over $M_I$.  That is, every finite tuple from $M$
isolated by a formula over $M_I$.  The same formulas witness that $M$ is atomic over $M_I$ in $\V$ as well.  
\qed\end{proof} %**** Need to quote that there is an atomic model over Nc, for which we need N countable. [Then can collapse to get general result.]

%The following shows that dull pairs can be amalgamated, with no new non-orthogonality classes of regular types being realized.  
%
%\begin{Lemma}[PMOP]    \label{dullamalg}  Suppose $M\preceq N_1$ and $M\preceq N_2$ are both dull pairs with $\fg {N_1} M {N_2}$.
%Then there is $N^*$ for which  $N_1\preceq N^*$, $N_2\preceq N^*$ and $M\preceq N^*$ are all dull pairs.   Moreover, for any $e\in N^*\setminus M$ with $p:=\tp(e/M)$ regular,
%either there is $h_1\in N_1$ or there is $h_2\in N_2$ with $\tp(h_i/M)$ regular and $\not\perp p$.
%\end{Lemma}  
%
%\begin{proof}  By PMOP, choose $N^*$ constructible over $N_1N_2$.  We argue that $N_1\preceq N^*$ is a dull pair, with the argument for $N_2\preceq N^*$ being symmetric.
%By way of contradiction, by Proposition~\ref{chardull}, choose $d\in N^*\setminus N_1$ with $q:=\tp(d/N_1)\in \PPe$.  Since $N^*$ is dominated by $N_2$ over $N_1$, we have $\nfg d {N_1} {N_2}$.
%Coupled with $\fg {N_1} M {N_2}$, we conclude $q\not\perp\tp(N_2/M)$, directly contradicting $M\preceq N_2$ dull.  
%Finally, $M\preceq N^*$ holds by Corollary~\ref{splitfiltration}(2).  
%
%For the final sentence, we ask whether or not $\nfg e {N_1} M$.  If it does fork, then $p\not\perp\tp(N_1/M)$, so since $M\subseteq_{na} N_1$, such an $h_1\in N_1$ exists by Fact~\ref{3big}(1).
%On the other hand, if $\fg e {N_1} M$, then since $N^*$ is dominated by $N_2$ over $N_1$, we have $\nfg e {N_1} {N_2}$.   As $\fg {N_1} M {N_2}$, we conclude $p\not\perp \tp(N_2/M)$, so $h_2\in N_2$ exists by a similar argument.
%\qed\end{proof}

\begin{Corollary}[V-DI]  \label{part2}  Suppose $M$ and $N$ are two models that share the same maximal [weak, regular]-decomposition $\d=\<M_\eta,a_\eta:\eta\in I\>$.
Then $M$ and $N$ are back-and-forth equivalent over $M_I$.
\end{Corollary}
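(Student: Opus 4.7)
The plan is to derive this almost immediately from Theorem~\ref{atomicdecomp}. Since $\d$ is a maximal [weak, regular] decomposition of both $M$ and $N$, that theorem tells us that $M$ is atomic over $M_I$ and $N$ is atomic over $M_I$. The corollary then reduces to the general principle that two models atomic over a common set are back-and-forth equivalent over that set.

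Concretely, I would exhibit the back-and-forth system
$$\mathcal{F} \;=\; \{(\abar,\bbar) \in M^{<\omega} \times N^{<\omega} : \tp(\abar/M_I) = \tp(\bbar/M_I)\},$$
which is nonempty (it contains the empty pair) and whose members correspond to partial isomorphisms extending $\mathrm{id}_{M_I}$. To check the forth property, given $(\abar,\bbar) \in \mathcal{F}$ and $c \in M$, atomicity of $M$ over $M_I$ supplies an $L(M_I)$-formula $\theta(x,y,\ebar)$ with $\ebar \subseteq M_I$ that isolates $\tp(\abar c/M_I)$. Since $\tp(\abar/M_I) = \tp(\bbar/M_I)$ and $c$ witnesses $\exists y\,\theta(\abar,y,\ebar)$ in $M$, the formula $\exists y\,\theta(\bbar,y,\ebar)$ holds in $N$; let $d \in N$ be such a witness. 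Because $\theta(x,y,\ebar)$ isolates $\tp(\abar c/M_I)$, the pair $(\bbar,d)$ realizes the same type over $M_I$, so $(\abar c,\bbar d) \in \mathcal{F}$. The back property is symmetric, using atomicity of $N$ over $M_I$.

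Because Theorem~\ref{atomicdecomp} does the heavy lifting, there is no genuine obstacle; the content of the corollary is essentially the atomicity produced by that theorem. The only point requiring care is that $M_I$ can be uncountable, so ``back-and-forth equivalent over $M_I$'' must be interpreted as the existence of a nonempty family of finite partial isomorphisms each fixing $M_I$ pointwise and satisfying the back-and-forth property, which is precisely what $\mathcal{F}$ delivers.
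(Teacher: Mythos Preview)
Your proposal is correct and follows essentially the same approach as the paper: invoke Theorem~\ref{atomicdecomp} to get atomicity of both $M$ and $N$ over $M_I$, then exhibit the obvious back-and-forth system of finite partial maps fixing $M_I$. The paper phrases the system as partial elementary maps $f:M\to N$ with $\dom(f)=\abar M_I$ and $f\mr{M_I}=id$, which is equivalent to your formulation via pairs of tuples with equal type over $M_I$; your write-up simply spells out the forth step in more detail than the paper does.
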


\begin{proof}  As both $M,N$ are atomic over $M_I$ by Theorem~\ref{atomicdecomp}, the set 
 $$\F=\{\hbox{partial, elementary}\ f:M\rightarrow N: \dom(f)=\abar M_I, f\mr{M_I}=id\}$$ 
 is a back-and-forth system over $M_I$.
 \qed\end{proof}

\section{Equivalents of NOTOP}

In Chapter XII of \cite{Shc}, Shelah defines a theory having OTOP.

\begin{Definition}   \label{OTOP}   {\em  Let $T$ be a countable, superstable theory.  We say {\em $T$ has OTOP} if there is a type $p(\xbar,\ybar,\zbar)$ with $\lg(\ybar)=\lg(\zbar)$ such that, %**** Added missing parenthesis
for all infinite cardinals $\lambda$ and all binary relations $R\subseteq \lambda^2$, there is a model $M_R\models T$ and $\{\abar_\alpha:\alpha\in\lambda\}$ such that
$M_R$ realizes the type $p(\xbar,\abar_\alpha,\abar_\beta)$ if and only if $R(\alpha,\beta)$ holds.  
\\
We say {\em $T$ has NOTOP} if it fails to have OTOP.
}
\end{Definition}

Seeing this definition, one could imagine a weakening that is reminiscent of the distinction between a formula $\phi(\xbar,\ybar)$ being unstable (i.e., has the order property) and $\phi(\xbar,\ybar)$ having the Independence Property.  

\begin{Definition}  \label{linOTOP}  {\em  Let $T$ be a countable, superstable theory.  We say {\em $T$ has linear OTOP} if there is a type $p(\xbar,\ybar,\zbar)$ with $\lg(\ybar)=\lg(\zbar)$ such that, 
for all infinite cardinals $\lambda$,
there is a model $M_\lambda\models T$ and $\{\abar_\alpha:\alpha\in\lambda\}$ such that
$M_\lambda$ realizes the type $p(\xbar,\abar_\alpha,\abar_\beta)$ if and only if $\alpha\le\beta$.
\\
We say {\em $T$ has linear NOTOP}  if it fails to have linear OTOP.
}
\end{Definition}

It is somewhat curious that among superstable theories, OTOP and linear OTOP coincide, since for a first order formula $\phi(x,y)$ (as opposed to a type)
whether it codes an order is equivalent to $\phi(x,y)$ being unstable, whereas its coding arbitrary binary relations is equivalent to $\phi(x,y)$ having the Independence Property.

Here, with Theorem~\ref{finish} below, we prove the two notions are equivalent by demonstrating that each is equivalent to V-DI.   
We begin with one Lemma that is of independent interest.

\begin{Lemma}  \label{finitepiece}  Suppose $T$ is superstable, $\kappa$ any uncountable regular cardinal,  and $(\abar_\alpha:\alpha<\kappa)$ is any sequence of finite tuples.
Then there is a stationary $S\subseteq\kappa$ and a finite $F\subseteq \bigcup\{\abar_\alpha:\alpha\}$ such that $\{\abar_\alpha:\alpha\in S\}$ is independent over $F$.
\end{Lemma}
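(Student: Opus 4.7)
The plan is a standard pressing-down plus pigeonhole argument, using that superstable theories have finite character for forking.

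First I would use superstability: for each $\alpha<\kappa$, pick a finite subset $F_\alpha\subseteq\bigcup_{\beta<\alpha}\bar{a}_\beta$ such that $\fg{\bar{a}_\alpha}{F_\alpha}{\bigcup_{\beta<\alpha}\bar{a}_\beta}$. Since each $\bar{a}_\beta$ is finite, every element of $F_\alpha$ lies in some $\bar{a}_{\beta}$ with $\beta<\alpha$, so for each limit ordinal $\alpha<\kappa$ there exists $g(\alpha)<\alpha$ with $F_\alpha\subseteq \bigcup_{\beta\le g(\alpha)}\bar{a}_\beta$.

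Next I would apply Fodor's lemma. The set $C$ of limit ordinals below $\kappa$ is a club, and $g:C\to\kappa$ is regressive, so there is a stationary $S_1\subseteq C$ on which $g$ is constantly equal to some $\gamma<\kappa$. Set $A:=\bigcup_{\beta\le \gamma}\bar{a}_\beta$; since $\kappa$ is uncountable regular and each $\bar{a}_\beta$ is finite, $|A|\le |\gamma|\cdot\aleph_0 <\kappa$, and hence the collection $[A]^{<\omega}$ of finite subsets of $A$ has cardinality $<\kappa$. The map $\alpha\mapsto F_\alpha$ on $S_1$ then takes values in a set of size $<\kappa$, and since $\kappa$ is regular, at least one fiber is stationary: choose stationary $S\subseteq S_1$ and a finite $F\subseteq A$ with $F_\alpha = F$ for all $\alpha\in S$.

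Finally I would verify independence of $\{\bar{a}_\alpha:\alpha\in S\}$ over $F$. For each $\alpha\in S$, monotonicity of non-forking applied to $\fg{\bar{a}_\alpha}{F}{\bigcup_{\beta<\alpha}\bar{a}_\beta}$ (using $F=F_\alpha$) yields $\fg{\bar{a}_\alpha}{F}{\bigcup\{\bar{a}_\beta:\beta\in S,\beta<\alpha\}}$. This is precisely the condition that the sequence $(\bar{a}_\alpha:\alpha\in S)$ be independent over $F$, and $F$ is a finite subset of $\bigcup_{\alpha<\kappa}\bar{a}_\alpha$ as required.

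The argument is essentially bookkeeping; the only mild subtlety is confirming that $|A|^{<\omega}<\kappa$ so that the fiber pigeonhole produces a \emph{stationary} (not merely unbounded) set, which is where the regularity of $\kappa$ is used twice.
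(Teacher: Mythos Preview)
Your argument is correct and follows essentially the same route as the paper's proof: use superstability to find finite bases, apply Fodor's lemma on the set of limit ordinals to bound where those bases live, then pigeonhole on the actual finite set to obtain a stationary $S$ with a common $F$, and conclude independence by monotonicity and transitivity of non-forking. The only cosmetic difference is that the paper first applies Fodor to get a common initial segment $B=A_{\beta^*}$ and only afterwards chooses the finite $F\subseteq B$, whereas you choose the finite $F_\alpha$ first and then apply Fodor; the content is identical.
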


\begin{proof}  Let $S_0=\{\alpha\in\kappa:\alpha$ is a limit ordinal$\}$.  For each $\alpha\in S_0$, find some $\beta(\alpha)<\alpha$ such that $\tp(\abar_\alpha/A_\alpha)$ does not fork over $A_{\beta(\alpha)}$, where $A_\alpha:=\bigcup\{\abar_\gamma:\gamma<\alpha\}$.  By Fodor's Lemma there is some $\beta^*<\kappa$ and a stationary $S_1\subseteq S_0$
such that, taking $B=A_{\beta^*}$,  $\fg {\abar_\alpha}  B {A_\alpha}$ for all $\alpha\in S_1$.  Furthermore, for each $\alpha\in S_1$, there is a finite $F\subseteq B$ for which
$\fg {\abar_\alpha} F B$.  As $|B|<\kappa$, there is a stationary $S_2\subseteq S_1$ and some finite $F^*\subseteq B$ for which
$\fg {\abar_\alpha} {F^*} B$ holds for all $\alpha\in S_2$.  Then $\{\abar_\alpha:\alpha\in S_2\}$ is independent over $F^*$.
\qed\end{proof}

\begin{Theorem}  \label{finish}  The following are equivalent for a countable, superstable theory $T$.
\begin{enumerate}
\item  $T$ has V-DI;
\item  $T$ has $\PPe$-NDOP and PMOP;
\item  $T$ has $\PPe$-NDOP and countable PMOP;
\item  $T$ has linear NOTOP;
\item  $T$ has NOTOP.
\end{enumerate}
\end{Theorem}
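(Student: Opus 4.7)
The equivalence of items (1), (2), and (3) was established in Theorem~\ref{sofar}, so it remains to fold (4) and (5) into the cycle via $(1)\Rightarrow(4)\Rightarrow(5)\Rightarrow(1)$. The implication $(4)\Rightarrow(5)$ is trivial: OTOP applied to the specific binary relation $R=\{(\alpha,\beta):\alpha\le\beta\}$ on $\lambda$ supplies a model witnessing linear OTOP, so linear NOTOP implies NOTOP.

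For $(5)\Rightarrow(1)$ the plan is to argue by contrapositive, constructing an OTOP witness from a failure of V-DI. By Lemma~\ref{DIequiv}(3), I would fix an independent triple of countable models $(M_0,M_1,M_2)$ and a finite tuple $c$ that is $V$-dominated by the triple while $\tp(c/M_1M_2)$ is non-isolated. Enumerating $M_1$ and $M_2$ and taking $p(\bar x,\bar y,\bar z)$ to encode $\tp(c/M_1M_2)$ over the parameters $M_0$, I would build for each binary $R\subseteq X^2$ a model $M_R$ by amalgamating over $M_0$ an $M_0$-independent family of pairs of copies $(M_1^\alpha,M_2^\alpha)_{\alpha\in X}$ of $(M_1,M_2)$; for each $(\alpha,\beta)\in R$ I would adjoin a realization of the appropriate conjugate of $\tp(c/M_1M_2)$ and for each $(\alpha,\beta)\notin R$ I would consistently omit it. The non-isolation hypothesis makes the omissions possible, while the $V$-domination of $c$ ensures that local decisions at distinct pairs do not interfere. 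This is the standard Shelah OTOP construction from Chapter~XII of \cite{Shc}, now carried out without the auxiliary NDOP hypothesis used there.

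For $(1)\Rightarrow(4)$, assume V-DI and suppose for contradiction that $p(\bar x,\bar y,\bar z)$ witnesses linear OTOP. Fix an uncountable regular $\lambda$ together with $M_\lambda$ and $(\bar a_\alpha:\alpha<\lambda)$ coding $\le$. By Lemma~\ref{finitepiece} obtain a stationary $S\subseteq\lambda$ and a finite $F\subseteq M_\lambda$ with $\{\bar a_\alpha:\alpha\in S\}$ independent over $F$, then pass (via type-counting and Ramsey-style extraction) to an infinite $F$-indiscernible subsequence inside $M_\lambda$; stability of $T$ upgrades this to a totally $F$-indiscernible set. Enlarge $F$ to a countable $N_0\preceq M_\lambda$ over which the sequence $(\bar a_n:n<\omega)$ remains totally indiscernible, take countable prime models $N_1,N_2\preceq M_\lambda$ over $N_0\bar a_0$ and $N_0\bar a_1$ respectively, and by symmetrically absorbing (via Fact~\ref{category}(3)) the finite superstable forking-base of a realization $\bar c\in M_\lambda$ of $p(\bar x,\bar a_0,\bar a_1)$, arrange that $\bar c$ is $V$-dominated by the resulting countable independent triple $(N_0,N_1,N_2)$. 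V-DI then forces $\tp(\bar c/N_1N_2)$ to be isolated, so the constructible model $N^*$ over $N_1\cup N_2$ (existing by Theorem~\ref{DIPMOP} and embeddable into $M_\lambda$) realizes $p(\bar x,\bar a_0,\bar a_1)$. Total indiscernibility supplies an automorphism of $\C$ over $N_0$ swapping $\bar a_0$ and $\bar a_1$; when the triple is arranged symmetrically this swap lifts to an automorphism of $N^*$, producing a realization of $p(\bar x,\bar a_1,\bar a_0)$ in $N^*\preceq M_\lambda$ and contradicting linear OTOP.

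The main obstacle is arranging the symmetric setup in $(1)\Rightarrow(4)$: the finite forking-base of $\bar c$ must be absorbed into $(N_0,N_1,N_2)$ without destroying the symmetry between $N_1$ and $N_2$ or the indiscernibility of the sequence over $N_0$. This requires careful bookkeeping, placing the forking-base compatibly across $N_1$ and $N_2$ and then re-extracting a symmetric countable independent triple via downward L\"owenheim--Skolem for stable systems. Once the symmetric $V$-dominated configuration is secured, lifting the swap to $N^*$ and exhibiting both realizations inside $M_\lambda$ follow routinely from the constructibility afforded by V-DI.
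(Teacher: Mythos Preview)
Your overall architecture matches the paper's: the cycle $(1)\Leftrightarrow(2)\Leftrightarrow(3)$ is quoted from Theorem~\ref{sofar}, $(4)\Rightarrow(5)$ is trivial, and for $(5)\Rightarrow(1)$ both you and the paper simply defer to the standard construction in Hart~\cite{Hart} and Shelah~\cite{Shc}, Chapter~XII.

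The substantive gap is in your $(1)\Rightarrow(4)$. The step ``arrange that $\bar c$ is $V$-dominated by the resulting countable independent triple $(N_0,N_1,N_2)$'' does not go through, and this is not a matter of bookkeeping. A realization $\bar c\in M_\lambda$ of $p(\bar x,\bar a_0,\bar a_1)$ is an arbitrary element of an arbitrary model; absorbing a finite forking base of $\tp(\bar c/N_1N_2)$ into the triple gives you $\fg{\bar c}{N_1N_2}{\,\cdot\,}$ for one particular superset, not for \emph{every} $\bar B\sqsupseteq(N_0,N_1,N_2)$ as $V$-domination demands. Fact~\ref{category}(3) only lets you shrink a triple that already $V$-dominates $\bar c$; it does not manufacture $V$-domination from non-forking. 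The only general route to $V$-domination over a triple of models is $\ell$-isolation (Fact~\ref{Vdomfact}(4)), and nothing about $\bar c$ gives you that. Indeed, if your argument worked, it would show that \emph{any} realization of $p$ sits in the constructible model over some symmetric triple, which would force $p(\bar x,\bar a_1,\bar a_0)$ to be realized whenever $p(\bar x,\bar a_0,\bar a_1)$ is; but linear OTOP is precisely the assertion that this fails.

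The paper's argument avoids this by never trying to $V$-dominate a fixed $\bar c$. Instead it builds a full weak tree decomposition $\d$ of the entire model $M^*$ (extending the star $(M_{\langle\alpha\rangle}:\alpha<\kappa)$), invokes Theorem~\ref{atomicdecomp} to conclude that $M^*$ is atomic over $\bigcup\d$, and then for every $\alpha<\beta$ reads off an isolating formula $\theta(\bar x,\bar e_{\alpha,\beta})$ for some realization $\bar d_{\alpha,\beta}$ of $p(\bar x,\bar a_\alpha,\bar a_\beta)$. The parameters $\bar e_{\alpha,\beta}$ split into three pieces living in the $\alpha$-branch, the $\beta$-branch, and the complement; Erd\H{o}s--Rado uniformizes both the formula $\theta$ and the types $r^*,s^*,t^*$ of these pieces over $M_{\langle\rangle}$. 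The swap is then achieved not by an automorphism of a small prime model, but by observing that every branch $M_J(\gamma)$ contains realizations of \emph{both} $r^*$ and $s^*$ (since $\gamma$ occurs as a left index in some pairs and as a right index in others), so one can reassemble a tuple $\hat e$ of the correct type with the roles of $\alpha$ and $\beta$ reversed, and $\theta(\bar x,\hat e)$ forces a realization of $p(\bar x,\bar a_\alpha,\bar a_\beta)$ with $\beta<\alpha$ inside $M^*$.
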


\begin{proof}  The equivalence of (1)--(3) was proved in Theorem~\ref{sofar}, and $(4)\Rightarrow(5)$ is trivial.  

$(5)\Rightarrow(1)$ is proved on pages 122-124 of \cite{Hart}.  Hart's condition (*) is precisely V-DI in our notation.  Another proof of this is given in Section XII.4 of \cite{Shc}.  

So, it remains to prove $(1)\Rightarrow(4)$.  The punchline for this argument is that the existence of a tree decomposition for a purported model witnessing linear OTOP
gives too many symmetries of the structure.  We use these symmetries to show that, for $\kappa$ large enough,
if a model $M^*$ realizes $p(\xbar,\abar_\alpha,\abar_\beta)$ whenever $\alpha<\beta<\kappa$, then it must also realize $p(\xbar,\abar_\beta,\abar_\alpha)$ for some
carefully chosen $\alpha<\beta<\kappa$.  

Choose a sufficiently large regular cardinal $\kappa$, a sequence $(\abar_\alpha:\alpha<\kappa)$, and a model $M^*$ containing $(\abar_\alpha:\alpha<\kappa)$ for which  $M^*$ realizes $p(\xbar,\abar_\alpha,\abar_\beta)$ whenever $\alpha<\beta$.  We will find some $\beta<\alpha$ for which $p(\xbar,\abar_\alpha,\abar_\beta)$ is realized as well. %**** holds should be is realized
For this,  by passing to  a large subsequence, Lemma~\ref{finitepiece} allows us to assume that  $\{\abar_\alpha:\alpha\in\kappa\}$ is independent over a finite set $F$.
Now choose a countable $M_{\<\>}\subseteq_{na} M^*$ containing $F$.   As $\kappa>2^{\aleph_0}$ and passing to a further subsequence, we may additionally assume
that $\tp(\abar_\alpha/M_{\<\>})=\tp(\abar_\beta/M_{\<\>})$.  
By removing at most countably many elements, we may assume that $\{\abar_\alpha:\alpha\in\kappa\}$
is independent over $M_{\<\>}$.  Next, for each $\alpha\in\kappa$, using Fact~\ref{3big}(3),  choose a countable $M_{\<\alpha\>}\subseteq_{na} M^*$ that is dominated by $\abar_\alpha$ over $M_{\<\>}$.  Thus, $\d_0=\<M_{\<\alpha\>},\abar_\alpha:\alpha<\kappa\>\smallfrown M_{\<\>}$ is a weak decomposition inside $M^*$.  By Lemma~\ref{extendtomax}(2),
there is a $\le^*$-extension $\d=\<M_\eta,\abar_\eta:\eta\in J\>$ of $\d_0$ that is a weak decomposition of $M^*$.   
Thus, by Theorem~\ref{atomicdecomp}, $M^*$ is atomic over $M_J$.  

For any $\alpha<\beta$, choose a realization $\dbar_{\alpha,\beta}$ of $p(\xbar,\abar_\alpha,\abar_\beta)$ in $M^*$.   As $\tp(\dbar_{\alpha,\beta}/M_J)$ is isolated, choose
$\ebar_{\alpha,\beta}\supseteq\abar_\alpha\abar_\beta$ from $M_J$ and a formula $\theta(\xbar,\ebar_{\alpha,\beta})$ isolating $\tp(\dbar_{\alpha,\beta}/M_J)$.
Thus, in particular, $\theta(\xbar,\ebar_{\alpha,\beta})$ is consistent, and every realization of it realizes $p(\xbar,\abar_\alpha,\abar_\beta)$.

As notation, for each $\gamma$, let $M_J(\gamma)=\bigcup\{M_\eta:\eta\trianglerighteq\<\gamma\>\}\setminus M_{\<\>}$.
Each pair $\alpha<\beta$ induces a partition of $M_J$ into three (disjoint) pieces, namely $M_J(\alpha)$, $M_J(\beta)$, and the complement, 
$M_J\setminus (M_J(\alpha)\cup M_J(\beta))$.  This partition induces a partition of 
$\ebar_{\alpha,\beta}=\rbar_{\alpha,\beta}\sbar_{\alpha,\beta}\tbar_{\alpha,\beta}$.
Crucially, note that for all $\alpha<\beta$ the tuples $\{\rbar_{\alpha,\beta},\sbar_{\alpha,\beta},\tbar_{\alpha,\beta}\}$ are independent over $M_{\<\>}$.

By Erd\"os-Rado, there is a large subsequence $I\subseteq \kappa$ such that, for all $\alpha<\beta$ from $I$,
\begin{itemize}
\item  The $L$-formula $\theta(\xbar,\ybar)$  for which $\theta(\xbar,\ebar_{\alpha,\beta})$ isolates $\tp(\dbar_{\alpha,\beta}/M_I)$ is constant; %**** Added missing parentheses
\item  The partition of $\ebar_{\alpha,\beta}=\rbar_{\alpha,\beta}\sbar_{\alpha,\beta}\tbar_{\alpha,\beta}$ is independent of $\alpha<\beta$;
\item  There are unique types $r^*,s^*,t^*\in S(M_{\<\>})$ with 
$\tp(\rbar_{\alpha,\beta}/M_{\<\>})=r^*$,
$\tp(\sbar_{\alpha,\beta}/M_{\<\>})=s^*$, and $\tp(\tbar_{\alpha,\beta}/M_{\<\>})=t^*$.
\end{itemize}

Let $I_0\subseteq I$ have the maximum and minimum elements of $I$ removed (if they exist).    Thus, for every $\gamma\in I_0$,
$M_J(\gamma)$ contains a realization of $r^*$ (in particular, $\rbar_{\gamma,\beta}$ for any $\beta\in I$, $\beta> \gamma$), and dually
$M_J(\gamma)$ contains a realization of $s^*$.

Now choose $\beta<\alpha$ from $I_0$.  From above, choose $\shat\in M_J(\beta)$ realizing $s^*$,  $\rhat\in M_J(\alpha)$ and let $\that=\tbar_{\beta.\alpha}$.
Put $\ehat:=\rhat\shat\that$.
Then, as $\{\rhat,\shat,\that\}$ are independent over $M_{\<\>}$, we have $\tp(\ehat/M_{\<\>})=\tp(\ebar_{\beta,\alpha}/M_{\<\>})$.  It follows
that $M^*\models \exists\xbar\theta(\xbar,\ehat)$ and, moreover, any such realization of $\theta(\xbar,\ehat)$ realizes $p(\xbar,\abar_\alpha,\abar_\beta)$.
\qed\end{proof}

\section{Some context for $\PPe$}  Many of the results presented here generalize results of Shelah and the first author \cite{Borel} under the stronger assumption of $\omega$-stability.
Recall that a regular type $p$ is {\em eventually non-isolated,}  eni,
if there is a finite set $b$ on which it is based and stationary, and a model $M\supseteq b$ that omits the restriction $p|b$.
In \cite{Borel}, 
it is proved that for $T$ $\omega$-stable, NOTOP is equivalent to eni-NDOP.  Noting that because an $\omega$-stable has constructible models over every subset,
the following Corollary is immediate from Theorem~\ref{finish}, since both properties are equivalent to NOTOP.

\begin{Corollary}  \label{eni}  If $T$ is $\omega$-stable, then $\PPe$-NDOP is equivalent to eni-NDOP.
\end{Corollary}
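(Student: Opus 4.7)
The plan is to derive the corollary by pivoting through NOTOP, combining the new Theorem~\ref{finish} of this paper with the already-known characterization of NOTOP in $\omega$-stable theories from \cite{Borel}.

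First, I would observe that under $\omega$-stability countable PMOP is automatic. Indeed, $\omega$-stable theories admit constructible (and in particular atomic) models over arbitrary sets, so given any independent triple $(M_0,M_1,M_2)$ of countable models there is a constructible model over $M_1M_2$. Consequently clause (3) of Theorem~\ref{finish} simplifies, in the $\omega$-stable case, to the single hypothesis $\PPe$-NDOP, and the equivalence (3)$\Leftrightarrow$(5) of that theorem yields
$$\PPe\text{-NDOP}\ \Longleftrightarrow\ \text{NOTOP}$$
for every countable $\omega$-stable $T$.

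Next, I would invoke the main result of \cite{Borel}, which establishes that for $\omega$-stable $T$, eni-NDOP is equivalent to NOTOP. Chaining these two equivalences along their common NOTOP pivot gives precisely $\PPe$-NDOP $\Leftrightarrow$ eni-NDOP for $\omega$-stable $T$, which is the content of the corollary.

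There is no substantive obstacle here: the only verification required is the automatic countable PMOP under $\omega$-stability, which is standard. One could alternatively attempt to compare eni types with the stationary weight-one generators of $\PPe$ directly, which would perhaps be more informative, but the common NOTOP pivot is by far the most economical route and is exactly the strategy suggested by the remark preceding the statement.
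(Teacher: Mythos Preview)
Your proposal is correct and matches the paper's own argument essentially verbatim: the paper observes that $\omega$-stable theories have constructible models over every set (so PMOP is automatic), applies Theorem~\ref{finish} to get $\PPe$-NDOP $\Leftrightarrow$ NOTOP, and then invokes \cite{Borel} for eni-NDOP $\Leftrightarrow$ NOTOP.
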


However, outside of $\omega$-stable theories, the following example shows that the notions are distinct.

\begin{Example} \label{Ex}  A small, superstable theory $T$ with $\PPe$-DOP, but eni-NDOP.
\end{Example}

  Let $L=\{U,+,0,U_n\}_{n\in\omega} \cup\{P, E_1,E_2\}\cup\{V,\pi,g,V_n\}_{n\in\omega}$ and  fix a cardinal $\kappa\ge 2^{\aleph_0}$.  We will describe a saturated $L$-structure
$M$ of size $\kappa$, and $T=Th(M)$ will be as claimed.  %**** added ":n " in "V_n: n \in \omega"

The two unary predicates $U(M)$ and $V(M)$ partition the universe of $M$.  One sort, $(U(M),+,0,U_n)_{n\in\omega}$ is a
($\kappa$-dimensional) ${\bf F}_2$-vector space with a nested sequence of subspaces $U_n(M)$, where $U_0(M)=U(M)$ and each $U_{n+1}(M)$ has co-dimension one in $U_n(M)$. %**** changed with to where
As $M$ is saturated, $\bigcap_{n\in\omega} U_n(M)$  has dimension $\kappa$.  

The unary predicate $P(M)\subseteq \bigcap_{n\in\omega} U_n(M)$ consists of a linearly independent set (of size $\kappa$) such that
its linear span $\<P(M)\>$ has co-dimension $\kappa$ in $\bigcap_{n\in\omega} U_n$.  

So far, $P(M)$ has no structure, i.e., is totally indiscernible.  However, the binary relations $E_1,E_2$ are interpreted as cross-cutting equivalence relations, each with
infinitely many classes,  on
$P(M)$, with $E_i(M)\subseteq P(M)\times P(M)$ for each $i$.  As notation, let $E^*(x,y):=E_1(x,y)\wedge E_2(x,y)$.   As $M$ is saturated, $E^*(M,a)\subseteq P(M)$ has size 
$\kappa$ for every $a\in P(M)$.  One should think of $(P(M),E_1,E_2,E^*)$ as coding the `standard DOP checkerboard.'

Continuing, $V(M)$ has size $\kappa$, and $\pi:V(M)\rightarrow P(M)$ is a surjection.  
The function $g:U\times V\rightarrow V$ is a group action that acts regularly on $\pi^{-1}(a)$ for every $a\in P(M)$.
%\begin{quotation}
%\noindent For any $a\in P(M)$, $g(-,a)$ maps $\pi^{-1}(a)$ onto itself, and describes a regular action of $U$ on $\pi^{-1}(a)$.
%\end{quotation}
%%**** This seems slightly garbled, since $a$ is in $P(M)$ not $V$. What we really want to say is that $g: U \times V \to V$ is a group action, and acts regularly on each $\pi^{-1}(a)$.

Finally, for each $n\in\omega$, $V_n$ is interpreted so that $V_{n+1} \subseteq V_n$ for each $n$, and for every $a\in P(M)$, the set  $\{u\in U(M):g(u,v)\in V_n(M)\cap \pi^{-1}(a)\}$ is a coset of $U_n(M)$. %**** C_n also depends on $a$. But why name it?

%**** Also we need to insist $V_{n+1} \subseteq V_n$ for each $n$.

It is readily checked that $T=Th(M)$ is small and superstable.   We describe the five species of regular types occurring in $S(M)$.  Let $p_0$ be the complete type
asserting that $P(x)$ holds, but $\neg E_1(x,a)\wedge \neg E_2(x,a)$ for every $a\in P(M)$.  For $i=1,2$, let $p_i(x,a)$ assert that $E_i(x,a)$ holds, but
$E_{3-i}(x,b)$ fails for every $b\in P(M)$.
 %**** Need to require \neg E_{3-i}(x, b) for all b \in P(M)
Let  $p^*(x,a)$ be generated by $E^*(x,a)\wedge x\neq a$, and let $q(x)$ be the complete type asserting that $x\in \bigcap U_n(M)\setminus \<P(M)\>$.
As $\dcl(c)\cap U(M)\neq\emptyset$ for any finite tuple $c$, every non-algebraic type is non-orthogonal to one of these types.  
As it is akin to the DOP checkerboard, $p^*(x,a)$ has a DOP witness, and 
it can be checked that the only regular types with a DOP witness are those non-orthogonal to $p^*(x,a)$. %**** "with a DOP witness as those" changed to "are those"

We argue that $p^*(x,a)\in\PPe$.   Choose any $b\in P(M)$ with $E^*(b,a)\wedge b\neq a$ and choose any $c\in \pi^{-1}(b)\cap\bigcap V_n$.
The type $w(x,a):=\tp(c/a)$ is visibly non-isolated, and since $bc$ is dominated by $b$ over $a$, $w(x,a)$ has weight one.  As $b\in\dcl(c)$ realizes $p^*(x,a)$,
$w(x,a)\not\perp p^*(x,a)$ so $p^*(x,a)\in \PPe$.  Thus, $T$ has $\PPe$-DOP.  %**** Actually b doesn't realize p^*(x, a). We could either choose b in an elementary extension of $M$, or we could just note that b realizes p^*(x, a)|a which is parallel to p^*(x, a)

The type $w(x,a)$ is not regular, and 
 it can be shown that the non-orthogonality class of $p^*(x,a)$ does not contain any eni (regular) type.  Thus, $T$ has eni-NDOP.

\medskip

We note two extreme cases of $\PPe$.

\begin{Corollary}  \label{noPe}  Assume $T$ is countable and superstable, but $\PPe=\emptyset$.  Then $T$ is $\omega$-stable and $\omega$-categorical.
\end{Corollary}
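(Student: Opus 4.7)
My plan rests on the observation that when $\PPe=\emptyset$, every elementary pair $M\preceq N$ is vacuously a dull pair, since the defining condition $\tp(N/M)\perp\PPe$ is trivially satisfied when $\PPe$ is empty. This single reduction powers both halves of the corollary, converting each of them into a direct application of a result already established in the paper.

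To prove $\omega$-categoricity, I would fix any two countable models $M_1,M_2\models T$, realize them as elementary substructures of the monster $\bigc$, and use downward L\"owenheim--Skolem to pick a countable $K\preceq\bigc$ with $M_1\cup M_2\subseteq K$.  By the opening observation, both $M_1\preceq K$ and $M_2\preceq K$ are dull pairs of countable models, so two applications of Proposition~\ref{iso} yield $M_1\cong K\cong M_2$.  Hence $T$ has a unique countable model up to isomorphism, so by Ryll--Nardzewski $T$ is $\omega$-categorical.

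For $\omega$-stability, I would bound $|S(M)|$ for countable $M$.  Given $p\in S(M)$, the hypothesis $\PPe=\emptyset$ forces $p\perp\PPe$ trivially, so Lemma~\ref{stationary} produces a finite $d\subseteq M$ on which $p$ is based and stationary, while Lemma~\ref{AIlemma} gives that $p$ is always isolated; hence $p\mr d$ is isolated by some $L(d)$-formula $\phi$.  Stationarity makes $p$ the unique non-forking extension of $p\mr d$ to $M$, so $p$ is determined by the pair $(d,\phi)$.  There are countably many such pairs, giving $|S(M)|\le\aleph_0$, so $T$ is $\omega$-stable.

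I do not anticipate a serious obstacle: every ingredient is already in place, and the corollary really just records the degenerate case of the dull-pair theory when no $e$-types exist.  As an alternative finish for the second half, one could invoke Lachlan's theorem that superstable plus $\omega$-categorical implies $\omega$-stable, but the direct type-count above is more self-contained and mirrors the style of the rest of the section.
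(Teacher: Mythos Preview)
Your proof is correct, and the $\omega$-stability argument is essentially the paper's (with the helpful extra step of invoking Lemma~\ref{AIlemma} to isolate $p\mr d$, which the paper leaves implicit when it passes from ``based and stationary on a finite set'' to ``$S(M)$ countable'').

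The $\omega$-categoricity arguments differ. The paper first establishes $\omega$-stability, uses it to produce a countable saturated model $N$, embeds an arbitrary countable $M$ into $N$ by universality, and then applies Proposition~\ref{iso} once to get $M\cong N$. You instead bypass saturation entirely: given two countable models you build a common countable elementary extension $K$ via downward L\"owenheim--Skolem in the monster and apply Proposition~\ref{iso} twice. Your route is more self-contained (it does not rely on having proved $\omega$-stability first), while the paper's is marginally shorter. Both are fine. One minor quibble: once you have shown any two countable models are isomorphic, that \emph{is} $\omega$-categoricity; the appeal to Ryll--Nardzewski is superfluous.
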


\begin{proof}  Let $M$ be any countable model of $T$.  If $\PP_e=\emptyset$, then by Lemma~\ref{stationary}, every $p\in S(M)$ is based and stationary over a finite set.
As $M$ is countable, this implies $S(M)$ is countable, hence $T$ is $\omega$-stable.   
To get that $T$ is $\omega$-categorical, we argue that every countable model is saturated.  Since $T$ is $\omega$-stable, a countable, saturated model $N$ exists.
Let $M$ be any countable model of $T$.  Since $N$ is countably universal, we may assume $M\preceq N$.  Since $\PPe=\emptyset$, it is also dull.
Thus, by Proposition~\ref{iso}, $M$ is isomorphic to $N$, so $M$ is saturated as well.
\qed\end{proof}  

\begin{Corollary}  Assume $T$ is countable and superstable, but every regular type is in $\PPe$.  Then $T$ has NOTOP if and only if $T$ is classifiable.
\end{Corollary}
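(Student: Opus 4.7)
The plan is to show the two directions essentially by unwinding the definitions, invoking Theorem~\ref{finish} and Proposition~\ref{VDINDOP} for the nontrivial direction.

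For the $(\Leftarrow)$ direction, if $T$ is classifiable then by definition it is countable, superstable, with NDOP and NOTOP; in particular NOTOP holds, and there is nothing further to verify.

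For the $(\Rightarrow)$ direction, assume $T$ has NOTOP. By the main equivalence Theorem~\ref{finish}, NOTOP is equivalent to V-DI for countable, superstable theories, so $T$ has V-DI. Proposition~\ref{VDINDOP} then gives that $T$ has $\PPe$-NDOP, i.e., no regular type in $\PPe$ has a DOP witness (in the sense of Definition~\ref{origDOP}). The key observation is now that the hypothesis ``every regular type is in $\PPe$'' makes this a statement about \emph{all} regular types: under this hypothesis, $\PPe$-NDOP coincides with the classical NDOP of Shelah. Since the classical DOP is precisely the existence of some regular type with a DOP witness, we conclude that $T$ has NDOP. Combined with countability, superstability, and NOTOP, this gives that $T$ is classifiable.

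There is no real obstacle here; the content of the corollary is essentially the bookkeeping fact that, once one restricts to the regime where $\PPe$ exhausts all regular types, the localized $\PPe$-NDOP and the classical NDOP are the same property. Everything else is already packaged into Theorem~\ref{finish} and Proposition~\ref{VDINDOP}. One could instead quote the equivalence of (1) and (2) in Theorem~\ref{sofar} in place of Proposition~\ref{VDINDOP}, but the conclusion is identical.
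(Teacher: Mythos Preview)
Your proof is correct and follows essentially the same approach as the paper: both directions reduce to observing that, under the hypothesis that every regular type lies in $\PPe$, the notion of $\PPe$-NDOP coincides with classical NDOP, so the equivalence in Theorem~\ref{finish} does all the work. The paper quotes Theorem~\ref{finish} directly for $\PPe$-NDOP rather than routing through V-DI and Proposition~\ref{VDINDOP}, but as you note yourself this is an immaterial difference.
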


\begin{proof} Right to left is immediate.   By our assumption on $T$,  $\PPe$-NDOP is equivalent to NDOP, so left to right follows from Theorem~\ref{finish}.  
\qed\end{proof}

\begin{Remark}
{\em We close by observing that the countability of the language is crucial in the equivalents of Theorem~\ref{finish}.  Indeed, the notion of V-DI is preserved under the addition or deletion of constant symbols, but NOTOP is not.  In particular, the theory in Example~\ref{Ex} has OTOP, since the language is countable and $T$ has $\PPe$-DOP.  However, if one expands by adding constants for an a-model (equivalently, replacing the theory $T$ by the elementary diagram of $M$) then the expanded theory cannot code arbitrary relations, hence has NOTOP.  
}
\end{Remark}

\appendix

\section{Appendix}  

\subsection{Isolation, construction sequences, and TV-substructures}

The initial definitions are well known.

\begin{Definition}  {\em  An $n$-type $p(x)\in S_n(A)$ is {\em isolated} if $\psi(x,a)\vdash p(x)$ for some $\psi(x,a\in p$.\\
A model $N$ is {\em atomic over $B$} if $B\subseteq N$ and $\tp(a/B)$ is isolated
for every finite tuple $a$ from $N$.\\
A {\em construction sequence over $B$} is a sequence $\cbar=(a_\alpha:\alpha<\delta)$ with, for each $\alpha<\delta$,
 $\tp(a_{\alpha}/BA_\alpha)$ isolated, where $A_\alpha=\bigcup\{a_\beta:\beta<\alpha\}$.\\
 A model $N$ is {\em constructible over $B$} if there is a construction sequence $\cbar$ over $B$ and the universe of $N=B\cup\bigcup\cbar$.
 }
 \end{Definition}
 
 The following facts are also well known.
 
 \begin{Fact}  \label{arbitrary} Let $T$ be any complete theory.
 \begin{enumerate}
 \item  If $N$ is constructible over $B$, then $N$ is atomic over $B$.
 \item  If $N$ is atomic over $B$ and is countable, then any enumeration of $N$ of order type $\omega$ is a construction sequence over $B$.
 \item  If $B\subseteq M$ and $\cbar$ is a construction sequence over $B$, then there is $\cbar'\subseteq M$ with $\tp(\cbar/B)=\tp(\cbar'/B)$.
 \end{enumerate}
 \end{Fact}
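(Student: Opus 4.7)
The plan is to handle the three clauses in turn, using only the definitions and the basic fact that every consistent, isolated, complete type over a subset of a model is realized in that model.

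For (1), I would show by induction that $\tp(F/B)$ is isolated for every finite $F\subseteq\bigcup\cbar$, from which the result follows by enlarging a finite tuple $\abar$ from $N$ to include all of its $\cbar$-coordinates and then restricting. For each $\alpha$, fix an isolating formula $\phi_\alpha(y,b_\alpha,\cbar_\alpha)$ for $\tp(a_\alpha/BA_\alpha)$, with $\cbar_\alpha$ a finite subtuple of $A_\alpha$, and define a ``predecessor'' operation $\pi(a_\alpha)=\cbar_\alpha$. Iterating $\pi$ strictly lowers ordinal indices, so the $\pi$-closure $F^*$ of any finite $F\subseteq\bigcup\cbar$ is finite. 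Enumerate $F^*=(a_{\alpha_1},\dots,a_{\alpha_n})$ with $\alpha_1<\cdots<\alpha_n$; by induction on $n$, there is $\psi(\ybar,b)$ isolating $\tp(a_{\alpha_1},\dots,a_{\alpha_{n-1}}/B)$, and by $\pi$-closure $\cbar_{\alpha_n}$ is a subtuple of $(a_{\alpha_1},\dots,a_{\alpha_{n-1}})$, so $\psi(\ybar,b)\wedge\phi_{\alpha_n}(y_n,b_{\alpha_n},\cbar_{\alpha_n})$, with $\cbar_{\alpha_n}$ re-expressed in the variables $\ybar$, isolates $\tp(F^*/B)$.

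Clause (2) is essentially the content of (1) read in reverse. Let $N=\{c_n:n<\omega\}$ be atomic over $B$, and fix $n$. By atomicity some $\psi_n(y_0,\dots,y_n,b_n)$ isolates $\tp(c_0,\dots,c_n/B)$; then $\psi_n(c_0,\dots,c_{n-1},y,b_n)$ is a consistent formula over $B\cup\{c_0,\dots,c_{n-1}\}$ satisfied by $c_n$, and any realization of it shares $\tp(c_0,\dots,c_n/B)$ with $c_n$ and hence shares $\tp(\cdot/B\cup\{c_0,\dots,c_{n-1}\})$; so that formula isolates $\tp(c_n/B\cup\{c_0,\dots,c_{n-1}\})$, making the enumeration a construction sequence over $B$.

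For (3), I would transfinitely build $\cbar'=(a'_\alpha:\alpha<\delta)\subseteq M$ together with an increasing chain of $B$-elementary maps $\sigma_\alpha:B\cup A_\alpha\to M$ fixing $B$ pointwise and satisfying $\sigma_\alpha(a_\beta)=a'_\beta$ for all $\beta<\alpha$, taking unions at limit stages. At a successor stage, apply $\sigma_\alpha$ to the isolating formula to produce $\phi_\alpha(y,b_\alpha,\sigma_\alpha(\cbar_\alpha))$, which isolates a consistent complete type over $B\cup A'_\alpha$; since $M$ is a model, this isolated type is realized by some $a'_\alpha\in M$, and setting $\sigma_{\alpha+1}(a_\alpha)=a'_\alpha$ preserves elementarity. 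The only point that deserves verification is preservation of elementarity at successor stages, which is immediate because an isolated complete type over a subset of a model is completely determined by any one of its realizations in the model.
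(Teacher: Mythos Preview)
The paper does not prove this statement; it is recorded as a well-known Fact without proof. Your arguments for all three clauses are correct and are the standard ones: the $\pi$-closure argument for (1), the restriction-of-an-isolating-formula argument for (2), and the transfinite construction of a $B$-elementary map into $M$ for (3) are exactly how one verifies these facts. The one phrase that is slightly compressed is ``iterating $\pi$ strictly lowers ordinal indices'' in (1); what makes the closure $F^*$ finite is a transfinite induction on the largest index occurring in $F$, using that each $\cbar_\alpha$ is a finite tuple of elements with strictly smaller index, but this is clearly what you intend.
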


 In many places, we use the Open Mapping Theorem, which holds for an arbitrary stable theory.
 
 \begin{Fact}[Open Mapping Theorem]  \label{Open}  If $A\subseteq B$ and $\fg c A B$, then $\tp(c/B)$ isolated implies $\tp(c/A)$ isolated.
 \end{Fact}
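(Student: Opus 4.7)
The plan is to extract an $L(A)$-formula isolating $p := \tp(c/A)$ from the given $L(B)$-formula isolating $q := \tp(c/B)$, exploiting the non-forking hypothesis $\fg{c}{A}{B}$.

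First I would record a ``transfer of isolation across automorphisms''. Let $\phi(x,b)$ isolate $q$ with $b \in B$, and set $r := \tp(b/A)$. For any $b' \models r$ and any realization $c'$ of $\phi(x,b')$, an $A$-automorphism $\sigma$ with $\sigma(b)=b'$ sends $\phi(x,b)$ to $\phi(x,b')$ and shows $\tp(c'/b')=\tp(\sigma(c)/b')$, hence $c' \models p$. By compactness, the partial $L(A)$-type $r(y) \cup \{\phi(x,y)\}$ then entails $p(x)$, so for every $\mu \in p$ there is a single formula $\chi_\mu(y) \in r$ such that the $L(A)$-formula $\exists y(\chi_\mu(y) \wedge \phi(x,y))$ entails $\mu(x)$ and is realized by $c$ (with witness $y=b$).

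The main obstacle is compressing this family of $L(A)$-approximations into a single isolating formula. This is where the stability of $T$ enters. I would invoke the Finite Equivalence Relation Theorem together with the stationarity of strong types: the strong types over $A$ extending $p$ each admit a unique non-forking extension to $B$, and by the isolation of $q$, no non-forking extension corresponding to a strong type other than $\stp(c/A)$ can contain $\phi(x,b)$, since $q$ is the unique type in $S(B)$ containing that formula. Via the definability of types in stable theories, this separation is witnessed by an $A$-definable finite equivalence relation $E$, and combining the $L(A)$-formula picking out the $E$-class of $c$ with $\exists y(\chi(y) \wedge \phi(x,y))$ for a suitable $\chi \in r$ yields the desired $L(A)$-formula isolating $p$. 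The delicate portion of the argument is the descent from $\acl^{\rm eq}(A)$ back to $A$, but this follows a classical pattern in stability theory.
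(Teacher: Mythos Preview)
The paper does not supply a proof of this statement; it is recorded in the Appendix as a standard Fact from stability theory, with no argument given. So there is nothing in the paper to compare your attempt against.

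Regarding your sketch: the first three steps are correct, but observe that they nowhere use the hypothesis $\fg{c}{A}{B}$. The conclusion that $r(y)\cup\{\phi(x,y)\}\vdash p(x)$, and hence that the family $\{\exists y\,(\chi(y)\wedge\phi(x,y)):\chi\in r\}$ axiomatizes $p$, holds whenever $\tp(c/B)$ is isolated, forking or not. So the entire burden of the argument falls on step~4.

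Step 4 is where the sketch becomes a hand-wave rather than a proof. You assert that a single $A$-definable finite equivalence relation $E$ separates $\stp(c/A)$ from all other strong types extending $p$, and that combining ``the $L(A)$-formula picking out the $E$-class of $c$'' with some $\psi_\chi$ isolates $p$. Two problems: there may be infinitely many strong types over $A$ extending $p$, so no single finite $E$ separates $\stp(c/A)$ from all of them; and even for a fixed $E$, the $E$-class of $c$ is only $\acl^{\rm eq}(A)$-definable, not $L(A)$-definable. You flag the descent from $\acl^{\rm eq}(A)$ to $A$ as ``delicate'' and defer to a ``classical pattern'', but this is exactly the place where the non-forking hypothesis must do its work, and you have not indicated what that pattern is or why it applies. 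As written, step~4 is a genuine gap, not an omitted routine verification.
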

 
 \begin{Definition}  \label{TVdef}  {\em  For any sets $A,B$, we say {\em $A$ is a Tarski-Vaught subset of $B$}, written $A\subseteq_{TV} B$,\footnote{In \cite{Shc}, Shelah denotes this same notion by $\subseteq_t$}.  
  if $A\subseteq B$ and, for every $A$-definable formula  $\phi(x,a)$, 
 if there is some $b\in B$ with $\phi(b,a)$, then there is some $a^*\in A$ with $\phi(a^*,a)$.
 }
 \end{Definition}

 Obviously, for any model $M$, $M\subseteq_{na} B$ whenever $M\subseteq B$.   As well, if $\fg A M B$, then the fact that
 $MA\subseteq_{TV} MAB$ is a restatement of the Finite Satisfiability Theorem.   More generally, for arbitrary stable systems $\Mbar\sq\Nbar$ of models
 we have $\bigcup\Mbar\subseteq_{TV} \bigcup\Nbar$.
 
 Tarski-Vaught subsets play well with isolation.
 
 \begin{Lemma}  \label{TVup}
 Suppose $A\subseteq_{TV} B$ 
 and $\tp(c/A)$ is isolated.  Then $\tp(c/B)$ is isolated by the same formula and, moreover,
 $Ac\subseteq_{TV} Bc$.    Consequently, if $(\cbar_\alpha:\alpha<\gamma)$ is any construction sequence over $A$,
 then it is a construction sequence over $B$ via the same formulas.
 \end{Lemma}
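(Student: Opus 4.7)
The plan is to prove the three assertions in sequence, since each builds on the previous. The only real content is in the first part; the second and third are straightforward consequences pushed through by induction.

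For the first assertion, suppose $\psi(x,a)$ with $a\in A$ isolates $\tp(c/A)$, and assume for contradiction some $L(B)$-formula $\phi(x,b)$, with $b\in B$, is not decided by $\psi(x,a)$. Then both $\exists x(\psi(x,a)\wedge\phi(x,y))$ and $\exists x(\psi(x,a)\wedge\neg\phi(x,y))$ are $L(A)$-formulas satisfied by the single element $b\in B$; hence the conjunction $\chi(y)$ of these two is a single $L(A)$-formula satisfied by $b\in B$. Applying $A\subseteq_{TV} B$ to $\chi(y)$ produces $b^*\in A$ for which $\psi(x,a)$ fails to decide $\phi(x,b^*)$, contradicting isolation of $\tp(c/A)$. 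Hence $\psi(x,a)$ decides every $L(B)$-formula, so it isolates $\tp(c/B)$.

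For the second assertion, suppose $\phi(c,b)$ holds with $b\in B$ (absorbing any $A$-parameters into $\phi$). Then $\exists x(\psi(x,a)\wedge\phi(x,y))$ is an $L(A)$-formula realized by $b$, so by $A\subseteq_{TV} B$ there is $b^*\in A$ and a witness $c^*$ with $\psi(c^*,a)\wedge\phi(c^*,b^*)$. Since $\psi(x,a)$ isolates $\tp(c/A)$ and $b^*\in A$, we have $c\equiv_A c^*$, so $\phi(c,b^*)$ holds, giving the desired witness inside $Ac$.

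For the consequence about construction sequences, I would proceed by transfinite induction on $\alpha<\gamma$, showing simultaneously that the prescribed formula isolates $\tp(\cbar_\alpha/B\cbar_{<\alpha})$ and that $A\cbar_{\le\alpha}\subseteq_{TV} B\cbar_{\le\alpha}$. The successor step follows immediately from the first two parts applied to the pair $A\cbar_{\le\alpha}\subseteq_{TV} B\cbar_{\le\alpha}$. For limit $\alpha$, any formula witnessing a failure of $\subseteq_{TV}$ uses only finitely many parameters, so it lives over $A\cbar_{<\alpha_0}$ for some $\alpha_0<\alpha$, and the witness in $B\cbar_{<\alpha}$ uses only finitely many $c_\beta$ as well, reducing to the inductive hypothesis at a smaller stage.

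The main (minor) obstacle is bookkeeping in the first assertion: the key insight is that ``$\psi(x,a)$ decides $\phi(x,y)$'' is a first-order property of $y$ over $A$, so non-decidability is witnessed by a single $L(A)$-formula and can therefore be reflected from $B$ back into $A$ via the Tarski--Vaught condition. Everything else is chasing definitions and applying the first two parts repeatedly.
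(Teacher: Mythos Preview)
Your proof is correct and follows essentially the same route as the paper's. Your formula $\chi(y)$ in the first part is exactly the paper's $\eta(y,a)$ written as a conjunction of two existentials rather than a single $\exists x\exists x'$ formula; your argument for the moreover clause matches the paper's (which has minor typos in its displayed formula but the same idea); and for the construction-sequence consequence the paper simply says ``by induction on the length,'' whereas you spell out the limit case, which is fine.
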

 
 \begin{proof}  Suppose $\phi(x,a)$ isolates $\tp(c/A)$.   If it were not the case that $\phi(x,a)$ isolates $\tp(c/B)$, then there would
 be some $\delta(x,a,b)$ with $b$ from $B$ such that
 $$\eta(b,a):=\exists x\exists x'[\phi(x,a)\wedge\phi(x',a)\wedge\delta(x,a,b)\wedge\neg\delta(x',a,b)]$$
 However, if there were any $a^*$ from $A$ such that $\eta(a^*,a)$, this would contradict $\phi(x,a)$ isolating $\tp(c/A)$.
 
 For the moreover clause, assume $\delta(x,a',c)$ has a solution in $B$ with $a'$ from $A$.   Then 
 $$\theta(x,a,a'):=\exists z(\phi(x,a)\wedge\delta(x,a',z))$$
 also has a solution in $B$, hence in $A$.  By the isolation, any realization of $\theta(x,a,a')$ also realizes $\delta(x,a',c)$.
 The third sentence follows by induction on the length of the construction sequence.
 \qed\end{proof}
 
 \subsection{$\ell$-isolation and $\ell$-construction sequences}
 
$\ell$-isolation is a weakening of isolation.

\begin{Definition}  \label{elldef}   {\em  
A type  $\tp(c/A)$ is {\em $\ell$-isolated over $A$} if, 
for every $\phi(x,y)$, there is a formula $\psi(x,a)\in\tp(c/A)$
with $\psi(x,a)\vdash\tp_\phi(c/A)$.  
\\
A model $N$ is {\em $\ell$-atomic over $B$} if $B\subseteq N$ and $\tp(a/B)$ is $\ell$-isolated
for every finite tuple $a$ from $N$.\\
An {\em $\ell$-construction sequence over $B$} is a sequence $\cbar=(a_\alpha:\alpha<\delta)$ with, for each $\alpha<\delta$,
 $\tp(a_{\alpha}/BA_\alpha)$ $\ell$-isolated, where $A_\alpha=\bigcup\{a_\beta:\beta<\alpha\}$.\\
 A model $N$ is {\em $\ell$-constructible over $B$} if there is an $\ell$-construction sequence $\cbar$ over $B$ and the universe of $N=B\cup\bigcup\cbar$.

}
\end{Definition}

The advantage is that in a countable, superstable theory $T$, the $\ell$-isolated types are dense over any base set $A$.

\begin{Fact}  \label{ellexist}  Let $T$ be any countable, superstable theory. 
\begin{enumerate}
\item  For any base set $A$, $\{p\in S(A):p$ is $\ell$-isolated$\}$ is dense.
\item  For any set $A$ an $\ell$-constructible model  over $A$ exists.
\end{enumerate}
\end{Fact}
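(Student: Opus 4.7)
The plan is to first establish (1)---density of $\ell$-isolated types in $S(A)$---and then derive (2) from (1) by a transfinite Tarski--Vaught closure with bookkeeping.

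For (1), I will use the local rank $R^\phi$, which in any stable theory takes finite values on consistent formulas. The key lemma to isolate: for any consistent $L(A)$-formula $\theta(x,a)$ and any $L$-formula $\phi(x,y)$, there is an $L(A)$-formula $\delta(x,a')\vdash\theta(x,a)$ that \emph{$\phi$-decides $A$}, meaning $\delta\vdash\phi(x,b)$ or $\delta\vdash\neg\phi(x,b)$ for every $b\in A$. To prove this, iteratively refine $\theta$ by formulas $\pm\phi(x,b)$ with $b\in A$ along a maximal path of strictly decreasing $R^\phi$; finiteness of $R^\phi$ forces termination in at most $R^\phi(\theta)$ steps, and a formula of $R^\phi$-rank $0$ must $\phi$-decide every $b\in A$ (else the definition of rank would permit a further refinement). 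Countability of $L$ then permits enumerating $L$-formulas $(\phi_n)_{n<\omega}$ and recursively building a descending chain $\theta=\theta_0,\theta_1,\ldots$ of $L(A)$-formulas with each $\theta_{n+1}$ $\phi_n$-deciding $A$. Any complete $p\in S(A)$ containing all $\theta_n$ is then $\ell$-isolated, witnessed by the chain.

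For (2), I build $\cbar=(c_\alpha:\alpha<\gamma)$ by transfinite recursion with standard bookkeeping. Setting $B_\alpha:=A\cup\{c_\beta:\beta<\alpha\}$, at stage $\alpha$ I consult the bookkeeping queue for a consistent $L(B_\alpha)$-formula $\psi(x,\bar b)$ pending attention, apply (1) to produce an $\ell$-isolated $p\in S(B_\alpha)$ containing $\psi$, and let $c_\alpha\models p$. Enqueuing every new consistent formula as it arises (as more parameters enter the sequence) and processing them in a well-ordered fashion guarantees that by the end of the recursion, $N:=\bigcup_\alpha B_\alpha$ satisfies the Tarski--Vaught criterion, hence is a model---and it is $\ell$-constructible over $A$ by construction.

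The main obstacle is making the rank argument in (1) precise enough to confirm that the iterative refinement stays inside $L(A)$, refining only by $\pm\phi(x,b)$ for $b\in A$ rather than for arbitrary $b$ in the monster; this amounts to checking that the $A$-localized rank $R^\phi_A\leq R^\phi$ is still finite, which is immediate from stability. The bookkeeping in (2) is routine. Note that the countability of $L$ is essential for the $\omega$-enumeration of formulas in (1); superstability plays no direct role in this particular fact, though it is what makes $\ell$-isolation the right notion for the stronger structural results developed elsewhere in the paper.
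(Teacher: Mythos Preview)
Your proof is correct and is essentially the standard argument the paper defers to by citing Lemma~4.2.18(4) of \cite{Shc}; the paper gives no independent proof of (1) and for (2) says only ``iterating (1) over larger and larger approximations to a model,'' which is exactly your bookkeeping construction.

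One minor remark: the detour through an $A$-localized rank $R^\phi_A$ is unnecessary. The full $R^\phi$-rank already does the job: if $\delta$ is an $L(A)$-formula with $R^\phi(\delta)=n$ and $\delta$ fails to decide $\phi(x,b)$ for some $b\in A$, then both $\delta\wedge\phi(x,b)$ and $\delta\wedge\neg\phi(x,b)$ are consistent $L(A)$-formulas, and by the definition of $R^\phi$ at least one of them has rank strictly less than $n$ (since $R^\phi(\delta)\not\ge n+1$ means no single $b$---in $A$ or otherwise---can split $\delta$ into two pieces both of rank $\ge n$). So the refinement stays inside $L(A)$ automatically while strictly decreasing the global $R^\phi$. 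Your observation that superstability is not used here is also correct; stability and countability of $L$ suffice.
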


\begin{proof}  (1) holds by e.g., Lemma~4.2.18(4) of \cite{Shc}, and
(2) follows by iterating (1) over larger and larger approximations to a model.
\qed\end{proof}

The analogue of Lemma~\ref{TVup} holds as well, essentially by the same proof as there.

\begin{Fact}  \label{TVupl}  If $A\subseteq_{TV} B$ and $\tp(c/A)$ is $\ell$-isolated, then $\tp(c/B)$ is also $\ell$-isolated, with the same witnessing formulas.
\end{Fact}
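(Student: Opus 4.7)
The plan is to mirror the proof of Lemma~\ref{TVup} at the $\phi$-type level, one formula $\phi(x,y)$ at a time. Fix such a $\phi$. By $\ell$-isolation of $\tp(c/A)$, pick $\psi(x,a)\in\tp(c/A)$ with $\psi(x,a)\vdash \tp_\phi(c/A)$. Since $A\subseteq B$, $\psi(x,a)\in\tp(c/B)$ already, so it suffices to show that the same $\psi(x,a)$ isolates $\tp_\phi(c/B)$ as well; iterating this over all $\phi$ then gives $\ell$-isolation of $\tp(c/B)$ via exactly the same family of witnesses, which is what the ``with the same witnessing formulas'' clause requires.

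For the isolation step I would use the same ``splitting'' trick as in Lemma~\ref{TVup}. Suppose, toward a contradiction, that $\psi(x,a)$ fails to $\vdash \tp_\phi(c/B)$; then some $b\in B$ witnesses the auxiliary formula
$$\theta_\phi(y,a)\ :=\ \exists x\,\exists x'\bigl(\psi(x,a)\wedge\psi(x',a)\wedge\phi(x,y)\wedge\neg\phi(x',y)\bigr).$$
Crucially, the only parameter of $\theta_\phi$ is $a\in A$. Apply $A\subseteq_{TV} B$ to obtain $a^*\in A$ with $\theta_\phi(a^*,a)$. Unwinding yields two realizations of $\psi(x,a)$ giving different $\phi$-values at $a^*\in A$, which directly contradicts $\psi(x,a)\vdash\tp_\phi(c/A)$. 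There is no real obstacle here: because $\ell$-isolation is a purely formula-by-formula assertion, the Lemma~\ref{TVup} proof localizes cleanly, and the only care needed is to ensure that the auxiliary $\theta_\phi$ mentions no parameters outside $A$, which the displayed formula visibly does.
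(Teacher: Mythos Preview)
Your proof is correct and follows exactly the approach the paper indicates: the paper does not spell out a proof of Fact~\ref{TVupl} but simply remarks that it holds ``essentially by the same proof'' as Lemma~\ref{TVup}, and your argument is precisely that $\phi$-by-$\phi$ localization of the splitting trick from Lemma~\ref{TVup}.
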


\subsection{Orthogonality and domination}  Throughout this subsection, all that is needed is for $T$ to be stable.  The following notions are all due to Shelah.

\begin{Definition}  {\em  Suppose $p\in S(A)$ and $q\in S(B)$.  Then $p$ and $q$ are {\em orthogonal}, $p\perp q$, if,
for every $E\supseteq A\cup B$, $\fg a E b$ for every $a,b$ realizing any non-forking extensions of $p,q$, respectively.
\\  If $p\in S(D)$, we say {\em $p$ is orthogonal to the set $B$}, $p\perp B$, if $p\perp q$ for every $q\in S(B)$.
\\  If $p\in S(D)$ and $D_0\subseteq D$, we say {\em $p$ is almost orthogonal to $D_0$} if $\fg c {D} {e}$ for every $e$ such that $\fg e {D_0} D$.
} %**** This is what Makkai calls almost orthogonal
\end{Definition}
In many texts, being almost orthogonal is written in terms of {\em domination}. 

\begin{Definition}  \label{domdef}  {\em  We say that for $D\supseteq D_0$, {\em $cD$ is dominated by $D$ over $D_0$} 
if $\fg c {D} e$ for every
$e$ satisfying $\fg e {D_0} D$.  Thus, $cD$ is dominated by $D$ over $D_0$ if and only if $\tp(c/D)$ is almost orthogonal to $D_0$.
For arbitrary strong types $p,q$, we write $p\triangleleft q$ if, for some/every a-model $M$ on which both $p,q$ are based and stationary, for some/every $b$ realizing $q$,
there is $a$ realizing $p$ with $ab$ dominated by $b$ over $M$.
} %**** Don't we need to pass to an a-model to get ``every"?
\end{Definition}

We note the following two facts.

\begin{Fact}  \label{basicorth}  Suppose $\tp(e/D)$ is stationary, $D_0\subseteq D$, and $\fg D {D_0} Y$.  
\begin{enumerate}
\item  If $eD$ is dominated by $D$ over $D_0$, then $\tp(e/D)\vdash \tp(e/DY)$; and
\item  If $\tp(e/D)\perp D_0$, then $\tp(e/D)\vdash \tp(e/DY)$ and $\tp(e/D)\perp D_0Y$.
\end{enumerate}
\end{Fact}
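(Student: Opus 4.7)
The plan is to derive both items by unpacking the definitions of domination and orthogonality, together with symmetry of non-forking and the stationarity of $\tp(e/D)$.

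For (1), symmetry of non-forking applied to $\fg D {D_0} Y$ yields $\fg Y {D_0} D$, which is precisely the side condition on $Y$ appearing in the definition of ``$eD$ dominated by $D$ over $D_0$'' (Definition~\ref{domdef}). Domination then gives $\fg e D Y$, and stationarity of $\tp(e/D)$ promotes this non-forking statement to the desired entailment $\tp(e/D)\vdash\tp(e/DY)$.

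For the first conclusion of (2), an analogous argument works with orthogonality replacing domination. For each finite $y\subseteq Y$, symmetry gives $\fg y {D_0} D$, so $y$ realizes the non-forking extension of $\tp(y/D_0)$ to $D$; evaluating $\tp(e/D)\perp\tp(y/D_0)$ at the common base $D$ (which contains $D\cup D_0$) yields $\fg e D y$. Finite character of non-forking then gives $\fg e D Y$, and stationarity again produces $\tp(e/D)\vdash\tp(e/DY)$.

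For the second conclusion of (2), since $\tp(e/DY)$ has just been identified as the stationary non-forking extension of $\tp(e/D)$, it is parallel to $\tp(e/D)$ and hence has the same orthogonality class; it thus suffices to show $\tp(e/DY)\perp q$ for every stationary $q\in S(D_0Y)$. For such $q$, fix a realization $b$, pass to an $a$-model $E\supseteq DY$, and take realizations $e',b'$ of the non-forking extensions of $\tp(e/DY)$ and $q$ to $E$; one then reduces to the hypothesis $\tp(e/D)\perp\tp(b/D_0)$ by replacing $b$, up to $D_0$-conjugacy in the monster, with a realization $b^*$ satisfying $\fg {b^*} {D_0} E$, and applying orthogonality at the base $E$. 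The main obstacle is that $q\in S(D_0Y)$ need not itself be the non-forking extension of $q\mr{D_0}$: arranging the shuffle between $b$, $b^*$, and $b'$ consistently requires the full hypothesis $\fg Y {D_0} D$, and is the only step that is not a direct unpacking of definitions.
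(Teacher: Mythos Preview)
Your arguments for (1) and the first clause of (2) are correct and match the paper's approach. The only cosmetic difference is that the paper deduces the first clause of (2) from (1) by observing that $\tp(e/D)\perp D_0$ implies $eD$ is dominated by $D$ over $D_0$, whereas you reprove the non-forking $\fg e D Y$ directly from the definition of orthogonality; both are fine.

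For the second clause of (2) the paper gives no argument at all, simply citing X.1.1 of \cite{Shc}. Your attempt to prove it directly is more ambitious, and you correctly isolate the obstacle: when $q\in S(D_0Y)$ forks over $D_0$, the $D_0$-conjugate $b^*$ with $\fg{b^*}{D_0}{E}$ will \emph{not} realize $q$, so the orthogonality $\fg{e'}{E}{b^*}$ you obtain says nothing about $b'$. You assert that the hypothesis $\fg Y{D_0}D$ lets one ``arrange the shuffle between $b,b^*,b'$ consistently,'' but you do not say how, and no simple conjugation of $b$ alone will do this. One standard way to close the gap is to move the pair $Yb$ rather than $b$: choose $(Y^*,b^*)$ with $\stp(Y^*b^*/D_0)=\stp(Yb/D_0)$ and $\fg{Y^*b^*}{D_0}{Ee'}$, use $p\perp D_0$ at $E$ to get $\fg{e'}{E}{Y^*b^*}$, then use $\fg Y{D_0}D$ together with $\fg{e'}{D}{E}$ to argue that $\tp(e'/D_0Y)=\tp(e'/D_0Y^*)$ under the automorphism carrying $Y^*b^*$ to $Yb$, transporting the independence back. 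Alternatively, one passes through an $a$-model $N\supseteq D_0Y$ with $\fg N{D_0Y}D$, so that transitivity gives $\fg N{D_0}D$ and every type over $N$ is (parallel to) a non-forking extension of a type over $D_0$. As written, your sketch stops exactly at the nontrivial step.
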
  

\begin{proof}  (1)  Choose any $e'$ such that $\tp(e'/D)=\tp(e/D)$.  Then $e'D$ is dominated by $D$ over $D_0$ as well, so
we have $\fg {e} {D} Y$ and $\fg {e'} D Y$. Thus, $\tp(e/DY)=\tp(e'/DY)$ since $\tp(e/D)$ is stationary.
(2)  The first clause follows from (1) and the second is X.1.1 of \cite{Shc}.
\qed\end{proof}

For lack of a better place, we will require the following technical lemma, whose proof is similar to the proof of $(c)\Rightarrow(d)$ of X, 2.2 of \cite{Shc}.

\begin{Lemma}  \label{adde}  Suppose $D$ is $V$-dominated by $\Abar=(A_0,A_1,A_2)$ and $\tp(e/DA_1A_2)$ is stationary and orthogonal to both $A_1$ and $A_2$.
Then $De$ is also $V$-dominated by $\Abar$.
\end{Lemma}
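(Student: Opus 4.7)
\textbf{Proof plan for Lemma~\ref{adde}.} Fix $\Bbar \sqsupseteq \Abar$; the goal is to show $\fg{De}{A_1A_2}{B_1B_2}$. Since $D$ is $V$-dominated by $\Abar$, we already have $\fg{D}{A_1A_2}{B_1B_2}$, so by the standard pair decomposition of non-forking it suffices to establish $\fg{e}{DA_1A_2}{B_1B_2}$. I will do this in two stages, absorbing $B_1$ and then $B_2$, each time invoking Fact~\ref{basicorth}(2) with one of the two given orthogonalities.

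\textbf{Stage 1.} Apply Fact~\ref{basicorth}(2) to the stationary type $\tp(e/DA_1A_2)$ with $D_0 = A_1$ and $Y = B_1$, using $\tp(e/DA_1A_2) \perp A_1$. The required independence $\fg{DA_1A_2}{A_1}{B_1}$ reduces by the pair lemma (on the left) to the two conditions $\fg{D}{A_1A_2}{B_1}$ and $\fg{A_2}{A_1}{B_1}$. The first is immediate from $V$-domination of $D$. The second is equivalent by symmetry to $\fg{B_1}{A_1}{A_2}$, which I will obtain from the defining conditions of $\sqsupseteq$: condition (b) $\fg{B_0}{A_0}{A_1 A_2}$ yields by monotonicity/pair $\fg{B_0}{A_1}{A_2}$, and combining this with condition (c) $\fg{B_1}{B_0 A_1}{A_2}$ via the left-pair lemma gives $\fg{B_0 B_1}{A_1}{A_2}$, hence $\fg{B_1}{A_1}{A_2}$ by left-monotonicity. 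Fact~\ref{basicorth}(2) then yields $\fg{e}{DA_1A_2}{B_1}$, and the non-forking extension $\tp(e/DA_1A_2 B_1)$ remains stationary and orthogonal to $A_2$.

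\textbf{Stage 2.} Apply Fact~\ref{basicorth}(2) to $\tp(e/DA_1A_2 B_1)$ with $D_0 = A_2$ and $Y = B_2$, using the preserved orthogonality $\perp A_2$. The required independence $\fg{DA_1A_2 B_1}{A_2}{B_2}$ is equivalent by symmetry to $\fg{B_2}{A_2}{DA_1B_1}$, which I decompose via the pair lemma on the right into $\fg{B_2}{A_2}{DA_1}$ and $\fg{B_2}{A_2 DA_1}{B_1}$. The former reduces, analogously to Stage 1, to $\fg{B_2}{A_1A_2}{D}$ (from $V$-domination by symmetry) and $\fg{A_1}{A_2}{B_2}$ (proved by the symmetric argument with the roles of $1$ and $2$ swapped, using condition (d) $\fg{B_2}{B_0A_2}{B_1}$ and $\fg{B_0}{A_2}{A_1}$ derived from (b)). The latter is obtained by a base-change starting from condition (d): combining (d) with the $V$-domination consequence $\fg{B_2}{A_1A_2 B_1}{D}$ and the previously established independences that show $B_0$ can be absorbed into the enlarged base $A_2 D A_1$.

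\textbf{Main obstacle.} The delicate point is the base-change in part (ii) of Stage 2: condition (d) only gives independence over $B_0 A_2$, whereas we need it over $A_2 D A_1$. The bases differ precisely by swapping $B_0$ for $DA_1$, and neither inclusion is direct. Managing this requires carefully exploiting the hidden symmetry in the definition of $\sqsupseteq$---in particular, that $\Bbar$ is itself an independent triple, yielding $\fg{B_1}{B_0}{B_2}$ and its consequences---together with the full $V$-domination information, to successively swap out $B_0$ and swap in $DA_1$ using the pair lemma on the base. Once the two stages are combined by transitivity of non-forking, we obtain $\fg{e}{DA_1A_2}{B_1B_2}$, and hence $\fg{De}{A_1A_2}{B_1B_2}$, completing the verification for arbitrary $\Bbar\sqsupseteq\Abar$.
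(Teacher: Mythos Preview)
Your two-stage plan has a genuine gap at Stage~2(ii): the independence $\fg{DA_1A_2B_1}{A_2}{B_2}$ that you need is \emph{false} in general.  The obstruction is that $B_0$ sits inside both $B_1$ and $B_2$ but need not lie in $\acl(A_1A_2D)$.  Concretely, take any $\Bbar\sqsupseteq\Abar$ with $b_0\in B_0\setminus\acl(A_0)$.  From $\fg{B_0}{A_0}{A_1A_2}$ and $\fg{D}{A_1A_2}{B_0}$ one gets $\fg{b_0}{A_0}{A_1A_2D}$, so $b_0\notin\acl(A_1A_2D)$.  Since $b_0\in B_1\cap B_2$, the formula $x=b_0$ lies in $\tp(B_2/A_1A_2DB_1)$ and forks over $A_1A_2D$; thus $\nfg{B_2}{A_1A_2D}{B_1}$.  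No amount of base-change manipulation from the conditions of $\sqsupseteq$ will rescue this, because all the relevant independences of $B_1$ from $B_2$ in the definition are over bases containing $B_0$.

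The paper's proof is organized precisely to avoid this.  It proceeds in \emph{three} steps rather than two: first absorb $B_0$ (using $\tp(e/DA_1A_2)\perp A_1$ with $Y=B_0$), which by Fact~\ref{basicorth}(2) upgrades the orthogonality to $A_1B_0$ and, by the evident symmetry of the first step, also to $A_2B_0$.  Only then are $B_1$ and $B_2$ absorbed, now using $D_0=A_1B_0$ and $D_0=A_2B_0$ respectively.  With $B_0$ already in the base, the needed independences (e.g.\ $\fg{DB_1A_2}{B_0A_2}{B_2}$) follow directly from $\fg{B_2}{B_0A_2}{B_1}$ and the $V$-domination of $D$.  Your Stage~1 is essentially correct, but you should insert a preliminary step absorbing $B_0$ before it; the ``main obstacle'' you flagged then simply disappears.
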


\begin{proof}  Choose any $\Bbar\sqsupseteq\Abar$ and we will prove in three steps that
\begin{equation} 
\tp(e/DA_1A_2)\vdash\tp(e/DA_1A_2B_0)\vdash \tp(e/DB_1A_2)\vdash \tp(e/DB_1B_2) \tag{*}
\end{equation}
which implies $\fg e {DA_1A_2} {B_1B_2}$.  Coupling this with $\fg D {A_1A_2} {B_1B_2}$ from the $V$-domination of $D$ over $\Abar$ gives
$\fg {De} {A_1A_2}{B_1B_2}$, as required.  Along the way, we  also  prove that $\tp(e/D)$ is orthogonal to both $A_1B_0$ and $A_2B_0$.
%The engine for all of these results is the following basic fact, see e.g., X, Claim 1.1 of \cite{Shc}. %**** Corrected a mythical ``D_2" to "A_2" (before: "which implies \fg e {D A_1 D_1} {B_1B_2}$".)

%\begin{Fact}  \label{fundamental}  Suppose $X_0\subseteq X$, $\tp(e/X)$ is stationary and $\perp X_0$, and $\fg X {X_0} Y$.
%Then both $\tp(e/X)\vdash\tp(e/XY)$ and $\tp(e/X)\perp X_0Y$.
%\end{Fact}
To obtain the first implication of (*), $\Abar\sq\Bbar$ and $D$ $V$-dominated by $\Abar$ give $\fg {B_0} {A_1} {A_2}$ and $\fg {D} {A_1} {A_2}$, from
which it follows that $\fg {DA_1A_2} {A_1} {B_0}$.   Thus, applying Fact~\ref{basicorth}(2) with $D_0=A_1$ gives $\tp(e/DA_1A_2)\vdash\tp(e/DA_2A_2B_0)$ and also
$\tp(e/DA_1A_2B_0)\perp A_1B_0$.  Also, note that our assumptions are symmetric between $A_1$ and $A_2$, so arguing symmetrically gives
$\tp(e/DA_1A_2B_0)\perp A_2B_0$ as well.

For the second implication, again from $\Abar\sq\Bbar$ and $D$ $V$-dominated by $\Abar$, we have $\fg {B_1}{A_1B_0} {A_2}$ and
$\fg D {A_1A_2}{B_0B_1}$.    By transitivity of non-forking we have $\fg {B_1} {A_1B_0} {DA_2}$, hence also $\fg {B_1} {A_1B_0}{DA_1A_2}$.
So, applying  Fact ~\ref{basicorth}(2) with $D_0=A_1B_0$  gives the second implication of (*).

Finally, since $\fg {B_2} {B_0A_2} {B_1}$ and $\fg {D} {A_1A_2} {B_1B_2}$, we obtain $\fg {DB_1A_2} {B_0A_2} {B_2}$.  
As we proved $\tp(e/DB_1A_2)\perp A_2B_0$ above,
applying Fact~\ref{basicorth}(2)  with $D_0=A_2B_0$ gives the third implication of (*), completing the proof of the Lemma.
\qed\end{proof}

The following notion and subsequent lemma appear as Defintion~1.4 and Lemma~1.5 of \cite{Borel}.

\begin{Definition}  \label{essentiallyfinite} {\em A set $A$ is {\em essentially finite
with respect to a strong type $p$\/} if, for all finite sets $D$ on which $p$ is based
and stationary, there is a finite $A_0\subseteq A$ such that $p|DA_0\vdash p|DA$.
}
\end{Definition}

\begin{Lemma}  \label{basicorth2}
Fix a strong type $p$.
If either of the following conditions hold
\begin{enumerate}
\item  $p\perp A$ and $B$ is a (possibly empty) $A$-independent set of finite sets; or
\item  if $A$ is essentially finite with respect to $p$, $p\perp B$, and $\fg A {A\cap B} {B}$
\end{enumerate}
then $A\cup B$ is essentially finite with respect to $p$.
\end{Lemma}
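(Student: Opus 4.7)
Fix a finite $D$ on which $p$ is based and stationary. We seek a finite $C_0\subseteq A\cup B$ with $p|DC_0\vdash p|D(A\cup B)$; since $p|D$ is stationary, this reduces to showing that every $c\models p|DC_0$ satisfies $\fg c {DC_0} {A\cup B}$. The strategy in both cases is to pick finite witnesses by superstability and then chain one or two applications of Fact~\ref{basicorth}(2) to extend the implication of types through the appropriate orthogonalities.

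In Case 1, by superstability of $\tp(D/AB)$ I will first choose a finite $B_0\subseteq B$ with $\fg D {AB_0} B$; combining its symmetric form $\fg {B\setminus B_0} {AB_0} D$ with $\fg {B\setminus B_0} A {B_0}$ (the $A$-independence of $B$) via transitivity of non-forking yields $\fg {B\setminus B_0} A {DB_0}$. Next I choose a finite $A_0\subseteq A$ with $\fg {DB_0} {A_0} A$ (superstability again), and set $C_0 = A_0\cup B_0$. For $c\models p|DC_0$, a first application of Fact~\ref{basicorth}(2) with base $DC_0$, $D_0=A_0$, $Y=A\setminus A_0$ -- using $p\perp A$ to obtain $p|DC_0\perp A_0$ (parallelism), and the choice of $A_0$ to obtain $\fg {DC_0} {A_0} {A\setminus A_0}$ -- gives $c\models p|DAB_0$; a second application with base $DAB_0$, $D_0=A$, $Y=B\setminus B_0$ -- using $p\perp A$ again and the symmetric form $\fg {DAB_0} A {B\setminus B_0}$ of the independence above -- gives $c\models p|D(A\cup B)$.

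In Case 2, I first choose a finite $C\subseteq A\cap B$ with $\fg A C B$ (finite character) and a finite $B_1\subseteq B$ with $\fg D {AB_1} B$ (superstability). Since $p|DB_1$ is the nonforking extension of the stationary type $p|D$, the essentially-finite hypothesis on $A$ applied with base $DB_1$ yields a finite $A_0\subseteq A$ with $p|DB_1A_0\vdash p|DB_1A$. Setting $C_0 = A_0\cup C\cup B_1$, a realization $c\models p|DC_0$ restricts to $p|DB_1A_0$, hence realizes $p|DB_1A=p|DAB_1$ by the essentially-finite witness. I then apply Fact~\ref{basicorth}(2) with base $DAB_1$, $D_0=(A\cap B)\cup B_1\subseteq DAB_1$, $Y=B\setminus B_1$: orthogonality $p|DAB_1\perp (A\cap B)B_1$ follows from $p\perp B$, while $\fg {DAB_1} {(A\cap B)B_1} {B\setminus B_1}$ follows by transitivity from $\fg A {A\cap B} B$ and $\fg D {AB_1} B$, giving $c\models p|D(A\cup B)$.

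\textbf{Main obstacle.} The delicate point is the coordination of finite witnesses, especially in Case 2: $B_1$ must be selected first (reflecting that $D$ may fork with $B$ in ways uncontrolled by $A\cap B$), and the essentially-finite witness $A_0$ must then be chosen relative to the enlarged base $DB_1$ rather than $D$ alone. Verifying the independence $\fg {DAB_1} {(A\cap B)B_1} {B\setminus B_1}$, which fuses the two hypotheses $\fg A {A\cap B} B$ and $\fg D {AB_1} B$, is the main transitivity check driving the argument.
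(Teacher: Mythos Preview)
Your proof is correct, and in fact the paper does not give its own proof of this lemma: it simply records it as Lemma~1.5 of \cite{Borel}. So there is no ``paper's proof'' to compare against here; you have supplied what the paper omits.

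Two minor remarks. In Case~1, when you write ``choose a finite $B_0\subseteq B$'', remember that $B$ is presented as an $A$-independent \emph{set of finite tuples}; for the step $\fg{B\setminus B_0}{A}{B_0}$ to follow from $A$-independence you want $B_0$ to be the union of finitely many of those tuples, not an arbitrary finite subset of $\bigcup B$. This is harmless---after using superstability to get some finite $B_0'$ with $\fg{D}{AB_0'}{B}$, just enlarge $B_0'$ to the union $B_0$ of the finitely many tuples it meets---but worth saying explicitly. In Case~2, the finite $C\subseteq A\cap B$ you introduce is never used: your application of Fact~\ref{basicorth}(2) takes $D_0=(A\cap B)\cup B_1$, not $C\cup B_1$, and the transitivity argument only uses the full hypothesis $\fg{A}{A\cap B}{B}$. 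Including $C$ in $C_0$ does no harm, but you can drop it.
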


\subsection{On na-substructures}

One of the tools that led to the strong structure theorems proved for classifiable theories was the notion of an $na$-substructure, which is 
due to Hrushovski.

\begin{Definition}  \label{naDef}  {\em  We say {\em $M$ is an $na$-substructure of $N$,} written $M\subseteq_{na} N$, if $M\preceq N$ and, 
for every finite $F\subseteq M$ and every $F$-definable
formula $\phi(x)$, if there is some $c\in \phi(N)\setminus M$, then there is $c'\in\phi(M)\setminus \acl(F)$.
}
\end{Definition}

The salient features of this notion is that for any model $N$ and any countable subset $A\subseteq N$, there is a countable $M\subseteq_{na} N$ containing $A$.  This is proved by the same method as the Downward L\"owenheim-Skolem theorem.  

The following three Facts explain the utility of this notion.  Fact~\ref{3big}(1,2) appear as Propositions~8.3.5 and 8.3.6 of \cite{Pillay}, respectively, and 
Fact~\ref{3big}(3)  is Proposition~5.1~ of \cite{ShBue}.  

\begin{Fact}  \label{3big}
\begin{enumerate}
\item  Whenever $M\subseteq_{na} N$, if $p$ is any regular type non-orthogonal to $\tp(N/M)$, then some regular type $q\in S(M)$ non-orthogonal to $p$ is realized in $N\setminus M$.
\item (3-model Lemma) Suppose $M\preceq M'\preceq N$ with $M\subseteq_{na} N$.  For every regular type $p=\tp(e/M')$ with $e$ from $N$, there is $h\in N$ such that
$\tp(h/M)$ is regular, non-orthogonal to $p$, and $\fg h M {M'}$.
\item  Suppose $M\subseteq_{na} N$ and $A$ is any set such that $M\subseteq A\subseteq N$.  Then there is a model $M^*\subseteq_{na} N$ with $A\subseteq M^*$, $|M^*|=|A|$,
and
$M^*$ dominated by $A$ over $M$.
\end{enumerate}
\end{Fact}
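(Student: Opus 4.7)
The plan is to prove the three parts of Fact~\ref{3big} in order, noting that they are classical results (appearing essentially as cited in \cite{Pillay} and in \cite{ShBue}), with part~(1) doing the main descent of regular types into $N$, part~(2) following from (1) by a non-forking adjustment, and part~(3) a L\"owenheim--Skolem--style construction carried out inside the set of elements dominated by $A$ over $M$. The common thread is that the na-hypothesis is precisely what lets one transfer regular-type information from an abstract non-orthogonality statement into concrete realizations living in $N$.

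For part~(1), I would begin by invoking the definition of non-orthogonality to a set to pick $c \in N$ with $\tp(c/M) \not\perp p$; since $p$ is non-algebraic, $c \notin M$. Superstability together with regular weight decomposition then yields a regular $q \in S(M)$ non-orthogonal to $\tp(c/M)$, hence to $p$. The critical step is producing a realization of $q$ inside $N \setminus M$ itself rather than in some a-saturated extension. For this I would fix a strongly regular formula $\phi(x,\bar a) \in q$ with $\bar a \subseteq M$; a forking calculation using $c$ shows that $\phi$ has a solution in $N$ that does not lie in $\acl(\bar a M)$ in the relevant sense, and the na-hypothesis (combined with strong regularity, which forces any non-algebraic realization of $\phi$ to realize $q$) then delivers the required $h \in N \setminus M$ with $\tp(h/M) = q$. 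The main obstacle is choosing $\phi$ so that its realizations outside $\acl(\bar a)$ genuinely realize $q$ and not merely a forking extension -- this is exactly the content of strong regularity, and the na-condition is what translates it into an assertion about $N$.

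For part~(2), the implicit hypothesis is $p \not\perp M$ (otherwise the conclusion fails trivially). Apply~(1) to produce a regular $q \in S(M)$ with $q \not\perp p$ realized by some $h_0 \in N \setminus M$. If $\fg {h_0} M {M'}$ we are done; otherwise we re-run the argument of~(1) but choose a strongly regular formula $\phi$ witnessing $q$ whose non-algebraic solutions in $N$ are additionally independent from $M'$ over $M$. Such a $\phi$ exists because the non-forking extension $q|M'$ is still non-orthogonal to $p$, and its strongly regular witness over $M'$ can be arranged to have its solution set controlled (up to forking) by a formula over $M$. Applying na to this enriched $\phi$ yields the required $h$.

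For part~(3), let $N^{\mathrm{dom}} := \{c \in N : c \text{ is dominated by } A \text{ over } M\}$; by definition $A \subseteq N^{\mathrm{dom}}$, and $N^{\mathrm{dom}}$ is closed under $\acl$ over $A$. I would build $M^*$ as a continuous chain $(A_\alpha)_{\alpha \le |A|}$ of subsets of $N^{\mathrm{dom}}$ with $A_0 = A$, at each successor stage adjoining: (i)~Tarski--Vaught witnesses for formulas over $A_\alpha$ with solutions in $N$, and (ii)~na-witnesses for formulas $\phi(x,\bar e)$ with $\bar e \subseteq A_\alpha$ finite and $\phi(N) \setminus A_\alpha \neq \emptyset$, chosen outside $\acl(\bar e)$. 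The principal obstacle is maintaining the na-condition in (ii) when $\bar e \not\subseteq M$: the hypothesis $M \subseteq_{na} N$ only directly supplies witnesses for parameters in $M$. To overcome this, one uses transitivity of non-forking together with the fact that $\bar e \subseteq A_\alpha \subseteq N^{\mathrm{dom}}$ is already dominated by $A$ over $M$, reducing the na-question for $\phi(x,\bar e)$ to one about a related formula with parameters in $M$, where the original hypothesis applies. Verifying that the resulting $M^* = \bigcup_\alpha A_\alpha$ is an elementary substructure of $N$ of size $|A|$, lies inside $N^{\mathrm{dom}}$, and is na in $N$, then follows by a routine induction on $\alpha$ using transitivity of domination at the limit stages.
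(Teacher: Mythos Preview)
The paper does not prove Fact~\ref{3big} at all: it is recorded in the Appendix as a citation, with parts~(1) and~(2) attributed to Propositions~8.3.5 and~8.3.6 of \cite{Pillay} and part~(3) to Proposition~5.1 of \cite{ShBue}. So there is no ``paper's proof'' to compare your sketch against; I can only evaluate your argument on its own merits.

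Your sketch of part~(1) has a genuine gap. You write ``fix a strongly regular formula $\phi(x,\bar a)\in q$'' and then rely on strong regularity to conclude that any non-algebraic realization of $\phi$ in $N$ realizes $q$. But in a merely superstable theory a regular type need not be strongly regular; indeed the paper only obtains strong regularity in Lemma~\ref{SRna} under the extra hypothesis $p\perp\PPe$. The actual argument in \cite{Pillay} does not go through a strongly regular formula. It instead selects a regular $q$ of minimal $R^\infty$-rank non-orthogonal to $\tp(c/M)$ so that $\tp(c/M)$ is foreign to all types of smaller rank, and then splits on whether $q$ is trivial (using the algebraic-closure trick) or non-trivial (using $p$-simplicity and the geometry of $p$-weight, as in 8.2.20 there). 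The paper's Lemma~\ref{aclregular} reproduces exactly this kind of argument. Your plan also seems to invoke the na-hypothesis in the wrong direction: $M\subseteq_{na} N$ produces non-algebraic witnesses \emph{inside $M$} for formulas realized in $N\setminus M$, not the other way around; it does not by itself manufacture elements of $N\setminus M$ realizing a prescribed type over $M$.

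The same objection applies to your part~(2), which reuses the strong-regularity step. For part~(3) your outline is the right shape (an iterated L\"owenheim--Skolem construction inside the set of elements dominated by $A$ over $M$, as in \cite{ShBue}), but the sentence ``reducing the na-question for $\phi(x,\bar e)$ to one about a related formula with parameters in $M$'' is where the real work is and is not justified as stated; the Shelah--Buechler argument handles this point with some care.
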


For our purposes, we need to localize this notion of `being na' to individual regular types.
We begin with two very general lemmas.

\begin{Lemma}   \label{isolnonalg}   Suppose $M\preceq N$, $c$ from $M$ and $b\in N\setminus M$.   If $\tp(b/c)$ is isolated by $\delta(x,c)$, then there
is $b^*\in \delta(M,c)\setminus\acl(c)$.
\end{Lemma}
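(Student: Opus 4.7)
The plan is to exploit elementarity of $M \preceq N$ together with the isolation property of $\delta(x,c)$. The key preliminary observation is that $b \notin \acl(c)$: indeed, if $b$ belonged to $\acl(c)$, then there would be some $L(c)$-formula $\phi(x,c)$ with $\phi(\C,c)$ finite and $b \in \phi(\C,c)$; since $c$ lies in $M$ and $M \preceq N$, the formulas $\phi(x,c)$ and $\exists^{=k}x\,\phi(x,c)$ have the same solution sets and witnesses in $M$ and $N$. Thus $\phi(N,c) = \phi(M,c)$, forcing the contradiction $b \in M$.

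Next I would use elementarity a second time to produce the desired witness. Since $b \in \delta(N,c)$, the sentence $\exists x\,\delta(x,c)$ holds in $N$, and since $c \subseteq M \preceq N$ it holds in $M$ as well. Pick any $b^* \in \delta(M,c)$. Because $\delta(x,c)$ isolates $\tp(b/c)$ and $b^*$ realizes $\delta(x,c)$, we have $\tp(b^*/c) = \tp(b/c)$. Combined with $b \notin \acl(c)$ from the previous step, this gives $b^* \notin \acl(c)$, so $b^* \in \delta(M,c) \setminus \acl(c)$, as required.

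There is essentially no obstacle here; the proof is a direct two-step application of elementarity. The only subtlety worth emphasizing is that the first step does not need any stability-theoretic input: it is purely a consequence of $M \preceq N$ and the fact that algebraic closures over parameters in $M$ are preserved between $M$ and $N$. The isolation hypothesis is then used exactly to transfer non-algebraicity of $b$ over $c$ to any other realization of $\delta(x,c)$, including the one found in $M$.
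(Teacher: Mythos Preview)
Your proof is correct and follows essentially the same approach as the paper: both use elementarity to find $b^*\in\delta(M,c)$ and then use the isolation of $\tp(b/c)$ by $\delta(x,c)$ to transfer non-algebraicity from $b$ to $b^*$. The only cosmetic difference is that the paper first notes $\delta(x,c)$ is non-algebraic and then argues directly by contradiction that $b^*\notin\acl(c)$, whereas you first establish $b\notin\acl(c)$ and then transfer this via $\tp(b^*/c)=\tp(b/c)$; these are two phrasings of the same argument.
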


\begin{proof}  Since $\delta(b,c)$ holds with $b\not\in M$, $\delta(x,c)$ is non-algebraic.   As $M\preceq N$ there is $b^*\in M$ realizing $\delta(x,c)$.
We argue that any such $b^*\not\in\acl(c)$.  Suppose $b^*\in\acl(c)$.  Choose an algebraic $\alpha(x,c)\in \tp(b^*/c)$.  Since $\tp(b^*/c)$ is isolated by $\delta(x,c)$,
this would imply that $\forall x[\delta(x,c)\rightarrow\alpha(x,c)]$, but this is contradicted by $b$.
\qed\end{proof} 

 The following is a slight strengthening on the fact that in a superstable theory, a realization of a regular type can be found inside any pair of models.  
 Indeed, the proof below is simply a minor variant of Proposition~8.3.2 in \cite{Pillay}.

\begin{Lemma}   \label{aclregular} Suppose $M\preceq N$, $d$ from $M$,  and $\phi(N,d)\setminus M$ is non-empty.
Then there is some $e\in \acl(\phi(N,d)\cup M)$ with $\tp(e/M)$ regular.
\end{Lemma}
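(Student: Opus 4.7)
The plan is to reduce directly to the standard regular-type-existence result (Proposition~8.3.2 of \cite{Pillay}), which asserts that in a superstable theory, whenever $\tp(a/M)$ is non-algebraic there is some $e \in \acl(Ma)$ with $\tp(e/M)$ regular.

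Concretely, I will pick any $a \in \phi(N,d) \setminus M$, which exists by hypothesis. The first step is to verify that $\tp(a/M)$ is non-algebraic: since $M \preceq N$, if $a$ realized some algebraic formula $\psi(x,\bar m)$ over $M$ with exactly $k$ solutions in the monster, then by elementarity those $k$ solutions would already be present in $M$, forcing $a \in M$, contrary to $a \in N \setminus M$. Thus $a \notin \acl(M)$.

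Next, I invoke the regular-type extraction result: there exists $e \in \acl(Ma)$ with $\tp(e/M)$ regular. The usual argument fixes a maximal $M$-independent sequence $(e_1,\dots,e_n)$ drawn from $\acl(Ma)$ with each $\tp(e_i/M)$ regular and non-algebraic; finite weight of $\tp(a/M)$ bounds the length, while the maximality forces $\tp(a/Me_1\cdots e_n)$ to be orthogonal to all regular types over $M$, which combined with $a \notin \acl(M)$ forces at least one such $e_i$ to exist. Taking $e := e_1$ then suffices.

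Finally, since $a \in \phi(N,d)$ we have $\acl(Ma) \subseteq \acl(\phi(N,d) \cup M)$, so $e$ lies in the required set. The ``slight strengthening'' over Proposition~8.3.2 is essentially cosmetic: the Pillay proof uses that $N$ is a model to further locate $e$ inside $N$, whereas here we are content with $e$ in the monster. If anything, the present statement is easier, and the extra conclusion $e \in \acl(\phi(N,d) \cup M)$ is immediate from the construction. The only real content, as in Pillay's argument, lies in the nonemptiness of the regular-type decomposition of $\tp(a/M)$; this is the step I would be most careful about in a full write-up, but it is routine given finite weight in a superstable theory.
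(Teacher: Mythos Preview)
There is a genuine gap. You have misstated Proposition~8.3.2 of \cite{Pillay}: what it actually says is that for \emph{models} $M\prec N$ there is $e\in N$ with $\tp(e/M)$ regular; it does not claim that $e$ can be found in $\acl(Ma)$ for a single element $a$. In the non-trivial case of Pillay's proof one passes (via 7.1.17 and 8.2.17) to some $a_2\in\acl(Ma)$ and a formula $\theta\in\tp(a_2/M)$, and then the regular-type realization $c$ is chosen from $\theta(N)$ using that $N$ is a model. There is no reason for $c$ to lie in $\acl(Ma)$, and in general it will not.

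Your ``usual argument'' sketch does not repair this. Maximality of an $M$-independent sequence $(e_1,\dots,e_n)$ of regular-type realizations \emph{drawn from $\acl(Ma)$} does not force $\tp(a/Me_1\cdots e_n)$ to be orthogonal to every regular type: non-orthogonality to some regular $p$ does not by itself produce an element of $\acl(Ma)$ realizing a regular type over $M$ independent from the $e_i$. That implication is exactly the hard content that 8.2.20/8.3.2 supply, and it uses the ambient model. So when $n=0$ you cannot derive a contradiction in the way you suggest.

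The paper's argument avoids this by working with the whole set $D=\acl(\phi(N,d)\cup M)$ rather than $\acl(Ma)$ for one $a$. Following Pillay, one arrives at $a_2\in D$ and $\theta\in\tp(a_2/M)$; the key additional move is that, because membership of $a_2$ in $D$ is witnessed by an $M$-formula (``$y$ is algebraic over some tuple from $\phi(x,d)$ and parameters in $M$''), one can shrink $\theta$ so that $\theta(N)\subseteq D$. The element $c\in\theta(N)$ produced by the remainder of Pillay's proof then lands in $D$ automatically. This step genuinely uses the definable shape of $D$ and is not available for $\acl(Ma)$.
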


\begin{proof}    Let $D:=\acl(\phi(N,d)\cup M)$.  As in the proof of 8.3.2 of \cite{Pillay}, by Lemma 8.1.12(iii) there, since $D\not\subseteq M$, 
choose a regular type $p$ that is non-orthogonal to $\tp(D/M)$ with $R^\infty(p)=\alpha>0$
such that $\tp(D/M)$ is foreign to ${\bf R^\infty\!<\!\alpha}$.  Choose $a\subseteq D$ such that $\tp(a/M)\not\perp p$.  Note that $\tp(a/M)$ is orthogonal to all forking extensions of $p$.  There are now two cases.  If $p$ is trivial, then by Lemma~8.3.1 of \cite{Pillay}, there is $a'\in\acl(aM)\subseteq D$ such that $\tp(a'/M)$ is regular, so we are done.
On the other hand, if $p$ is non-trivial, look at the proof of Lemma~8.2.20 in \cite{Pillay}. The first two moves are to apply 7.1.17 to obtain $a_1\in\acl(aM)$ that is $p$-simple of positive $p$-weight, and then to apply 8.2.17 to find $a_2\in\dcl(Ma_1)$ such that $\tp(a_2/M)$ contains a formula $\theta$ as in the statement of 8.2.20.  Note that $a_2\in D$.
Thus, by shrinking $\theta$ slightly (but staying within $\tp(a_2/M)$) we may assume that $\theta(N)\subseteq D$.  Now, continuing with the proof of 8.2.20, find $c\in\theta(N)\subseteq D$ as there.  The verification that $\tp(c/M)$ is regular follows as in the proof of Proposition~8.3.2.
\qed\end{proof}

\begin{Definition}  {\em   For $M$ any model, a type $p\in S(M)$ is {\em na} if, for every $\phi(x,d)\in p$, there is $b\in \phi(M,d)\setminus \acl(d)$.
}
\end{Definition}  

%\begin{Lemma}  \label{nalink}   Suppose $p\in S(M)$ is always isolated.  Then $p$ is na.
%\end{Lemma}
%
%\begin{proof}  Choose any $\phi(x,d)\in p$.  By increasing $d$ if necessary, we may assume $p$ is based on $d$, hence $\tp(a/d)$ is isolated for any $a$ realizing $p$.  
%Then there is $b\in\phi(M,d)\setminus \acl(d)$ by Lemma~\ref{isolnonalg}.  
%\qed\end{proof}

\begin{Proposition}   \label{charna}  For $M\preceq N$, $M\subseteq_{na} N$ if and only if every regular $p\in S(M)$ realized in $N$ is na.
\end{Proposition}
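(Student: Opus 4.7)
The plan is to prove both implications directly from the definitions, using Lemma~\ref{aclregular} to handle the nontrivial converse.

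For $(\Rightarrow)$, I would assume $M\subseteq_{na} N$ and take a regular type $p\in S(M)$ realized by some $e\in N$.  Since $p$ is non-algebraic, $e\in N\setminus M$.  For any $\phi(x,d)\in p$ with $d$ a finite tuple from $M$, we have $e\in\phi(N,d)\setminus M$, so the hypothesis $M\subseteq_{na} N$ applied with $F=d$ produces $b\in\phi(M,d)\setminus\acl(d)$, verifying that $p$ is na.

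For $(\Leftarrow)$, I would fix finite $F\subseteq M$ and an $F$-definable formula $\phi(x)$ with $\phi(N)\setminus M$ nonempty, and look for some $c'\in\phi(M)\setminus\acl(F)$.  First, apply Lemma~\ref{aclregular} to obtain $e\in\acl(\phi(N,F)\cup M)$ with $\tp(e/M)$ regular; since $N$ is a model it is algebraically closed, so $e\in N$, and since regular types are non-algebraic, $e\notin M$.  Thus $p:=\tp(e/M)$ is regular and realized in $N$, hence by hypothesis is na.  The remaining task is to convert na-ness of $p$ into a witness inside $\phi(M,F)\setminus\acl(F)$.  Unpacking $e\in\acl(\phi(N,F)\cup M)$, fix $a_1,\dots,a_k\in\phi(N,F)$, a tuple $m$ from $M$, and a formula $\alpha(x,y_1,\dots,y_k,z)$ with $\alpha(e,a_1,\dots,a_k,m)$ holding and $\alpha(x,a_1,\dots,a_k,m)$ algebraic in $x$.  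The formula
\[ \psi(x,m,F):=\exists y_1\cdots\exists y_k\Big(\bigwedge_{i=1}^{k}\phi(y_i,F)\wedge\alpha(x,y_1,\dots,y_k,m)\Big) \]
lies in $p$, so na-ness of $p$ yields $b\in\psi(M,m,F)\setminus\acl(mF)$ witnessed by some $a'_1,\dots,a'_k\in\phi(M,F)$.  If every $a'_i$ were in $\acl(F)$, then $b\in\acl(mF,a'_1,\dots,a'_k)=\acl(mF)$, contradicting the choice of $b$; so some $a'_i\in\phi(M,F)\setminus\acl(F)$ is the required witness.

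The main obstacle is precisely this final transfer step.  Lemma~\ref{aclregular} delivers a regular type almost for free from $\phi(N,F)\setminus M\ne\emptyset$, but na-ness is a schema indexed by the formulas in $p$, so one must select the right existential formula $\psi$ — essentially reading the algebraicity witness $\alpha$ for $e$ as an existential statement about elements of $\phi$ — to route the na witness for $p$ back into the original formula $\phi$; the algebraicity of $\alpha$ is then exactly what forces at least one of the witnessing $a'_i$ out of $\acl(F)$.
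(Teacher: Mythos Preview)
Your approach is essentially the same as the paper's: both directions use Lemma~\ref{aclregular} to produce a regular type from a failure of the na-condition, then build an existential formula quantifying over witnesses in $\phi$ to transfer between the two na-notions.  However, your converse argument has a genuine gap at the final step.

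You assert that if every $a'_i\in\acl(F)$ then $b\in\acl(mF,a'_1,\dots,a'_k)$.  This would require $\alpha(x,a'_1,\dots,a'_k,m)$ to be algebraic in $x$, but you only arranged that the \emph{original} instance $\alpha(x,a_1,\dots,a_k,m)$ is algebraic.  Nothing prevents $\alpha(x,a'_1,\dots,a'_k,m)$ from having infinitely many solutions for the new witnesses $a'_i$ found in $M$, in which case $b$ need not lie in $\acl(mF)$ and your contradiction evaporates.  The paper's proof handles exactly this by building the boundedness into the formula itself: it fixes the exact number $n$ of solutions of $\alpha(x,a_1,\dots,a_k,m)$ and takes
\[
\theta(y,d,d'):=\exists \xbar\Big[\bigwedge_i \phi(x_i,d)\wedge \alpha(y,\xbar,d')\wedge\exists^{=n} z\, \alpha(z,\xbar,d')\Big],
\]
so that any witnessing $\abar'$ automatically makes $\alpha(y,\abar',d')$ algebraic.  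Adding the clause $\exists^{=n} x\,\alpha(x,y_1,\dots,y_k,m)$ (or $\exists^{\le n}$) to your $\psi$ repairs the argument completely.
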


\begin{proof}  Left  to right is obvious.
For the converse, assume $M$ is not an $na$-substructure.   Choose $\phi(x,d)$ realized in $N\setminus M$ with $d\subseteq M$, but $\phi(M,d)\subseteq \acl(d)$.
By Lemma~\ref{aclregular}, choose $e\in \acl(\phi(N,d)\cup M)$ such that $q:=\tp(e/M)$ is regular.  We show that $q$ is not $na$.
For this, choose $\abar$ from $\phi(N,d)$ for which $e\in\acl(M\abar)$.  
Say $\psi(x,\abar,d')\in \tp(e/M\abar)$ has exactly $n$ solutions and $d'\subseteq M$.
Put
$$\theta(y,d,d'):=\exists \xbar[\wedge_i \phi(x_i,d)\wedge \psi(y,\xbar,d')\wedge\exists^{=n} z \psi(z,\xbar,d')]$$
We claim that the formula $\theta(y,d,d')$ witnesses that $q$ is not na.
Clearly, $\theta(y,d,d')\in\tp(e/d,d')$, so it suffices to show  that  $\theta(M,d,d')\subseteq\acl(dd')$.
To verify this, choose $b'\in\theta(M,d,d')$ and choose $\abar'$ from $M$ witnessing this.  
Then $\abar'\subseteq\phi(M,d)$, hence $\abar\subseteq\acl(dd')$ by our choice of $\phi(x,d)$.  
But $b'\in \acl(\abar'd')$ via $\psi(z,\abar',d')$, so $b'\in\acl(dd')$, as required.  
\qed\end{proof}

\end{document}